\newcommand{\intd}{\,\mathrm{d}}
\newcommand{\eps}{\varepsilon}
\newcommand{\heps}{\hat{\varepsilon}}
\newcommand{\hbeps}{{_{B}\hat{\varepsilon}}}
\newcommand{\hepsc}{\hat{\varepsilon}_{{C}}}
\newcommand{\hceps}{{_{{C}}\hat{\varepsilon}}}
\newcommand{\hepsb}{\hat{\varepsilon}_{{B}}}
\newcommand{\bphi}{{_{B}\phi}}
\newcommand{\cphi}{{_{C}\phi}}
\newcommand{\bpsi}{{_{B}\psi}}
\newcommand{\cpsi}{{_{C}\psi}}
\newcommand{\phib}{{\phi_{B}}}
\newcommand{\psic}{{\psi_{C}}}
\newcommand{\hbphib}{{_{B}\hat{\phi}_{B}}}
\newcommand{\hcphic}{{_{C}\hat{\phi}_{C}}}
\newcommand{\hbpsib}{{_{B}\hat{\psi}_{B}}}
\newcommand{\hbpsi}{{_{{B}}\hat{\psi}}}
\newcommand{\hpsib}{\hat{\psi}_{{B}}}
\newcommand{\hcpsic}{{_{C}\hat{\psi}_{C}}}
\newcommand{\hhat}[1]{\hat{\hat{#1}}}
\newcommand{\dual}[1]{#1^{\vee}}
\newcommand{\dbA}{\dual{(\bA)}}
\newcommand{\dcA}{\dual{(\cA)}}
\newcommand{\dAb}{\dual{(\Ab)}}
\newcommand{\dAc}{\dual{(\Ac)}}
\newcommand{\comega}{{_{C}\omega}}
\newcommand{\bomega}{{_{B}\omega}}
\newcommand{\omegac}{\omega_{C}}
\newcommand{\omegab}{\omega_{B}}
\newcommand{\cupsilon}{{_{C}\upsilon}}
\newcommand{\bupsilon}{{_{B}\upsilon}}
\newcommand{\upsilonc}{\upsilon_{C}}
\newcommand{\upsilonb}{\upsilon_{B}}
\newcommand{\cphic}{\cphi{}_{C}}
\newcommand{\bpsib}{\bpsi{}_{B}}
\newcommand{\GstG}{G {_{s}\times_{t}} G}
\newcommand{\GssG}{G {_{s}\times_{s}} G}
\newcommand{\GttG}{G {_{t}\times_{t}} G}
\numberwithin{equation}{subsection}
\theoremstyle{definition} 
\newtheorem{remark}{Remark}[subsection]
\newtheorem{remarks}[remark]{Remarks}
\newtheorem*{notation*}{Notation}
\theoremstyle{plain}
\newtheorem{definition}[remark]{Definition}
\newtheorem{theorem}[remark]{Theorem}
\newtheorem{proposition}[remark]{Proposition}
\newtheorem{corollary}[remark]{Corollary}
\newtheorem{lemma}[remark]{Lemma}
\newtheorem*{assumption*}{Assumption}
\newcommand{\C}{\mathds{C}}
\newcommand{\oo}{\otimes}
\newcommand{\osub}[1]{\underset{#1}{\otimes}}
\newcommand{\mbtimes}{\osub{\mu_{B}}}
\newcommand{\hAcA}{ \hat{A}_{{ C}} \oo {_{{ C}}\hat{A}}}
\newcommand{\hAbA}{ \hat{A}_{{ B}} \oo {_{{ B}}\hat{A}}}
\newcommand{\hAlA}{\hat{ A}^{B} \otimes {_{B}\hat{ A}}}
\newcommand{\hArA}{\hat{ A}_{C} \otimes {^{C}\hat{ A}}}
\newcommand{\gr}[5]{\tensor*[^{#2}_{#3}]{#1}{^{#4}_{#5}}}
\newcommand{\id}{\iota}
\DeclareMathOperator{\Hom}{Hom}
\DeclareMathOperator{\End}{End}
\newcommand{\bB}{{_{B}B}}
\newcommand{\Bb}{B_{B}}
\newcommand{\bsA}{\gr{A}{}{B}{}{}}
\newcommand{\btA}{\gr{A}{}{}{B}{}}
\newcommand{\bAs}{\gr{A}{}{}{}{B}}
\newcommand{\bAt}{\gr{A}{B}{}{}{}}
\newcommand{\csA}{\gr{A}{}{C}{}{}}
\newcommand{\ctA}{\gr{A}{}{}{C}{}}
\newcommand{\cAs}{\gr{A}{}{}{}{C}}
\newcommand{\cAt}{\gr{A}{C}{}{}{}}
\newcommand{\cAc}{\gr{A}{}{C}{}{C}}
\newcommand{\cCc}{\gr{C}{}{C}{}{C}}
\newcommand{\bAb}{\gr{A}{}{B}{}{B}}
\newcommand{\bBb}{\gr{B}{}{B}{}{B}}
\newcommand{\AltkA}{\ctA \overline{\times} \csA} 
\newcommand{\ArtkA}{\bAs \overline{\times} \bAt} 
\newcommand{\cAsA}{\cAs \otimes \csA}
\newcommand{\bAsA}{\bAs \otimes \bsA}
\newcommand{\AlA}{\ctA \otimes \csA}
\newcommand{\ArA}{\bAs \otimes \bAt}
\newcommand{\bATA}{\btA \otimes \bAt}
\newcommand{\cATA}{\ctA \otimes \cAt}
\newcommand{\bA}{{_{B}A}}
\newcommand{\Ab}{A_{B}}
\newcommand{\cA}{{_{C}A}}
\newcommand{\Ac}{A_{C}}
\newcommand{\sbA}{A^{B}}
\newcommand{\Asb}{\gr{A}{B}{}{}{}}
\newcommand{\scA}{A^{C}}
\newcommand{\Asc}{\gr{A}{C}{}{}{}}
\newcommand{\AbA}{\Ab \oo \bA}
\newcommand{\AcA}{\Ac \oo \cA}
\newcommand{\op}{\mathrm{op}}
\newcommand{\Tl}{T_{\lambda}}
\newcommand{\Tr}{T_{\rho}}
\newcommand{\lT}{{_{\lambda}T}}
\newcommand{\rT}{{_{\rho}T}}
\newcommand{\hTl}{\hat{T}_{\lambda}}
\newcommand{\hTr}{\hat{T}_{\rho}}
\newcommand{\hlT}{{_{\lambda}\hat{T}}}
\newcommand{\hrT}{{_{\rho}\hat{T}}}
\newcommand{\beps}{{_{B}\varepsilon}}
\newcommand{\epsc}{\varepsilon_{C}}
\newcommand{\ceps}{{_{C}\varepsilon}}
\newcommand{\epsb}{\varepsilon_{B}}
\newcommand{\feps}{{_{F}\varepsilon}}
\newcommand{\epse}{\varepsilon_{E}}
\newcommand{\actleft}{\triangleleft}
\newcommand{\actright}{\triangleright}
\title[Duality of algebraic quantum groupoids]{On duality of algebraic quantum groupoids}
\author{Thomas Timmermann} 
 \address{FB Mathematik und Informatik, University of Muenster \\ Einsteinstr.\ 62, 48149
   Muenster, Germany}
 \email{timmermt@math.uni-muenster.de}
 \thanks{Supported by the SFB 878
     ``Groups, geometry and actions'' funded by the DFG}
\date{\today}
 \subjclass[2010]{16T05}
 \keywords{bialgebroid, Hopf algebroid, weak Hopf algebra, quantum
   groupoid, Pontrjagin duality, integral, morphism}
\begin{document}

\begin{abstract}
  Like quantum groups, quantum groupoids frequently appear in pairs of mutually dual objects. We develop a general Pontrjagin duality theory for quantum groupoids in the algebraic setting that extends Van Daele's duality theory for multiplier Hopf algebras and overcomes the finiteness restrictions of the approach of Kadison, Szlach\'anyi, Böhm and Schauenburg.  Our construction is based on the integration theory for multiplier Hopf algebroids and yields, as a corollary, a duality theory for weak multiplier Hopf algebras with integrals.   We compute the duals in several examples and introduce morphisms of multiplier Hopf algebroids to succinctly describe their structure. Moreover, we show that such morphisms  preserve the antipode.
\end{abstract}
\maketitle

\tableofcontents

\section{Introduction}

A fundamental feature of quantum groups and quantum groupoids is that, like locally compact abelian groups and their Pontrjagin duals, many appear as pairs of mutually dual objects, for example, function and convolution algebras of groups and groupoids, quantizations of function algebras and of universal enveloping algebras of Lie-Poisson groups and groupoids \cite{chari}, \cite{korogodski}, or Galois symmetries for depth 2 inclusions of rings or factors \cite{kadison:inclusions}.

General duality results on existence of a dual object and an identification of the bidual with the initial object were obtained, for example, for finite-dimensional (weak) Hopf algebras \cite{kornel:weak}, for quantum groupoids that are finite relative to the base algebras by Kadison, Szlach\'anyi, Böhm and Schauenburg \cite{boehm:bijective}, \cite{kadison:inclusions},  \cite{schauenburg}, for multiplier Hopf algebras with integrals by Van Daele \cite{daele}, and in the operator-algebraic framework for locally compact quantum groups by Kustermans and Vaes \cite{vaes:1} and for measured quantum groupoids by Enock and Lesieur \cite{enock:action}, \cite{lesieur}.

In this article, we establish a general duality theory for algebraic quantum groupoids in the form of multiplier Hopf algebroids \cite{timmermann:regular}, without any finiteness assumption, following the elegant approach to the duality of algebraic quantum groups of Van Daele \cite{daele}. The key assumption that we impose is existence of integrals \cite{timmermann:integration}, which correspond to integration with respect to the Haar measure on a group or with respect to a Haar system and a quasi-invariant measure on a groupoid, and also play a crucial role in  the operator-algebraic approaches to quantum groups and quantum groupoids mentioned above.  The regular multiplier Hopf algebroids that we use simultaneously generalize the weak (multiplier) Hopf algebras \cite{boehm:weak}, \cite{kornel:weak},  \cite{daele:weakmult0}, \cite{daele:weakmult}, and Hopf algebroids \cite{boehm:hopf}, \cite{lu:hopf},  \cite{xu}. As a special case, we obtain a duality theory for weak (multiplier) Hopf algebras with integrals, again without any finiteness assumption.

As a tool, we introduce morphisms of multiplier Hopf algebroids, show
that they preserve the antipode, and use them to describe the
structure of the dual in several examples.

\medskip

To motivate our approach and explain the problems that we have to solve, let us first outline the afore-mentioned approaches.

If $A$ is a finite-dimensional (weak) Hopf algebra with multiplication $m\colon A\otimes A \to A$ and comultiplication $\Delta \colon A\to A\otimes A$, then the dual vector space $A^{\vee}$ is a (weak) Hopf algebra with respect to the dual maps $\Delta^{\vee} \colon A^{\vee} \otimes A^{\vee} \to A^{\vee}$ and $m^{\vee} \colon A^{\vee} \to A^{\vee} \otimes A^{\vee}$, where finite-dimensionality is used to identify $A^{\vee} \otimes A^{\vee}$ with $(A\otimes A)^{\vee}$.

To extend this duality beyond the finite-dimensional case and from (weak) Hopf algebras to (multiplier) Hopf algebroids, two problems have to be overcome.

The first problem is that in the infinite-dimensional case, the tensor product $A^{\vee} \otimes A^{\vee}$ is a proper subspace of $(A\otimes A)^{\vee}$, and while the multiplication on $A^{\vee}$ can still be defined by the formula $\upsilon \ast \omega := (\upsilon \otimes \omega)\circ \Delta$, the comultiplication may not be well-defined because the dual map $m^{\vee} \colon A^{\vee} \to (A\otimes A)^{\vee}$ need not take values in the subspace $A^{\vee} \otimes A^{\vee}$. To solve this problem, we follow Van Daele \cite{daele} who assumed existence of a left or right integral on $A$, that is, of a functional $\phi$ or $\psi$ satisfying
\begin{align} \label{eq:hopf-invariance}
  (\iota \otimes \phi)\Delta(a) = \phi(a)1_{A}\quad \text{or} \quad (\psi\otimes \iota)\Delta(a) = \psi(a)1_{A}.
\end{align}
Given such integrals, the  spaces of all functionals of the form $a\cdot \phi \colon b\mapsto  \phi(ba)$ or $\psi\cdot a \colon b\to \psi(ab)$, respectively, coincide, form a subalgebra $\hat{A}$ of $A^{\vee}$, and carry a comultiplication  $\hat{\Delta}$ that takes values in the multiplier algebra $M(\hat{A} \otimes \hat{A})$ such that $(\hat{A}\otimes 1)\hat{\Delta}(A)$ and   $\hat{\Delta}(\hat{A})(1\otimes \hat{A})$  lie in $\hat{A} \otimes \hat{A}$ and
\begin{align} \label{eq:dual-mh}
  \begin{aligned}
    ((\upsilon \otimes 1)\hat{\Delta}(\omega)\mid a\otimes b) &= (\upsilon \otimes \omega\mid \Delta(a)(1\otimes b)), \\
    (\hat{\Delta}(\upsilon)(1\otimes \omega)\mid a\otimes b) &= (\upsilon
    \otimes \omega\mid (a\otimes 1)\Delta(b))
  \end{aligned}
\end{align}
for all $a,b\in A$ and $\upsilon,\omega \in \hat{A}$. Then $\hat{A}$ is a so-called multiplier Hopf algebra and has a left and a right integral $\hat{\psi}$ and $\hat{\phi}$ again, given by $a\cdot \phi \mapsto \varepsilon(a)$ and $\psi\cdot a \mapsto \varepsilon(a)$, respectively, where $\varepsilon$ denotes the counit of $A$. Moreover, the associated dual $\hat{\hat{A}}$ is canonically isomorphic to $A$.

The second problem is that if one passes from weak Hopf algebras to Hopf algebroids, the  comultiplication  does no longer map $A$ to $A \otimes A$. Instead, the algebra $A$ can be regarded as a bimodule over two anti-isomorphic base algebras $R$ and $L$, and one has a left comultiplication $\Delta_{L}$ and a right comultiplication $\Delta_{R}$ that map $A$ to the balanced tensor products $A_{L} \otimes {_{L}A}$ or ${A_{R}} \otimes {_{R}A}$, respectively. Hence, we can not define a multiplication on the dual vector space $A^{\vee}$ as before, but only on each of the four dual modules $(_{L}A)^{\vee}$, $(A_{L})^{\vee}$, $(_{R}A)^{\vee}$ and $(A_{R})^{\vee}$, for example, on $(_{L}A)^{\vee}$ and $(A_{R})^{\vee}$ by
\begin{align} \label{eq:dual-modules-convolution}
  _{L}\upsilon \ast {_{L}\omega}  = { _{L}\upsilon} \circ (\iota \otimes {_{L}\omega}) \circ \Delta_{L} \quad \text{and} \quad
\upsilon_{R} \ast \omega_{R} = \omega_{R} \circ (\upsilon_{R} \otimes \iota) \circ \Delta_{R}.
\end{align}
If the modules $_{L}A,A_{L}$ and $_{R}A,A_{R}$ are finitely generated and projective, then one can dualize the multiplication on $A$ and define a one-sided comultiplication on each of the four dual modules so that one obtains two left bialgebroids and two right bialgebroids. Without further assumption, it is not clear how to assemble these four one-sided bialgebroids into one (two-sided) Hopf algebroid. In \cite{boehm:bijective}, Böhm and Szlach\'anyi show that this can be done if one assumes existence of an integral  \emph{in}  $A$ \cite{boehm:integrals}, and obtain a duality theory for Hopf algebroids with cointegrals.\footnote{To avoid confusion between integrals \emph{in} $A$ and integrals \emph{on} $A$, we henceforth reserve the term integral for the latter and use the term \emph{cointegral} for the former.}

\medskip

Let us now explain our approach and the organization of this article.  

Briefly, we carry over Van Daele's duality theory to regular multiplier Hopf algebroids, using the theory of integration on such objects developed in \cite{timmermann:integration}.

\medskip

In Section \ref{section:preliminaries}, we summarize the definition of a  \emph{regular multiplier Hopf algebroid} and the ingredients needed for integration on such an object.

 A regular multiplier Hopf algebroid consists of an algebra $A$, possibly without unit, two   subalgebras $B,C \subseteq M(A)$ with two fixed anti-isomorphisms $C \rightleftarrows B$,  and a left comultiplication $\Delta_{C}$ and a right comultiplication  $\Delta_{B}$ subject to several natural conditions like co-associativity and existence of counits and an antipode. 

 To perform integration,  first need a \emph{partial left integral} and a \emph{partial right integral}, that is, a $C$-bilinear map $\cphic \colon A \to C$ and a $B$-bilinear map $\bpsib\colon A \to B$ satisfying invariance conditions similar to \eqref{eq:hopf-invariance}.  
 In the situation studied by Böhm and Szlach\'anyi, where $A$ is a Hopf algebroid with a cointegral, existence of such partial integrals is automatic \cite[Lemma 5.6]{boehm:bijective}; see also \cite{boehm:integrals}, \cite{boehm:erratum}.
Next, we need a \emph{base weight}, which consists  of a faithful functional $\mu_{B}$ on $B$ and a faithful functional $\mu_{C}$ on $C$ satisfying a few natural conditions. 
If the multiplier Hopf algebroid arises from a weak (multiplier) Hopf algebra as in \cite{daele:relation}, there exists a canonical choice of such a base weight.

 A regular multiplier Hopf algebroid with partial integrals and a base weight as above will be called a \emph{measured multiplier Hopf algebroid}.

\medskip

 In Section \ref{section:dual}, we associate to a measured multiplier Hopf algebroid $A$ as above a dual measured multiplier Hopf algebroid. 

 The underlying \emph{dual algebra} $\hat{A}$ was introduced already in \cite{timmermann:integration} and will be studied in detail in subsection \ref{subsection:dual-algebra}.  As in the case of multiplier Hopf algebras, it is a subspace of the linear dual $A^{\vee}$ defined with the help of the total integrals $\phi=\mu_{C} \circ \cphi$ and $\psi=\mu_{B} \circ \bpsib $.  The base weight allows us to identify this subspace with four different convolution algebras contained in certain dual modules of $A$, thereby solving the second problem outlined above.  The algebras $B$ and $C$ embed naturally into $M(\hat{A})$ so that we can regard $\hat{A}$ as a module over these base algebras in various ways.

We next construct a dual left comultiplication $\hat{\Delta}_{B}$ and a dual right comultiplication $\hat{\Delta}_{C}$  which dualize the multiplication on $A$ in a similar way as  expressed by \eqref{eq:dual-mh}. To do so, we need to define various pairings between balanced tensor squares of $\hat{A}$ and balanced tensor squares of $A$. This is done in subsection \ref{subsection:dual-comultiplication}.

 The full duality result will be obtained  in subsection \ref{subsection:dual-integrals}. We define 
   a partial right and a partial  left integral on $\hat A$ by similar formulas  as in the case of multiplier Hopf algebras \cite{daele},  show that $\hat{A}$ becomes 
 measured multiplier Hopf algebroid again, and prove that the natural map $A \to \dual{(\hat A)}$ identifies $A$ with the bidual $\hat{\hat A}$.

  The dual $\hat{A}$ turns out to be unital if and only if $A$ has a left and a right cointegral. Existence of such cointegrals is the starting assumption in the duality developed in  \cite{boehm:bijective}.

\medskip

In Section \ref{section:morphisms}, we introduce the notion of a morphism between regular multiplier Hopf algebroids, which requires some care when the algebras involved are non-unital and when the base algebras are allowed to change. We show that every morphism preserves the antipode and pay special attention to the case where the target is the dual of a measured regular multiplier Hopf algebroid.

\medskip

 In Section \ref{section:examples}, we finally consider  examples coming from \'etale groupoids, from actions of Hopf algebras,  and from braided-commutative Yetter-Drinfeld algebras, and compute the dual in each of these cases.  Moreover, we show that the duality constructed in Section \ref{section:dual} yields, as a special  case, a duality theory for weak multiplier Hopf algebras with integrals. 

\medskip

The material developed in this article raises several questions that will have to be addressed separately.

Given the theory of integrals developed here, one should be able to construct (operator-algebraic) measured quantum groupoids out of suitable (algebraic) measured multiplier Hopf $*$-algebroids, similarly as it was done for multiplier Hopf algebras by Kustermans and Van Daele in \cite{kustermans:analytic-2} and \cite{kustermans:algebraic}, and for dynamical quantum groups in \cite{timmermann:dynamical}.  A first step will be taken in \cite{timmermann:opalg}. A crucial assumption in the theory of measured quantum groupoids is that the modular automorphism groups of the left and of the right integral commute, and it would be desirable to study the implications of this assumption in the present context.

Another question concerns the implications of existence of integrals on the theory of corepresentations. We expect that in the case where $A$ is proper in the sense that $BC\subseteq A$, and under suitable assumptions on the base algebras $B$ and $C$, many results on the corepresentation theory of compact quantum groups carry over to measured multiplier Hopf $*$-algebroids.

One should also clarify the relation to the duality obtained by Schauenburg \cite{schauenburg}, Kadison and Szlach\'anyi \cite{kadison:inclusions} and Böhm and Szlach\'anyi \cite{boehm:bijective} in the case where $A$ is unital and finitely generated projective as a module over $B$ and $C$. In particular, we plan to study the relation between the cointegrals that  underly the duality developed in  \cite{boehm:bijective}, and the base weights and partial  integrals used in our appoach.

\subsubsection*{Preliminaries}

We shall use the following conventions and terminology.

All algebras and modules will be complex vector spaces and all homomorphisms will be linear maps, but much of the theory developed in this article applies in wider generality.

The identity map on a set $X$ will be denoted by $\iota_{X}$ or simply $\iota$.

Let $B$ be an algebra, not necessarily unital. We denote by $B^{\op}$
the \emph{opposite algebra}, which has the same underlying vector
space as $B$, but the reversed multiplication.

Given a right module $M$ over $B$, we write $M_{B}$ if we want to emphasize that $M$ is regarded as a right $B$-module. We call $M_{B}$ \emph{faithful} if for each non-zero $b\in B$ there exists an $m\in M$ such that $mb$ is non-zero, \emph{non-degenerate} if for each non-zero $m\in M$ there exists a $b \in B$ such that $mb$ is non-zero, \emph{idempotent} if $MB=M$, and we say that $M_{B}$ \emph{has local units in $B$} if for every finite subset $F\subset M$ there exists a $b\in B$ with $mb=m$ for all $m\in F$. Note that the last property implies the preceding two. We denote by
\begin{align*}
  \dual{(M_{B})} :=\Hom(M_{B},B_{B})
\end{align*}
  the dual module, and by $\dual{f} \colon \dual{(N_{B})} \to \dual{(M_{B})}$ the dual of a morphism $f\colon M_{B}\to N_{B}$ of right $B$-modules, given by $\dual{f}(\chi)= \chi \circ f$.  We use the same notation for duals of vector spaces and of linear maps. We furthermore denote by $L(M_{B}) := \Hom(\Bb,M_{B})$ the space of \emph{left multipliers} of the module $M_{B}$.

For left modules, we obtain the corresponding notation and terminology by identifying left $B$-modules with right $B^{\op}$-modules.  We denote by $R(_{B}M):=\Hom(\bB,{_{B}M})$ the space of \emph{right multipliers} of a left $B$-module $_{B}M$.

Given  a right $B$-module $M_{B}$ and a left $B$-module $_{B}N$, we denote by $M_{B} \otimes {_{B}N}$ the (balanced) tensor product. We shall frequently use the following  slice maps associated to elements $\phi_{B} \in (M_{B})^{\vee}$ and $\bpsi \in (_{B}N)^{\vee}$, 
\begin{align} \label{eq:slice}
  \begin{aligned}
    \phi_{B} \odot \iota &\colon M_{B} \otimes {_{B}N} \to N, \ m\otimes n \mapsto \phi_{B}(m)n, \\
    \id \odot \bpsi &\colon M_{B} \otimes {_{B}N} \to M, \ m\otimes n
    \mapsto m\bpsi(n).
  \end{aligned}
\end{align}

We write $B_{B}$ or ${_{B}B}$ when we regard $B$ as a right or left module over itself with respect to right or left multiplication. We say that the algebra $B$ is \emph{non-degenerate}, \emph{idempotent}, or \emph{has local units} if the modules ${_{B}B}$ and $B_{B}$ both are non-degenerate, idempotent or both have local units in $B$, respectively. Note that the last property again implies the preceding two.

We denote by $L(B)=\End(B_{B})$ and $R(B)=\End({_{B}B})^{\op}$ the algebras of left or right multipliers of $B$, respectively, where the multiplication in the latter algebra is given by $(fg)(b):=g(f(b))$. Note that $B_{B}$ (or\ ${_{B}B}$) is non-degenerate if and only if the natural map from $B$ to $L(B)$ (or $R(B)$, respectively) is injective. If $B_{B}$ is non-degenerate, we define the multiplier algebra of $B$ to be the subalgebra $M(B) :=\{ t\in L(B) : Bt\subseteq B\} \subseteq L(B)$, where we identify $B$ with its image in $L(B)$. Likewise we could define $M(B) =\{ t\in R(B) : tB \subseteq B\}$ if ${_{B}B}$ is non-degenerate.  If both definitions make sense, that is, if $B$ is non-degenerate, then they evidently coincide up to a natural identification, and a multiplier is given by a pair of maps $t_{R},t_{L}\colon B\to B$ satisfying $t_{R}(a)b=at_{L}(b)$ for all $a,b\in B$.

Given a left or right $B$-module $M$ and a space $N$, we regard the space of linear maps from $M$ to $N$ as a right or left $B$-module, where $(f \cdot b)(m)=f(bm)$ or $(b\cdot f)(m)=f(mb)$ for all maps $f$ and all elements $b\in B$ and $m\in M$, respectively.

In particular, we regard the dual space $\dual{B}$ of a non-degenerate, idempotent algebra $B$ as a bimodule over $M(B)$, where $(a \cdot \omega \cdot b)(c)=\omega(bca)$, and call a functional $\omega \in \dual{B}$ \emph{faithful} if the maps $B \to \dual{B}$ given by $d \mapsto d\cdot\omega$ and $d \mapsto\omega \cdot d$ are injective, that is, $\omega(dB)\neq 0$ and $\omega(Bd) \neq 0$ whenever $d\neq 0$.

Given a faithful functional $\omega \in \dual{B}$, we say that $\omega$ \emph{admits a modular automorphism} if there exists an automorphism $\sigma$ of $B$ such that $\omega(ab)=\omega(b\sigma(a))$ for all $a,b\in B$. One easily verifies that this condition holds if and only if $B\cdot \omega = \omega \cdot B$, and that then $\sigma$ is characterized by the relation $\sigma(b) \cdot \omega = \omega \cdot b$ for all $b\in B$.

Assume that $B$ is a $*$-algebra. We call a functional $\omega \in \dual{B}$ \emph{self-adjoint} if it coincides with $\omega^{*}=\ast \circ \omega \circ \ast$, that is, $\omega(a^{*})=\omega(a)^{*}$ for all $a\in B$, and \emph{positive} if additionally $\omega(a^{*}a)\geq 0$ for all $a\in A$.

\section{Measured regular multiplier Hopf algebroids}
\label{section:preliminaries}

This section summarizes the definition and main properties of measured multiplier Hopf
algebroids, which are regular multiplier Hopf algebroids with
partial integrals and base weights. 
References are
\cite{timmermann:integration} and
\cite{timmermann:regular}.

\subsection{Regular multiplier Hopf algebroids}

\label{section:multiplier-bialgebroids}
Briefly,
 a {regular multiplier Hopf algebroid} consists of an algebra $A$, possibly without unit, two   subalgebras $B,C \subseteq M(A)$ with two fixed anti-isomorphisms $C \rightleftarrows B$, and a left comultiplication $\Delta_{C}$ and a right comultiplication  $\Delta_{B}$ such that the maps
\begin{align} \label{eq:intro-canonical}
  \begin{aligned}
    \Tl &\colon a \otimes b \mapsto \Delta_{C}(b)(a \otimes 1), &
    \Tr&\colon a \otimes b\mapsto \Delta_{C}(a)(1 \otimes b), \\
    \lT &\colon a\otimes b\mapsto (a \otimes 1)\Delta_{B}(b), & \rT
    &\colon a\otimes b \mapsto (1 \otimes b)\Delta_{B}(a)
  \end{aligned}
\end{align}
induce bijections between certain balanced tensor products of $A$ relative to the base algebras $B$ and  $C$, respectively. This bijectivity condition is equivalent to the existence of a left counit $\ceps \colon A \to C$ for $\Delta_{C}$, a right counit $\epsb \colon A \to B$ for $\Delta_{B}$, and an antipode.

Let us now formulate all assumptions  precisely and  fix some notation.

A (two-sided) multiplier bialgebroid  is a tuple 
\begin{align*}
  (A,B,C,t_{B},t_{C},\Delta_{B},\Delta_{C})
\end{align*}
consisting of the following ingredients:
\begin{enumerate}
\item a \emph{total algebra} $A$, which  is   non-degenerate and idempotent;
\item \emph{base algebras} $B,C \subseteq M(A)$;
\item  anti-isomorphisms $t_{B}\colon B\to C$ and $t_{C}\colon C\to B$;
\item a \emph{right comultiplication $\Delta_{B}$} and a \emph{left comultiplication $\Delta_{C}$}, which are homomorphisms from $A$ into certain extended right or left Takeuchi products.
\end{enumerate}
The base algebras $B$ and $C$ need not be commutative, but have to commute with each other, and the algebra $A$ is assumed to be non-degenerate and idempotent when regarded as a left or as a right module over $B$ and over $C$.

We denote elements of $B$ by $x,x',\ldots$, elements of $C$ by $y,y',\ldots$, and reserve $a,b,c,\ldots$ for elements of $A$.  We write
\begin{align*}
&  \bsA, && \bAs, && \csA, && \cAs &&\text{and}  &&  \btA, && \bAt, && \ctA, && \cAt,
\end{align*}
when we regard $A$ as a left or right module over $B$ or $C$, respectively, using left or right multiplication in the case of subscripts and the base anti-isomorphisms $t_{B}$ and $t_{C}$ in the case of superscripts. Thus, in order of appearance, the module structures are given by 
\begin{align*}
x\cdot a &=xa,   & a\cdot x &=ax,  & y\cdot a&= ya, & a\cdot y &= ay, \\
  a\cdot x&=t_{B}(x)a, & x\cdot a &= at_{B}(x), & a\cdot y &= t_{C}(y)a, & y\cdot a &= at_{C}(y)
\end{align*}
for all $a\in A$, $x\in B$ and $y\in C$, respectively.  By assumption, all of these modules are non-degenerate and idempotent.  Any two of these module structures commute in the sense that one obtains a bimodule, and we use two indices when we regard $A$ as a bimodule.

We denote the balanced tensor product of modules by the ordinary tensor sign but emphasize the module structures involved. 

  Regard $\AlA$ as a right and $\ArA$ as a left module over $M(A)\otimes M(A)$ in the obvious way. 
Since $B$ and $C$ commute, we can also regard $\AlA$ as a left and $\ArA$ as a right module over $C\otimes B$.
We write $(a \otimes b)T$ for the image of an element $a\otimes b$ under an element $T \in \End(\ArA)^{\op}$, so that $(a\otimes b)(ST) = ((a\otimes b) S)T$ for all $a,b\in A$ and $S,T\in \End(\ArA)^{\op}$.

 Denote by
\begin{align*} 
\AltkA \subseteq \End(\ctA \otimes \csA)
\end{align*} 
the subspace  formed by all endomorphisms $T$ of $\ctA \otimes \csA$ satisfying the following condition: 
 for every $a,b \in A$, there exist elements
  \begin{align*} T(a \otimes 1) \in \ctA \otimes \csA \quad \text{and}
    \quad T(1 \otimes b) \in \ctA \otimes \csA
  \end{align*}
  such that 
  \begin{align*}
T(a \otimes b) = (T(a \otimes 1))(1 \otimes b) = (T(1
    \otimes b))(a \otimes 1)    
  \end{align*}
This subspace is a subalgebra and commutes with the right $A\otimes
A$-module action.  Likewise,
denote by
\begin{align*}
  \ArtkA \subseteq \End(\bAs \otimes \bAt)^{\op}
\end{align*}
the subspace formed by all endomorphisms $T$ such that for all $a,b\in
A$, there exist elements $(a\otimes 1)T \in \bAs \otimes \bAt$ and $(1
\otimes b)T \in \bAs \otimes \bAt$ such that
\begin{align*}
  (a\otimes b)T = (1\otimes b)((a\otimes 1)T) = (a\otimes 1)((1 \otimes b)T).
\end{align*}

\begin{definition} \label{definition:mult-hopf-algebroid} A \emph{multiplier bialgebroid} $(A,B,C,t_{B},t_{C},\Delta_{B},\Delta_{C})$ consists of a non-degenerate, idempotent algebra $A$, subalgebras $B,C \subseteq M(A)$, anti-isomorphisms $t_{B} \colon B\to C$ and $t_{C} \colon C\to B$, and homomorphisms
  \begin{align*}
    \Delta_{C}\colon A \to \AltkA \quad \text{and} \quad \Delta_{B} \colon A \to \ArtkA
  \end{align*}
 satisfying the following conditions:
  \begin{enumerate}
  \item $B$ and $C$ commute with each other and the modules $\bA,\Ab,\cA,\Ac$ are non-degenerate and idempotent;
  \item $\AlA$ is non-degenerate as a right module over $1\otimes A$ and over $A\otimes 1$, and $\ArA$ is non-degenerate as a left module over $1\otimes A$ and over $A\otimes 1$;
\item for all $x,x'\in B$, $y,y'\in C$, $a\in A$,
  \begin{align} \label{eq:left-comultiplication-bilinear}
    \Delta_{C}(xyax'y') &= (y\otimes x)\Delta_{C}(a)(y' \otimes x'), \\
    \Delta_{B}(xyax'y') &= (y\otimes x)\Delta_{B}(a)(y' \otimes x'); \label{eq:right-comultiplication-bilinear}
  \end{align}
\item for all $a,b,c\in A$, the following coassociativity conditions hold:
  \begin{align} \label{eq:left-comultiplication-coassociative}
 (\Delta_{C} \otimes \iota)(\Delta_{C}(b)(1 \otimes c)) (a\otimes 1 \otimes 1) &=
(\iota\otimes \Delta_{C})(\Delta_{C}(b)(a \otimes 1))(1 \otimes 1 \otimes c),
\\ \label{eq:right-comultiplication-coassociative}    (a \otimes 1 \otimes 1) ((\Delta_{B} \otimes \id)((1 \otimes
    c)\Delta_{B}(b))) &= (1 
\otimes 1 \otimes c)((\id \otimes \Delta_{B})((a \otimes 1)\Delta_{B}(b))) 
\\ \label{eq:mixed-coassociativity-one}
(\Delta_{C} \otimes \id)((1 \otimes c)\Delta_{B}(b)))(a \otimes 1 \otimes 1), &= (1
\otimes 1 \otimes c)((\id \otimes \Delta_{B})(\Delta_{C}(b)(a \otimes 1))), \\
\label{eq:mixed-coassociativity-two}
 (a \otimes 1 \otimes 1)((\Delta_{B}
\otimes \id)(\Delta_{C}(b)(1 \otimes c))) &= ((\id \otimes \Delta_{C})((a \otimes 1)\Delta_{B}(b)))(1 \oo
1 \otimes c).
  \end{align}
  \end{enumerate}
\end{definition}
Let $(A,B,C,t_{B},t_{C},\Delta_{B},\Delta_{C})$ be a multiplier bialgebroid.

Condition (2) ensures that for all $a,b\in A$, the elements
  $\Delta_{C}(a)(1\otimes b)$ and $ \Delta_{C}(b)(a\otimes 1)$ in $\AlA$ and $(a\otimes 1)\Delta_{B}(b)$ and $(1\otimes b)\Delta_{B}(a)$ in $\ArA$
  are uniquely determined.  Some typical cases where this condition holds are given in \cite[Lemma 2.1]{timmermann:regular}.
Condition (3) implies that the four linear maps which send an element $a\otimes b\in A\otimes A$ to either one of these four elements factorize to well-defined linear maps
\begin{align}
    \label{eq:tl}
    \Tl \colon \bATA &\to \AlA,   \quad  a\otimes b \mapsto \Delta_{C}(b)(a\otimes 1), \\     \label{eq:tr}
    \Tr \colon \bAsA &\to \AlA,     \quad a\otimes b \mapsto \Delta_{C}(a)(1 \otimes b), \\
    \label{eq:lt}
    \lT \colon \cAsA &\to \ArA, \quad a\otimes b\mapsto (a\otimes 1)\Delta_{B}(b), \\
    \label{eq:rt}
 \rT \colon \cATA &\to \ArA, \quad  a\otimes b\mapsto (1\otimes b)\Delta_{B}(a).
\end{align}
We call these  maps  the \emph{canonical maps} of the multiplier bialgebroid.

\begin{definition}
Let $\mathcal{A}=(A,B,C,t_{B},t_{C},\Delta_{C},\Delta_{B})$   be a multiplier bialgebroid. 

A \emph{left counit}  for $\mathcal{A}$  is a morphism $\ceps \in \Hom(\cA^{C}, \cCc)$ satisfying
  \begin{gather}
        \label{eq:left-counit}
        (\ceps \odot\id)(\Delta_{C}(a)(1 \otimes b)) = ab = (\id
        \odot \ceps)(\Delta_{C}(a)(b\otimes 1))
  \end{gather}
for all $a,b\in A$, and  a \emph{right counit} for $\mathcal{A}$ is a morphism $\epsb \in \Hom(\bAt_{B},\bBb)$ satisfying
\begin{gather}
  (\epsb \odot \id)((1\otimes b)\Delta_{B}(a))=ba =  (\id \odot
\epsb)((b\otimes 1)\Delta_{B}(a)).  \label{eq:right-counit}
\end{gather}
\end{definition}
 Here, $(\ceps \odot \id)(a\otimes b)= \ceps(a)b$ and$
(\id\odot \ceps)(a\otimes b)=  t_{C}(\ceps(b))a$ for every elementary
tensor $a \otimes b\in \AlA$, and   $\epsb\odot \id$ and $\id \odot
\epsb$   are defined  similarly.

Let $(A,B,C,t_{B},t_{C},\Delta_{B},\Delta_{C})$ be a multiplier bialgebroid. Then by \cite[Theorem 5.6]{timmermann:regular}, the following conditions are equivalent:
\begin{itemize}
\item[{(H1)}] the canonical maps $\Tl,\Tr,\lT,\rT$ are bijective and the subspaces
\begin{align*}
\gr{I}{}{B}{}{} &= \langle \omega(a) : \omega \in \dbA, a \in A\rangle, &
  \gr{I}{}{}{B}{} &= \langle \omega(a) : \omega \in \dual{(\btA)}, a \in A\rangle, \\
  \gr{I}{}{}{}{C} &= \langle \omega(a) : \omega \in \dAc, a \in A\rangle, &
  \gr{I}{C}{}{}{} &= \langle \omega(a) : \omega \in \dual{(\cAt)}, a \in A\rangle
\end{align*}
of $B$ or $C$, respectively, satisfy
 $A=t_{B}(\gr{I}{}{B}{}{})\cdot A = \gr{I}{}{}{B}{}  \cdot A =A\cdot t_{C}(\gr{I}{}{}{}{C}) = A \cdot \gr{I}{C}{}{}{}$;
\item[{(H2)}] there exist a left counit $\ceps$, a right counit $\epsb$ and an anti-automorphism $S$ of $A$ such that its extension to $M(A)$ satisfies
  \begin{align} \label{eq:antipode-bilinear}
   S_{C}:=S|_{C}= t_{C}^{-1}  \quad \text{and} \quad S_{B}:=S|_{B}=t_{B}^{-1}
  \end{align}
  and the following diagrams commute, where $m$ denotes the multiplication maps:
    \begin{align} \label{dg:antipode}
      \xymatrix@C=30pt@R=15pt{\Ab\otimes \bA \ar[d]_{ T_{\rho}}
        \ar[r]^(0.55){\epsb \odot \id} & A, & \Ac\otimes \cA
        \ar[d]_{_{\lambda} T} \ar[r]^(0.55){\id \odot\ceps}
        &A. \\
        \AlA \ar[r]_{S \otimes \id} & \ar[u]_{m} \Ac
        \otimes \cA & \ArA \ar[r]_{\id \otimes S} & \ar[u]_{m}
        \Ab \otimes \bA}
    \end{align}
\end{itemize}
If these conditions hold, then the maps $\beps,\epsc$ and $S$ are uniquely determined.
\begin{definition}
  A regular multiplier Hopf algebroid is a multiplier bialgebroid satisfying the equivalent conditions {(H1)} and {(H2)}. The map $S$ is called its \emph{antipode}.
\end{definition}

 Let $\mathcal{A}$ be a regular multiplier Hopf algebroid. Then the following diagrams commute,
\begin{gather} \label{dg:galois-inverse}
        \xymatrix@R=15pt@C=25pt{\ctA \otimes \cAt \ar[d]_{\rT} \ar[r]^{\id \oo S} & \AlA,
          \\
          \ArA \ar[r]_{\id \oo S} & \AbA \ar[u]_{\Tr} } \quad
        \xymatrix@R=15pt@C=25pt{\btA \otimes \bAt \ar[d]_{\Tl} \ar[r]^{S \oo \id} & \ArA,
          \\
          \AlA \ar[r]_{S \oo \id} & \AcA \ar[u]_{\lT} } \\ \label{dg:galois-antipode}
    \xymatrix@C=25pt@R=15pt{
      \btA \otimes \bAt \ar[d]_{\Tl} \ar[r]^{\Sigma(S \oo S)} &       \ctA \otimes \cAt
      \ar[d]^{\rT} \\
      \AlA \ar[r]_{\Sigma (S \oo S)} & \ArA, }
  \quad
    \xymatrix@C=25pt@R=15pt{
      \AcA \ar[d]_{\lT} \ar[r]^{\Sigma (S \oo S)} &       \AbA
      \ar[d]^{\Tr} \\
      \ArA \ar[r]_{\Sigma (S \oo S)} & \AlA,  } 
\end{gather}
where $\Sigma$ denotes the flip maps on various tensor products; see Theorem 6.8, Proposition 6.11 and Proposition 6.12 in \cite{timmermann:regular}. Furthermore, by Corollary 5.12 in \cite{timmermann:regular},
\begin{gather} \label{eq:antipode-counits}
   \ceps = t_{B} \circ \epsb\circ S \quad\text{and}\quad   \epsb =
  t_{C} \circ\ceps \circ S.
\end{gather}
We shall also use the following multiplicativity of the  counits, see (3.5) and (4.9) in \cite{timmermann:regular}:
\begin{align} \label{eq:counits-multiplicative}
  \begin{aligned}
    \ceps(ab)&=\ceps(a\ceps(b))=\ceps(at_{C}(\ceps(b))), \\
    \epsb(ab)&=\epsb(\epsb(a)b) = \epsb(t_{B}(\epsb(a))b)
  \end{aligned}
\end{align}
for all $a,b\in A$.

 Recall that an algebra $D$ is \emph{firm} if the multiplication map $D\otimes D\to D$ factorizes to an isomorphism $D_{D} \otimes {_{D}D} \to D$, and that a right module $M_{D}$ over an algebra $D$ is called  \emph{locally projective} \cite{zimmermann-huisgen} if for every finite number of elements $m_{1},\ldots,m_{k} \in M$, there exist finitely many $\upsilon_{i} \in\Hom(M_{D},D_{D})$ and $e_{i} \in \Hom(D_{D},M_{D})$ such that $m_{j} = \sum_{i} e_{i}(\upsilon_{i}(m_{j}))$ for all $j=1,\ldots,k$. The corresponding definition for left modules is obvious. 
\begin{definition} \label{definition:projective}
  We call a regular multiplier Hopf algebroid $(A,B,C,t_{B},t_{C},\Delta_{B},\Delta_{C})$ \emph{locally projective} if 
  the algebras $B$ and $C$ are firm and the modules $\bA,\Ab,\cA,\Ac$ are locally projective.
\end{definition}
Finally, we consider involutions.
\begin{definition} \label{definition:involution} A \emph{multiplier Hopf $*$-algebroid} is a regular multiplier Hopf algebroid $(A,B,C,t_{B},t_{C},\Delta_{B},\Delta_{C})$ with an involution on the underlying algebra $A$ such that
  \begin{enumerate}
  \item $B$ and $C$ are $^*$-subalgebras of $M(A)$;
  \item $ t_{B}\circ \ast \circ t_{C} \circ \ast =\id_{C}$ and $t_{C}\circ \ast \circ t_{B}
\circ \ast =\id_{B}$;
\item $\Delta_{C}(a^{*})(b^{*}
    \otimes c^{*}) = ((b\otimes c)\Delta_{B}(a))^{(-)^{*} \otimes (-)^{*}}$ for all $a,b,c\in A$.
\end{enumerate}
\end{definition}
Here, condition (2) ensures that the map
\begin{align*}
(-)^{*} \otimes (-)^{*}\colon  \AlA \to \ArA, \ a\otimes b \mapsto a^{*} \otimes b^{*}
\end{align*}
is well-defined.  If $\mathcal{A}$ is a multiplier Hopf $^{*}$-algebroid as above, then
  \begin{align} \label{eq:counit-antipode-involution}
   \epsb\circ * &= *\circ
  S_{C}\circ \ceps, & \ceps\circ *&=*\circ S_{B}\circ \epsb, & 
  S\circ *\circ S \circ *&=\id_{A};
  \end{align}
see Proposition 6.2 in \cite{timmermann:regular}.

\subsection{Measured multiplier Hopf algebroids}

Let $\mathcal{A}=(A,B,C,t_{B},t_{C},\Delta_{B},\Delta_{C})$ be a regular multiplier Hopf algebroid with left counit $\epsc$, right counit $\beps$ and antipode $S$.

Then the following conditions on a bimodule map $\bpsib \in \Hom(\bAb,\bBb)$ are equivalent:
 \begin{itemize}
    \item[(RI1)] $(S_{B} \circ \bpsib \odot \iota)(\Delta_{C}(a)(1 \otimes     b)) = \bpsib(a)b$ for all $a,b\in A$,
    \item[(RI2)] $(\bpsib \odot \iota)((1 \otimes     b)\Delta_{B}(a)) = b\bpsib(a)$  for all $a,b\in A$,
    \item[(RI3)] the following diagram commutes, where $\Sigma \colon \Ac\oo \cA \to \btA\oo \bAt$ denotes the flip:
      \begin{align} \label{dg:strong-invariance-right}
        \xymatrix@R=12pt@C=35 pt{ \AcA \ar[r]^{\lT} \ar[d]_{\Tl\Sigma}
          & \ArA
          \ar[d]^{S\circ (\bpsib \odot \iota)}\\
          \AlA \ar[r]_(0.55){(S_{B}\circ \bpsib)\odot \iota} &
          A; }        
      \end{align}
\end{itemize}
see \cite[Proposition 3.1]{timmermann:integration}, Likewise, for every $\cphic \in \Hom(\cAc,\cCc)$, the following conditions are equivalent:
    \begin{itemize}
    \item[(LI1)] $(\iota \odot     S_{C} \circ\cphic)(\Delta_{C}(b)(a\otimes 1))=\cphic(b)a$ for all $a,b\in A$,
    \item[(LI2)] $(\iota \odot \cphic)((a\otimes   1)\Delta_{B}(b)) = a\cphic(b)$  for all $a,b\in A$,
    \item[(LI3)] the following diagram commutes, where $\Sigma \colon \Ab\oo \bA \to \ctA\oo \cAt$ denotes the flip:
      \begin{align} \label{dg:strong-invariance-left}
        \xymatrix@R=12pt@C=35pt{  \AbA \ar[r]^{\Tr}
          \ar[d]_(0.55){\rT\Sigma} & \AlA \ar[d]^(0.55){S\circ (\iota
            \odot \cphic)}\\
      \ArA \ar[r]_(0.55){\iota\odot  (S_{C}\circ \cphic)} & A. }       
      \end{align}
    \end{itemize}
Here, the  slice maps $(S_{B} \circ \bpsib)\odot \id$, $ \bpsib\odot \id$ and $\id \odot \cphic$, $\id \odot (S_{C} \circ \cphic)$ are defined similarly as in the case of  left or right counits.
\begin{definition}
  A \emph{measured regular multiplier Hopf algebroid} consists of a regular multiplier Hopf algebroid
\begin{align*}
  \mathcal{A}=(A,B,C,t_{B},t_{C},\Delta_{B},\Delta_{C}),
\end{align*}
 faithful functionals $\mu_{B}$ on $B$ and $\mu_{C}$ on $C$, and maps $\bpsib \in \Hom(\bAb,\bBb)$ and $\cphic \in \Hom(\cAc,\cCc)$ satisfying the following conditions:
\begin{enumerate}
\item  $\mu_{B} \circ t_{C} = \mu_{C}$, $\mu_{C} \circ t_{B} = \mu_{B}$, and $\mu_{B} \circ \epsb = \mu_{C} \circ \ceps$;
\item  (RI1)--(RI3) hold for $\bpsib$ and (LI1)--(LI3) hold for $\cphic$;
\item the compositions $\psi:=\mu_{B} \circ \bpsib$ and $\phi:=\mu_{C}\circ \cphic$ are faithful,
\item there exist surjective maps  $\bphi 
  \in \dbA$, $\phib \in \dAb$,  $\cpsi \in \dcA$, $\psic
    \in \dAc$ such that
  \begin{align} \label{eq:quasi-invariance} 
\psi = \mu_{C} \circ \cpsi = \mu_{C} \circ   \psic \quad{and} \quad
\phi =
    \mu_{B} \circ \bphi = \mu_{B} \circ \phib.
  \end{align}
\end{enumerate}
We call $(\mu_{B},\mu_{C})$ the  \emph{base weight}, $\bpsib$ the \emph{partial right integral}, $\cphic$ the \emph{partial left integral}, and the compositions $\psi$ and $\phi$ the \emph{total right} and the \emph{total left integral}, respectively. 

We call a  measured regular multiplier Hopf algebroid as above a \emph{measured multiplier Hopf $*$-algebroid} if  $\mathcal{A}$ is a multiplier Hopf $*$-algebroid and the functionals $\mu_{B},\mu_{C}$ and the total integrals $\phi,\psi$ are positive.
\end{definition}
\begin{remark} \label{remark:counits-surjective}
  As a consequence of (4) is that the counits $\ceps$ and $\epsb$ of
  $\mathcal{A}$ are automatically surjective because then
  \begin{align*}
    \ceps(A) = \ceps(\cphic(A)A) = \cphic(A)\ceps(A) = \cphic(A\ceps(A)) =\cphic(A)=C
  \end{align*}
by  \eqref{eq:left-counit} and similarly $\beps(A)=\bpsib(A)=B$.
\end{remark}
 A key property of total integrals is that they admit modular automorphisms:
\begin{theorem} \label{theorem:modular-automorphism}
 Let $(\mathcal{A},\mu_{B},\mu_{C},\bpsib,\cphic)$ be a measured regular multiplier Hopf algebroid. 
 Then the functionals $\mu_{B}$ and $\mu_{C}$ and the total   integrals $ \phi$ and $\psi$ admit modular automorphisms
   $\sigma_{B},\sigma_{C},\sigma^{\phi},\sigma^{\psi}$, respectively,   and
   \begin{align} \label{eq:modular-automorphism}
     \begin{aligned}
       \sigma_{C} = \sigma^{\phi}|_{C} &=  S_{B}S_{C}, &
       \sigma_{B}= \sigma^{\psi}|_{B} &=S_{B}^{-1}S_{C}^{-1},\\
 \Delta_{C} \circ
       \sigma^{\phi} &= (S^{2} \otimes \sigma^{\phi}) \circ
       \Delta_{C}, & \Delta_{C} \circ
       \sigma^{\psi} &= (\sigma^{\psi} \otimes S^{-2}) \circ
       \Delta_{C}, \\ \Delta_{B} \circ \sigma^{\phi} &= (S^{2} \otimes
       \sigma^{\phi})
       \circ \Delta_{B}, 
  & \Delta_{B} \circ \sigma^{\psi} &= (\sigma^{\psi}
       \otimes S^{-2}) \circ \Delta_{B}. 
     \end{aligned}
   \end{align}
If $\mathcal{A}$ is locally projective, then
 \begin{align} \label{eq:modular-automorphism-base}
   \begin{aligned}
     \sigma^{\phi}(B)&=B, & \mu_{B} \circ \sigma^{\phi}|_{B} &= \mu_{B}, & \sigma^{\phi}|_{B} \circ \bphi &= \sigma_{B} \circ \phi_{B}, \\
     \sigma^{\psi}(C) &= C, & \mu_{C}\circ \sigma^{\psi}|_{C} &= \mu_{C}, &
     \sigma^{\psi}|_{C} \circ \cpsi &= \sigma_{C} \circ \psi_{C}.
   \end{aligned}
 \end{align}
 \end{theorem}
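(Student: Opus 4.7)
The plan is to follow Van Daele's strategy for algebraic quantum groups (see \cite{daele}), adapted to the algebroid setting using the strong invariance conditions (LI3) and (RI3). The existence of $\sigma^{\phi}$ and $\sigma^{\psi}$ is the crux; the remaining identities then follow by systematic use of the antipode intertwining relations \eqref{dg:galois-inverse}--\eqref{dg:galois-antipode}, the counit formulas \eqref{eq:antipode-counits}--\eqref{eq:counits-multiplicative}, and faithfulness of $\phi,\psi$.

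To construct $\sigma^{\phi}$ (the case of $\sigma^{\psi}$ being symmetric), the goal is to show that for each $b\in A$ the functional $a\mapsto \phi(ab)$ factors through right multiplication by a unique element $\sigma^{\phi}(b)\in A$, uniqueness being immediate from faithfulness. Existence of such an element I would derive from \eqref{dg:strong-invariance-left}, which rewrites $S\circ (\id\odot\cphic)\circ \Tr$ as $\id\odot (S_{C}\circ\cphic)\circ \rT\Sigma$: combining this identity with $\phi=\mu_{C}\circ\cphic$, the bijectivity of $\rT$ from (H1), and the non-degeneracy and idempotency of the bimodule structures on $A$ produces the required element. Faithfulness then promotes $\sigma^{\phi}$ to an algebra automorphism.

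Next I would obtain the restriction formulas and the modular automorphisms of the base weights simultaneously. For $y\in C$, the $C$-bilinearity of $\cphic$ combined with $\phi=\mu_{C}\circ\cphic$ and $\mu_{B}\circ t_{C}=\mu_{C}$ yields $\phi(ya)=\phi(a\,S_{B}S_{C}(y))$ via the identifications $t_{B}=S_{B}^{-1}$ and $t_{C}=S_{C}^{-1}$, so $\sigma^{\phi}|_{C}=S_{B}S_{C}=:\sigma_{C}$ is a modular automorphism of $\mu_{C}$; the symmetric computation on $\psi$ combined with \eqref{eq:antipode-counits} and the compatibility $\mu_{B}\circ\epsb=\mu_{C}\circ\ceps$ yields $\sigma_{B}=S_{B}^{-1}S_{C}^{-1}=\sigma^{\psi}|_{B}$. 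For the comultiplication compatibility $\Delta_{C}\circ\sigma^{\phi}=(S^{2}\otimes \sigma^{\phi})\circ\Delta_{C}$, I would slice both sides with $\id\odot \phi$ after multiplying on the right by $1\otimes b$: using (LI2), the coassociativity \eqref{eq:left-comultiplication-coassociative}, and the commutativity of \eqref{dg:galois-antipode}, the two slices coincide in $A$, and faithfulness of $\phi$ through strong invariance upgrades slice-equality to equality. The $\Delta_{B}$-version is parallel using (LI2) on the right leg, and the two $\sigma^{\psi}$-identities follow symmetrically with $S^{-2}$ in place of $S^{2}$ coming from (RI3) and the inverse antipode.

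For the refined statements in the locally projective case, local projectivity of $\bA,\Ab$ together with firmness of $B$ provides local dual bases compatible with the maps $\bphi$ and $\phib$ from condition~(4); combined with the already-established comultiplication compatibility this forces $\sigma^{\phi}(B)\subseteq B$ with $\sigma^{\phi}|_{B}\circ\bphi=\sigma_{B}\circ\phib$, and then $\mu_{B}\circ\sigma^{\phi}|_{B}=\mu_{B}$ follows from $\mu_{B}\circ\sigma_{B}=\mu_{B}$; the $\sigma^{\psi}$-statements are dual. The main obstacle is the existence of $\sigma^{\phi}$: one must verify that the element supplied via \eqref{dg:strong-invariance-left} is independent of the auxiliary factorization of $b$ used in its construction and yields an algebra homomorphism, which is where faithfulness of $\phi$ and the non-degeneracy of the bimodule structures on $A$ must be used most carefully.
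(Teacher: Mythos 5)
Your plan and the paper's proof diverge structurally at the outset: the paper does not reprove \eqref{eq:modular-automorphism} at all, but cites Proposition 5.1 and Theorem 6.2 of \cite{timmermann:integration} for every identity in that display, and only argues \eqref{eq:modular-automorphism-base}. That argument is a short bookkeeping step: from the cited results one has $\phib(xa)=(S^{2}\circ\sigma^{\phi})(x)\phib(a)$ and $\bphi(ax)=\bphi(a)(\sigma^{\phi}\circ S^{2})^{-1}(x)$, whence $\phib$ and $\bphi$ have the same kernel; surjectivity of both then yields a linear bijection $\theta$ of $B$ with $\phib=\theta\circ\bphi$, which is identified with $S^{2}\circ\sigma^{\phi}|_{B}$, giving $\sigma^{\phi}(B)=B$ and $\mu_{B}\circ\sigma^{\phi}|_{B}=\mu_{B}$. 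Your route for this last part via ``local dual bases compatible with $\bphi$ and $\phib$'' is vaguer and not obviously equivalent, but the more serious problem lies earlier in your sketch.

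The genuine gap is in your derivation of $\sigma_{C}=\sigma^{\phi}|_{C}=S_{B}S_{C}$. The ingredients you invoke --- $C$-bilinearity of $\cphic$, the factorization $\phi=\mu_{C}\circ\cphic$, and $\mu_{B}\circ t_{C}=\mu_{C}$ --- give only $\phi(ya)=\mu_{C}(y\,\cphic(a))$, $\phi(ay)=\mu_{C}(\cphic(a)y)$, and the \emph{invariance} $\mu_{C}(yc)=\mu_{C}\bigl(t_{B}t_{C}(y)\,t_{B}t_{C}(c)\bigr)$ obtained by applying $\mu_{B}\circ t_{C}=\mu_{C}$ and $\mu_{C}\circ t_{B}=\mu_{B}$ in succession. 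For noncommutative $C$ this is strictly weaker than the modular relation $\mu_{C}(yc)=\mu_{C}(c\,S_{B}S_{C}(y))$ you need, so the claimed identity does not follow from these data alone; one needs in addition the counit compatibility $\mu_{B}\circ\epsb=\mu_{C}\circ\ceps$, the multiplicativity \eqref{eq:counits-multiplicative} and surjectivity of the counits and of $\cphic$, and the prior construction of $\sigma^{\phi}$. That construction --- the identity $A\cdot\phi=\phi\cdot A$, equivalently the existence, for each $a$, of $\sigma^{\phi}(a)$ with $\phi(ab)=\phi(b\,\sigma^{\phi}(a))$ for all $b$ --- is itself the hardest step, and ``combining \eqref{dg:strong-invariance-left} with bijectivity of $\rT$ and non-degeneracy'' only names the tools without exhibiting the element. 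These are precisely the points the paper outsources to \cite{timmermann:integration}; a self-contained proof along your lines would have to supply them in full.
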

 \begin{proof}
   The equations \eqref{eq:modular-automorphism} follow from \cite[Proposition 5.1, Theorem 6.2]{timmermann:integration}. We suppose that $\mathcal{A}$ is locally projective and prove the equations in \eqref{eq:modular-automorphism-base} concerning $\sigma^{\phi}$. By
 \cite[Theorem 6.2]{timmermann:integration} again, $\sigma^{\phi}(M(B))=M(B)$ and
\begin{align} \label{eq:bphi-phib}
 \phib(xa) &= (S^{2} \circ \sigma^{\phi})(x)\phib(a), &
  \bphi(ax) &= \bphi(a) (\sigma^{\phi}\circ S^{2})^{-1}(x)
\end{align}
for all $x\in B$ and $a\in A$, and hence $\phib(a)=0$ if and only if $\phi(BaB)=0$ if and only if $\bphi(a)=0$.  As $\phib$ and $\bphi$ are surjective, we can conclude existence of a linear bijection $\theta$ satisfying $\phib = \theta \circ \bphi$.  Then $\phib(xa) = \theta(x)\phib(a)$ for all $x\in B$, $a\in A$ and hence, by \eqref{eq:bphi-phib}, $\theta = S^{2} \circ \sigma^{\phi}|_{B}$. In particular, $\sigma^{\phi}(B)=B$.  The relation $\mu_{B} \circ \theta \circ \phib = \mu_{B} \circ \bphi = \mu_{B} \circ \phib$ and surjectivity of $\phib$ imply $\mu_{B} \circ \theta = \mu_{B}$. \end{proof} 
One also has a modular element relating the left and the right integral:
\begin{theorem}[{\cite[Corollary 6.1]{timmermann:integration}}]\label{theorem:modular-element}
 Let $(\mathcal{A},\mu_{B},\mu_{C},\bpsib,\cphic)$ be a measured regular multiplier Hopf algebroid.  Then there exist invertible multipliers $\delta,\delta^{+},\delta^{-}\in M(A)$   such that $\psi=\delta\cdot \phi$, $\phi\circ S = \delta^{+} \cdot \phi$ and $\phi \circ S^{-1} = \phi \cdot \delta^{-}$.
\end{theorem}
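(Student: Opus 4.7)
The plan is to reduce each of the three identities to a single uniqueness result for total integrals, combined with the fact that the antipode $S$ converts left invariance into right invariance and vice versa. As a first step, I would verify that $\phi \circ S^{\pm 1}$ are total right integrals and that $\psi \circ S^{\pm 1}$ are total left integrals. This uses that $S$ is an anti-automorphism with $S|_{B} = t_{B}^{-1}$ and $S|_{C} = t_{C}^{-1}$, together with the diagrams \eqref{dg:galois-antipode} and the compatibility $\mu_{B}\circ t_{C} = \mu_{C}$ from condition~(1) of a measured regular multiplier Hopf algebroid, which convert the left invariance (LI1)--(LI2) of $\cphic$ into the right invariance (RI1)--(RI2) of the appropriate twist.

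The core step is then to prove a uniqueness result: any two faithful total right integrals on $A$ differ by left multiplication with a unique invertible element of $M(A)$, and symmetrically for left integrals with right multiplication. To construct the multiplier relating two right integrals $\psi_{1}$ and $\psi_{2}$, I would use strong invariance \eqref{dg:strong-invariance-right} together with bijectivity of $\lT$: for each $c \in A$ one selects a preimage of an appropriate element of $\ArA$ under $\lT$, slices the two components against $\psi_{1}$ and $\psi_{2}$, and reads off a candidate element $d$. One then verifies that $d$ is independent of the chosen preimage, extends to an invertible two-sided multiplier of $A$, and intertwines $B$ and $C$ appropriately, using the modular automorphisms of Theorem~\ref{theorem:modular-automorphism} to transfer between the various module structures on $A$ and the faithfulness from condition~(3) of the measured structure to ensure uniqueness and invertibility.

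The three multipliers $\delta, \delta^{+}, \delta^{-}$ are then obtained by applying this uniqueness to pairs of total right (resp.\ left) integrals built from $\phi$, $\psi$ and $S^{\pm 1}$: comparing $\psi$ with a right integral of the form $\phi(\cdot\,\delta)$ (which is right-invariant once $\delta$ is chosen correctly) yields $\psi = \delta \cdot \phi$; comparing $\phi\circ S$ with $\phi$ as left-invariant functionals (after a further twist by $S$) yields $\delta^{+}$; and the symmetric right-multiplication version produces $\delta^{-}$. The relations in the theorem then fall out after rearranging the occurring $S$'s using the intertwining properties of $S$ from \eqref{dg:galois-inverse} and the modular formulas \eqref{eq:modular-automorphism}.

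The principal obstacle is the uniqueness step in the non-unital, non-commutative-base setting: because $A$ has no unit and $B,C$ are genuinely non-commutative, the element $d$ produced by the slicing procedure is not a scalar but must be built as a two-sided multiplier compatible with the base anti-isomorphisms $t_{B}, t_{C}$. Verifying this compatibility is exactly where the full strength of the modular machinery of Theorem~\ref{theorem:modular-automorphism}, in particular the equivariance \eqref{eq:modular-automorphism-base} of $\sigma^{\phi}$ and $\sigma^{\psi}$ with respect to $B$ and $C$, is needed; the rest of the argument is then parallel to Van Daele's treatment of algebraic quantum groups.
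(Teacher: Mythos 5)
The paper itself contains no proof of this statement --- it is imported verbatim from \cite[Corollary 6.1]{timmermann:integration} --- so your proposal can only be measured against the argument in that reference, which follows Van Daele's template. Your first step (that $\phi\circ S^{\pm 1}$ are total right integrals, via \eqref{dg:galois-antipode} and $\mu_{B}\circ t_{C}=\mu_{C}$) is correct and is indeed how one starts. The gap is in your core step. All three identities have the form ``a right integral equals $\phi$ pre-composed with multiplication by an invertible multiplier'': with the paper's conventions, $\psi(a)=\phi(a\delta)$, $\phi(S(a))=\phi(a\delta^{+})$, $\phi(S^{-1}(a))=\phi(\delta^{-}a)$. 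These compare a \emph{right} integral with the \emph{left} integral $\phi$ shifted by a multiplier; they are not instances of ``two faithful right integrals differ by an invertible multiplier''. Your plan to obtain $\psi=\delta\cdot\phi$ by ``comparing $\psi$ with a right integral of the form $\phi(\cdot\,\delta)$, which is right-invariant once $\delta$ is chosen correctly'' is circular: the existence of an invertible $\delta$ for which $\phi(\cdot\,\delta)$ is right-invariant is essentially the assertion to be proved, and nothing in your toolkit produces such a $\delta$. Same-chirality uniqueness applied to the two right integrals $\psi$ and $\phi\circ S$ would only yield a multiplier relating $\psi$ to $\phi\circ S$, which still expresses nothing in terms of $\phi$ itself without already knowing $\phi\circ S=\delta^{+}\cdot\phi$.

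What is missing is the direct construction of the modular multiplier, i.e.\ the algebroid analogue of Van Daele's identity $(\phi\otimes\iota)\Delta(a)=\phi(a)\delta$: one must control the \emph{a priori unconstrained} slice of the comultiplication against $\cphic$ in the wrong leg, or equivalently show that for each $a\in A$ there is a unique $b\in A$ with $a\cdot\psi=b\cdot\phi$ in $A^{\vee}$, and that the resulting linear bijection $a\mapsto b$ is left $A$-linear, hence given by right multiplication with an invertible element of $M(A)$. This is where strong invariance \eqref{dg:strong-invariance-right}--\eqref{dg:strong-invariance-left}, bijectivity of the canonical maps, and faithfulness of $\phi$ and $\psi$ genuinely enter; your ``slicing'' paragraph gestures at such an argument but aims it at the comparison of two integrals of the same chirality, which is not the statement needed. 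Once this cross-chirality step is in place, the identities for $\delta^{\pm}$ do follow from the right-integral property of $\phi\circ S^{\pm 1}$ together with the comparison just constructed, as you indicate; without it, the proof does not get off the ground.
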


 \subsection{Factorizable functionals on bimodules}

Many   constructions in this article  become  more transparent when phrased in terms of the
factorizable functionals on bimodules introduced in \cite{timmermann:integration}. Let us recall their  definition  and main properties.

  Let $B$ and $C$ be idempotent algebras with faithful functionals  $\mu_{B}$ and $\mu_{C}$, respectively, and let $M$ be an idempotent  $B$-$C$-bimodule.  Note that $B$ and $C$ are automatically non-degenerate.
\begin{definition}
We call a functional $\omega \in \dual{M}$  \emph{factorizable} (with respect to $\mu_{B}$ and $\mu_{C}$) if  there exist maps $\bomega \in \Hom(_{B}M,{_{B}B})$ and $\omegac \in
  \Hom(M_{C},C_{C})$ such that
  \begin{align}
    \label{eq:factorisation}
    \mu_{B} \circ \bomega = \omega = \mu_{C} \circ \omegac.
  \end{align}
We denote by
\begin{align*}
  (_{B}M_{C})^{\sqcup} \subseteq \dual{M}
\end{align*}
 the subspace of all such factorizable functionals.  
\end{definition}
\begin{remarks}
  \begin{enumerate}
  \item  A functional $\omega \in M^{\vee}$  is factorizable if and only if there exist linear maps $\bomega \colon M\to B$ and $\omegac \colon M \to C$ such that for all $x\in B$, $y\in C$, $m\in M$,
\begin{align*}
  \omega(xm) = \mu_{B}(x\bomega(m)) \quad\text{and} \quad
\omega(my) = \mu_{C}(\omegac(m)y)
\end{align*}
\item If $\omega \in \dual{M}$ is factorizable, the maps $\bomega$ and $\omegac$ are uniquely determined.
\item The assignment ${_{B}M_{C}} \mapsto (_{B}M_{C})^{\sqcup}$ evidently is functorial.
  \end{enumerate}
\end{remarks}
The key property of factorizable functionals is that one can form slice maps and tensor products for such functionals   as follows.  

Let $D$ be a non-degenerate, idempotent algebra with a faithful
functional $\mu_{D}$ and let $_{C}N_{D}$ be an idempotent
$C$-$D$-bimodule. Consider the balanced tensor product $_{B}M_{C}\otimes
{_{C}N_{D}}$ and let $\upsilon \in (_{B}M_{C})^{\sqcup}$ and $\omega\in (_{C}N_{D})^{\sqcup}$.
Then the following slice maps 
    \begin{align*}
      \upsilonc \odot \id & \colon M_{C}\otimes {_{C}N_{D}} \to N_{D}, &    
                                                                     \id \odot \comega &\colon {_{B}M_{C}} \otimes
                                                                                         {_{C}N} \to {_{B}M}, 
    \end{align*}
are morphisms of modules, and
   by composition with $\omega$ or $\upsilon$, respectively, we obtain a factorizable functional
    \begin{align}\label{eq:factorisation-tensor-1}
      \upsilon \underset{\mu_{C}}{\otimes} \omega :=\upsilon \circ (\id \odot \comega) = \omega \circ (\upsilonc \odot \id) \in ({_{B}M_{C}}\otimes {_{C}N_{D}})^{\sqcup}
    \end{align}
which satisfies
\begin{align}
  \label{eq:factorisation-tensor}
  (\upsilon \underset{\mu_{C}}{\otimes} \omega)(m\otimes n) =\mu_{C}(\upsilonc(m)\comega(n))
\end{align}
for all $m\in M$ and $n\in N$; see \cite[Lemma 3.4.2]{timmermann:integration}.


 Suppose that $\mu_{B}$ and $\mu_{C}$ admit modular automorphisms $\sigma_{B}$ and $\sigma_{C}$, respectively.  Then  $(_{B}M_{C})^{\sqcup}$ is an $M(C)$-$M(B)$-sub-bimodule of $M^{\vee}$ and
  \begin{align} \label{eq:factorisation-dual-bimodule-factorisation}
_ {B}   (y \cdot \omega\cdot  x)(m) &= \bomega(my)\sigma_{B}(x),  &
(y\cdot \omega \cdot x)_{C}(m) &= \sigma_{C}^{-1}(y)\omegac(m x)
 \end{align}
 for all $\omega \in (_{B}M_{C})^{\sqcup}$, $x\in B$, $y\in C$;
 see \cite[Lemma 3.4.4]{timmermann:integration}. 

\section{Duality}
\label{section:dual}
Throughout this section, we fix a  measured regular multiplier Hopf algebroid
\begin{align*}
  (\mathcal{A},\mu_{B},\mu_{C},\bpsib,\cphic),  \text{ where } \mathcal{A}=(A,B,C,t_{B},t_{C},\Delta_{B},\Delta_{C}),
\end{align*}
which we assume to be locally projective from subsection \ref{subsection:dual-comultiplication} on, and construct, step by step, a dual measured regular multiplier Hopf algebroid
 \begin{align*}
  (\widehat{\mathcal{A}},\mu_{C}, \mu_{B},\hbpsib,\hcphic),  \text{ where } \widehat{\mathcal{A}}=(\hat A,C,B,t^{-1}_{ B}, t^{-1}_{ C},\hat \Delta_{C},\hat \Delta_{B}).
    \end{align*}
    As before, $\phi=\mu_{C} \circ \cphic$ and $\psi=\mu_{B}\circ \bpsib$ denote the total left and right integrals.

Let us briefly outline the construction.
As a vector space, $\hat{A}$ is just the subspace
\begin{align*}
  A\cdot \phi = \phi \cdot A = A\cdot \psi = \psi\cdot A
\end{align*}
of the linear dual $A^{\vee}$, and the multiplication is given by
\begin{align*}
  \upsilon \ast \omega &= (\upsilon \underset{\mu_{C}}{\otimes} \omega)\circ \Delta_{C} = (\upsilon \underset{\mu_{B}}{\otimes} \omega) \circ \Delta_{B}.
\end{align*}
The details are given in subsection \ref{subsection:dual-bimodule} and \ref{subsection:dual-algebra}.

The dual left comultiplication $\hat{\Delta}_{B}$ and the dual right comultiplication $\hat{\Delta}_{C}$ will be constructed in terms of the associated maps
\begin{align*}
  \hTl &\colon \upsilon \oo \omega \mapsto  \hat\Delta_{ B}(\omega)(\upsilon \otimes 1), & \hTr &\colon
  \upsilon \oo \omega \mapsto \hat\Delta_{ B}(\upsilon)(1 \otimes
  \omega), \\
  \hlT &\colon \upsilon \oo \omega \mapsto (\upsilon \oo
  1)\hat\Delta_{ C}(\omega), &
  \hrT &\colon \upsilon \oo \omega \mapsto (1 \oo
  \omega)\hat\Delta_{ C}(\upsilon),
\end{align*}
which dualise the corresponding canonical maps of $\mathcal{A}$ similarly as in \eqref{eq:dual-mh}. To do so, we need to define various pairings between balanced tensor squares of $\hat{A}$ and balanced tensor squares of $A$.
This is done in subsection \ref{subsection:dual-comultiplication}.

Finally, we show in subsection \ref{subsection:dual-integrals} that the maps
\begin{align*}
  {_{C}\hat{\psi}_{C}} \colon a\cdot \phi \mapsto \ceps(a) \quad \text{and} \quad {_{B}\hat{
\phi}_{B}}\colon \psi\cdot a \mapsto \epsb(a)
\end{align*}
are partial integrals on $\widehat{\mathcal{A}}$, so that the latter becomes a measured regular multiplier Hopf algebroid again, and that the canonical map $A \mapsto (\hat{A})^{\vee}$ is an isomorphism from $\mathcal{A}$ to the dual of $\widehat{\mathcal{A}}$.

\subsection{The dual  as a bimodule}
\label{subsection:dual-bimodule}

The underlying algebra $\hat A$ of the dual $\widehat{\mathcal{A}}$ was introduced already in \cite[Section 6.2]{timmermann:integration}.  Before we recall its construction, let us look at the special case where $\mathcal{A}$ is unital.

If the algebras and maps comprising $\mathcal{A}$ are unital, we can regard the left and the right comultiplication as bimodule maps
\begin{align*}
  \Delta_{B} \colon \bAt_{B} \to \bAt_{B} \otimes \bAt_{B} \quad \text{and} \quad
  \Delta_{C} \colon \csA^{C} \to \csA^{C} \otimes \csA^{C},
\end{align*}
and equip the dual modules $(^{B}A)^{\vee}$, $(A_{B})^{\vee}$, $(_{C}A)^{\vee}$ and $(A^{C})^{\vee}$ with convolution products similarly as
in  \eqref{eq:dual-modules-convolution}. 
Now, the base weight $(\mu_{B},\mu_{C})$ allows us to identify \emph{one common subalgebra} inside these four convolution algebras   as follows.  On the spaces $(\bAt_{B})^{\sqcup}$ and $(\csA^{C})^{\sqcup}$ of factorizable functionals, which can be identified with subspaces of $(^{B}A)^{\vee}$ and $(A_{B})^{\vee}$ or  $(_{C}A)^{\vee}$ and $(A^{C})^{\vee}$, respectively, the  multiplications take the form
\begin{align*}
  \upsilon\cdot_{B} \omega &:= (\upsilon \underset{\mu_{B}}{\otimes} \omega) \circ \Delta_{B} \quad   \text{and} \quad
  \upsilon \cdot_{C} \omega := (\upsilon \underset{\mu_{C}}{\otimes} \omega) \circ \Delta_{C},
\end{align*}
where we use the relative tensor product of factorizable functionals defined in \eqref{eq:factorisation-tensor} and functoriality. Moreover, on the intersection,
\begin{align} \label{eq:asqcup}
  A^{\sqcup} &:=(\bAt_{B})^{\sqcup} \cap (\csA^{C})^{\sqcup} \subseteq \dual{A},
\end{align}
the two multiplications coincide, because the counit functional $\varepsilon$, which lies in $A^{\sqcup}$ by \cite[Remark 5.1 (3)]{timmermann:integration},  is a unit for both multiplications and the   mixed coassociativity condition implies  that for all 
$\upsilon,\omega \in A^{\sqcup}$,
\begin{align*}
  \upsilon \cdot_{B} \omega = \upsilon \cdot_{B} (\varepsilon \cdot_{C} \omega)  = (\upsilon \cdot_{B} \varepsilon) \cdot_{C} \omega = \upsilon \cdot_{C} \omega.
\end{align*}

Let us now turn to the general, non-unital case.
 Define $A^{\sqcup} \subseteq \dual{A}$ as in \eqref{eq:asqcup}. Then
 \begin{align} \label{eq:asqcup-intersections}
A^{\sqcup} = (_{B}A^{B})^{\sqcup} \cap (^{C}A_{C})^{\sqcup} 
=
  (_{B}A_{B})^{\sqcup} \cap (_{C}A_{C})^{\sqcup} =  (^{B}A_{B})^{\sqcup} \cap (_{C}A^{C})^{\sqcup}.
\end{align}
Indeed, a functional   $\omega$ on $A$ lies in $A^{\sqcup}$ if and only if it  there exist  module maps
\begin{align*}
  \bomega &\in \dual{(\bA)}, &
  \omegab &\in \dual{(\Ab)}, &
\comega &\in \dual{(\cA)}, & \omegac &\in \dual{(\Ac)}, \\
  ^{B}\omega &\in \dual{(\Asb)}, &
  \omega^{B} &\dual{(\sbA)}, &
  ^{C}\omega &\in \dual{(\Asc)}, & \omega^{C} &\in \dual{(\scA)}
\end{align*}
such that the composition with $\mu_{B}$ or $\mu_{C}$, respectively, gives $\omega$ again.  In that case,
\begin{align*}
  ^{B}\omega &= S_{C}\circ \omegac, &
  \omega^{B} &= S_{C} \circ \comega, &
  ^{C}\omega &= S_{B} \circ \omegab, &
  \omega^{C} &= S_{B} \circ \bomega.
\end{align*}

In the non-unital case, the space $A^{\sqcup}$ may be too large  to carry a convolution product. Instead, the underlying vector space of the dual algebra will be the  subspace 
\begin{align}
  \label{eq:dual-space}
\hat A:= A\cdot \phi = \phi \cdot A = A\cdot \psi = \psi \cdot A \subseteq A^{\sqcup}.
\end{align}
 The four spaces in the middle are equal by Theorem \ref{theorem:modular-automorphism} and Theorem \ref{theorem:modular-element}, and the inclusion $\hat A \subseteq A^{\sqcup}$ can  easily be seen using the  factorizations $\phi = \mu_{C} \circ \cphic$ and $\psi=\mu_{B} \circ \bpsib$.

By \cite[Lemma 3.6]{timmermann:integration},  the formulas
   \begin{align}
    \label{eq:dual-module}
                                 x\upsilon &:= \upsilon \cdot  t_{B}(x), & \upsilon x &:= \upsilon \cdot x, &y\upsilon &:= y \cdot \upsilon, & \upsilon y &:= t_{C}(y)\cdot \upsilon,
   \end{align}
  where $\upsilon\in A^{\sqcup}$ and $x\in B$, $y\in C$, turn $A^{\sqcup}$ into a $B$-bimodule and a $C$-bimodule. As such, it is  non-degenerate because $BA=AB=CA=AC=A$.

\begin{lemma} \label{lemma:dual-bimodule}
The subspace $\hat A \subseteq A^{\sqcup}$ is a $C$-sub-bimodule and a $B$-sub-bimodule. As such, it is idempotent and faithful. If $\mathcal{A}$ is locally projective, then also $\hat A$ is locally projective as a left and as a right module over $B$ and over $C$, respectively.
\end{lemma}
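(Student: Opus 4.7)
The plan is to exploit the four bijective parametrizations $A\to\hat A$ given by $a\mapsto \phi\cdot a$, $a\mapsto a\cdot\phi$, $a\mapsto \psi\cdot a$ and $a\mapsto a\cdot\psi$, each of which is a bijection by the equalities in \eqref{eq:dual-space} and the faithfulness of $\phi$ and $\psi$. The crucial observation is that each of the four one-sided $B$- and $C$-actions on $\hat A$ defined in \eqref{eq:dual-module} becomes, under a suitable choice of parametrization, a one-sided action on $A$ that either coincides with one of $\bA,\Ab,\cA,\Ac$ or is related to one of them by transport along the base anti-isomorphism $t_B$ or $t_C$.

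For the sub-bimodule claim I would unfold \eqref{eq:dual-module}: if $\upsilon=\phi\cdot a$, then $\upsilon x = \phi\cdot(ax)$ and $x\upsilon = \phi\cdot(a\,t_B(x))$ both lie in $\phi\cdot A=\hat A$, since $ax\in A$ and $a\,t_B(x)\in A$ (here we use $B,C\subseteq M(A)$); if $\upsilon=a\cdot\phi$, then $y\upsilon = (ya)\cdot\phi$ and $\upsilon y = (t_C(y)\,a)\cdot\phi$ both lie in $A\cdot\phi=\hat A$. Idempotence on each side is then immediate from the idempotence of $\bA,\Ab,\cA,\Ac$: for instance, $\hat A\cdot B = \phi\cdot(AB) = \phi\cdot A = \hat A$, while $\hat A\cdot C = (t_C(C)\,A)\cdot\phi = (BA)\cdot\phi = \hat A$, and symmetrically for the left actions.

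For faithfulness and local projectivity, the same calculations produce module isomorphisms
\[
\hat A_B \cong \Ab, \qquad {_B\hat A} \cong \btA, \qquad {_C\hat A} \cong \cA, \qquad \hat A_C \cong \cAt,
\]
and $\btA$, $\cAt$ are, via the anti-isomorphisms $t_B\colon B\to C$ and $t_C\colon C\to B$, identified with $\Ac$ and $\bA$, respectively. Since $B,C\subseteq M(A)$ and $A$ is non-degenerate, a nonzero element of $B$ (resp.\ of $C$) has nonzero left \emph{and} right action on $A$; combined with the faithfulness of $\phi$ and $\psi$, this implies that $\hat A$ is faithful on each of its four sides. Under the additional assumption that $\mathcal A$ is locally projective, the same isomorphisms transport the local projectivity of $\bA,\Ab,\cA,\Ac$ to the four one-sided structures on $\hat A$.

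The main obstacle I anticipate is purely bookkeeping: one must carefully match each of the four one-sided actions on $\hat A$ defined in \eqref{eq:dual-module} with the correct parametrization and with the correct module among $\bA,\Ab,\cA,\Ac$, keeping track of the twists by $t_B$ and $t_C$ needed to pass from the straight modules to the twisted modules $\btA$ and $\cAt$. Once this dictionary is set up, the three claims reduce essentially tautologically to the corresponding properties of $A$ as a module over its base algebras.
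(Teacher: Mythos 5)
Your proposal is correct and follows essentially the same route as the paper: the four identities you unfold from \eqref{eq:dual-module} are exactly the paper's display \eqref{eq:hata-module}, and the sub-bimodule property, idempotence (from $AB=BA=AC=CA=A$), faithfulness (from faithfulness of $\phi,\psi$) and local projectivity are all deduced from them in the same way. The only blemish is a label swap in your dictionary of twisted modules: the left $B$-module ${_{B}\hat{A}}$ is identified with $\bAt$ (action $x\cdot a = at_{B}(x)$), not with the right module $\btA$, and likewise $\hat{A}_{C}$ with $\ctA$ rather than $\cAt$ --- but your subsequent transport along $t_{B}$ and $t_{C}$ shows you have the correct modules in mind, so this is purely notational.
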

\begin{proof}
  To see that $\hat A$ is a sub-bimodule, note that
\begin{align} \label{eq:hata-module}
  \begin{aligned}
    y(a\cdot \phi) &= (ya) \cdot \phi, & (a\cdot \phi)y &= (t_{C}(y)a)
    \cdot \phi, \\ x(\phi \cdot a) &= \phi \cdot (at_{B}(x)), & (\phi
    \cdot a)x &= \phi \cdot (ax)
  \end{aligned}
\end{align}
for all $a \in A$, $x\in B$, $y\in C$.  Idempotence of $\hat A$   follows easily  from the relation $AB=BA=AC=CA=A$, and   faithfulness of  $\hat A$ from faithfulness of $\phi$ and $\psi$. To see that $\hat A$ is locally projective as a module, note that  \eqref{eq:hata-module} allows us to identify $\hat A$, regarded as a left or right module over $C$ or over $B$, respectively, with the  locally projective modules $_{C}A$, $A^{C}$, $^{B}A$ and $A_{B}$, respectively.
\end{proof}

 We write
  \begin{align*}
    _{B}(A^{\sqcup}), \quad (A^{\sqcup})_{B}, \quad _{C}(A^{\sqcup}), \quad (A^{\sqcup})_{C} \quad \text{and} \quad
_{B}\hat{A}, \quad \hat{A}_{B}, \quad _{C}\hat{A}, \quad \hat{A}_{C}
  \end{align*}
  when we regard $A^{\sqcup}$ or $\hat{A}$, respectively, as a left or right module over $B$ or over $C$ as in \eqref{eq:dual-module}.
Using similar notation as for $A$, we also   write
\begin{align*}
  (A^{\sqcup})^{{B}}, \quad ^{{B}}(A^{\sqcup}), \quad (A^{\sqcup})^{{C}}, \quad ^{{C}}(A^{\sqcup}) \quad \text{and} \quad
\hat{A}^{{B}}, \quad ^{{B}}\hat{A}, \quad \hat{A}^{{C}}, \quad ^{{C}}\hat{A}
\end{align*}
 when we regard $A^{\sqcup}$ or $\hat{A}$, respectively, as a left or right module over ${B}$ or ${C}$ such that  the product of an element $\omega \in A^{\sqcup}$ with $x\in B$ or $y\in C$ is
 \begin{align*}
 t_{C}^{-1} (x)\omega, \quad \omega t_{C}^{-1}(x), \quad
t_{B}^{-1}(y)\omega, \quad \omega t_{B}^{-1}(y),
 \end{align*}
respectively.

\subsection{The dual algebra}
\label{subsection:dual-algebra}
The spaces $A^{\sqcup}$ and $\hat A$ act by convolution on $A$ as follows.
\begin{proposition}[{\cite[Proposition 6.1]{timmermann:integration}}] \label{proposition:act}
 Let
  $(\mathcal{A},\mu_{B},\mu_{C},\bpsib,\cphic)$ be a measured regular multiplier Hopf
  algebroid and let $\omega \in A^{\sqcup}$ and $a\in A$. Then there exist unique multipliers $\omega \actright a \in M(A)$ and $a\actleft \omega \in M(A)$ such that for all $b\in A$,
\begin{align*}
    b(\omega \actright a)&:= (\id  \odot {^{B}\omega})((b \otimes 1)\Delta_{B}(a)), &
  (\omega \actright a)b&:= (\id \odot {_{C}\omega})(\Delta_{C}(a)(b\otimes 1)),  \\
  b(a\actleft \omega) &:= (\omegab \odot \id)((1 \otimes b)\Delta_{B}(a)), &
  (a \actleft \omega)b 
&:= (\omega^{C} \odot \id)(\Delta_{C}(a)(1 \otimes b)).
\end{align*} 
\end{proposition}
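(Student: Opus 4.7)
The plan is to verify, for each of the four formulas, that the right-hand side is a well-defined element of $A$, then to show that the two formulas defining $\omega \actright a$ (respectively $a \actleft \omega$) satisfy the multiplier compatibility
\[
[b(\omega \actright a)]\cdot c = b\cdot [(\omega \actright a)c], \qquad [b(a \actleft \omega)]\cdot c = b \cdot [(a \actleft \omega)c]
\]
for all $b,c \in A$. Uniqueness of the two multipliers will then follow from non-degeneracy of $A$. For well-definedness, factorizability $\omega \in A^{\sqcup}$ supplies module maps $\bomega, \omegab, \comega, \omegac, {^{B}\omega}, \omega^{B}, {^{C}\omega}, \omega^{C}$ valued in $B$ or $C \subseteq M(A)$, each compatible with one of the eight one-sided $B$- or $C$-module structures on $A$; moreover the relations ${^{B}\omega} = S_{C}\circ \omegac$, $\omega^{B} = S_{C} \circ \comega$, ${^{C}\omega}=S_{B}\circ\omegab$, $\omega^{C}=S_{B}\circ\bomega$ (established in \eqref{eq:asqcup-intersections} and the surrounding discussion) link them. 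The elements $(b \otimes 1)\Delta_{B}(a), (1 \otimes b)\Delta_{B}(a) \in \ArA$ and $\Delta_{C}(a)(b \otimes 1), \Delta_{C}(a)(1 \otimes b) \in \AlA$ exist by Definition \ref{definition:mult-hopf-algebroid}, and applying the appropriate slice map to each yields an element of $A$.

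The main task is to establish, for the multiplier $\omega \actright a$ and for all $b,c\in A$,
\[
\bigl[(\id \odot {^{B}\omega})((b \otimes 1)\Delta_{B}(a))\bigr] \cdot c = b \cdot \bigl[(\id \odot {_{C}\omega})(\Delta_{C}(a)(c \otimes 1))\bigr].
\]
I would recognise both sides as iterated slices of a common three-fold tensor expression; one of the mixed coassociativity identities \eqref{eq:mixed-coassociativity-one}, \eqref{eq:mixed-coassociativity-two} provides precisely the bridge between $(b\otimes 1)\Delta_{B}(a)$ and $\Delta_{C}(a)(c\otimes 1)$. The identity ${^{B}\omega} = S_{C}\circ \omegac$, combined with the antipode intertwining relations \eqref{dg:galois-inverse} that convert the canonical map $\rT$ into $\Tr$, allows one to match a slice by ${^{B}\omega}$ on the second leg of $\Delta_{B}(a)$ with a slice by ${_{C}\omega}$ on the second leg of $\Delta_{C}(a)$. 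The remaining equality follows from coassociativity together with the commutativity of $B$ and $C$ inside $M(A)$, which permits the values $\omegac(\cdot) \in C$ and ${^{B}\omega}(\cdot) \in B$ to be moved past intermediate factors.

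The compatibility for $a \actleft \omega$ is handled by an entirely symmetric argument using $\omegab$ and $\omega^{C}$, slicing on the \emph{first} tensor factor and invoking the other mixed coassociativity relation together with the antipode intertwiners relating $\Tl$ and $\lT$. Uniqueness is immediate: since $A$ is non-degenerate, a multiplier is determined by its action on $A$ from either side. The hardest step is the compatibility displayed above: although it reduces morally to coassociativity plus $BC=CB$, making it rigorous requires careful bookkeeping of the eight one-sided $B$- and $C$-module structures on $A$ and of the matching between the various factorizations of $\omega$ through $\mu_{B}$ and through $\mu_{C}$, so that every balanced tensor product and every slice map remains well-defined at every intermediate stage.
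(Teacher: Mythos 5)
Your overall skeleton is right and matches what the paper identifies as the crux: the four slice expressions are well defined by factorizability of $\omega$ and the axioms of a multiplier bialgebroid, uniqueness follows from non-degeneracy of $A$, and the real work is the compatibility $(b(\omega \actright a))c = b((\omega \actright a)c)$ (and its left-handed analogue). However, the mechanism you propose for that key step has a genuine gap. The two expressions to be compared are $(b\otimes 1)\Delta_{B}(a)$, which is in the image of $\lT$, and $\Delta_{C}(a)(c\otimes 1)$, which is in the image of $\Tl$. Neither mixed coassociativity identity bridges this pair: \eqref{eq:mixed-coassociativity-one} relates the $\rT$-picture $(1\otimes c)\Delta_{B}(b)$ to the $\Tl$-picture $\Delta_{C}(b)(a\otimes 1)$, and \eqref{eq:mixed-coassociativity-two} relates the $\Tr$-picture $\Delta_{C}(b)(1\otimes c)$ to the $\lT$-picture $(a\otimes 1)\Delta_{B}(b)$. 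So there is no single ``common three-fold tensor expression'' obtained from one application of mixed coassociativity; one has to chain two coassociativity identities through a genuine three-fold coproduct and then collapse the extra middle leg. That collapsing is exactly where the counit functional $\varepsilon=\mu_{B}\circ\epsb=\mu_{C}\circ\ceps$ enters --- it is the ingredient the paper explicitly flags (``by help of the counit functional $\varepsilon$ and mixed coassociativity'') and the one your plan omits entirely.

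A second, smaller problem: you propose to match the slice by ${}^{B}\omega$ on the second leg of $\Delta_{B}(a)$ with the slice by ${}_{C}\omega$ on the second leg of $\Delta_{C}(a)$ via ${}^{B}\omega=S_{C}\circ\omegac$ and the diagrams \eqref{dg:galois-inverse}. But that identity relates ${}^{B}\omega$ to $\omegac$ (the \emph{right} $C$-module factorization), whereas the formula for $(\omega\actright a)b$ uses $\comega$ (the \emph{left} $C$-module factorization); $\omegac$ and $\comega$ are different maps, linked only through $\mu_{C}$ and its modular automorphism, not pointwise. Likewise the relevant square in \eqref{dg:galois-inverse} applies $S$ to the \emph{first} tensor leg (the one carrying the output), not to the leg being sliced by $\omega$, so it does not perform the matching you want. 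As written, the central compatibility is therefore not established; you would need to redo this step along the lines the paper indicates, inserting $\varepsilon$ to mediate between the $\lT$- and $\Tl$-pictures.
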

The point here is to prove that for all $a,b,c\in A$ and $\omega\in A^{\sqcup}$,
\begin{align*}
  b((\omega \actright a)c) = (b(\omega \actright a))c \quad \text{and} \quad b((a \actleft \omega)c) = (b(a\actleft \omega))c,
\end{align*}
  which can be done by  help of the counit functional $\varepsilon$ and mixed coassociativity.
\begin{lemma} \label{lemma:act}
  \begin{enumerate}
  \item Let $\upsilon,\omega \in \hat A$ and $a\in A$. Then
$\omega \actright a$ and $a\actleft \upsilon$ lie in $A$ and     
\begin{align}
      \label{eq:1}
 \omega \actright (a \actleft \upsilon) &= (\omega \actright a) \actleft \upsilon, & \varepsilon(\omega \actright a)     &= \omega(a), & \varepsilon(a \actleft \upsilon) &= \upsilon(a).
    \end{align}
  \item For all $a,b \in A$, 
    \begin{align} \label{eq:strong-invariance}
      ((\phi \cdot a) \actright b) = S((b\cdot \phi)\actright a) \quad\text{and} \quad b\actleft (a\cdot \psi) = S(a \actleft (\psi \cdot b)).
    \end{align}
\item For all $\omega \in A^{\sqcup}$, $a\in A$, $x,x'\in B$, $y,y'\in
    C$,
\begin{align}
\label{eq:act-bimodule}
\begin{aligned}
  \omega \actright (yay') &= y(\omega \actright a)y', & (xy\omega
  x'y') \actright a &= x(\omega \actright
  (x'at_{C}(y')))t_{C}(y),  \\
  (xax') \actleft \omega &= x(a \actleft \omega)x', & a \actleft
  (xy\omega x'y') &= t_{B}(x') ((t_{B}(x)ay) \actleft \omega)y'
\end{aligned}
\end{align}
  \item For all $\omega \in A^{\sqcup}$ and  $a\in A$,
    \begin{align} \label{eq:act-antipode} 
\omega \actright S(a) &= S(a\actleft (\omega \circ S)), & S(a) \actleft \omega &= S((\omega \circ S) \actright a).
\end{align}
\item If $(\mathcal{A},\mu_{B},\mu_{C},\bpsib,\cphic)$ is a measured multiplier Hopf $*$-algebroid, then
\begin{align}
  \label{eq:act-involution}
  (\omega \actright a)^{*} &= (* \circ \omega \circ *) \actright
                            (a^{*}), & (a \actleft \omega)^{*} &= (a^{*}) \actleft (* \circ
                                                               \omega \circ *).
\end{align}
\end{enumerate}
\end{lemma}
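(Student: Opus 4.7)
My plan is to prove the five parts essentially by unpacking the definitions in Proposition \ref{proposition:act} and repeatedly invoking the strong invariance properties (LI3) and (RI3), the (mixed) coassociativity conditions, and the bilinearity of $\Delta_{B},\Delta_{C}$. The main technical obstacle will lie entirely in part (1), namely the claim that $\omega \actright a$ and $a \actleft \upsilon$ belong to $A$ and not merely to $M(A)$.

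For part (1), I would use that $\hat{A} = A\cdot \phi = \phi\cdot A = A\cdot \psi = \psi\cdot A$ to choose a concrete factorization, e.g.\ $\omega = \phi\cdot c$. Then the uniqueness of the factorization $\omega=\mu_{C}\circ \omega_{C}$ identifies $\omega_{C} = \cphic(c\cdot -)$ and hence $^{B}\omega = S_{C}\circ \omega_{C}$. Plugging this into the defining equation $b(\omega \actright a) = (\id\odot {^{B}\omega})((b\otimes 1)\Delta_{B}(a))$ and composing with the strong-invariance diagram \eqref{dg:strong-invariance-left} converts $(\id \odot S_{C}\circ \cphic)$ on an element of $\AlA$ into $(S\circ (\iota\odot\cphic))\circ T_{\rho}^{-1}$ applied to something in $\AbA$, whose value manifestly lies in $A$. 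A parallel argument with $\bpsib$ and diagram \eqref{dg:strong-invariance-right} treats $a\actleft\upsilon$. The mixed associativity $\omega\actright(a\actleft \upsilon) = (\omega\actright a)\actleft\upsilon$ would then follow by pairing both sides with an arbitrary $b\in A$ on the left and on the right and invoking the mixed coassociativity conditions \eqref{eq:mixed-coassociativity-one} and \eqref{eq:mixed-coassociativity-two}. Finally, the counit identities $\varepsilon(\omega\actright a)=\omega(a)$ and $\varepsilon(a\actleft\upsilon)=\upsilon(a)$ follow by applying $\beps$ or $\ceps$ to the defining equations and using \eqref{eq:left-counit}, \eqref{eq:right-counit} together with $\mu_{B}\circ\beps = \mu_{C}\circ\ceps=\varepsilon$.

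Part (2) is then essentially a reformulation: substituting $\omega=\phi\cdot a$ into the definition of $\omega\actright b$ and comparing with \eqref{dg:strong-invariance-left} at $a\otimes b\in \AbA$ identifies the two sides of the first equality after cancelling a left multiplier, and dually with \eqref{dg:strong-invariance-right} for the second. For part (3), the rules \eqref{eq:act-bimodule} are read off directly from the bilinearity relations \eqref{eq:left-comultiplication-bilinear}, \eqref{eq:right-comultiplication-bilinear} combined with the module structures \eqref{eq:dual-module}: for example, $(xy\omega x'y')\actright a$ is computed by moving the scalar factors inside $\Delta_{B}(a)$ via \eqref{eq:right-comultiplication-bilinear} and then relabelling. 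Part (4) uses the commutative squares \eqref{dg:galois-antipode}, which transport an expression involving $\Delta_{B}(S(a))$ to one involving $\Delta_{C}(a)$ after applying $\Sigma(S\otimes S)$, and thus identify $\omega\actright S(a)$ with $S(a\actleft(\omega\circ S))$. Part (5) uses condition (3) of Definition \ref{definition:involution} to transfer between $\Delta_{B}(a^{*})$ and $\Delta_{C}(a)^{(-)^{*}\otimes(-)^{*}}$, together with \eqref{eq:counit-antipode-involution} to handle the interaction of $*$ with $\beps,\ceps,S$; formal verification is then a routine bookkeeping exercise once the involution is distributed over the slices defining the actions.
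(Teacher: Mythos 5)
Your plan is correct and identifies exactly the ingredients the paper itself lists: the strong-invariance diagrams \eqref{dg:strong-invariance-right}, \eqref{dg:strong-invariance-left} for (2), the bilinearity relations \eqref{eq:left-comultiplication-bilinear}--\eqref{eq:right-comultiplication-bilinear} for (3), diagram \eqref{dg:galois-antipode} for (4), and Definition \ref{definition:involution}(3) for (5); the paper's proof is nothing more than this list plus one worked computation (the second equation of \eqref{eq:act-bimodule}). The only genuine difference is in part (1): the paper gets membership of $\omega \actright a$ and $a\actleft \upsilon$ in $A$ directly from the inclusions $\Delta_{C}(A)(1\otimes A)\subseteq \AlA$ and $(1\otimes A)\Delta_B(A)\subseteq \ArA$ — e.g.\ writing $\omega=c\cdot\phi$ gives $(\omega\actright a)b=(\id\odot t_{C}\circ\cphic)(\Delta_{C}(a)(b\otimes c))$, a slice of a finite sum of elementary tensors, hence visibly $e b$ for a fixed $e\in A$ — whereas you detour through the strong-invariance diagram, which in effect proves part (2) first and then reads off membership from it. That route also works, but it is an unnecessary extra step, and two small slips in it should be fixed: $(b\otimes 1)\Delta_{B}(a)$ lives in $\ArA$, not $\AlA$, and the map supplied by \eqref{dg:strong-invariance-left} is $S\circ(\iota\odot\cphic)\circ \Tr\circ(\rT\Sigma)^{-1}$ rather than composition with $\Tr^{-1}$.
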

\begin{proof}
  The verification is straightforward; use
  the inclusions $\Delta_{C}(A)(A\otimes 1) \subseteq \AlA$ and $\Delta_{C}(A)(1\otimes A) \subseteq \AlA$, coassociativity and the counit axioms for (1), \eqref{dg:strong-invariance-right} and \eqref{dg:strong-invariance-left} for (2), the relations \eqref{eq:left-comultiplication-bilinear} and \eqref{eq:right-comultiplication-bilinear} for (3), diagram \eqref{dg:galois-antipode} for (4) and Definition \ref{definition:involution} (3) for (5). We therefore only carry out the proof of the second equation in \eqref{eq:act-bimodule} and leave the remaining assertions to the reader.  By \eqref{eq:factorisation-dual-bimodule-factorisation}, the functional $\omega'=xy\omega x'y'$ satisfies
\begin{align*}
  _{C}\omega'(c) &= \comega(x'ayt_{C}(y'))t_{C}^{-1}(x), &
  \omegab'(c) &= t_{B}^{-1}(y')\omegab(x't_{B}(x)cy) 
\end{align*}
for all $c\in A$, and hence, taking into account \eqref{eq:left-comultiplication-bilinear},
\begin{align*}
  (\omega' \actright a)b &= (\id \odot \comega')(\Delta_{C}(a)(b \otimes 1)) \\
  &= (\id \odot \comega)((x \otimes x')\Delta_{C}(a)(b\otimes yt_{C}(y'))) \\ &= x (\id \odot \comega)(\Delta_{C}(x'at_{C}(y'))(t_{C}(y)b)) = x(\omega \actright (x'at_{C}(y')))t_{C}(y)b
\end{align*}
for all $b\in A$. The other equations in  \eqref{eq:act-bimodule} follows similarly.
\end{proof}
 The space $\hat A$ becomes an algebra as follows.
\begin{theorem}[{\cite[Theorem 6.3]{timmermann:integration}}] \label{theorem:dual-algebra} Let
  $(\mathcal{A},\mu_{B},\mu_{C},\bpsib,\cphic)$ be a measured regular multiplier Hopf
  algebroid. Then:
  \begin{enumerate}
  \item The space $\hat{A}$ is  a non-degenerate, idempotent algebra with respect to the multiplication defined by $\upsilon\omega := \upsilon \circ (\omega \actright -) = \omega  \circ
(-\actleft \upsilon)$.
\item The space $A$ is a non-degenerate, idempotent and faithful $\hat A$-bimodule with respect to the left and right multiplication given by $\omega \otimes a \mapsto \omega \actright a$ and $a\otimes \omega \mapsto a \actleft \omega$.
\item The space $A^{\sqcup}$ is a non-degenerate and faithful $\hat A$-bimodule with respect to the left and right multiplication given by $\omega \otimes \upsilon \mapsto \upsilon(-\actleft \omega)$ and $\upsilon \otimes \omega \mapsto \upsilon(\omega \actright -)$.
  \end{enumerate}
\end{theorem}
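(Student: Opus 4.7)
My plan is to establish part (1) through the standard checks of well-definedness, closure, associativity, and non-degeneracy/idempotence, and then derive parts (2) and (3) from (1) combined with Proposition \ref{proposition:act} and Lemma \ref{lemma:act}. For $\upsilon,\omega\in\hat{A}$ and $a\in A$, Lemma \ref{lemma:act}(1) ensures that $\omega\actright a$ and $a\actleft\upsilon$ lie in $A$, and the identities in \eqref{eq:1} give
\begin{align*}
\upsilon(\omega\actright a) = \varepsilon\bigl((\omega\actright a)\actleft\upsilon\bigr) = \varepsilon\bigl(\omega\actright(a\actleft\upsilon)\bigr) = \omega(a\actleft\upsilon),
\end{align*}
so the two defining formulas for the product agree and yield a functional on $A$. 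To verify that $\upsilon\omega$ lies in $\hat{A}$, I would factor one argument along the description \eqref{eq:dual-space}, say $\omega = b\cdot\phi$, and combine strong invariance \eqref{eq:strong-invariance} with the slice formula for $\actright$ of Proposition \ref{proposition:act} to rewrite $\upsilon\omega$ as an element of $A\cdot\phi = \hat{A}$.

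For associativity, the first formula $\upsilon\omega = \upsilon\circ(\omega\actright -)$ reduces $(\upsilon\omega)\eta = \upsilon(\omega\eta)$ to the identity $(\omega\eta)\actright a = \omega\actright(\eta\actright a)$; expanding both sides via Proposition \ref{proposition:act} produces iterated slices of $(\iota\otimes\Delta_{C})\Delta_{C}(a)$ and $(\Delta_{C}\otimes\iota)\Delta_{C}(a)$ against ${_C\omega}$ and ${_C\eta}$, which coincide by coassociativity \eqref{eq:left-comultiplication-coassociative}. Non-degeneracy follows as follows: if $\upsilon\omega = 0$ for all $\upsilon\in\hat{A}$, then $\omega(a\actleft\upsilon) = 0$ for all $a\in A$ and $\upsilon\in\hat{A}$, and since $\varepsilon(a\actleft\upsilon) = \upsilon(a)$ together with surjectivity of the counit (Remark \ref{remark:counits-surjective}) forces the span of the elements $a\actleft\upsilon$ to exhaust $A$, we conclude $\omega = 0$; the other side is symmetric. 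Idempotence $\hat{A}\hat{A} = \hat{A}$ falls out of the same sliced-convolution description combined with idempotence of the $B$- and $C$-module structures on $A$ and surjectivity of the partial integrals.

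Parts (2) and (3) come out quickly. For (2), bimodule associativity on $A$ is exactly the identity $\omega\actright(a\actleft\upsilon) = (\omega\actright a)\actleft\upsilon$ of Lemma \ref{lemma:act}(1), compatibility of these actions with the product in $\hat{A}$ is built into its definition, and faithfulness is inherited from faithfulness of $\phi$ and $\psi$. Part (3) is analogous once one observes via Proposition \ref{proposition:act} that the formulas $\omega\otimes\upsilon\mapsto\upsilon(-\actleft\omega)$ and $\upsilon\otimes\omega\mapsto\upsilon(\omega\actright -)$ really do define commuting left and right $\hat{A}$-actions on $A^{\sqcup}$, with faithfulness inherited from (2) because $\hat{A}\subseteq A^{\sqcup}$ already separates the points of $A$. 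The main obstacle, as I see it, is the closure step $\upsilon\omega\in\hat{A}$: this is the heart of the duality between the left comultiplication and the dual multiplication, and it requires a careful interplay of strong invariance with the bimodule factorizations of the partial integrals.
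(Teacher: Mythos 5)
Your proposal follows the same route as the paper on the one step the paper actually spells out: closure $\upsilon\omega\in\hat A$ is obtained by writing $\omega=b\cdot\phi$ and invoking strong invariance \eqref{eq:strong-invariance} together with Lemma \ref{lemma:act} to arrive at $\upsilon(b\cdot\phi)=(b\actleft \upsilon S^{-1})\cdot\phi$, which is exactly \eqref{eq:product-ahat}. Your verification that the two defining formulas for the product agree, via $\varepsilon(\omega\actright a)=\omega(a)$ and $(\omega\actright a)\actleft\upsilon=\omega\actright(a\actleft\upsilon)$, and your reduction of associativity to coassociativity are likewise correct and match the intended argument.

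The step that does not hold up as written is the non-degeneracy argument. From $\varepsilon(a\actleft\upsilon)=\upsilon(a)$ and surjectivity of the counit you cannot conclude that the elements $a\actleft\upsilon$ span $A$: knowing the values of the single functional $\varepsilon$ on a family of vectors says nothing about the span of that family. Worse, by \eqref{eq:product-ahat} the statement $\lspan(A\actleft\hat A)=A$ is equivalent to idempotence of $\hat A$, i.e.\ to another part of the very assertion being proved, so invoking it here without an independent proof is circular. The correct and shorter route to non-degeneracy uses the \emph{other} formula for the product: if $\upsilon\omega=0$ for all $\upsilon\in\hat A$, then $\phi((\omega\actright a)c)=(c\cdot\phi)(\omega\actright a)=0$ for all $a,c\in A$, faithfulness of $\phi$ gives $\omega\actright a=0$, and then $\omega(a)=\varepsilon(\omega\actright a)=0$; the right-hand version uses $\psi$. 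Relatedly, the idempotence claims ($\hat A\hat A=\hat A$ and the fullness of the $\hat A$-actions on $A$), which you dismiss in one clause, are the genuinely nontrivial span statements in this theorem: idempotence of the $B$- and $C$-module structures and surjectivity of the partial integrals only yield $A\cdot\lspan(A\actleft\hat A)=A$, not $\lspan(A\actleft\hat A)=A$, so these points still need an argument (the paper defers them to the cited reference).
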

Given the  preparations above, the proof is straightforward. 
For example, to see that  $\upsilon \omega \in \hat A$ whenever $\upsilon,\omega \in \hat A$, write $\omega=b\cdot \phi$ with $b\in A$ and use Lemma \ref{lemma:act} (1) and (2) to see that
\begin{align*}
      (\upsilon \omega) (a) = \upsilon((b \cdot \phi) \actright a) &=
    \upsilon S^{-1}((\phi \cdot a) \actright b) =  \phi(a (b \actleft \upsilon S^{-1}))
\end{align*}
for all $a\in A$, whence, by  Lemma \ref{lemma:act} (4), 
\begin{align} \label{eq:product-ahat}
  \upsilon (b\cdot \phi) = c \cdot \phi, \quad \text{where } c= (b \actleft \upsilon S^{-1}) = S(\upsilon \actright S^{-1}(b)).
\end{align}
A similar argument shows  that
\begin{align} \label{eq:product-ahat-2}
  (\psi\cdot b)\upsilon = \psi \cdot c, \quad \text{where } c = \upsilon S^{-1} \actright b.
\end{align}

 The multiplier algebra $M(\hat A)$  can be identified with a subspace of $A^{\sqcup}$ similarly as in the case of  multiplier Hopf algebras \cite{kustermans:analytic-2} as follows. Since $A$ is idempotent and non-degenerate as an $\hat A$-bimodule, we can regard it as an $M(\hat A)$-bimodule such that $T \actright (\omega \actright a)=(T\omega \actright a)$ and $(a\actleft \omega) \actleft T = a \actleft (\omega T)$ for all $a\in A$, $\omega \in \hat A$ and $T\in M(\hat A)$.
\begin{proposition} \label{proposition:dual-multipliers} Let
  $(\mathcal{A},\mu_{B},\mu_{C},\bpsib,\cphic)$ be a measured regular multiplier Hopf
  algebroid with dual algebra $\hat A$. Then every element of
  \begin{align*}
     A^{\sqcup}_{0} :=\{ \upsilon \in A^{\sqcup} : \upsilon \actright a \in A \text{ and } a\actleft \upsilon  \in A \text{ for all  } a \in A\}
  \end{align*}
  determines a multiplier of $\hat A$ and the map $A^{\sqcup}_{0}\to M(\hat A)$ is a linear isomorphism.
\end{proposition}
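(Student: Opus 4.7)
For $\upsilon \in A^{\sqcup}_{0}$, I would define candidate left and right multiplication maps $L_{\upsilon}, R_{\upsilon} \colon \hat{A} \to \hat{A}$ by the same formulas that define the product on $\hat{A}$, namely $L_{\upsilon}(\omega)(a) := \upsilon(\omega \actright a)$ and $R_{\upsilon}(\omega)(a) := \upsilon(a \actleft \omega)$; these expressions make sense because Lemma~\ref{lemma:act}~(1) puts $\omega \actright a$ and $a \actleft \omega$ in $A$. The key verification is that the values lie in $\hat{A}$. Observing that by \eqref{eq:act-antipode} the relations $(\upsilon \circ S^{\pm 1}) \actright a = S^{\mp 1}(S^{\pm 1}(a) \actleft \upsilon)$ and $a \actleft (\upsilon \circ S^{\pm 1}) = S^{\mp 1}(\upsilon \actright S^{\pm 1}(a))$ force $A^{\sqcup}_{0}$ to be closed under composition with $S^{\pm 1}$, the derivation behind \eqref{eq:product-ahat} carries over verbatim: writing $\omega = b \cdot \phi$ and using strong invariance \eqref{eq:strong-invariance}, one finds $L_{\upsilon}(\omega) = c \cdot \phi$ with $c = b \actleft (\upsilon \circ S^{-1}) \in A$. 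The analogous argument with $\omega = \psi \cdot b$ handles $R_{\upsilon}$.

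Next, the multiplier axiom $\omega_{1} L_{\upsilon}(\omega_{2}) = R_{\upsilon}(\omega_{1}) \omega_{2}$ for $\omega_{1}, \omega_{2} \in \hat{A}$ reduces, after unpacking the definition of the product on $\hat{A}$, to the identity $\upsilon(\omega_{1} \actright (a \actleft \omega_{2})) = \upsilon((\omega_{1} \actright a) \actleft \omega_{2})$, which is just Lemma~\ref{lemma:act}~(1). Linearity is clear. Injectivity follows at once: if $L_{\upsilon} = 0$, then $\upsilon$ vanishes on the subspace $\hat{A} \actright A$, which equals $A$ by the idempotence statement in Theorem~\ref{theorem:dual-algebra}~(2).

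For surjectivity, which I expect to be the main obstacle, my plan is to first extend the $\hat{A}$-bimodule structure on $A$ to an $M(\hat{A})$-bimodule structure. Using idempotence, every $a \in A$ admits a decomposition $a = \sum_{i} \omega_{i} \actright a_{i}'$; given $T \in M(\hat{A})$ with associated maps $L_{T}, R_{T}$, I would set $T \actright a := \sum_{i} L_{T}(\omega_{i}) \actright a_{i}'$. Well-definedness follows from the multiplier axiom together with the non-degeneracy of $A$ as an $\hat{A}$-module: if $\sum_{i} \omega_{i} \actright a_{i}' = 0$, then for any $\eta \in \hat{A}$,
\begin{align*}
\eta \actright \sum_{i} L_{T}(\omega_{i}) \actright a_{i}' = \sum_{i} (\eta L_{T}(\omega_{i})) \actright a_{i}' = \sum_{i} (R_{T}(\eta) \omega_{i}) \actright a_{i}' = R_{T}(\eta) \actright 0 = 0,
\end{align*}
whence non-degeneracy gives $\sum_{i} L_{T}(\omega_{i}) \actright a_{i}' = 0$. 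With this extended action, define $\upsilon_{T}(a) := \varepsilon(T \actright a)$. Then by \eqref{eq:1},
\begin{align*}
\upsilon_{T}(\omega \actright a) = \varepsilon(T \actright (\omega \actright a)) = \varepsilon(L_{T}(\omega) \actright a) = L_{T}(\omega)(a),
\end{align*}
so $L_{\upsilon_{T}} = L_{T}$, and similarly $R_{\upsilon_{T}} = R_{T}$.

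The hard part will be confirming $\upsilon_{T} \in A^{\sqcup}_{0}$ rather than just $\upsilon_{T} \in A^{\vee}$. This entails exhibiting the four base factorizations $\bomega, \omega_{B}, \comega, \omega_{C}$ for $\omega = \upsilon_{T}$ and verifying $\upsilon_{T} \actright a, a \actleft \upsilon_{T} \in A$. I expect these to follow by transporting the $B$- and $C$-bimodule structures on $\hat{A}$ and on $\varepsilon \in A^{\sqcup}$ through the extended $M(\hat{A})$-action, using compatibility of the action with the base-algebra actions encoded in \eqref{eq:hata-module} and \eqref{eq:act-bimodule}; concretely, $_{B}\upsilon_{T}$ should be given by $a \mapsto \beps(T \actright a)$ after interpreting $\beps$ correctly, with the other factorizations defined analogously.
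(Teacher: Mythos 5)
Your proposal follows essentially the same route as the paper's proof: products with elements of $\hat A$ are shown to land in $\hat A$ by the computation behind \eqref{eq:product-ahat}, injectivity comes from non-degeneracy/idempotence of the $\hat A$-action on $A$, and surjectivity is obtained by extending the $\hat A$-bimodule structure on $A$ to $M(\hat A)$ and setting $\upsilon_T(a)=\varepsilon(T\actright a)$. The only step your ``similarly'' conceals is the identity $\varepsilon(T\actright a)=\varepsilon(a\actleft T)$, which the paper proves explicitly by evaluating both sides on $\hat A\actright A=A$ via Lemma \ref{lemma:act}(1); this identity is what lets the single functional $\upsilon_T$ induce both $L_T$ and $R_T$, and it is also needed for the $B$-side factorizations of $\upsilon_T$ (since by \eqref{eq:act-bimodule} the left $B$-action on $A$ commutes with $a\actleft T$ but not with $T\actright a$, your candidate $a\mapsto\epsb(T\actright a)$ should rather be built from the $\actleft$ description).
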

\begin{proof}
 Let $\upsilon \in A^{\sqcup}_{0}$, $\omega\in \hat A$, and write $\omega=b\cdot \phi$ with $b \in A$. Then the same calculation that lead to \eqref{eq:product-ahat} shows that the product $\upsilon \omega \in A^{\sqcup}$ is equal to $c\cdot \phi \in \hat A$ with $c=S(\upsilon\actright S^{-1}(b)) \in A$. A similar argument shows that $\omega\upsilon \in \hat A$.
  The resulting map $A^{\sqcup}_{0} \to M(\hat A)$ is well-defined and injective by  Theorem \ref{theorem:dual-algebra} (3), so we only need to show that this map  is surjective.  Let $T \in M(\hat A)$ and define $\upsilon,\upsilon'\colon A \to \C$ by
$\upsilon(a):=\varepsilon(T\actright a)$ and $\upsilon'(a):=\varepsilon(a\actleft T)$ for all $a \in A$. Then $\upsilon=\upsilon'$ because 
\begin{align*}
  \upsilon(\omega \actright a) = \varepsilon(T\omega \actright a) = (T\omega)(a) = \varepsilon(a \actleft T\omega) =  \omega(a \actleft T) = \varepsilon(\omega \actright a \actleft T) = \upsilon'(\omega \actright a)
\end{align*} 
for all $\omega \in \hat A$ and $a\in A$ by Lemma \ref{lemma:act} (1).  The relations $\varepsilon\in A^{\sqcup}$ and  \eqref{eq:act-bimodule} imply $\upsilon \in A^{\sqcup}$. Finally, the calculation above and the definition of the $\hat A$-bimodule structure of $A^{\sqcup}$ imply $(\upsilon\omega)(a) = \upsilon(\omega \actright a) = (T\omega)(a)$ for all $\omega \in \hat A$ and $a\in A$ so that $\upsilon\omega = T\omega$.
\end{proof}

The multiplication on $\hat A$ is compatible with the bimodule structures considered above:
\begin{lemma} 
 For all $\omega \in \hat{A}$, $\upsilon \in A^{\sqcup}$, $x\in B$ and $y\in C$, 
\begin{align} \label{eq:dual-bimodule-multiplier}
  (\omega x)\upsilon &= \omega(x\upsilon), &
  (\upsilon x)\omega &= \upsilon(x\omega), &
 (\omega y)\upsilon &= \omega(y\upsilon), &
  (\upsilon y)\omega &= \upsilon(y\omega) &
\end{align}
 In particular, we obtain embeddings of $B$ and $C$ into $M(\hat{A})$, and the images commute.
\end{lemma}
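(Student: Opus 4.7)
The plan is to verify each of the four equations by unwinding the $\hat{A}$-bimodule structure on $A^{\sqcup}$ from Theorem \ref{theorem:dual-algebra}(3), which states that $(\omega \cdot \upsilon)(a) = \upsilon(a \actleft \omega)$ and $(\upsilon \cdot \omega)(a) = \upsilon(\omega \actright a)$ for $\omega \in \hat{A}$, $\upsilon \in A^{\sqcup}$, $a\in A$, and then comparing both sides using the bimodule compatibility of $\actright$ and $\actleft$ recorded in \eqref{eq:act-bimodule} together with the defining module formulas \eqref{eq:dual-module}. The latter say that multiplication by $x\in B$ or $y\in C$ on a functional $\omega' \in A^{\sqcup}$ is just the dual action, so that $(x\omega')(c)=\omega'(t_B(x)c)$, $(\omega' x)(c)=\omega'(xc)$, $(y\omega')(c)=\omega'(cy)$ and $(\omega' y)(c)=\omega'(ct_C(y))$.

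I will spell out the first identity as the template. For any $a\in A$, Theorem \ref{theorem:dual-algebra}(3) gives $((\omega x)\upsilon)(a) = \upsilon(a\actleft (\omega x))$, and \eqref{eq:act-bimodule} (with $y=x'=y'=1$) yields $a\actleft (\omega x) = t_B(x)(a\actleft \omega)$, so the right-hand side equals $\upsilon(t_B(x)(a\actleft \omega))$. On the other side, $(\omega(x\upsilon))(a) = (x\upsilon)(a\actleft \omega)=\upsilon(t_B(x)(a\actleft \omega))$ directly from the module formula \eqref{eq:dual-module}. The remaining three identities follow the same pattern: each time one uses \eqref{eq:act-bimodule} to move an element of $B$ or $C$ across the $\actright$ or $\actleft$ action, and \eqref{eq:dual-module} to relate that to multiplication in the dual module.

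For the final assertion, the plan is to assign to each $x\in B$ the pair of maps $L_x(\omega):=x\omega$ and $R_x(\omega):=\omega x$ on $\hat{A}$. To check that $(L_x,R_x)$ is a multiplier of $\hat{A}$ one needs three properties: $L_x(\omega\upsilon)=L_x(\omega)\upsilon$, $R_x(\omega\upsilon)=\omega R_x(\upsilon)$, and the balancing $R_x(\omega)\upsilon=\omega L_x(\upsilon)$. The balancing is exactly the first identity of the lemma applied to $\upsilon\in\hat{A}\subseteq A^{\sqcup}$, while the two associativity conditions are verified by the same kind of direct unfolding (using that $x(\omega\upsilon) = (\omega\upsilon)\cdot t_B(x)$ and $\omega\upsilon(t_B(x)a) = \omega(\upsilon\actright t_B(x)a) = \omega(t_B(x)(\upsilon\actright a))$, which matches $(x\omega)\upsilon(a)$). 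The analogous assignment works for $C$. Injectivity of these maps $B\to M(\hat{A})$ and $C\to M(\hat{A})$ follows from faithfulness of $\hat{A}$ as a $B$- and $C$-module proved in Lemma \ref{lemma:dual-bimodule}. Commutation of the images reduces to commutation of $B$ and $C$ inside $M(A)$, which passes through the module formulas \eqref{eq:dual-module}.

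The main obstacle is purely bookkeeping: \eqref{eq:act-bimodule} mixes the anti-isomorphisms $t_B$ and $t_C$ in a way that must be aligned with the conventions in \eqref{eq:dual-module}, and one has to be careful that the products of an element of $\hat{A}$ with an element of $A^{\sqcup}$ land in $A^{\sqcup}$ (which is ensured by Theorem \ref{theorem:dual-algebra}(3)) so that all evaluations at $a\in A$ are legitimate. Once the conventions are lined up, every equation is a one-line computation.
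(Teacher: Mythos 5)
Your proof is correct and follows essentially the same route as the paper: unwind the $\hat{A}$-actions from Theorem \ref{theorem:dual-algebra}(3), move the base-algebra element across $\actleft$ or $\actright$ via \eqref{eq:act-bimodule}, and match against the module formulas \eqref{eq:dual-module}; you even supply more detail on the ``in particular'' clause (multiplier pair, injectivity via Lemma \ref{lemma:dual-bimodule}) than the paper does. The only blemish is a harmless indexing slip: to get $a\actleft(\omega x)=t_{B}(x)(a\actleft\omega)$ from \eqref{eq:act-bimodule} one specializes $x=y=y'=1$ and keeps $x'$, not $y=x'=y'=1$ as written, but the identity you use is the correct one.
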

\begin{proof}
The relations \eqref{eq:dual-bimodule-multiplier} follow easily from \eqref{eq:act-bimodule}, for example,
 \begin{align*}
   ((\omega x)\upsilon)(a) &= \upsilon(a \actleft \omega x) = \upsilon(t_{B}(x)(a \actright \omega)) = (\omega(x\upsilon))(a), \\
  ((\upsilon x)\omega)(a) &=  (\upsilon x)(\omega \actright a) = \upsilon(x(\omega \actright a)) = \upsilon((x\omega) \actright a) = (\upsilon(x\omega))(a)
\end{align*}
for all $\upsilon \in A^{\sqcup}$, $\omega \in\hat A$, $a\in A$ and $x\in B$.
\end{proof}
Let us next consider the involutive case.
\begin{proposition} \label{proposition:dual-involution}
Let  $(\mathcal{A},\mu_{B},\mu_{C},\bpsib,\cphic)$ be a measured multiplier Hopf
$*$-algebroid.  Then $\hat A$ is a $*$-algebra with respect to the involution given by
\begin{align*}
  \omega^{*}(a):=\omega(S(a)^{*})^{*}
\end{align*}
 for all $\omega \in \hat A$ and $a \in A$, and the embeddings of $B$ and $C$ into $M(\hat A)$ become involutive.
\end{proposition}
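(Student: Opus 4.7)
The plan is to verify in turn that (a) the formula $\omega^*(a) = \overline{\omega(S(a)^*)}$ maps $\hat A$ into itself, (b) the involution has order two, (c) it is anti-multiplicative, and (d) it restricts to the original involutions on $B$ and $C$.

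For (a), I would write a given $\omega\in \hat A$ as $\omega = a\cdot\phi$ with $a\in A$, so $\omega(b)=\phi(ba)$.  Self-adjointness of $\phi$ and anti-multiplicativity of $\ast$ give $\omega^*(b)=\phi(a^*S(b))$, and since $a^*S(b)=S(bS^{-1}(a^*))$, this rewrites as $\omega^*(b)=(\phi\circ S)(bS^{-1}(a^*))$.  The modular relation $\phi\circ S=\delta^+\cdot\phi$ from Theorem \ref{theorem:modular-element} then turns this into $\omega^*(b)=\phi(bS^{-1}(a^*)\delta^+)$, i.e.\ $\omega^* = (S^{-1}(a^*)\delta^+)\cdot\phi$.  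As $\delta^+\in M(A)$, the element $S^{-1}(a^*)\delta^+$ lies in $A$, and hence $\omega^*\in A\cdot\phi=\hat A$.

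For (b), a direct unfolding gives $\omega^{**}(a)=\omega((\ast\circ S)^2(a))$, so the claim reduces to $(\ast S)^2=\id_A$.  This is immediate from $S\ast S\ast =\id_A$ in \eqref{eq:counit-antipode-involution} together with $\ast^2=\id_A$, since the former yields $S\ast S=\ast$, whence $\ast S\ast S=\ast\ast=\id_A$.

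The main step is (c).  Given $\upsilon,\omega\in\hat A$ and $a\in A$, I start from the identity $(\upsilon\omega)(a)=\omega(a\actleft \upsilon)$ of Theorem \ref{theorem:dual-algebra}, so that $(\upsilon\omega)^*(a)=\overline{\omega(S(a)^*\actleft \upsilon)}$.  I then use two ingredients from Lemma \ref{lemma:act} to rewrite the argument: the $\ast$-compatibility \eqref{eq:act-involution} converts $S(a)^*\actleft \upsilon$ into $(S(a)\actleft (\ast\upsilon\ast))^*$, and the antipode-compatibility \eqref{eq:act-antipode} converts the latter into $S(((\ast\upsilon\ast)\circ S)\actright a)^*$.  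Combined with the basic rewriting $\overline{\omega(c)}=\omega^*(S^{-1}(c^*))$ (an immediate consequence of the definition of $\omega^*$), this yields $\overline{\omega(S(a)^*\actleft \upsilon)}=\omega^*(((\ast\upsilon\ast)\circ S)\actright a)$.  The calculation closes by observing that $((\ast\upsilon\ast)\circ S)(b)=\overline{\upsilon(S(b)^*)}=\upsilon^*(b)$, so that the right-hand side is $\omega^*(\upsilon^*\actright a)=(\omega^*\upsilon^*)(a)$.  The main difficulty here is keeping track of the complex conjugations and of the correct composition order of $S$ and $\ast$; the identity $\overline{\omega(c)}=\omega^*(S^{-1}(c^*))$ is what makes this bookkeeping manageable.

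For (d), I take $x\in B$ and compute, using \eqref{eq:dual-module}, $(\omega x)^*(a)=\overline{\omega(xS(a)^*)}=\overline{\omega((S(a)x^*)^*)}=\omega^*(S^{-1}(S(a)x^*))=\omega^*(S^{-1}(x^*)a)$.  The compatibility of $S$, $\ast$ and $t_B, t_C$ built into Definitions \ref{definition:mult-hopf-algebroid} and \ref{definition:involution} identifies $S^{-1}(x^*)$ with the appropriate image of $x^*$ in $C$, which by \eqref{eq:dual-module} is precisely the multiplier by which $x^*$ acts from the left; thus $(\omega x)^*=x^*\omega^*$.  A parallel computation gives $(x\omega)^*=\omega^*x^*$, and the analogous calculations for $y\in C$ finish the proof.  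In particular, under the involution on $M(\hat A)$ induced by that on $\hat A$, the embedded copies of $B$ and $C$ are $\ast$-stable and the embeddings respect the original involutions.
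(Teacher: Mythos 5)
Your proposal is correct and follows essentially the same route as the paper's proof: membership of $\omega^{*}$ in $\hat A$ via the modular element $\delta^{+}$, involutivity via \eqref{eq:counit-antipode-involution}, anti-multiplicativity via \eqref{eq:act-antipode} and \eqref{eq:act-involution}, and a direct computation for the base algebras. The only (cosmetic) difference is in the anti-multiplicativity step, where the paper compares $\,(\upsilon\omega)^{*}$ and $\omega^{*}\upsilon^{*}$ through their action $\actright$ on the faithful module $A$, while you evaluate the functionals directly using the identity $\overline{\omega(c)}=\omega^{*}(S^{-1}(c^{*}))$; both rest on the same two items of Lemma \ref{lemma:act}.
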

\begin{proof}
Let $\omega,\upsilon \in \hat A$. Write  $\omega = a\cdot \phi$. Then  $(a  \cdot\phi)^{*}(b) = \phi(a^{*}S(b)) = (\phi \circ S)(bS^{-1}(a^{*})) = \phi(bS(a)^{*}\delta^{+})= (S(a)^{*}\delta{^{+}} \cdot \phi)(b)$ for all $b\in A$, where
$\delta^{+} \in M(A)$ is as in Theorem \ref{theorem:modular-element}, and hence $\omega^{*} \in \hat{ A}$.
Relation  \eqref{eq:counit-antipode-involution} implies that $(\omega^{*})^{*} =\omega$. To see that $(\upsilon^{*}\omega^{*})=\omega^{*}\upsilon^{*}$, note that for all $a\in A$,
\begin{align*}
  \omega^{*} \actright S(a)^{*} &= (* \circ \omega \circ * \circ S) \actright S(a)^{*} = S(a \actleft \omega)^{*}, & 
S(a)^{*} \actleft \upsilon^{*} &= S(\upsilon \actright a)^{*}
\end{align*}
 by \eqref{eq:act-antipode} and \eqref{eq:act-involution},
and hence $\omega^{*}\upsilon^{*} \actright S(a)^{*} = S(a \actleft \upsilon\omega)^{*} = (\upsilon\omega)^{*} \actright S(a)^{*}$.  Finally,
\begin{align*}
  (\omega^{*} x^{*})(a) = (\omega^{*})(ax^{*}) = \omega(S(x^{*})^{*}S(a)^{*}) = \omega(S^{-1}(x)S(a)^{*}) = (x\omega)^{*}(a),
\end{align*}
whence $\omega^{*}x^{*}=(x\omega)^{*}$ for all $x\in B$, and similarly $y^{*}\omega^{*}=(\omega y)^{*}$ for all $y\in C$.
\end{proof}

 By \cite[Lemma 6.2]{timmermann:integration}, we can also define an evaluation map $\dual{T} \in \dual{(\hat A)}$ for every  $T \in M(A)$ such that for all $a\in A$,
\begin{align*}
  \dual{T}(a\cdot \phi)  &= \phi(Ta), & \dual{T}(\phi \cdot a) &= \phi(aT), & \dual{T}(a\cdot \psi) &= \psi(Ta), & \dual{T}(\psi \cdot a) &= \psi(aT).
\end{align*}
We show that this functional lies in
\begin{align*}
   \hat{A}^{\sqcup} := ({_{C}\hat{A}_{C}}) \cap ({_{B}\hat{A}_{B}}).
\end{align*}
\begin{lemma} \label{lemma:eval}
Let $T \in M(A)$. Then the functional $\dual{T}$ lies in $(\hat A)^{\sqcup}$, and
         for all $a \in A$,
    \begin{align*}
      ({_{C}\dual{T}})(\phi \cdot a) &=  \sigma_{C}(\cphic(aT)), &
      (\dual{T}_{C})(\psi \cdot a) &= t_{B}(\bpsib(aT)), \\
      ({_{B}\dual{T}})(a \cdot \phi) &= t_{C}(\cphic(Ta)), &
      (\dual{T}_{B})(a \cdot \psi) &= \sigma^{-1}_{B}(\bpsib(Ta)).
    \end{align*}
\end{lemma}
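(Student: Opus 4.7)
The plan is to treat the four formulas in the statement as explicit candidates for the components of a factorization
\[\dual{T} = \mu_C \circ {_{C}\dual{T}} = \mu_C \circ \dual{T}_C = \mu_B \circ {_{B}\dual{T}} = \mu_B \circ \dual{T}_B,\]
and to verify that each candidate is a bimodule map of the appropriate type. By the uniqueness of factorizations of a factorizable functional, this simultaneously yields $\dual T \in \hat{A}^{\sqcup}$ and the identification of its four components.

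Well-definedness of each candidate as a linear map $\hat{A} \to C$ or $\hat{A} \to B$ is immediate from the faithfulness of the total integrals $\phi$ and $\psi$, which makes each of the assignments $a \mapsto a \cdot \phi,\ \phi \cdot a,\ a \cdot \psi,\ \psi \cdot a$ an injection $A \to \hat{A}$ whose image is all of $\hat{A}$. The factorization identities follow from the modular invariance $\mu_C \circ \sigma_C = \mu_C$ and $\mu_B \circ \sigma_B^{-1} = \mu_B$ of the base weights, together with the compatibilities $\mu_B \circ t_C = \mu_C$ and $\mu_C \circ t_B = \mu_B$ from the definition of a base weight; for instance,
\[\mu_C({_{C}\dual{T}}(\phi \cdot a)) = \mu_C(\sigma_C(\cphic(aT))) = \mu_C(\cphic(aT)) = \phi(aT) = \dual{T}(\phi \cdot a),\]
and analogously for the other three formulas.

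The module map property is the main content. For ${_{C}\dual{T}}$, the modular property of $\phi$ (Theorem \ref{theorem:modular-automorphism}), extended to multipliers, gives $y \cdot \phi = \phi \cdot \sigma_C^{-1}(y)$ in $A^{\vee}$ for $y \in C$, hence $y \cdot (\phi \cdot a) = \phi \cdot (\sigma_C^{-1}(y) a)$ in $\hat{A}$. Using the $C$-bilinearity of $\cphic$ together with the fact that $\sigma_C$ is an algebra automorphism of $C$,
\[{_{C}\dual{T}}(y \cdot (\phi \cdot a)) = \sigma_C(\cphic(\sigma_C^{-1}(y) aT)) = \sigma_C(\sigma_C^{-1}(y) \cphic(aT)) = y \cdot {_{C}\dual{T}}(\phi \cdot a).\]
The other three module map properties are verified by entirely analogous manipulations, invoking the relations in \eqref{eq:modular-automorphism} and \eqref{eq:modular-automorphism-base}, the $B$-bilinearity of $\bpsib$, and Theorem \ref{theorem:modular-element} whenever one must convert between the descriptions of $\hat{A}$ in terms of $\phi$ and in terms of $\psi$.

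The main obstacle is careful bookkeeping of the various left and right $B$- and $C$-module structures on $\hat{A}$ introduced in \eqref{eq:dual-module}, together with the need to move base-algebra elements past elements of $A$ inside $M(A)$ using the modular automorphisms $\sigma^{\phi}$ and $\sigma^{\psi}$; the four verifications share the same overall pattern but differ in the precise modular and base-map identities invoked.
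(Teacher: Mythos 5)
Your proposal is correct and follows essentially the same route as the paper: a direct verification using the definition of $\dual{T}$, the factorizations $\phi=\mu_C\circ\cphic$ and $\psi=\mu_B\circ\bpsib$, the bilinearity of the partial integrals, the modular automorphisms $\sigma_B,\sigma_C,\sigma^{\phi},\sigma^{\psi}$ and the compatibilities $\mu_B\circ t_C=\mu_C$, $\mu_C\circ t_B=\mu_B$. The only cosmetic difference is that the paper checks the single weighted identities $\dual{T}(y\,\omega)=\mu_C\bigl(y\,{_{C}\dual{T}}(\omega)\bigr)$ and $\dual{T}(\omega\, y)=\mu_C\bigl(\dual{T}_C(\omega)\,y\bigr)$, which encode your steps (module-map property plus factorization) simultaneously via the equivalent characterization of factorizability, whereas you verify the two properties separately.
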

\begin{proof}
  The functional $\dual{T}$ lies in $({_{C}\hat A_{C}})^{\sqcup}$ and the first two equations hold because
  \begin{align*}
    \dual{T}(y(\phi \cdot a))&= 
    \phi(aTy) =\mu_{C}(\cphic(aT)y) = 
    \mu_{C}(y\sigma_{C}(\cphic(aT))),
\\    \dual{T}((\psi \cdot a)y) &=  \psi(aTt_{C}(y))  \\ &=
 \mu_{B}(\bpsib(aT)t_{C}(y))
    = \mu_{B}(t_{B}^{-1}(y)\bpsib(aT)) = \mu_{C}(t_{B}(\bpsib(aT))y)
  \end{align*}
  for all $y\in C$ and $a\in A$. The remaining assertions follow similarly.
\end{proof}

 \subsection{The dual comultiplications}
\label{subsection:dual-comultiplication}
From now on, we suppose that \emph{$\mathcal{A}$  is locally projective}.

Consider the pairings
\begin{align} \label{eq:pairing-ala}
 (A^{\sqcup} \otimes A^{\sqcup}) \times (A^{C} \otimes \cA) &\to \C, &
(\upsilon \otimes \omega\mid a\otimes b) &= \upsilon(t_{C}(\comega(b))a) = \omega(\upsilon^{C}(a)b), \\ \label{eq:pairing-aca}
 (A^{\sqcup} \otimes A^{\sqcup}) \times (A_{C} \otimes \cA) &\to \C, &
  (\upsilon \otimes \omega\mid a\otimes b) &= \upsilon(a\comega(b)) = \omega(\upsilonc(a)b), \\ \label{eq:pairing-ara}
 (A^{\sqcup} \otimes A^{\sqcup}) \times (A_{B} \otimes \bAt) &\to \C, & 
(\upsilon \otimes \omega\mid a\otimes b) &= \upsilon(a^{B}\omega(b)) = \omega(bt_{B}(\upsilonb(a))), \\ \label{eq:pairing-aba}
(A^{\sqcup} \otimes A^{\sqcup}) \times (A_{B} \otimes \bsA) &\to \C, &
(\upsilon \otimes \omega\mid a\otimes b) &= \upsilon(a\bomega(b)) = \omega(\upsilonb(a)b).
\end{align}
Using   tensor products of factorizable functionals \eqref{eq:factorisation-tensor}, they take the simple form
   \begin{align} \label{eq:pairing}
     (\upsilon \otimes \omega \mid  a\otimes b):=(\upsilon \underset{\mu_{C}}{\otimes} \omega)(a \otimes b) \quad \text{or} \quad
     (\upsilon \otimes \omega \mid  a\otimes b):=(\upsilon \underset{\mu_{B}}{\otimes} \omega)(a \otimes b),
   \end{align}
respectively.   By Lemma \ref{lemma:eval},
 we can also regard elements of $A$ as factorizable functionals on $\hat A$ and hence define pairings between $A \otimes A$  on one side and $    \hAbA$, $  \hat A^{B} \otimes {_{B}\hat A}$,  $ \hAcA$ and $\hat A_{C} \otimes {_{\hat C}\hat A}$ on the other side.
\begin{lemma} \label{lemma:pairing}
  The formulas in \eqref{eq:pairing-ala}--\eqref{eq:pairing-aba} define non-degenerate pairings
\begin{align*}
    \hAbA \times  \AlA &\to \C, &
 \hAcA \times   \ArA  &\to \C, \\
  \hat A^{B} \otimes {_{B}\hat A} \times \AcA &\to \C, &
  \hat A_{C} \otimes {^{C}\hat A} \times \AbA &\to \C
\end{align*}
such that  for all $a,b\in A$ and $\upsilon,\omega \in \hat A$,
\begin{align} \label{eq:dual-pairing}
  (\upsilon \otimes \omega\mid a\otimes b) =  
(\dual{a} \underset{\mu_{B}}{\otimes} \dual{b})(\upsilon \otimes \omega) \quad \text{or} \quad
  (\upsilon \otimes \omega\mid a\otimes b) =  
(\dual{a} \underset{\mu_{C}}{\otimes} \dual{b})(\upsilon \otimes \omega),
\end{align}
respectively.
 \end{lemma}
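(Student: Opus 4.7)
My plan is to show that each of the four pairings in (3.3.1)--(3.3.4) is exactly an instance of the relative tensor product of factorizable functionals defined in \eqref{eq:factorisation-tensor}: if we regard $\upsilon,\omega\in \hat{A}\subseteq A^{\sqcup}$ as factorizable functionals on the appropriate one-sided modules, then (3.3.5) gives a compact description of the four formulas. With this observation, well-definedness on the balanced tensor products on the $\hat{A}$-side is essentially inherited from the construction of $\upsilon\osub{\mu_{C}}\omega$ and $\upsilon\osub{\mu_{B}}\omega$, while well-definedness on the $A$-side reduces to standard bookkeeping with $t_{B},t_{C}$ and the factorization maps $\bomega,\omegab,\comega,\omegac,{}^{B}\omega,\omega^{B},{}^{C}\omega,\omega^{C}$.

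First I would verify, case by case, that the two expressions on each right-hand side of (3.3.1)--(3.3.4) agree; for example, for (3.3.1) both $\upsilon(t_{C}(\comega(b))a)$ and $\omega(\upsilon^{C}(a)b)$ equal $\mu_{C}(\upsilon^{C}(a)\cdot\comega(b))$ after applying the identity $\mu_{B}\circ t_{C}=\mu_{C}$ together with the characterizing formulas ${}^{C}\omega=S_{B}\circ\omegab$ and $\omega^{C}=S_{B}\circ\bomega$, so both expressions coincide with $(\upsilon\osub{\mu_{C}}\omega)(a\otimes b)$. Next I would check that each formula factors through the balanced tensor products on both sides. On the $A$-side this amounts to checking, for instance, $(\upsilon\otimes\omega\mid t_{C}(y)a\otimes b) = (\upsilon\otimes\omega\mid a\otimes yb)$, which follows from $t_{C}$ being an anti-homomorphism and $\comega$ being a left $C$-module map. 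On the $\hat{A}$-side, balancing follows from the bimodule formulas \eqref{eq:factorisation-dual-bimodule-factorisation} together with the definitions \eqref{eq:dual-module} of the $B$- and $C$-actions on $\hat{A}^{\sqcup}$.

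For the duality formula \eqref{eq:dual-pairing}, I would use Lemma \ref{lemma:eval} to write the factorizations of $\dual{a}$ and $\dual{b}$ explicitly. Writing $\upsilon=c\cdot\phi$ and $\omega=\phi\cdot d$ (or the corresponding decompositions with $\psi$), one computes, using Lemma \ref{lemma:eval}, that $\dual{a}_{B}(\upsilon)$ and ${}_{B}\dual{b}(\omega)$ are of the form $\sigma_{B}^{-1}(\bpsib(a\cdot\text{-}))$ and $t_{C}(\cphic(b\cdot\text{-}))$ respectively; applying $\mu_{B}$ to their product and using the modular-automorphism invariance $\mu_{B}\circ\sigma_{B}^{-1}=\mu_{B}$ and $\mu_{B}\circ t_{C}=\mu_{C}$ from Theorem~\ref{theorem:modular-automorphism} and the definition of a base weight recovers the original pairing value.

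Finally, non-degeneracy is where I expect the main obstacle. One direction --- that a nonzero element of a tensor square of $\hat{A}$ pairs nontrivially with some element of the corresponding tensor square of $A$ --- follows from the definition of the factorizable functionals: if $\upsilon\osub{\mu_{C}}\omega$ annihilates $A^{C}\otimes{}_{C}A$, then since $\mu_{C}$ is faithful and the images $\upsilon^{C}(A)\comega(A)$ generate $C$ by Remark \ref{remark:counits-surjective} combined with the surjectivity of $\cphic$ and $\bpsib$, one can conclude $\upsilon\otimes\omega=0$. The reverse direction uses local projectivity of $A$ as a module over $B$ and $C$ (now assumed): a nonzero element $\xi$ of $\AlA$ can be tested against enough dual basis functionals; lifting these functionals to elements of $\hat{A}$ via the surjectivity $A\cdot\phi=\hat{A}$ and Proposition~\ref{proposition:act} produces $\upsilon,\omega$ with $(\upsilon\otimes\omega\mid\xi)\neq 0$. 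The parallel arguments for the three remaining pairings are analogous after swapping $B\leftrightarrow C$ and/or applying the antipode $S$, which exchanges the four tensor squares through the diagrams \eqref{dg:galois-antipode}.
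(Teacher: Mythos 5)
Your treatment of well-definedness and of the duality formula \eqref{eq:dual-pairing} follows the same route as the paper: the four formulas are the tensor products $\upsilon\osub{\mu_{C}}\omega$ resp.\ $\upsilon\osub{\mu_{B}}\omega$ of factorizable functionals (this is \eqref{eq:pairing}, already recorded before the lemma), which settles factorization through the balanced tensor products of $A$, and the computation of $(\dual{a}\osub{\mu_{B}}\dual{b})(\upsilon\otimes\omega)$ via Lemma \ref{lemma:eval} and \eqref{eq:dual-module} is exactly the paper's calculation; this in turn settles factorization through the balanced tensor products of $\hat{A}$. Up to that point the proposal is sound.

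The gap is in the non-degeneracy argument. For the direction ``a nonzero element of the tensor square of $\hat{A}$ pairs nontrivially'' you only treat an elementary tensor $\upsilon\otimes\omega$; a general element of, say, $\hat{A}_{B}\otimes{_{B}\hat{A}}$ is a finite sum $\sum_{i}\upsilon_{i}\otimes\omega_{i}$ with no individual summand zero, and the faithfulness-of-$\mu_{C}$ argument says nothing about such sums. The mechanism that handles them is precisely \eqref{eq:dual-pairing}: pairing with $a\otimes b$ is evaluation of the factorizable functional $\dual{a}\osub{\mu_{B}}\dual{b}$, so a general element can be sliced by the module maps $\dual{a}_{B}\odot\id$ and $\id\odot{_{B}\dual{b}}$; local projectivity of $\hat{A}_{B}$ and ${_{B}\hat{A}}$ (Lemma \ref{lemma:dual-bimodule}, which you never invoke) guarantees that these slices detect every nonzero element, and the fact that elements of $A$ separate the points of $\hat{A}$ finishes this direction. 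Symmetrically, in the other direction your ``lifting'' step is not justified: the dual-basis functionals in $\Hom(A^{C},C_{C})$ produced by local projectivity are arbitrary module maps and need not be of the form $\upsilon^{C}$ with $\upsilon\in\hat{A}$, and the surjectivity $A\cdot\phi=\hat{A}$ produces no such lift. Here one should again use the identity $(\upsilon\osub{\mu_{C}}\omega)(\xi)=\omega((\upsilon^{C}\odot\id)(\xi))$, the fact that $\hat{A}$ separates the points of $A$, and local projectivity of the $A$-modules. In short, the right ingredients all appear somewhere in your sketch, but they are distributed between the two directions in a way that leaves each incomplete.
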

 \begin{proof}
To see that   the pairings factorize as claimed,  we only need to prove \eqref{eq:dual-pairing}. We do so  for the first of the four cases; the remaining three cases are similar:
   \begin{align*} 
     (\dual{a} \underset{\mu_{B}}{\otimes} \dual{b})(\upsilon \otimes \omega) =
\mu_{B}(\dual{a}_{B}(\upsilon) \cdot {_{B}\dual{b}}(\omega))
&= \mu_{B}(\sigma_{B}^{-1}(\bupsilon(a))t_{C}(\comega(b))) \\
&= \mu_{B}(t_{C}(\comega(b))\bupsilon(a)) = \upsilon(t_{C}(\comega(b))a)
   \end{align*}
   by Lemma \ref{lemma:eval} and \eqref{eq:dual-module}.  To see that the pairings are non-degenerate, use the fact that elements of $\hat A$ separate the points of $A$, elements of $A$ separate the points of $\hat A$, relation \eqref{eq:dual-pairing} and the fact that all modules involved are locally projective by assumption on $\mathcal{A}$ and Lemma \ref{lemma:dual-bimodule}.
 \end{proof}

With the pairings obtained above, we embed various balanced tensor products of $\hat{A}$ dual spaces of certain balanced tensor products of $A$, and show that the duals of the canonical maps $\Tl,\Tr,\lT,\rT$ restrict to the balanced tensor products of $\hat{A}$.
 
Denote by $\End(A)$  the space of all linear endomorphisms of $A$.  
\begin{lemma} \label{lemma:compact}
  The four subspaces of $\End(A)$ spanned by all  maps of the form
 \begin{align*}
 a\mapsto b(a \actleft \upsilon), \quad a\mapsto b(\upsilon \actright a), \quad
a\mapsto (\upsilon \actright a)b \quad \text{or }   
a\mapsto (a \actleft \upsilon)b \quad \text{with } b\in A, \upsilon\in \hat{A},
 \end{align*}
respectively, coincide with the subspaces spanned by all maps of the form
\begin{align*}
  a\mapsto b\upsilonb(a), \quad  a\mapsto b\upsilonc(a), \quad a\mapsto \cupsilon(a)b \quad \text{or } a\mapsto \bupsilon(a)b \quad \text{with } b\in A, \upsilon \in \hat{ A},
\end{align*}
respectively.
\end{lemma}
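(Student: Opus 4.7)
The four claimed identities are formally parallel, being permuted by the antipode identities of Lemma~\ref{lemma:act}(4) and by the symmetry between $\Delta_B$ and $\Delta_C$. My plan is to establish the first identity, namely $\mathrm{span}\{a \mapsto b(a \actleft \upsilon)\} = \mathrm{span}\{a \mapsto b\upsilonb(a)\}$, and derive the remaining three by analogous arguments using the other formulas of Proposition~\ref{proposition:act} together with the factorization identities ${}^{B}\upsilon = S_{C} \circ \upsilonc$ and $\upsilon^{C} = S_{B} \circ \bupsilon$ relating the various dual slicings.

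The starting point is the formula $b(a \actleft \upsilon) = (\upsilonb \odot \iota)\bigl((1 \otimes b)\Delta_B(a)\bigr)$ from Proposition~\ref{proposition:act}. Writing $(1 \otimes b)\Delta_B(a) = \sum_{i} m_{i} \otimes n_{i} \in \bAs \otimes \bAt$, this evaluates to $\sum_{i} n_{i} t_{B}(\upsilonb(m_{i}))$, a finite sum in $A \cdot C$; by contrast, $b\upsilonb(a)$ lies in $A \cdot B$. Both expressions give elements of $A$, and comparing their spans will combine three ingredients: bijectivity of the canonical map $\rT$ (axiom~(H1)); surjectivity of the right counit $\epsb$ (Remark~\ref{remark:counits-surjective}); and local projectivity of $\bAs$ and $\bAt$.

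For the inclusion $\mathrm{span}\{a \mapsto b\upsilonb(a)\} \subseteq \mathrm{span}\{a \mapsto b'(a \actleft \upsilon')\}$, I would use the right counit axiom $(\epsb \odot \iota)((1 \otimes b')\Delta_B(c)) = b'c$ together with surjectivity of $\epsb$ to select elements $c \in A$ whose counit recovers prescribed elements of $B$; the $B$-bimodule structure on $\hat{A}$ from Lemma~\ref{lemma:dual-bimodule} then lets me absorb the auxiliary $B$-factors into modified elements of $\hat{A}$, realizing each $b\upsilonb(a)$ as a finite sum of terms of the desired action-form. Conversely, starting from $\sum_{i} n_{i} t_{B}(\upsilonb(m_{i}))$, local projectivity of $\bAs$ supplies a finite dual basis expressing each $m_{i}$ via right $B$-module maps; pushing $t_{B}$ through the $B$-$C$-bimodule actions on $\hat{A}$ then rewrites each summand as $b_{j}''(\upsilon_{j}'')_{B}(a)$ for appropriate $b_{j}'' \in A$ and $\upsilon_{j}'' \in \hat{A}$.

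The main obstacle, inherent to the non-unital setting, is the unavailability of a Sweedler-style expansion $\Delta_B(a) = \sum a_{(1)} \otimes a_{(2)}$: all manipulations must be phrased through the legitimate elements $(1 \otimes b)\Delta_B(a) \in \bAs \otimes \bAt$ and their finite decompositions supplied by local projectivity. A secondary technical issue is keeping track of the base anti-isomorphisms $t_{B}, t_{C}$ in the various slice formulas, which must be carefully absorbed via the bimodule structures on $\hat{A}$ inherited from $A^{\sqcup}$ as in~\eqref{eq:dual-module}.
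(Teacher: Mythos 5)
Your reduction to a single case, with the other three following from the antipode identities and the $\Delta_{B}$/$\Delta_{C}$ symmetry, matches the paper's strategy. But the ingredients you assign to the two inclusions are the wrong ones, and I do not see how either inclusion closes as you describe. The identity that actually bridges the two spans is the right-invariance (RI2) of the partial integral, $(\bpsib\odot\id)((1\otimes c)\Delta_{B}(a))=c\,\bpsib(a)$, which you never invoke. Writing $\upsilon=\psi\cdot b$, so that $\upsilonb(a)=\bpsib(ba)$, the paper's entire argument is the computation $c\,\bpsib(ba)=(\bpsib\odot\id)((1\otimes c)\Delta_{B}(b)\Delta_{B}(a))=\sum_{i}c_{i}\,(a\actleft(\psi\cdot b_{i}))$, where $(1\otimes c)\Delta_{B}(b)=\sum_{i}b_{i}\otimes c_{i}$; this gives one inclusion directly, and the reverse inclusion follows because $\rT$ is surjective, so every element of $\ArA$ is a sum of elements of the form $(1\otimes c)\Delta_{B}(b)$.

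Neither of your proposed substitutes can play the role of (RI2). The right counit axiom $(\epsb\odot\id)((1\otimes b')\Delta_{B}(c))=b'c$ relates slices of $\Delta_{B}$ to products in $A$ and says nothing about $\bpsib$; and surjectivity of $\epsb$ only lets you realize prescribed \emph{fixed} elements of $B$ as counits, whereas $\upsilonb(a)=\bpsib(ba)$ varies with $a$, so there is no element $c$ whose counit ``recovers'' it. Likewise, local projectivity of $\bAs$ furnishes dual bases for finitely many fixed module elements, but the legs $m_{i},n_{i}$ of $(1\otimes b)\Delta_{B}(a)$ depend on $a$, so no dual-basis expansion chosen in advance can eliminate the comultiplication from $\sum_{i}n_{i}t_{B}(\upsilonb(m_{i}))$. (Local projectivity is genuinely needed nearby, e.g.\ for the non-degeneracy statement in Lemma~\ref{lemma:pairing}, but not in this lemma.) To repair your proof, discard the counit and local-projectivity steps and use instead (RI2), multiplicativity of $\Delta_{B}$, surjectivity of $\rT$, and the description $\hat{A}=\psi\cdot A$, as in the computation above.
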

\begin{proof}
We only show that maps of the form $a\mapsto b\upsilonb(a)$ and $a\mapsto b(a \actleft \upsilon)$ span the same subspace of $\End(A)$;
the remaining assertions follow similarly.
Let $b, c \in A$ and write $(1\otimes c)\Delta_{B}(b) =\sum_{i} b_{i} \otimes c_{i}$. Then by right-invariance of $\bpsib$,
\begin{align*}
  c\bpsib(ba)  &= (\bpsib \odot \id)((1 \otimes c)\Delta_{B}(ba)) \\ &= \sum_{i}
(\bpsib \odot \id)((b_{i} \otimes c_{i})\Delta_{B}(a)) = \sum_{i} c_{i}(a \actleft (\psi \cdot b_{i})).
\end{align*}
Since the map $\rT$ is bijective and $\hat A=\psi \cdot A$,  the assertion follows.
\end{proof}
\begin{proposition} \label{proposition:dual-canonical} Let $(\mathcal{A},\mu_{B},\mu_{C},\bpsib,\cphic)$ be a measured regular multiplier Hopf algebroid, where $\mathcal{A}$ is locally projective. Then there exist unique linear bijections $\hat T_{\rho}$, $\hat T_{\lambda}$, $_{\lambda}\hat T$, $_{\rho}\hat T$ that make the following diagrams commute,
   \begin{align*}
     \xymatrix@C=20pt@R=12pt{
      \hAcA \ar[r]^{\hTr} \ar@{^(->}[d] & \hAlA  \ar@{^(->}[d] &
      \hAbA \ar[r]^{\Sigma}\ar@{^(->}[d]  &  \hat A^{C} \otimes {^{C}\hat A} \ar[r]^{\hTl}  & \hAlA
      \ar@{^(->}[d]  
      \\
      \dual{(\ArA)} \ar[r]^{\dual{(\lT)}} & \dual{(\AcA)} &
      \dual{(\AlA)} \ar[r]^{\dual{(\Tl)}} & \dual{(A^{B} \otimes {^{B}A})} \ar[r]^{\dual{\Sigma}} & \dual{(\AcA)} \\
      \hAbA \ar[r]^{\hlT} \ar@{^(->}[d] & \hArA \ar@{^(->}[d] &
      \hAcA \ar[r]^{\Sigma} \ar@{^(->}[d] & \hat A^{B} \otimes {^{B}\hat A}  \ar[r]^{\hrT}  & \hArA \ar@{^(->}[d]
      \\
      \dual{(\AlA)} \ar[r]^{\dual{(\Tr)}} & \dual{(\AbA)} &
      \dual{(\ArA)} \ar[r]^{\dual{(\rT)}}&  \dual{(A^{C} \otimes {^{C}A})} \ar[r]^{\dual{\Sigma}} & \dual{(\AbA)} 
    } 
\end{align*}
that is, such  that for all $a,b\in A$ and $\upsilon,\omega \in \hat{ A}$,
\begin{align*}
  (\hat T_{\lambda}(\upsilon \otimes \omega)\mid a \otimes b) & = 
   (\omega \otimes \upsilon\mid T_{\lambda}(b\otimes a)) = \omega((\upsilon \actright a)b), 
   \\
   (\hat T_{\rho}(\upsilon \otimes \omega)\mid a\otimes b) &= (\upsilon \otimes \omega\mid {_{\lambda}T}(a\otimes b))
 = \upsilon(a(\omega \actright b)),  \\
   ({_{\lambda} \hat T}(\upsilon \otimes \omega)\mid a\otimes b) &= (\upsilon \otimes \omega\mid T_{\rho}(a\otimes b))
= \omega((a \actleft \upsilon)b), \\
   ({_{\rho}\hat T}(\upsilon \otimes \omega)\mid a\otimes b) &= (\omega \otimes \upsilon\mid {_{\rho}T}(b\otimes a))
= \upsilon(b(a  \actleft \omega)).
   \end{align*}
 \end{proposition}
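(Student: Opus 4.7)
The plan is to identify each of $\hTl$, $\hTr$, $\hlT$, $\hrT$ as the (co)restriction of the dual of the corresponding canonical map $\Tl$, $\Tr$, $\lT$, $\rT$ with respect to the embeddings of Lemma~\ref{lemma:pairing}. Since those pairings are non-degenerate, the embeddings are injective, so uniqueness of each $\hat{T}_{*}$ making the relevant diagram commute is immediate.

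First I would verify the computational side of the stated formulas. Taking $\hTl$ as a model, I would unfold
\[
(\omega \otimes \upsilon \mid \Tl(b \otimes a)) = (\omega \otimes \upsilon \mid \Delta_{C}(a)(b \otimes 1))
\]
through the pairing \eqref{eq:pairing-ala} and the characterization $(\upsilon \actright a)b = (\id \odot \cupsilon)(\Delta_{C}(a)(b \otimes 1))$ from Proposition~\ref{proposition:act}; this produces $\omega((\upsilon \actright a)b)$ directly. The three remaining formulas are obtained the same way, each matching one of the pairings in Lemma~\ref{lemma:pairing} with one canonical map and its associated slice formula in Proposition~\ref{proposition:act}.

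The hard part will be showing that the functional $F \colon a \otimes b \mapsto \omega((\upsilon \actright a)b)$ on $\AcA$ actually corresponds, via the embedding $\hAlA \hookrightarrow \dual{(\AcA)}$ coming from \eqref{eq:pairing-aba}, to a genuine element of the balanced tensor product $\hAlA$, not merely a functional on $\AcA$. Here I would invoke Lemma~\ref{lemma:compact}: the span of maps $a \mapsto (\upsilon \actright a)b$ coincides with the span of maps $a \mapsto \cupsilon(a) b$, so $F$ can be rewritten as a finite sum $F(a \otimes b) = \sum_{i} \omega({_{C}\upsilon_{i}}(a)\, b_{i})$ with $\upsilon_{i} \in \hat{A}$ and $b_{i} \in A$. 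Translating this back through the pairing formula \eqref{eq:pairing-aba}, using the factorization $\omega = c \cdot \phi$ together with Lemma~\ref{lemma:eval}, and invoking local projectivity of $\hat{A}$ (Lemma~\ref{lemma:dual-bimodule}) to exclude ambiguity, one exhibits $F$ as coming from a genuine element of $\hAlA$.

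Finally, bijectivity follows from bijectivity of the canonical maps together with a symmetry argument: applying the same construction to the inverse of $\Tl$, which via \eqref{dg:galois-antipode} is expressible in terms of the remaining canonical maps composed with the antipode, produces the inverse of $\hTl$. The three remaining maps $\hTr$, $\hlT$, $\hrT$ are handled by the same procedure mutatis mutandis.
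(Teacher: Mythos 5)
Your overall strategy is the same as the paper's: uniqueness from non-degeneracy of the pairings in Lemma \ref{lemma:pairing}, the explicit formulas from Proposition \ref{proposition:act}, and existence (i.e.\ membership of the functional in the embedded copy of the balanced tensor square of $\hat{A}$) from Lemma \ref{lemma:compact}. But the one step you yourself flag as the hard part is exactly where your argument breaks. Lemma \ref{lemma:compact} decomposes the single endomorphism $a\mapsto(\upsilon\actright a)b$, for \emph{fixed} $b$, as a finite sum of maps $a\mapsto{_{C}\upsilon_{i}}(a)b_{i}$, and the elements $\upsilon_{i},b_{i}$ depend on $b$. Consequently the identity $F(a\otimes b)=\sum_{i}\omega({_{C}\upsilon_{i}}(a)b_{i})$ that you write down cannot hold as an identity of functionals on $\AcA$: its right-hand side does not depend on $b$ at all, while the left-hand side does. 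A $b$-dependent family of decompositions does not exhibit $F$ as an element of $\hAlA$.

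The repair is to reverse the order of your last two steps, which is precisely what the paper does. First factor $\omega$ through the total integral, say $\omega=\phi\cdot c$, so that $F(a\otimes b)=\phi\bigl(c(\upsilon\actright a)b\bigr)$; then apply Lemma \ref{lemma:compact} to the map $a\mapsto c(\upsilon\actright a)$, which involves only the fixed data $c,\upsilon$ and not the variable $b$. This gives $c(\upsilon\actright a)=\sum_{i}c_{i}\,(\upsilon_{i})_{C}(a)$ uniformly in $a$, hence
\begin{align*}
  F(a\otimes b)=\sum_{i}\phi\bigl(c_{i}(\upsilon_{i})_{C}(a)b\bigr)=\sum_{i}(\phi\cdot c_{i})\bigl((\upsilon_{i})_{C}(a)b\bigr)
\end{align*}
for all $a,b$, which is the pairing of $\sum_{i}\upsilon_{i}\otimes(\phi\cdot c_{i})\in\hAlA$ with $a\otimes b$ via \eqref{eq:pairing-aca} (not \eqref{eq:pairing-aba}, which is the pairing against $\AbA$). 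With this reordering your argument coincides with the paper's. On bijectivity the paper is essentially silent; your suggestion of building inverses from \eqref{dg:galois-antipode} would work, but it is simpler to note that Lemma \ref{lemma:compact} is an equality of spans in both directions, so running the same argument backwards gives surjectivity, while injectivity follows from non-degeneracy of the pairings and bijectivity of $\Tl,\Tr,\lT,\rT$.
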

 \begin{proof}
   Uniqueness is  clear. Let us prove existence of $_{\lambda}\hat T$; the other maps can be treated similarly. Choose $\upsilon,\omega \in \hat A$ and $a,b\in A$.   Then by \eqref{eq:pairing-ala},
  \begin{align*}
    (\upsilon \otimes \omega\mid \Tr(a \oo b)) &=         \omega((\upsilon^{C} \odot \iota)(\Delta_{C}(a)(1\otimes b)))= \omega((a \actleft \upsilon)b).
  \end{align*}
We write $\omega$ in the form $\psi \cdot c$, use Lemma  \ref{lemma:compact}, and find $c_{i} \in A$ and $\upsilon_{i} \in \hat A$ such that the expression above becomes equal to
\begin{align*}
\sum_{i}  \psi(c_{i}(\upsilon_{i})_{B}(a)b) = \sum_{i} ((\psi \cdot c_{i}) \otimes  \upsilon_{i}\mid a \otimes b)
\end{align*}
for all $a,b\in A$, where we used \eqref{eq:pairing-aba}. 
 \end{proof}
\begin{remark} \label{remark:slice-htr}
Later, we need the following relation.
For all $\upsilon,\omega \in \hat A$ and $b\in A$,
\begin{align*}
  (\id \odot {_{B}\dual{b}})(\hTr(\upsilon \otimes \omega)))(b) &= \upsilon (- (\omega \actright b)),
\end{align*}
because $(\id \odot {_{B}\dual{b}})(\hTr(\upsilon \otimes \omega))(a) =
    (\dual{a} \underset{\mu_{B}}{\otimes} \dual{b})(\hTr(\upsilon
    \otimes \omega)) = \upsilon(a(\omega \actright b))$  for all $a\in
    A$.
\end{remark}

The last missing ingredient for our first main  theorem is  following result.
\begin{lemma} \label{lemma:comult-aux}
Write ${\hat{T}}_{\rho}(\upsilon\otimes \omega)=\sum_{i} \upsilon_{i}\otimes \omega_{i}$. Then for all $b,c\in A$,
\begin{align*}
  \sum_{i} (\upsilon_{i} \actright b)(\omega_{i} \actright c) &=
\upsilon\actright(b(\omega\actright c)).
\end{align*}
  \end{lemma}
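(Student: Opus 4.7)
The plan is to test the identity by right-multiplying both sides by an arbitrary $a \in A$ and invoking non-degeneracy of $A$ as a right $A$-module. This reduces the claim to an equality between elements of $A$ that can be derived from the explicit formulas for the actions $\actright$ and the defining property of $\hat T_\rho$ established in Proposition \ref{proposition:dual-canonical}.

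For the right-hand side, I would use the formula $(\upsilon \actright y)a = (\id \odot {}_C\upsilon)(\Delta_C(y)(a \otimes 1))$ with $y = b(\omega \actright c)$. Expanding $b(\omega \actright c) = (\id \odot {}^B\omega)((b \otimes 1)\Delta_B(c))$ and using the $B$-equivariance of $\Delta_C$ in \eqref{eq:left-comultiplication-bilinear} to commute the $\Delta_C$ past the ${}^B\omega$-slice, one obtains a triple-slice expression involving $(\Delta_C \otimes \id)((b \otimes 1)\Delta_B(c))(a \otimes 1 \otimes 1)$; the mixed coassociativity relation \eqref{eq:mixed-coassociativity-two} lets me rewrite this in a form where the ${}_C\upsilon$- and ${}^B\omega$-slices are applied separately.

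For the left-hand side, I would iterate the formulas $(\omega_i \actright c)a = (\id \odot {}_C\omega_i)(\Delta_C(c)(a \otimes 1))$ and $(\upsilon_i \actright b)p = (\id \odot {}_C\upsilon_i)(\Delta_C(b)(p \otimes 1))$, arriving at a double-slice expression of the form $\sum_{i,j,k} r_{j,k} \cdot {}_C\upsilon_i(s_{j,k}) \cdot {}_C\omega_i(q_j)$, where $\sum_j p_j \otimes q_j = \Delta_C(c)(a \otimes 1)$ and $\sum_k r_{j,k} \otimes s_{j,k} = \Delta_C(b)(p_j \otimes 1)$. The bridge between the two sides is then the scalar identity $\sum_i \upsilon_i(p \cdot {}_C\omega_i(q)) = \upsilon(p(\omega \actright q))$ from Proposition \ref{proposition:dual-canonical}, applied with $p$ and $q$ running over the triples coming from the two $\Delta_C$-expansions; local projectivity of $\bA, \Ab, \cA, \Ac$ ensures that this scalar identity lifts to the required equality of $A$-valued slices.

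The main obstacle will be the careful bookkeeping of the balanced tensor products (over $B$ on the $\Delta_B$-side and over $C$ on the $\Delta_C$-side), together with the precise application of mixed coassociativity needed to align the two triple-tensor expressions; the conversion between the $\Delta_C \otimes \Delta_B$-picture coming from the RHS and the $\Delta_C \otimes \Delta_C$-picture coming from the LHS is exactly the content of the duality between $_\lambda T$ and $\hat T_\rho$.
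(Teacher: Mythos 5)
Your strategy is sound and would go through, but it is organized quite differently from the paper's proof. The paper tests the identity against functionals $u(a\,\cdot\,)$ with $u\in \hat A$ and $a\in A$, so that everything becomes scalar-valued, and then the entire computation collapses into one application of the pentagon relation $(\lT)_{12}(\lT)_{23}=(\lT)_{23}(\lT)_{13}(\lT)_{12}$ from \cite[Proposition 2.8]{timmermann:regular}: the factor $(\lT)_{23}$ is traded for $\hTr$ by the duality of Proposition \ref{proposition:dual-canonical}, and the remaining factors are read back as iterated actions. You instead multiply by $a\in A$ on the right, keep $A$-valued slice expressions throughout, and replace the pentagon by its raw ingredients (bilinearity, mixed coassociativity, and multiplicativity of $\Delta_{C}$ on multipliers via Lemma \ref{lemma:comult-extension}). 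What the pentagon buys is precisely the bookkeeping you identify as the main obstacle: all balanced-tensor and multiplier-extension issues are absorbed into a single identity that is already proved elsewhere. Your route buys nothing extra here, but it does make visible exactly which coassociativity axiom is doing the work.

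Three corrections to the outline. First, the coassociativity you need is \eqref{eq:mixed-coassociativity-one}, not \eqref{eq:mixed-coassociativity-two}: your triple-tensor expression applies $\Delta_{C}$ to the first leg of $(b\otimes 1)\Delta_{B}(c)$, and after factoring out $\Delta_{C}(b)$ by multiplicativity, \eqref{eq:mixed-coassociativity-one} is what converts $(\Delta_{C}\otimes\id)\circ\Delta_{B}$ into $(\id\otimes\Delta_{B})\circ\Delta_{C}$; equation \eqref{eq:mixed-coassociativity-two} governs $\Delta_{B}$ applied to the first leg of a $\Delta_{C}$-expression, which never occurs here. (Alternatively, expanding $\omega\actright c$ as a right multiplier via the $\Delta_{C}$-formula lets you use plain coassociativity \eqref{eq:left-comultiplication-coassociative} instead.) Second, your displayed double-slice expression $\sum_{i,j,k} r_{j,k}\cdot {_{C}\upsilon_{i}}(s_{j,k})\cdot {_{C}\omega_{i}}(q_{j})$ must be read with ${_{C}\omega_{i}}(q_{j})$ \emph{inside} the argument, i.e.\ as $\sum_{i,j,k} t_{C}\bigl({_{C}\upsilon_{i}}(s_{j,k}\,{_{C}\omega_{i}}(q_{j}))\bigr)r_{j,k}$: since ${_{C}\upsilon_{i}}$ is only a left $C$-module map, the factor cannot be pulled outside, and the nested form is exactly what your bridge identity consumes. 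Third, the lift of the scalar bridge $\sum_{i}\upsilon_{i}(p\,{_{C}\omega_{i}}(q))=\upsilon(p(\omega\actright q))$ to the $C$-valued identity $\sum_{i}{_{C}\upsilon_{i}}(p\,{_{C}\omega_{i}}(q))={_{C}\upsilon}(p(\omega\actright q))$ follows from faithfulness of $\mu_{C}$ (apply $\mu_{C}(y\,\cdot\,)$ and use the scalar bridge with $yp$ in place of $p$), not from local projectivity; local projectivity only enters through the non-degeneracy of the pairings in Lemma \ref{lemma:pairing}, which this direction of the argument does not need.
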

  \begin{proof}
Let $a\in A$ and $u\in \hat{A}$. Then the pentagon relation for $\lT$ \cite[Proposition 2.8]{timmermann:regular} implies
\begin{align*}
u(a(\upsilon\actright(b(\omega\actright c))))
&=(u \mbtimes \upsilon \mbtimes \omega)((\lT)_{12}(\lT)_{23}(a\otimes b\otimes c)) \\
&=(u \mbtimes \upsilon \mbtimes \omega)((\lT)_{23}(\lT)_{13}(\lT)_{12}(a\otimes b\otimes c)) \\
 &=
\sum_{i} (u \mbtimes \upsilon_{i} \mbtimes \omega_{i})((\lT)_{13}(\lT)_{12}(a\otimes b\otimes c))\\
&=    \sum_{i} u(a(\upsilon_{i} \actright b)(\omega_{i} \actright c)). \qedhere
\end{align*}
  \end{proof}

 Define the extended Takeuchi products
\begin{align*}
  \hat A_{C} \overline{\times} {^{C}\hat A} \subseteq \End(\hat A_{C} \otimes {^{C}\hat A})^{\op} \quad\text{and}\quad
  \hat A^{B} \overline{\times} {_{B}\hat A} \subseteq \End(\hat A^{B} \otimes {_{B}\hat A})
\end{align*}
 similarly as in Subsection \ref{section:multiplier-bialgebroids}.
 \begin{theorem} \label{theorem:dual-hopf-algebroid} Let $(\mathcal{A},\mu_{B},\mu_{C},\bpsib,\cphic)$ be a measured regular multiplier Hopf algebroid, where $\mathcal{A}$ is locally projective. Then there exist unique maps
 \begin{align*}
    \hat \Delta_{C} \colon \hat A \to \hat A_{C} \overline{\times} {^{C}\hat A} \quad \text{and} \quad
    \hat \Delta_{B} \colon \hat A \to \hat A^{B} \overline{\times} {_{B}\hat A}
  \end{align*}
 such that  for all $\upsilon,\omega\in \hat A$ and $a,b\in A$
  \begin{align} \label{eq:dual-deltab}
    ( \hat \Delta_{B}(\upsilon)(1 \otimes \omega)\mid a\otimes b) &=
    (\upsilon \otimes \omega\mid (a \otimes 1)\Delta_{B}(b)), \\ \label{eq:dual-deltac}
    ((\upsilon \otimes 1)\hat \Delta_{C}(\omega) \mid  a\otimes b) &= (\upsilon \otimes \omega\mid \Delta_{C}(a)(1 \otimes b)),
  \end{align}
  and the tuple $\hat{\mathcal{A}}:= (\hat A,C,B,t_{B}^{-1},t_{C}^{-1},\hat \Delta_{C},\hat \Delta_{B})$ is a regular multiplier Hopf algebroid which is locally projective.
\end{theorem}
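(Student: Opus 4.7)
The plan is to build $\hat\Delta_C$ and $\hat\Delta_B$ out of the four bijective canonical maps $\hTl,\hTr,\hlT,\hrT$ constructed in Proposition~\ref{proposition:dual-canonical}, and then to verify the axioms of a multiplier bialgebroid and of a Hopf algebroid by systematically dualizing the corresponding facts about $\mathcal{A}$ through the pairings of Lemma~\ref{lemma:pairing}. Uniqueness of the elements $\hat\Delta_B(\upsilon)(1\otimes\omega)$, $\hat\Delta_B(\omega)(\upsilon\otimes 1)$, $(\upsilon\otimes 1)\hat\Delta_C(\omega)$ and $(1\otimes\omega)\hat\Delta_C(\upsilon)$ in the appropriate balanced tensor products follows at once from the non-degeneracy of those pairings. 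For existence, I would declare these four expressions to equal $\hTr(\upsilon\otimes\omega)$, $\hTl(\upsilon\otimes\omega)$, $\hlT(\upsilon\otimes\omega)$ and $\hrT(\upsilon\otimes\omega)$ respectively, so that \eqref{eq:dual-deltab}--\eqref{eq:dual-deltac} are forced by the formulas of Proposition~\ref{proposition:dual-canonical}.

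The first substantive step is to show that, for each fixed $\omega$, these four partial definitions assemble into a single element of the extended Takeuchi product $\hat A^B\overline{\times}{_B\hat A}$ (respectively $\hat A_C\overline{\times}{^C\hat A}$). The key compatibility, of the form $(\upsilon\otimes 1)(\hat\Delta_B(\omega)(1\otimes\omega'))=((\upsilon\otimes 1)\hat\Delta_B(\omega))(1\otimes\omega')$, is exactly what Lemma~\ref{lemma:comult-aux} is designed to give: evaluating both sides against an element $a\otimes b$ via Lemma~\ref{lemma:pairing} and Remark~\ref{remark:slice-htr} reduces the identity to the pentagon relation proved there. The three companion identities are handled in the same way by dualizing the pentagon relations for $\Tl,\Tr,\rT$ established in \cite{timmermann:regular}.

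Once $\hat\Delta_C,\hat\Delta_B$ are in place, I would check the axioms of Definition~\ref{definition:mult-hopf-algebroid} in order. The commutativity of $B$ and $C$ inside $M(\hat A)$ and the non-degeneracy and idempotency of the various $B$- and $C$-module structures on $\hat A$ come from Lemma~\ref{lemma:dual-bimodule}; the bilinearity identities \eqref{eq:left-comultiplication-bilinear}--\eqref{eq:right-comultiplication-bilinear} for $\hat\Delta_C,\hat\Delta_B$ translate, through the defining pairings, into the bilinearity built into \eqref{eq:pairing-ala}--\eqref{eq:pairing-aba} together with \eqref{eq:act-bimodule}; and the four coassociativity conditions \eqref{eq:left-comultiplication-coassociative}--\eqref{eq:mixed-coassociativity-two} for $\hat{\mathcal{A}}$ reduce, via \eqref{eq:dual-pairing}, to the corresponding pure and mixed coassociativity identities for $\Delta_B,\Delta_C$ in $\mathcal{A}$, again combined with the pentagon relations for the canonical maps. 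Condition (H1) is then essentially free: the canonical maps of $\hat{\mathcal{A}}$ \emph{are} the bijections $\hTl,\hTr,\hlT,\hrT$, and the fullness condition on the image ideals of the dual counits should be obtained from surjectivity of $\ceps$ and $\epsb$ (Remark~\ref{remark:counits-surjective}) through the identifications of Lemma~\ref{lemma:dual-bimodule}. Local projectivity of $\hat{\mathcal{A}}$ is immediate because firmness of $C,B$ is part of the standing hypothesis on $\mathcal{A}$ and the one-sided $\hat A$-modules identify, by Lemma~\ref{lemma:dual-bimodule}, with $_CA$, $A^C$, ${^BA}$, $A_B$, all locally projective by assumption.

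I expect the main obstacles to be two. The first is the Takeuchi compatibility: here Lemma~\ref{lemma:comult-aux} and its three analogues for the remaining canonical maps genuinely enter, and one must carefully track which of the four pairings from Lemma~\ref{lemma:pairing} is being applied to which factor. The second is the two \emph{mixed} coassociativity conditions \eqref{eq:mixed-coassociativity-one}--\eqref{eq:mixed-coassociativity-two} for $\hat{\mathcal{A}}$: they involve both $\hat\Delta_C$ and $\hat\Delta_B$ simultaneously, so they intertwine all four of $\hTl,\hTr,\hlT,\hrT$ as well as all four pairings \eqref{eq:pairing-ala}--\eqref{eq:pairing-aba}, and the real work is the bookkeeping needed to match the two sides.
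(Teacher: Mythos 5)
Your architecture matches the paper's: define $\hat\Delta_B(\upsilon)(1\otimes\omega)$, $\hat\Delta_B(\omega)(\upsilon\otimes 1)$, $(\upsilon\otimes 1)\hat\Delta_C(\omega)$, $(1\otimes\omega)\hat\Delta_C(\upsilon)$ to be $\hTr(\upsilon\otimes\omega)$, $\hTl(\upsilon\otimes\omega)$, $\hlT(\upsilon\otimes\omega)$, $\hrT(\upsilon\otimes\omega)$, get uniqueness from non-degeneracy of the pairings, and verify the axioms by dualization. But two of your reductions are misassigned, and one of them leaves a genuine gap. You never verify that $\hat\Delta_B$ and $\hat\Delta_C$ are algebra \emph{homomorphisms}, which is part of Definition \ref{definition:mult-hopf-algebroid} and is the one place where Lemma \ref{lemma:comult-aux} is actually needed: writing $\hat\Delta_B(\upsilon)(1\otimes\omega)=\sum_i\upsilon_i\otimes\omega_i$, multiplicativity comes down to $\sum_i(\upsilon_i\actright b)(\omega_i\actright c)=\upsilon\actright(b(\omega\actright c))$, i.e.\ to the pentagon relation for $\lT$. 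You instead spend Lemma \ref{lemma:comult-aux} on the Takeuchi compatibility $(\hTr(\upsilon\otimes\omega))(u\otimes 1)=(\hTl(u\otimes\upsilon))(1\otimes\omega)$, but that identity needs no pentagon at all: pairing either side against $a\otimes b$ yields $\upsilon((u\actright a)(\omega\actright b))$ directly from the definition of the product on $\hat A$ and the formulas of Proposition \ref{proposition:dual-canonical} (this is the computation \eqref{eq:dual-deltab-1} in the paper). So as written, your plan over-uses the pentagon where it is not needed and omits the step where it is indispensable.

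Second, coassociativity of $\hat\Delta_B$ does \emph{not} reduce to the coassociativity conditions \eqref{eq:left-comultiplication-coassociative}--\eqref{eq:mixed-coassociativity-two} for $\mathcal{A}$: since $\hat\Delta_B$ dualizes the \emph{multiplication} of $A$ (the convolution actions $\actright$, $\actleft$), its coassociativity dualizes the associativity of that multiplication, not the coassociativity of $\Delta_B$. Concretely, by local projectivity it suffices to check that the slice maps $\upsilon\mapsto\upsilon((u\actright a)\,-\,)$ and $\upsilon\mapsto\upsilon(-\,(\omega\actright c))$ commute, which is immediate; if you try to pair $(\hat\Delta_B\otimes\id)\hat\Delta_B(\upsilon)$ against $a\otimes b\otimes c$ and land on \eqref{eq:right-comultiplication-coassociative}, the bookkeeping will not close up. The remaining points are essentially right (bilinearity via \eqref{eq:act-bimodule}, local projectivity from Lemma \ref{lemma:dual-bimodule}, and (H1) from bijectivity of $\hTl,\hTr,\hlT,\hrT$), except that fullness of the ideals $_{C}\hat I$, $\hat I^{C}$, $\hat I_{B}$, $^{B}\hat I$ is obtained from Lemma \ref{lemma:eval} together with surjectivity of the partial integrals $\cphic$ and $\bpsib$, not from surjectivity of the counits.
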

We call $\hat{\mathcal{A}}$ the \emph{dual regular
multiplier Hopf algebroid}  of  $(\mathcal{A},\mu_{B},\mu_{C},\bpsib,\cphic)$.
\begin{proof}
By Lemma \ref{lemma:dual-bimodule}, $\hat{A}$ is locally projective.

In the following, we use the formulas given in  Proposition
\ref{proposition:dual-canonical} without further notice.

To prove existence of a linear map $\hat \Delta_{B}$ satisfying \eqref{eq:dual-deltab}, it suffices to show that
\begin{align*}
  \hTr(\upsilon \otimes \omega)(u \otimes 1) = \hTl(u \otimes \upsilon)(1 \otimes \omega)
\end{align*}
for all $u,\upsilon,\omega \in \hat A$. To prove this relation, note
 that for all $u',\upsilon',\omega' \in \hat{ A}$ and elementary
 tensors $a\otimes b\in \AcA$,
\begin{align*}
(\upsilon'u  \underset{\mu_{C}}{\otimes} \omega' \mid a\otimes b) &=
(\upsilon' u)(a\comega'(b)) = \upsilon'((u \actright a)\comega'(b)) =
 (\upsilon' \underset{\mu_{C}}{\otimes} \omega'\mid (u \actright a) \otimes
 b)  
\end{align*}
 and similarly for all  $a\otimes b\in \AbA$,
\begin{align*}
  (u'   \underset{\mu_{C}}{\otimes} \upsilon'\omega \mid a\otimes b) &= (u'  \underset{\mu_{C}}{\otimes} \upsilon' \mid a\otimes (\omega \triangleright b)),
\end{align*}
 whence for all $a,b\in A$,
\begin{align} \label{eq:dual-deltab-1}
  \begin{aligned}
    (\hTr(\upsilon \otimes \omega)(u \otimes 1)\mid a\otimes b)& =
    (\hTr(\upsilon \otimes \omega)\mid  (u\actright a) \otimes b)   \\
    &= \upsilon((u \actright a)(\omega \actright b))  \\
    &= (\hTl(u \otimes \upsilon)\mid a \otimes (\omega \actright b)) =
    (\hTl(u \otimes \upsilon)(1 \otimes \omega)\mid a\otimes b).
  \end{aligned}
\end{align}
  To see that the map $\hat \Delta_{B}$ is a homomorphism, write
\begin{align*}
  \hat\Delta_{B}(\upsilon)(1\otimes \omega)&=\sum_{i} \upsilon_{i}\otimes \omega_{i}.
\end{align*}
Then  \eqref{eq:dual-deltab-1} and  Lemma \ref{lemma:comult-aux} imply
\begin{align*}
( \hat\Delta_{B}(u)\hat\Delta_{B}(\upsilon)(1\otimes \omega)\mid b\otimes c) &=
\sum_{i}u((\upsilon_{i}\actright b)(\omega_{i}\actright c))\\
&=u(\upsilon\actright(b(\omega\actright c))) \\
&=(u\upsilon)(b(\omega \actright c)) =(\hat\Delta_{B}(u\upsilon)(1\otimes \omega)  \mid b\otimes c).
\end{align*}

Similar arguments prove existence of a  homomorphism $\hat \Delta_{C}$  satisfying \eqref{eq:dual-deltac}.

We claim that
$\hat{\Delta}_{B}(y\upsilon x) = (x\otimes 1)\hat{\Delta}_{B}(\upsilon)(1\otimes
y)$ for all $x\in B$, $y\in C$ and $\upsilon \in \hat{A}$.
 Fix $\omega\in \hat{A}$, $a,b\in
A$ and write $\hat{\Delta}_{B}(\upsilon)(1\otimes \omega) = \sum_{i}
\upsilon_{i} \otimes \omega_{i}$. Then
\begin{align*}
  (\hat{\Delta}_{B}(y\upsilon x)(1 \otimes \omega)\mid a\otimes b) &= \upsilon(xa (\omega\triangleright b)y) \\ &= \upsilon(xa (\omega \triangleright by)) =
(\hat{\Delta}_{B}(\upsilon)(1\otimes \omega)\mid xa\otimes by)
\end{align*}
  by \eqref{eq:act-bimodule} and \eqref{eq:dual-module},
and
\begin{align*}
  \sum_{i}(\upsilon_{i}x \otimes y\omega_{i}\mid a\otimes b) &= \sum_{i} \upsilon_{i}(xa {_{C}\omega_{i}}(by)) = \sum_{i}(\upsilon_{i} \otimes \omega_{i}\mid xa \otimes by)
\end{align*}
 by  \eqref{eq:pairing-aca}. The claim follows.
 
A similar argument shows that  $\hat{\Delta}_{C}(y\upsilon x) = (x\otimes 1)\hat{\Delta}_{C}(\upsilon)(1\otimes
y)$ for all $x\in B$, $y\in C$ and $\upsilon\in \hat{A}$.

Next, we prove  coassociativity  of $\hat\Delta_{B}$, which means that for all $u,\upsilon,\omega \in \hat A$, 
\begin{align*}
  (\hat \Delta_{B} \otimes\id)(\hat\Delta_{B}(\upsilon)(1\otimes \omega)) (u \otimes 1\otimes 1) =
  (\id \otimes \hat \Delta_{B})(\hat \Delta_{B}(\upsilon)(u \otimes 1))(1 \otimes 1 \otimes w).
\end{align*}
Since $\hat A$ is locally projective as a module over $B$ and $C$, it
suffices to show that this relation holds application of a slice map
of the form $(\dual{a})^{B} \odot \id \odot {_{B}\dual{c}}$, or, equivalently, that the maps
\begin{align*}
  \upsilon \mapsto ((\dual{a})^{B} \odot \id)(\hat \Delta_{B}(\upsilon)(u \otimes 1)) \quad \text{and} \quad
  \upsilon \mapsto (\id \odot {_{B}\dual{c}})(\hat \Delta_{B}(\upsilon)(1 \otimes \omega))
\end{align*}
commute. But \eqref{eq:dual-deltab-1} shows that these maps are given by
$\upsilon \mapsto \upsilon((u \actright a)-)$ and $\upsilon \mapsto \upsilon(-(\omega \actright c))$, respectively. Hence, $\Delta_{B}$ is coassociative.

Similar arguments prove coassociativity of $\hat \Delta_{C}$ and mixed coassociativity.

Finally, consider the subspaces
\begin{align*}
  {_{C}\hat I} &= \langle {_{C}f} (\omega) :  f \in \dual{({_{C}\hat A})}, \omega \in \hat A\rangle, &
  {\hat I^{C}} &= \langle f^{C}(\omega) : f \in \dual{({\hat A^{C}})}, \omega \in  \hat A\rangle, \\
  \hat I_{B} &= \langle f_{B}(\omega)  : f \in \dual{(\hat A_{B})}, \omega \in \hat A\rangle, &
  {^{B}\hat I} &= \langle {^{B}f}(\omega)  : f\in \dual{(^{B}\hat A)}, \omega \in \hat A\rangle
\end{align*}
of $C$ and $B$, respectively.  Using Lemma \ref{lemma:eval} and surjectivity of the maps $\cphic$ and $\bpsib$, we see that $  {_{C}\hat I}=C=  {\hat I^{C}}$ and  $\hat I_{B} =   {^{B}\hat I}$.

Since the canonical maps $\hTl,\hTr,\hlT,\hrT$ are bijective, we can conclude that $\hat{\mathcal{A}}$  is a regular multiplier Hopf algebroid as claimed.
\end{proof}
Let us next consider the involutive case.
\begin{theorem}
  If   $(\mathcal{A},\mu_{B},\mu_{C},\bpsib,\cphic)$  is a measured multiplier Hopf
  $*$-algebroid, then the dual $\hat{\mathcal{A}}=(\hat A,C,B,t_{B}^{-1},t_{C}^{-1},\hat\Delta_{C},\hat\Delta_{B})$ is a multiplier Hopf
  $*$-algebroid with involution given by $\omega^{*}(a):=\overline{\omega(S(a)^{*})}$ for all  $\omega \in\hat A$ and $a\in A$.  
\end{theorem}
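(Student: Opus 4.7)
The plan is to verify the three conditions of Definition \ref{definition:involution} for $\hat{\mathcal{A}}$, building on Proposition \ref{proposition:dual-involution} and Theorem \ref{theorem:dual-hopf-algebroid}. Condition (1) is immediate: Proposition \ref{proposition:dual-involution} already establishes that $\hat{A}$ is a $*$-algebra under the prescribed involution and that the embeddings $B,C \hookrightarrow M(\hat{A})$ are $*$-homomorphisms, so $B$ and $C$ are $*$-subalgebras of $M(\hat{A})$. Condition (2) for $\hat{\mathcal{A}}$, whose base anti-isomorphisms are $t_{B}^{-1}\colon C \to B$ and $t_{C}^{-1}\colon B \to C$, reads $t_{B}^{-1} \circ * \circ t_{C}^{-1} \circ * = \iota_{B}$ and $t_{C}^{-1} \circ * \circ t_{B}^{-1} \circ * = \iota_{C}$; these follow by inverting the corresponding identities for $\mathcal{A}$, and they ensure that the swap $(-)^{*} \otimes (-)^{*}$ passes to the balanced tensor products appearing in condition (3).

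The substantive step is condition (3): for all $u,\upsilon,\omega \in \hat{A}$,
\[
\hat{\Delta}_{C}(\omega^{*})(\upsilon^{*} \otimes u^{*}) = \bigl((\upsilon \otimes u)\hat{\Delta}_{B}(\omega)\bigr)^{(-)^{*} \otimes (-)^{*}}.
\]
My approach is to test this identity against elements $a \otimes b \in A \otimes A$ via the non-degenerate pairings of Lemma \ref{lemma:pairing}. Non-degeneracy of those pairings, together with local projectivity of $\hat{A}$ as a module over $B$ and $C$ (Lemma \ref{lemma:dual-bimodule}), will force equality in the appropriate extended Takeuchi product provided the two sides agree on all such test elements.

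Concretely, I will unfold both sides using the defining relations \eqref{eq:dual-deltac} and \eqref{eq:dual-deltab}: the left-hand side becomes a pairing involving $\Delta_{C}$ applied to a suitable element of $A$, while the right-hand side becomes a pairing involving $\Delta_{B}$. The definition $\omega^{*}(a) = \overline{\omega(S(a)^{*})}$ replaces the arguments of the pairings by elements of the form $S(a)^{*}$ and $S(b)^{*}$, and then an application of $S \circ * \circ S \circ * = \iota_{A}$ from \eqref{eq:counit-antipode-involution} reduces the desired identity precisely to the involution axiom $\Delta_{C}(a^{*})(b^{*} \otimes c^{*}) = ((b \otimes c)\Delta_{B}(a))^{(-)^{*} \otimes (-)^{*}}$ for $\mathcal{A}$, read under conjugation.

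The main obstacle will be bookkeeping. Each factorizable functional on $A$ carries several module structures $\bomega, \omegab, \comega, \omegac$ related through $S$ and the modular automorphisms, and the involution simultaneously swaps left with right and $B$ with $C$, shifting modular data as dictated by \eqref{eq:act-involution} and \eqref{eq:factorisation-dual-bimodule-factorisation}. The delicate task is to identify, on each side, which of the four pairings in Lemma \ref{lemma:pairing} applies and to track the modular corrections introduced when moving between the bimodule structures. No new structural input is required beyond the involution axiom for $\mathcal{A}$ and the identities already established in Proposition \ref{proposition:dual-involution}, Lemma \ref{lemma:act} and the factorisation formulas of Section \ref{section:preliminaries}.
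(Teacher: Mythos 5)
Your proposal is correct and follows essentially the same route as the paper: the algebra-level statements are delegated to Proposition \ref{proposition:dual-involution}, and the compatibility of the involution with the dual comultiplications is verified by pairing against $A\otimes A$, unwinding via \eqref{eq:dual-deltab}--\eqref{eq:dual-deltac}, and invoking the involution axiom for $\mathcal{A}$. The paper packages condition (3) as the single identity $(\ast\otimes\ast)\circ\hTr\circ(\ast\otimes\ast)=\hrT$ on canonical maps; the one ingredient you should name explicitly in your ``bookkeeping'' is \eqref{dg:galois-antipode}, which is needed (in addition to Definition \ref{definition:involution}~(3) and $S\circ\ast\circ S\circ\ast=\iota_{A}$) to move the $S\otimes S$ factors produced by the involution on $\hat{A}$ across the canonical maps of $\mathcal{A}$.
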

\begin{proof}
By Proposition \ref{proposition:dual-involution}, the formula above defines an involution on $\hat A$, and it suffices to show that $(\ast \oo
  \ast) \circ \hTr \circ (\ast \oo \ast) = \hrT$.
Let $\upsilon \otimes \omega \in \hAcA$ and  $a\otimes b \in \ArA$. Then by
  definition of the involution and of the maps $\hTr$ and $\hrT$, 
  \begin{align*}
    (((\ast \oo \ast)\circ \hTr \circ (\ast \oo \ast)) \Sigma (\upsilon \oo
    \omega)\mid a\otimes b)
  \end{align*}
is  equal to 
  \begin{align*}
    (\upsilon \otimes \omega\mid ((* \oo *) \circ (S \oo
    S) \circ \Sigma \circ \lT 
    \circ (S^{-1} \oo S^{-1}) \circ (* \oo *))(a\otimes b)).
\end{align*}
But $\circ (S \oo
    S) \circ \Sigma \circ \lT 
    \circ (S^{-1} \oo S^{-1})  = \Tr \circ \Sigma$ by  \eqref{dg:galois-antipode}, and 
$(*\oo *)\circ  \Tr \circ \Sigma   \circ (* \oo *)= \rT \circ \Sigma $ by Definition \ref{definition:involution} (3). Therefore, the expression above is equal to
\begin{align*} 
(\upsilon \otimes \omega\mid  (\rT \Sigma)(a \otimes b))      &=  ((\hrT \Sigma)(\upsilon \otimes \omega)\mid a\otimes b). \qedhere
  \end{align*}
\end{proof}
Next, we determine the counit and antipode of the dual $\hat{\mathcal{A}}$. 

\begin{theorem} \label{theorem:dual-counits-antipode} Let
  $(\mathcal{A},\mu_{B},\mu_{C},\bpsib,\cphic)$ be a measured regular multiplier Hopf
  algebroid, where $\mathcal{A}$ is locally projective. Then the   antipode
  $\hat S$ and left and right counits $\hbeps$ and $\hepsc$  of the dual regular   multiplier Hopf algebroid
 $(\hat A,C,B,t_{B}^{-1},t_{C}^{-1},\hat \Delta_{C},\hat \Delta_{B})$
are given by
  \begin{align*} 
    \hat S(\omega) &= \omega \circ S, & \hbeps(a \cdot \phi) &= t_{C}(\cphic(a)),
    & \hepsc(\psi \cdot a) &= t_{B}(\bpsib(a)) 
  \end{align*}
  for all $\omega \in
    \hat A$ and $ a\in A$. In particular, the functionals $\mu_{B} \circ \hbeps$ and $\mu_{C} \circ \hepsc$ are equal and coincide with evaluation at $1\in M(A)$.
\end{theorem}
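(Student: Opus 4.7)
The strategy is to define the candidate maps $\hbeps$, $\hepsc$, $\hat{S}$ by the formulas stated and verify that they satisfy the defining axioms of the counits and antipode for $\hat{\mathcal{A}}$; by the uniqueness part of Theorem~5.6 of \cite{timmermann:regular}, this identification then suffices.

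The final claim is a direct calculation. For $a\in A$, the base-weight axiom $\mu_B\circ t_C=\mu_C$ together with the factorization $\phi=\mu_C\circ\cphic$ gives
\[
(\mu_B\circ\hbeps)(a\cdot\phi) \;=\; \mu_B(t_C(\cphic(a))) \;=\; \mu_C(\cphic(a)) \;=\; \phi(a),
\]
and by Lemma~\ref{lemma:eval} the right-hand side equals $\dual{1}(a\cdot\phi)$; the symmetric calculation handles $\mu_C\circ\hepsc$.

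Next I would check well-definedness. The formulas for $\hbeps$ and $\hepsc$ are unambiguous because faithfulness of $\phi$ and $\psi$ ensures that the representations $\omega=a\cdot\phi$ and $\omega=\psi\cdot a$ determine $a$ uniquely, and the maps are bimodule morphisms by \eqref{eq:hata-module} and the bimodule properties of $\cphic$ and $\bpsib$. That $\hat{S}(\omega):=\omega\circ S$ maps $\hat{A}$ into itself follows from Theorem~\ref{theorem:modular-element}: writing $\omega=a\cdot\phi$ and using the anti-multiplicativity of $S$, one obtains $(a\cdot\phi)\circ S=(S^{-1}(a)\delta^{+})\cdot\phi\in\hat{A}$.

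The bulk of the work is verifying the counit axioms and the antipode diagram for $\hat{\mathcal{A}}$. For the left counit axiom of $\hbeps$, namely $(\hbeps\odot\id)(\hat{\Delta}_B(\omega)(1\otimes\upsilon))=\omega\upsilon$, I would pair both sides against an arbitrary $a\in A$, use the description $\hat{\Delta}_B(\omega)(1\otimes\upsilon)=\hTr(\omega\otimes\upsilon)$ from Proposition~\ref{proposition:dual-canonical} (with Remark~\ref{remark:slice-htr}), and reduce — upon writing $\omega=b\cdot\phi$ — to the left-counit property of $\cphic$ inherited from $\ceps$ via the invariance conditions (LI1)–(LI3); the axiom for $\hepsc$ is symmetric via $\hlT$ and the right-invariance of $\bpsib$. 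For the antipode, I would verify the analog of the first diagram in \eqref{dg:antipode} for $\hat{\mathcal{A}}$, i.e.\ $\hat{m}\circ(\hat{S}\otimes\id)\circ\hTr=\hepsc\odot\id$, by pairing against $a\otimes b\in A\otimes A$. Using $\hat{S}(\chi)=\chi\circ S$ and the pairings of Lemma~\ref{lemma:pairing}, this identity unfolds to the statement that $S$ intertwines the canonical maps of $\mathcal{A}$ as recorded in diagrams \eqref{dg:galois-inverse} and \eqref{dg:galois-antipode}, together with the strong invariance relations \eqref{eq:strong-invariance}.

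The principal obstacle is simply bookkeeping: the roles of $B$ and $C$, of the left and right comultiplications, and of the four canonical maps $T_\lambda, T_\rho, {_\lambda T}, {_\rho T}$ are all swapped in $\hat{\mathcal{A}}$, so matching up the dualized diagrams correctly takes care. Moreover, the modular elements $\delta^\pm$ and automorphisms $\sigma^\phi, \sigma^\psi$ arise whenever one converts between the representations $a\cdot\phi$, $\phi\cdot a$, $\psi\cdot a$, $a\cdot\psi$ of elements of $\hat{A}$, and these conversions must be tracked via Theorems~\ref{theorem:modular-automorphism} and~\ref{theorem:modular-element}. Once the correspondences are set up, each verification is routine given the machinery of Section~\ref{section:preliminaries} and the pairings of Lemma~\ref{lemma:pairing}.
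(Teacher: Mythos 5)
Your overall architecture (define the candidate maps, check the axioms, invoke uniqueness of counits and antipode) is sound and your treatment of the counits and of the final claim about $\mu_B\circ\hbeps=\mu_C\circ\hepsc$ matches the paper, which likewise starts from Lemma \ref{lemma:eval} applied to $T=1$ and verifies the counit axiom by pairing against elements of $A$ via Remark \ref{remark:slice-htr}. For the antipode you take a genuinely different route: you propose to verify the defining diagram \eqref{dg:antipode} for the candidate $\omega\mapsto\omega\circ S$, whereas the paper avoids re-proving any axiom by quoting the composition formula $\hat S\otimes\id=(\hlT)^{-1}(\hrT\Sigma)(\hTr)^{-1}$ (Proposition 5.8 of \cite{timmermann:regular}, applied to $\hat{\mathcal{A}}$, whose antipode already exists by Theorem \ref{theorem:dual-hopf-algebroid}) and observing that each dual canonical map is the transpose of the corresponding canonical map of $\mathcal{A}$, so the whole composite is the transpose of $S\otimes\id$. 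The paper's route is shorter because it needs only one identity and no re-verification that $\omega\mapsto\omega\circ S$ is anti-multiplicative or restricts correctly to the base algebras; your route works too but forces you to check those additional points (anti-multiplicativity follows from \eqref{eq:act-antipode}, and the computation $(a\cdot\phi)\circ S\in\hat A$ you sketch is correct in spirit, though the precise element is $(\delta^{+}\sigma^{\phi}(S^{-1}(a)))\cdot\phi$ rather than $(S^{-1}(a)\delta^{+})\cdot\phi$).

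One point should be corrected: the counit verification does not ``reduce to the left-counit property of $\cphic$ inherited from $\ceps$ via (LI1)--(LI3)''. The map $\cphic$ is a partial integral, not a counit, and the invariance conditions are not what is needed here; they are what makes the \emph{counits} $\ceps,\epsb$ of $\mathcal{A}$ into partial \emph{integrals} on $\hat{\mathcal{A}}$ in Theorem \ref{theorem:dual-measured}. The actual endpoint of the reduction is simpler than you expect: after pairing against $b\in A$ and applying Remark \ref{remark:slice-htr}, one gets $\heps\bigl(\upsilon(-(\omega\actright b))\bigr)=\upsilon(\omega\actright b)=(\upsilon\omega)(b)$, which is just the statement that $\heps$ is evaluation at $1\in M(A)$ together with the definition of the convolution product on $\hat A$. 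This is a misattribution rather than a fatal gap, since the machinery you set up (the pairings of Lemma \ref{lemma:pairing}, local projectivity to justify testing against slice maps) is exactly what carries the argument.
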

\begin{proof}
By Lemma \ref{lemma:eval}, evaluation at $1\in M(A)$ defines a functional $\heps \in (\hat A)^{\sqcup}$ such that
 $\hbeps(a\cdot \phi)=t_{C}(\cphic(a))$, $\hepsc(\psi \cdot a) = t_{B}(\bpsib(a)) $ and
$t_{C} (\hceps(a\cdot \phi)) = t_{C} \circ \sigma_{C}(\cphic(a)) =
\hbeps(a\cdot \phi) $ for all $a \in A$.   The map $\hbeps$ is the
left counit for $\hat A$ because by Remark \ref{remark:slice-htr}, 
\begin{align*}
  ((t_{C}\circ \hceps \odot \id)(\hTr(\upsilon \otimes \omega)))(b) &=
  (\heps \underset{\mu_{B}}{\otimes} \dual{b}) (\hTr(\upsilon \otimes \omega)) \\ &=
 \heps(\upsilon(-(\omega \actright b))) = \upsilon(\omega \actright b) = (\upsilon\omega)(b)
\end{align*}
for all $\upsilon,\omega \in \hat A$ and $b\in A$ 
and hence $(t_{C}\circ \hceps \odot \id)(\hTr(\upsilon \otimes \omega)) = \upsilon\omega$.  
A
similar  argument shows that $\hepsb$ is the right counit of $\hat{\mathcal{A}}$.

To prove the formula for  the antipode, we use \cite[Proposition 5.8]{timmermann:regular} to $\mathcal{A}$ and $\hat{\mathcal{A}}$, and find that the
 following diagrams commute:
  \begin{align*}
    \xymatrix@R=18pt{
      \AbA \ar[r]^{\rT\Sigma} \ar[d]_{\Tr} & \ArA  & 
&
      \hAbA \ar[r]^{\hrT\Sigma} \ar[d]_{\hTr} & \hArA\\
      \AlA  \ar[r]^{S\otimes \id}& \AcA \ar[u]_{\lT}& &
      \hAlA \ar[r]^{\hat S\otimes \id}& \hAcA \ar[u]_{\hlT} }
  \end{align*}
But by construction of the maps $\hTr,\hlT$ and $\hrT$, commutativity of the first diagram also implies commutativity of the second diagram with $\hat S$ replaced by the transpose of $S$, i.e.,
\begin{align*}
  (  \hat S(\upsilon) \otimes \omega\mid a \otimes b) &= ((\hlT)^{-1}  (\hrT \Sigma) (\hTr)^{-1}(\upsilon \otimes \omega)\mid a\otimes b) \\
  &= (\upsilon \otimes \omega\mid  (\lT)^{-1}  (\rT \Sigma)  (\Tr)^{-1}(a\otimes b)) = (\upsilon \otimes \omega\mid S(a) \otimes b)
\end{align*}
for all $a,b\in A$ and $\upsilon,\omega\in \hat A$, whence $(\hat S(\upsilon))(a)=\upsilon(S(a))$ for all $\upsilon \in \hat A$ and $a\in A$.
\end{proof}

\subsection{The full duality and biduality}
\label{subsection:dual-integrals}

To obtain a full duality and biduality of measured regular multiplier Hopf algebroids, it remains to construct total integrals on the dual. This can again be done in a similar way as in the case of multiplier Hopf algebras, see \cite{daele}.

\begin{theorem} \label{theorem:dual-measured} Let
  $(\mathcal{A},\mu_{B},\mu_{C},\bpsib,\cphic)$ be a measured regular multiplier Hopf
  algebroid, where $\mathcal{A}$ is locally projective, with dual regular multiplier Hopf algebroid
  $\hat{\mathcal{A}}=(\hat A,C,B,t_{B}^{-1},t_{C}^{-1},\hat \Delta_{C},\hat \Delta_{B})$ and define
$\hcpsic \colon \hat A \to C$ and  $\hbphib \colon \hat A \to B$ by
\begin{align*}
  \hcpsic( a\cdot \phi):= \ceps(a) \quad \text{and}\quad
  \hbphib(\psi \cdot a):= \epsb(a) \quad \text{for all } a\in A.
\end{align*}
 Then $(\hat{\mathcal{A}},\mu_{C},\mu_{B},\hcpsic,\hbphib)$ is a
 measured regular multiplier Hopf algebroid, where $\hat{\mathcal{A}}$
 is locally projective.
Its left total integral  $\hat\phi=\mu_{B} \circ \hbphib$ and right total integral and $\hat\psi=\mu_{C} \circ \hcpsic$ satisfy
  \begin{align} \label{eq:dual-left-integral} \hat \phi((\psi \cdot
    S(a))\omega) &= \omega(a), & \sigma^{\hat\phi}(\psi \cdot a) &=
    (\psi \circ S) \cdot S^{-2}(a), & \sigma^{\hat\phi}(y) &=
    (\sigma^{\psi})^{-1}(y), \\ \label{eq:dual-right-integral}
    \hat\psi (\omega (S(b) \cdot \phi)) &=
    \omega(b),   &
    (\sigma^{\hat\psi})^{-1}(a\cdot \phi)&= S^{-2}(a) \cdot (\phi \circ S), &
    \sigma^{\hat\psi}(x) &= (\sigma^{\phi})^{-1}(x)
  \end{align}
  for all $a,b\in A$, $x\in B$, $y\in C$ and $\omega \in \hat
  A$. 
If   $(\mathcal{A},\mu_{B},\mu_{C},\bpsib,\cphic)$ is a measured multiplier Hopf
  $*$-algebroid, then so is  $(\hat{\mathcal{A}},\mu_{C},\mu_{B},\hcpsic,\hbphib)$, and then
  \begin{align*}
    \hat \psi((a\cdot \phi)^{*}(a \cdot \phi)) &= \phi(a^{*}a)
    &&\text{and} &
    \hat\phi((\psi \cdot a)(\psi \cdot a)^{*}) &= \psi(aa^{*}) &&
    \text{for all } a\in A.
  \end{align*} 
\end{theorem}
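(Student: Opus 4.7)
The overall strategy is to establish the Plancherel-type formulas \eqref{eq:dual-left-integral} and \eqref{eq:dual-right-integral} first and extract the rest of the measured structure from them. Well-definedness of $\hcpsic$ and $\hbphib$ is immediate from faithfulness of $\phi$ and $\psi$, which make $a\mapsto a\cdot \phi$ and $a\mapsto \psi\cdot a$ injective; bimodularity over $C$ and $B$, respectively, follows directly from \eqref{eq:hata-module} together with the bimodularity of $\ceps$ and $\epsb$. The compatibility of the candidate base weight $(\mu_C,\mu_B)$ with the counits of $\hat{\mathcal{A}}$, namely $\mu_C \circ \hepsc = \mu_B \circ \hbeps$, is already contained in Theorem \ref{theorem:dual-counits-antipode}, since both sides coincide with evaluation at $1\in M(A)$.

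For the first Plancherel formula in \eqref{eq:dual-left-integral}, I would write the product $(\psi\cdot S(a))\omega$ as $\psi\cdot c$ with $c = (\omega\circ S^{-1})\actright S(a)$ using \eqref{eq:product-ahat-2}, and compute
\begin{align*}
\hat\phi\bigl((\psi\cdot S(a))\omega\bigr) = \mu_B(\epsb(c)) = \varepsilon(c) = \varepsilon\bigl((\omega\circ S^{-1})\actright S(a)\bigr) = (\omega\circ S^{-1})(S(a)) = \omega(a),
\end{align*}
using the identity $\mu_B\circ\epsb = \mu_C\circ\ceps = \varepsilon$ (from the measured axioms of $\mathcal{A}$) together with the counit relation in \eqref{eq:1}. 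The Plancherel formula for $\hat\psi$ is obtained by the symmetric argument based on \eqref{eq:product-ahat}. Both formulas immediately yield faithfulness of $\hat\phi$ and $\hat\psi$, since $\{\psi\cdot S(a): a\in A\}$ and $\{S(b)\cdot \phi : b\in A\}$both span $\hat{A}$.

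From the Plancherel formulas I then extract the modular automorphisms and the invariance axioms. For the modular automorphism of $\hat\phi$, one computes $\hat\phi(\omega\upsilon)$ with $\upsilon = \psi\cdot S(a)$ in two ways: directly, by rewriting the product via \eqref{eq:product-ahat} and applying the Plancherel formula, and, under the ansatz $\sigma^{\hat\phi}(\psi\cdot a) = (\psi\circ S)\cdot S^{-2}(a)$, via $\hat\phi(\omega\upsilon) = \hat\phi(\sigma^{\hat\phi}(\upsilon)\omega)$; comparing the two expressions and using Theorem \ref{theorem:modular-element} to interconvert the total integrals verifies the formula. The restriction $\sigma^{\hat\phi}|_C = (\sigma^\psi)^{-1}$ then follows by testing $\hat\phi(\upsilon y)$ against $y\in C$ via \eqref{eq:dual-bimodule-multiplier}. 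For the invariance axioms of $\hcpsic$, I would use the pairing characterization \eqref{eq:dual-deltac} of $\hat\Delta_C$ to recast the required identity as a statement about $\Delta_C$ and $\cphic$; the Plancherel formula supplies the translation, after which the identity reduces to the strong invariance diagram \eqref{dg:strong-invariance-left} for $\cphic$. The surjective factorizations of $\hat\phi$ and $\hat\psi$ required by \eqref{eq:quasi-invariance} for $\hat{\mathcal{A}}$ are produced by applying the same recipe to the maps $\bphi, \phib, \cpsi, \psic$ of $\mathcal{A}$, with surjectivity inherited from Remark \ref{remark:counits-surjective}.

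In the $*$-algebroid case, a direct calculation works: writing $(a\cdot \phi)^* = S(a)^*\delta^+\cdot \phi$ as derived in the proof of Proposition \ref{proposition:dual-involution}, the Plancherel formula \eqref{eq:dual-right-integral} reduces $\hat\psi((a\cdot\phi)^*(a\cdot\phi))$ to an expression in $\phi$ which, by \eqref{eq:counit-antipode-involution} and the transformation properties of $\delta^+$ under $S$ and $*$, equals $\phi(a^*a)$. I expect the main obstacle to be the verification of the invariance axioms for $\hcpsic$ and $\hbphib$: although the Plancherel formulas linearize the situation considerably, one must carefully track the four pairings \eqref{eq:pairing-ala}--\eqref{eq:pairing-aba}, the transposes of the canonical maps from Proposition \ref{proposition:dual-canonical} (in particular the slice identity in Remark \ref{remark:slice-htr}), and the mixed coassociativity relations, in order to match the strong invariance diagrams for $\mathcal{A}$ against their transposed counterparts for $\hat{\mathcal{A}}$.
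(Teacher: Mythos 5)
Your overall architecture coincides with the paper's: the Plancherel identities are obtained exactly as you describe, from \eqref{eq:product-ahat} and \eqref{eq:product-ahat-2} together with $\mu_{B}\circ\epsb=\mu_{C}\circ\ceps$ and the counit relation \eqref{eq:1}; the modular automorphism formulas come from evaluating $\hat\psi$ on a product in two ways; and the positivity claim is a one-line consequence of the first equation in \eqref{eq:dual-right-integral} (you do not even need $\delta^{+}$: taking $\omega=(a\cdot\phi)^{*}$ and $b=S^{-1}(a)$ gives $\hat\psi((a\cdot\phi)^{*}(a\cdot\phi))=(a\cdot\phi)^{*}(S^{-1}(a))=\phi(a^{*}a)$ directly).

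The step you yourself single out as the main obstacle --- the invariance axioms for $\hcpsic$ and $\hbphib$ --- is where your proposed reduction points at the wrong lemma. Under the duality, the partial integrals of $\widehat{\mathcal{A}}$ are transposes of the \emph{counits} of $\mathcal{A}$ (this is exactly why $\hcpsic(a\cdot\phi)=\ceps(a)$), so their invariance dualizes to the counit axiom and the multiplicativity \eqref{eq:counits-multiplicative} of $\ceps$ and $\epsb$, not to the strong invariance diagram \eqref{dg:strong-invariance-left} for $\cphic$; the latter dualizes to the antipode axiom \eqref{dg:antipode} for $\widehat{\mathcal{A}}$ and is what underlies Theorem \ref{theorem:dual-counits-antipode}, not Theorem \ref{theorem:dual-measured}. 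Concretely, the paper uses Remark \ref{remark:slice-htr} to show that $(t_{C}\circ\hcpsic\odot\id)(\hat\Delta_{B}(\upsilon)(1\otimes\omega))$ evaluated at $b$ equals $\varepsilon((\omega\actright b)a)$ for $\upsilon=a\cdot\phi$, and the chain $\varepsilon((\omega\actright b)a)=\varepsilon((\omega\actright b)\ceps(a))=\varepsilon(\omega\actright b\ceps(a))=\omega(b\ceps(a))=(\hcpsic(\upsilon)\omega)(b)$ uses only \eqref{eq:counits-multiplicative}, \eqref{eq:act-bimodule} and the counit property; strong invariance of $\cphic$ enters only indirectly, through the product formulas behind your Plancherel identities. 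Two smaller corrections: the required second factorizations of $\hat\psi$ and $\hat\phi$ are again furnished by the counits of $\mathcal{A}$, e.g.\ $\hbpsi(a\cdot\phi)=t_{C}(\epsc(a))$ and $\hpsib(a\cdot\phi)=(\sigma^{\phi})^{-1}(\epsb(a))$ with surjectivity from Remark \ref{remark:counits-surjective}, not by the maps $\bphi,\phib,\cpsi,\psic$; and each Plancherel formula by itself yields faithfulness of $\hat\phi$ or $\hat\psi$ only on one side, so the other side still deserves a word (for instance via the modular data just established).
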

In the situation above, we call
  $(\hat{\mathcal{A}},\mu_{C},\mu_{B},\hcpsic,\hbphib)$ the \emph{dual} of
  $(\mathcal{A},\mu_{B},\mu_{C},\bpsib,\cphic)$.
\begin{proof} 
By Theorem \ref{theorem:dual-counits-antipode},  $\mu_{C}\circ \hceps $ and $\mu_{B} \circ \hepsc$ coincide. 

The map $\hcpsic$ is $C$-bilinear because by \eqref{eq:dual-module},
\begin{align*}
  \hcpsic(y (a\cdot \phi) y') = \hcpsic(yt_{C}(y')a \cdot \phi) = \ceps(yt_{C}(y')a) = y\ceps(a)y'=y\hcpsic(a\cdot \phi)y'
\end{align*}
for all $y,y' \in C$ and $a\in A$.
 The functional $\hat\psi=\mu_{C} \circ \hcpsic$ on $\hat A$ is factorizable and
 \begin{align*}
   \hbpsi(a\cdot \phi)  &= t_{C}(\epsc(a)), &
   \hpsib(a\cdot \phi) &= (\sigma^{\phi})^{-1}(\epsb(a)),
 \end{align*}
because by \eqref{eq:dual-module} and Theorem \ref{theorem:modular-automorphism},
 \begin{align*}
   \hat\psi(x(a\cdot \phi)) &= \hat\psi(a\cdot \phi \cdot t_{B}(x)) = \hat\psi(at_{C}^{-1}(x) \cdot \phi) = 
   \eps(at_{C}^{-1}(x)) = \mu_{B}(xt_{C}(\epsc(a))), 
 \end{align*}
and
 \begin{align*}
      \hat\psi((a\cdot \phi)x') = \hat\psi(a\cdot \phi \cdot x') &= \hat\psi(a \sigma^{\phi}(x') \cdot \phi) \\ &= \varepsilon(a\sigma^{\phi}(x'))  =\mu_{B}(\epsb(a)\sigma^{\phi}(x'))  = \mu_{B}((\sigma^{\phi})^{-1}(\epsb(a))x').
\end{align*}

Taking $x'=(\sigma^{\phi})^{-1}(x)$ in the equations above, we find
\begin{align*}
  \hat \psi(x (a\cdot \phi) ) = \varepsilon(at_{C}^{-1}(x)) = \varepsilon(ax) =\varepsilon(a\sigma^{\phi}(x')) =\hat\psi((a \cdot \phi)x')
\end{align*}
which will give  the last equation in \eqref{eq:dual-right-integral} once existence of $\sigma^{\hat\psi}$ has been shown.

To see that $\hcpsic$ is a partial right integral, we use Remark \ref{remark:slice-htr} with  $\omega \in \hat A$, $\upsilon \in \hat A$ of the form $\upsilon= a\cdot \phi$ for some $a\in A$, and $b\in A$, and find
\begin{align*}
  ((t_{C} \circ \hcpsic  \odot \id)(\hat{\Delta}_{B}(\upsilon)(1 \otimes \omega)))(b) &=
(\hat{\psi} \underset{\mu_{B}}{\otimes} \dual{b})(\hat\Delta_{B}(\upsilon)(1 \otimes \omega))  \\ &=
  \hat\psi((\omega \actright b) \cdot \upsilon) = \hat\psi((\omega \actright b)a\cdot \phi) = \eps((\omega \actright b)a).
\end{align*}
We use  \eqref{eq:counits-multiplicative}, \eqref{eq:act-bimodule}  the counit property and \eqref{eq:dual-module}, and find
\begin{align*}
\varepsilon((\omega \actright b)a) = \varepsilon((\omega \actright b)\ceps(a)) &=\varepsilon(\omega \actright b\ceps(a))
\\ &=  \omega(b\ceps(a)) = \omega(b \hcpsic(a\cdot \phi)) = (\hcpsic(a\cdot \phi)\omega)(b).
\end{align*}
Since $b\in A$ was arbitrary, we can conclude that
\begin{align*}
  (t_{C} \circ \hcpsic  \odot \id)(\hat\Delta_{B}(\upsilon)(1 \otimes \omega))= \hcpsic(a\cdot \phi)\omega,
\end{align*}
showing that $\hcpsic$ is a partial right integral. It is full, that
is, $\hbpsi(A)=B=\hpsib(A)$, because $\ceps$ and $\epsc$ are
surjective by Remark \ref{remark:counits-surjective}.

Similar arguments show  that the functional $\hat\phi=\mu_{B} \circ \hbphib$ is factorizable and  that $\hbphib$ is a partial left integral.

To  prove the first equation in  \eqref{eq:dual-right-integral}, let $\upsilon \in \hat A$, $b\in A$ and write $\omega = b\cdot \phi$. Then
\begin{align*}
  \hat \psi(\upsilon (b\cdot \phi))  = \hat \psi((b \actleft \upsilon S^{-1}) \cdot \phi ) = \varepsilon(b \actleft \upsilon S^{-1}) ) = \upsilon(S^{-1}(b))
\end{align*}
 by \eqref{eq:product-ahat}.
Since $\phi$ is faithful, we can conclude that $\hat\psi$ is faithful
as well.  Moreover, taking $\upsilon=a\cdot \phi$ with $a\in A$ and  $\omega' = S^{-2}(b) \cdot (\phi \circ
    S)$,  we get
  \begin{align*}
    \hat \psi((a\cdot \phi)(b\cdot \phi)) &= \phi(S^{-1}(b)a) = (\phi
    \cdot S^{-1}(b)) (a) = \omega'(S^{-1}(a)) = \hat\psi(\omega'(a \cdot
    \phi)),
  \end{align*}
 whence the second  equation in \eqref{eq:dual-right-integral} follows. The third one was proven above already.

 If $(\mathcal{A},\mu_{B},\mu_{C},\bpsib,\cphic)$ is a measured
  multiplier Hopf $*$-algebroid,  then
  \begin{align*}
    \hat\psi((a\cdot \phi)^{*}(a\cdot \phi)) &= (a \cdot
    \phi)^{*}(S^{-1}(a)) =  \phi(S(S^{-1}(a))^{*}a)^{*} = \phi(a^{*}a)
  \end{align*}
  for all $a\in A$ by \eqref{eq:dual-right-integral}, and a similar
  calculation shows that $ \hat\phi((\psi \cdot a)(\psi \cdot a)^{*})
  = \psi(aa^{*})$ whence $\hat\psi$ and $\hat \phi$ are
  positive.
\end{proof}
The following biduality holds.
\begin{theorem} \label{theorem:biduality} Let
  $(\mathcal{A},\mu_{B},\mu_{C},\bpsib,\cphic)$ be a measured regular multiplier Hopf
  algebroid or a measured multiplier Hopf $*$-algebroid, where
  $\mathcal{A}$ is locally projective, and let
  $(\hat{\mathcal{A}},\mu_{C},\mu_{B},\hcpsic,\hbphib)$ be its
  dual. Then the map $A \to \dual{(\hat A)}$ given by $a\mapsto
  \dual{a}$ is an isomorphism from $(\mathcal{A},\mu_{B},\mu_{C},\bpsib,\cphic)$ to
  the bi-dual   $(\hhat{\mathcal{A}},\mu_{B},\mu_{C},{_{B}\hhat{\psi}_{B}},{_{C}\hhat{\phi}_{C}})$.
\end{theorem}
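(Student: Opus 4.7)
The plan is to verify three things: that $a\mapsto \dual a$ lands in $\hhat A$, that it is bijective, and that it intertwines the full measured regular multiplier Hopf algebroid structure. All three fall out rather transparently from the fact that the passage from $\mathcal A$ to $\hat{\mathcal A}$ is symmetric in the natural pairing $(a,\omega)\mapsto \omega(a) = \dual a(\omega)$, so iterating the construction should just return the original data.

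First, I would apply \eqref{eq:dual-right-integral} to $\hat{\mathcal A}$: for all $a\in A$ and $\omega \in \hat A$,
\begin{align*}
  \dual a(\omega) = \omega(a) = \hat\psi\bigl(\omega\,(S(a)\cdot \phi)\bigr),
\end{align*}
so $\dual a = (S(a)\cdot \phi)\cdot \hat\psi$ lies in $\hat A\cdot \hat\psi = \hhat A$. Analogously, \eqref{eq:dual-left-integral} yields $\dual a = \hat\phi \cdot (\psi \cdot S(a))$. Injectivity is immediate since $\hat A$ separates points of $A$ by faithfulness of $\phi$ and $\psi$. For surjectivity, a general element of $\hhat A = \hat A \cdot \hat\psi$ has the form $\upsilon \cdot \hat\psi$ with $\upsilon = b\cdot \phi$, and the identification above rewrites it as $\dual{S^{-1}(b)}$.

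To intertwine the remaining structure, I would proceed piece by piece. The bidual base algebras are $B,C$ again with anti-isomorphisms $t_B,t_C$ and base weights $\mu_B,\mu_C$, and iterating \eqref{eq:dual-module} shows that $a\mapsto \dual a$ is compatible with these embeddings. Multiplicativity $\dual{ab} = \dual a\,\dual b$ would be checked by evaluating both sides on an arbitrary $\omega \in \hat A$ and expanding the product on $\hhat A$ through $\hhat\Delta_B$ and the pairings of Lemma \ref{lemma:pairing}. Intertwining of the comultiplications is obtained by applying the defining identities \eqref{eq:dual-deltab}--\eqref{eq:dual-deltac} twice---once to $\hat{\mathcal A}$, once to $\hhat{\mathcal A}$---and invoking non-degeneracy of the pairings at the level of $\hat A \otimes \hat A$ and $\hhat A \otimes \hhat A$. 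For the antipode, Theorem \ref{theorem:dual-counits-antipode} applied to $\hat{\mathcal A}$ gives
\begin{align*}
  \hhat S(\dual a)(\omega) = \dual a(\hat S(\omega)) = \hat S(\omega)(a) = \omega(S(a)) = \dual{S(a)}(\omega),
\end{align*}
and then the counits and integrals of $\hhat{\mathcal A}$ are forced to be $\beps,\epsc,\phi,\psi$ by Theorem \ref{theorem:dual-measured} applied to $\hat{\mathcal A}$, together with the uniqueness of these maps given the antipode and base weights. In the $*$-algebraic case, the involution identity $\widehat{a^*} = (\dual a)^*$ follows from \eqref{eq:counit-antipode-involution} and Proposition \ref{proposition:dual-involution} applied twice.

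The hard part will be the sheer volume of bookkeeping: each of $A$ and $\hat A$ carries four module structures over $B$ and $C$, there are four pairings in Lemma \ref{lemma:pairing} and four canonical maps in Proposition \ref{proposition:dual-canonical} each with an analogue for $\hhat{\mathcal A}$, and the anti-isomorphisms $t_B^{\pm 1}, t_C^{\pm 1}$ interact with the modular automorphisms of Theorem \ref{theorem:modular-automorphism} in ways that must be tracked through the double dualization. Once these conventions are pinned down, each individual compatibility reduces to a short computation with the defining formulas of $\hat{\mathcal A}$.
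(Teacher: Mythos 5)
Your proposal follows essentially the same route as the paper's proof: membership of $\dual a$ in $\hhat A$ and surjectivity via the identity $\dual b = (S(b)\cdot\phi)\cdot\hat\psi$ coming from \eqref{eq:dual-right-integral}, injectivity from separation of points, bilinearity by iterating \eqref{eq:dual-module}, and intertwining of the comultiplications by pairing the canonical maps of $\hhat{\mathcal A}$ against those of $\mathcal A$ and invoking non-degeneracy. The paper packages the multiplicativity step slightly differently --- it first identifies $\hhat T_\rho, \hhat T_\lambda, {_\lambda\hhat T}, {_\rho\hhat T}$ with $T_\rho, T_\lambda, {_\lambda T}, {_\rho T}$, reads off $\upsilon \triangleleft \dual a = \upsilon\cdot a$ from ${_\lambda\hhat T}$, and then gets $(\dual a\,\dual b)(\upsilon)=\upsilon(ab)$ in one line --- but your sketch of expanding the product through the pairings amounts to the same computation.

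There is one genuine gap: you claim the integrals of $\hhat{\mathcal A}$ are ``forced to be $\phi,\psi$ \ldots{} together with the uniqueness of these maps given the antipode and base weights.'' Partial integrals on a regular multiplier Hopf algebroid are \emph{not} unique (nothing in the paper asserts this, and already for multiplier Hopf algebras they are only unique up to a scalar, while for groupoid algebras one can rescale by a function on the unit space), so this appeal does not establish that $j$ carries $\cphic,\bpsib$ to ${_C\hhat\phi_C},{_B\hhat\psi_B}$ --- which is part of what the theorem asserts, since it claims an isomorphism of \emph{measured} objects. The paper closes this by a direct computation,
\begin{align*}
  \hhat\phi(\dual a) = \hhat\phi\bigl((S(a)\cdot\phi)\cdot\hat\psi\bigr)
  = \hat\varepsilon\bigl((\sigma^{\hat\psi})^{-1}(S(a)\cdot\phi)\bigr)
  = \hat\varepsilon\bigl(S^{-1}(a)\cdot(\phi\circ S)\bigr) = \phi(a),
\end{align*}
using the explicit formula for $\sigma^{\hat\psi}$ from Theorem \ref{theorem:dual-measured} (and similarly $\hhat\psi(\dual a)=\psi(a)$); you need to replace the uniqueness appeal by this computation. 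The antipode identity you write out is fine, and the remaining pieces of your outline go through as planned.
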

\begin{proof}
  The bidual regular multiplier Hopf algebra $\hhat{\mathcal{A}}$ has the form
  \begin{align*}
    \hhat{\mathcal{A}}=(\hhat{A},B,C,t_{B},t_{C},\hhat{\Delta}_{B},\hhat{\Delta}_{C}).
  \end{align*}
The map $j\colon A \to \dual{(\hat A)}$ given by $a\mapsto
  \dual{a}$ is a linear surjection from $A$ to $\hhat{A}$ because 
 $\hhat{A} = \hat{A} \cdot \hat\psi = (A \cdot \phi) \cdot \hat \psi$ and
   \begin{align*}
    ((S(b) \cdot \phi) \cdot \hat \psi)(\upsilon) =
    \hat\psi(\upsilon(S(b) \cdot \phi)) = \upsilon(b) = \dual{b}(\upsilon)
  \end{align*}
  for all $b\in A$ and $\upsilon \in \hat A$ by \eqref{eq:dual-right-integral}. The map $j$ is injective because elements of $\hat A$ separate the points of $A$.  To check that it is $B$-bilinear, it helps to write $\hat C=B$ and $\hat t_{\hat C}=t_{C}^{-1}$. We insert \eqref{eq:dual-module} twice and find that
\begin{align*}
  x\dual{b}x'(\upsilon) = \dual{b}(\upsilon x \hat{t}_{\hat C}(x')) =  \upsilon(x b t_{C}(\hat t_{\hat C}(x'))) = \upsilon(xbx')
\end{align*}
   for all $x,x'\in \hat C=B$, $b\in A$ and $\upsilon\in \hat A$.
 A similar calculation shows that $j$ is $C$-bilinear.

  Let us next consider the canonical maps of $\hhat{\mathcal{A}}$.  By
  construction and Lemma \ref{lemma:pairing}, the map
  $\hhat{T}_{\rho}$ satisfies
  \begin{align*}
    (\hhat{T}_{\rho}(\dual{a} \oo \dual{b})\mid \upsilon \oo \omega) =
    (\dual{a} \oo \dual{b}\mid \hlT(\upsilon \oo \omega)) =
    (\upsilon \oo \omega\mid \Tr(a \oo b))
  \end{align*}
  for all $a,b\in A$ and $\upsilon,\omega \in \hat{A}$. Therefore, the map $j$ identifies $\hhat{\Tr}$ with $\Tr$,
  and similar arguments show that it identifies the maps $\hhat{T}_{\lambda},
  {_{\lambda}\hhat{T}}$ and ${_{\rho}\hhat{T}}$ with $\Tl,\lT$ and
  $\rT$, respectively.  

In particular, by Proposition \ref{proposition:dual-canonical},
\begin{align*}
  \upsilon(a(\omega \triangleright b)) = (\upsilon \otimes \omega\mid \lT(a\otimes b)) &= ({_{\lambda}}\hat{\hat{T}}(\dual{a} \otimes \dual{b})\mid \upsilon \otimes \omega) \\&= \dual{b}((\upsilon \triangleleft \dual{a})\omega) = ((\upsilon \triangleleft \dual{a})\omega)(b) = (\upsilon \triangleleft  \dual{a})(\omega \triangleright b),
\end{align*}
showing that
\begin{align} \label{eq:dual-bimodule-explicit-1}
  \upsilon \triangleleft \dual{a} = \upsilon \cdot a.
\end{align}
In particular, $(\dual{a}\dual{b})(\upsilon) =\dual{b}(\upsilon
\triangleleft \dual{a}) = \upsilon(ab) = \dual{(ab)}(\upsilon)$ so
that $j$ is a homomorphism.

Therefore, $j$ is an isomorphism from $\mathcal{A}$ to $\hhat{\mathcal{A}}$.

 The total left integral $\hhat{\phi}$ of the bidual is given by
 \begin{align*}
   \hhat{\phi}(\dual{a}) = \hhat{\phi}((S(a) \cdot \phi)\cdot \hat
    \psi) &= {\hat\eps}((\sigma^{\hat\psi})^{-1}(S(a) \cdot \phi)) \\ &=
    \hat \eps(S^{-1}(a) \cdot (\phi \circ S)) =
    (\phi \circ S)(S^{-1}(a)) =\phi(a)
  \end{align*}
  for all $a\in A$, where we used the formula for $\sigma^{\hat\psi}$
obtained in the proof of Theorem \ref{theorem:dual-measured}. A similar argument shows
  that the total right integral $\hhat{\psi}$ satisfies $\hhat{\psi}(\dual{a})=\psi(a)$ for all $a\in A$.

  Finally, assume that $(\mathcal{A},\mu_{B},\mu_{C},\bpsib,\cphic)$ is a measured
  multiplier Hopf $*$-algebroid. Then $j$ is a $*$-isomorphism because for all $b\in A$ and $\upsilon \in \hat
  A$, 
  \eqref{eq:counit-antipode-involution} implies $(\dual{b})^{*}(\upsilon) = \dual{b}(\hat
  S(\upsilon)^{*})^{*}$ and $\hat S(\upsilon)^{*} = \ast \circ
  (\upsilon \circ S) \circ \ast \circ S = \ast \circ \upsilon \circ
  \ast$,  whence $(\dual{b})^{*}(\upsilon) =
  \upsilon(b^{*})$.
\end{proof}

\begin{remark} \label{remark:dual-bimodule-explicit}
  The $M(\hat A)$-bimodule structure on
$A$, obtained via the identification $A \to \hat{\hat A}$,
$a\mapsto \dual{a}$
 takes the
following form: for all $a \in A$ and $\upsilon \in A^{\sqcup}_{0} \cong M(\hat A)$,
\begin{align}
  \label{eq:dual-bimodule-explicit}
  \upsilon \cdot \dual{a} = \dual{(\upsilon \actright a)} \quad \text{and}\quad
  \dual{a} \cdot \upsilon = \dual{(a\actleft \upsilon)}.
\end{align}
 Indeed,  for all $\omega \in \hat A$, we have
$(\upsilon \cdot \dual{a})(\omega) = \dual{a}(\omega\upsilon) = (\omega\upsilon)(a) = \omega(\upsilon \actright a) = \dual{(\upsilon \actright a)}(\omega)$, and a similar calculation proves the second equation above. 
\end{remark}

\subsection{When is the dual unital?} 
In \cite{boehm:bijective}, Böhm and Szlach\'anyi develop a duality for (unital) Hopf algebroids with non-degenerate cointegrals. Existence of such cointegrals implies  existence of suitably non-degenerate partial left and right integrals,   and holds if and only if the inclusions of the base algebras $B$ and $C$ in the total algebra $A$ are Frobenius extensions \cite{boehm:integrals}.  To clarify the relation between the duality developed here with the theory in  \cite{boehm:bijective}, one should therefore study the interplay of cointegrals with base weights.   For the moment, we shall only show that the dual  of a measured multiplier Hopf algebroid is unital if and only if the latter has a two-sided cointegral. 
\begin{definition}
  Let $\mathcal{A}$ be a regular multiplier  Hopf algebroid with left counit $\ceps$ and right counit $\epsb$. A \emph{left cointegral} in $\mathcal{A}$ is an element $l \in A$ satisfying $al=\ceps(a)l$ for all $a\in A$. A \emph{right cointegral} is an element $r\in A$ satisfying $r a = r\epsb(a)$ for all $a\in A$. A \emph{cointegral} is an element of $A$ that is both a left and a right cointegral.
\end{definition} 
Let us call an element $\lambda\in \hat{A}$ a \emph{left integral} if $_{C}\lambda=\lambda_{C}$  is a partial left integral, and an element $\rho\in \hat{A}$ a \emph{right integral} if $_{B}\rho=\rho_{B}$ is a partial right integral.
\begin{lemma} \label{lemma:cointegral-integral}
  Let  $(\mathcal{A},\mu_{B},\mu_{C},\bpsib,\cphic)$ be a measured multiplier Hopf algebroid, where $\mathcal{A}$ is locally projective.
  \begin{enumerate} 
  \item An element $\lambda \in \hat{A}$ is a left cointegral if and only if it is  a left integral on $A$. 
\item An element $\rho\in \hat{A}$ is a right cointegral if and only  if it is a  right integral on $A$.
  \end{enumerate}
\end {lemma}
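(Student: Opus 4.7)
The plan is to prove part (1) in detail; part (2) will follow by an entirely parallel argument with the roles of left and right interchanged ($\bpsib$ and $\hepsc$ replacing $\cphic$ and $\hbeps$, and (RI1)--(RI3) replacing (LI1)--(LI3)), or equivalently by applying (1) to the co-opposite algebroid in which $\Delta_{B}$ and $\Delta_{C}$ are exchanged and left/right integrals swap with right/left cointegrals.

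For (1), the key ingredients are the multiplication formula $(\omega\lambda)(a) = \omega(\lambda \actright a)$ from Theorem \ref{theorem:dual-algebra}, the explicit expression $\hbeps(c\cdot \phi) = t_{C}(\cphic(c))$ from Theorem \ref{theorem:dual-counits-antipode}, the formulas for $\lambda \actright a$ in Proposition \ref{proposition:act}, and the invariance axioms (LI1)--(LI3). I will also use repeatedly that, by \eqref{eq:dual-module}, the left action of $x\in B \subseteq M(\hat A)$ on $\lambda \in \hat A$ is given as a functional by $(x\lambda)(a) = \lambda(t_{B}(x)a)$, so that $(\hbeps(\omega)\lambda)(a) = \lambda(t_{B}(\hbeps(\omega))a)$.

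To show $\Leftarrow$: assume that $\cphic^{\lambda} := {_{C}\lambda} = \lambda_{C}$ is a partial left integral, so (LI1)--(LI3) hold for $\cphic^{\lambda}$. For any $\omega \in \hat A$ and $a\in A$, I compute $(\omega\lambda)(a) = \omega(\lambda \actright a)$ by first writing $b(\lambda \actright a) = (\id \odot {^{B}\lambda})((b\otimes 1)\Delta_{B}(a))$ via Proposition \ref{proposition:act}, noting that ${^{B}\lambda} = S_{C} \circ \lambda_{C} = S_{C}\cphic^{\lambda}$ by the factorization relations following \eqref{eq:asqcup-intersections}. Invoking the invariance (LI1) to collapse the $\omega$-dependence then produces the expression $\lambda(t_{B}(\hbeps(\omega))a)$, which coincides with $(\hbeps(\omega)\lambda)(a)$ by the left $B$-action on $\hat{A}$.

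To show $\Rightarrow$: assume $\omega\lambda = \hbeps(\omega)\lambda$ for every $\omega \in \hat A$. First, the $C$-bilinearity of both sides in $\omega$—using the $C$-bilinearity of $\hbeps$ (Theorem \ref{theorem:dual-counits-antipode}) together with \eqref{eq:dual-module}—forces $_{C}\lambda = \lambda_{C}$; call this common $C$-bilinear map $\cphic^{\lambda}$. Then, specializing $\omega = c\cdot\phi$ and evaluating at $a\in A$ yields, after substituting the formula for $\hbeps(c\cdot\phi)$, an identity of the shape $\phi(c(\lambda \actright a)) = \lambda(t_{B}(t_{C}(\cphic(c)))a)$. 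Letting $c$ range over $A$ and invoking the faithfulness of $\phi$, the surjectivity of $\cphic$ (Remark \ref{remark:counits-surjective}), and the non-degeneracy of the pairings (Lemma \ref{lemma:pairing}), one recovers the invariance condition (LI1) for $\cphic^{\lambda}$, and thus $\lambda$ is a left integral on $A$.

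The hard part will be the bookkeeping of slice conventions, bimodule structures, and the anti-isomorphisms $t_{B}, t_{C}, S_{B}, S_{C}$, since $\hbeps$ is $B$-valued while $\cphic^{\lambda}$ is $C$-valued and the two sides of the cointegral identity must be brought into compatible form. Conceptually, the equivalence mirrors the classical correspondence between invariant integrals on a multiplier Hopf algebra and cointegrals in its dual, as established by Van Daele in \cite{daele}.
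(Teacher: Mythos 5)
Your proposal follows essentially the same route as the paper's proof, which is a single short computation: for $\omega=a\cdot\phi$ one has $(\omega\lambda)(b)=\omega(\lambda\actright b)$, while $(\hbeps(\omega)\lambda)(b)=\lambda(t_{B}t_{C}(\cphic(a))b)=\mu_{C}({_{C}\lambda}(b)\cphic(a))=\omega({_{C}\lambda}(b))$ using $\hbeps(a\cdot\phi)=t_{C}(\cphic(a))$, the module structure \eqref{eq:dual-module}, the relation $t_{B}t_{C}=\sigma_{C}^{-1}$ and $\phi=\mu_{C}\circ\cphic$, so that the cointegral condition becomes $\lambda\actright b={_{C}\lambda}(b)$, which is exactly left invariance. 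Two bookkeeping points to repair when writing it out: since $(c\cdot\phi)(d)=\phi(dc)$, the identity in your forward direction should read $\phi((\lambda\actright a)c)=\lambda(t_{B}t_{C}(\cphic(c))a)$ with $c$ on the right, and the equality ${_{C}\lambda}=\lambda_{C}$ is obtained more directly not from $C$-bilinearity in $\omega$ but from the fact that $\lambda\actright a$ and ${_{C}\lambda}(a)$ then agree as two-sided multipliers, so that the right-multiplier formula of Proposition \ref{proposition:act} (involving ${^{B}\lambda}=S_{C}\circ\lambda_{C}$) forces the two factorizations to coincide.
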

\begin{proof}
We only prove (1).
  Let $\lambda \in \hat{A}$,  $a,b\in A$ and $\omega=a\cdot \phi$. Then $(\omega\lambda)(b) =\omega(\lambda \triangleright b)$ and
  \begin{align*}
    (\hbeps(\omega)\lambda)(b) = (t_{C}(\cphic(a))\lambda)(b) &= \lambda(t_{B}t_{C}(\cphic(a))b)  \\
&= \mu_{C}({_{C}\lambda}(b)\cphic(a)) = \phi({_{C}\lambda}(b)a) = \omega({_{C}\lambda}(b)),
  \end{align*}
where we used the relation $t_{B}t_{C}=\sigma_{C}^{-1}$ \eqref{eq:modular-automorphism}. The equivalence in (1) follows.
\end{proof}
In particular, we see that if $A$ is unital, then $\hat{A}$ has a left and a right cointegral.
More precisely, modulo duality,  the  following equivalence holds: 
\begin{proposition}\label{proposition:cointegral-unital}
  Let $(\mathcal{A},\mu_{B},\mu_{C},\bpsib,\cphic)$ be a measured multiplier Hopf algebroid, where $\mathcal{A}$ is locally projective.
Then the following conditions are equivalent:
\begin{enumerate}
\item the dual $\widehat{\mathcal{A}}$ is unital;
\item there exists a left cointegral $l\in A$ such that $\cphic(l)=1_{C}$;
\item there exists a right cointegral $r\in A$ such that $\bpsib(r)=1_{B}$.
\end{enumerate}
In that case, $l$ and $r$ can be chosen such that $\hat{A} \triangleright l =A=r \triangleleft \hat{A}$.
\end{proposition}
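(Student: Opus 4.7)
The guiding idea is to show that the unit of $\hat A$, whenever it exists, must be the total counit functional $\varepsilon=\mu_{B}\circ\epsb=\mu_{C}\circ\ceps$. Indeed, Lemma~\ref{lemma:act}(1) immediately yields
\begin{align*}
  (\varepsilon\omega)(a)=\varepsilon(\omega\actright a)=\omega(a)=\varepsilon(a\actleft\omega)=(\omega\varepsilon)(a)
\end{align*}
for all $\omega\in\hat A$ and $a\in A$, so under the isomorphism $A^{\sqcup}_{0}\cong M(\hat A)$ of Proposition~\ref{proposition:dual-multipliers}, $\varepsilon$ corresponds to the identity of $M(\hat A)$. Thus $\hat A$ is unital if and only if $\varepsilon\in\hat A$, in which case $1_{\hat A}=\varepsilon$; this reduces the proposition to the equivalence of ``$\varepsilon\in\hat A$'' with (2) and with (3).

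For (2)$\Leftrightarrow$(1), the ``if'' direction is a one-line calculation: $C$-bilinearity of $\cphic$ and the cointegral relation $al=\ceps(a)l$ give $(l\cdot\phi)(a)=\mu_{C}(\ceps(a)\cphic(l))=\varepsilon(a)$, so $l\cdot\phi=\varepsilon\in\hat A$. For the converse, if $\varepsilon\in\hat A=A\cdot\phi$ then faithfulness of $\phi$ produces a unique $l\in A$ with $l\cdot\phi=\varepsilon$, and the multiplicativity identity $\ceps(ba)=\ceps(b\ceps(a))$ of \eqref{eq:counits-multiplicative} forces $\phi(b(al-\ceps(a)l))=\varepsilon(ba)-\varepsilon(b\ceps(a))=0$ for every $b\in A$; a second appeal to faithfulness of $\phi$ then yields $al=\ceps(a)l$, making $l$ a left cointegral. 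The normalisation $\cphic(l)=1_{C}$ is extracted from the formula $\hbeps(a\cdot\phi)=t_{C}(\cphic(a))$ of Theorem~\ref{theorem:dual-counits-antipode} applied to $\varepsilon=l\cdot\phi$, together with the fact that unitality of $\hat A$ propagates along the embedding $B\hookrightarrow M(\hat A)=\hat A$ (from the lemma following Proposition~\ref{proposition:dual-multipliers}) to make $B$ unital. The equivalence (3)$\Leftrightarrow$(1) is the mirror argument, using $\hat A=\psi\cdot A$, the right counit formula $\hepsc(\psi\cdot a)=t_{B}(\bpsib(a))$, the identity $\epsb(ab)=\epsb(\epsb(a)b)$, and faithfulness of $\psi$.

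For the final claim, with $l$ and $r$ chosen as above, the strong invariance identities \eqref{eq:strong-invariance}, together with the fact that $\varepsilon\actright a=a=a\actleft\varepsilon$ (as $\varepsilon$ is the unit of $M(\hat A)$ acting on $A$), give
\begin{align*}
  (\phi\cdot a)\actright l=S\bigl((l\cdot\phi)\actright a\bigr)=S(a),\qquad r\actleft(a\cdot\psi)=S\bigl(a\actleft(\psi\cdot r)\bigr)=S(a)
\end{align*}
for all $a\in A$; since $\hat A=\phi\cdot A=A\cdot\psi$ and $S(A)=A$, we conclude $\hat A\actright l=A=r\actleft\hat A$. The main delicacy will be carefully verifying that unitality of $\hat A$ really does force $B$ and $C$ to be unital so that ``$\cphic(l)=1_{C}$'' and ``$\bpsib(r)=1_{B}$'' make sense; this hinges on combining the embeddings $B,C\hookrightarrow M(\hat A)$ from the lemma after Proposition~\ref{proposition:dual-multipliers} with the counit axioms for $\hat{\mathcal{A}}$.
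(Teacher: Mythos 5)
Your proof is correct and follows essentially the same route as the paper's: identify the would-be unit of $\hat{A}$ with the counit functional $\varepsilon$, translate $\varepsilon=l\cdot\phi$ into the normalized left cointegral condition via \eqref{eq:counits-multiplicative} and faithfulness of $\phi$, and use strong invariance \eqref{eq:strong-invariance} for the final claim. The only (harmless) deviations are that you make the identification $1_{M(\hat A)}=\varepsilon$ explicit through Proposition \ref{proposition:dual-multipliers}, and you extract $\cphic(l)=1_{C}$ from the dual counit formula of Theorem \ref{theorem:dual-counits-antipode} rather than by evaluating $\ceps=l\cdot\cphic$ at $1\in M(A)$ as the paper does.
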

\begin{proof}
We only prove the equivalence of (1) and (2); equivalence with (3) follows similarly. Instead of using Lemma \ref{lemma:cointegral-integral} and biduality, we proceed directly.

  Suppose (1) holds. Then $\varepsilon\in \widehat{\mathcal{A}}=A\cdot \phi$. Choose $l \in A$ such that $l\cdot \phi=\varepsilon$. Then  $\ceps = l \cdot \cphic$  and hence $\cphic(l)=\ceps(1_{A})=1_{C}$.  We show that   $l$ is a left cointegral.  Let $a,b\in A$. Then
  $\phi(ab\lambda) = \varepsilon(ab) = \varepsilon(a\ceps(b)) = \phi(a\ceps(b)\lambda)$
by \eqref{eq:counits-multiplicative} which implies  $b\lambda = \ceps(b)\lambda$ because $\phi$.

Conversely, assume $l\in A$ is as in (2). Then $(l\cdot \phi)(a) = \phi(al)=\phi(\ceps(a)l) = \mu_{C}(\ceps(a)\cphic(l)) \varepsilon(a)$ for all $a\in A$ and hence $\varepsilon =l\cdot \phi \in \hat{ A}$. Finally,  strong invariance \eqref{eq:strong-invariance} implies
  $a = (l \cdot \phi) \triangleright a = S((\phi \cdot a) \triangleright l)$ for all $a\in A$ and hence $\hat{A} \triangleright l = S^{-1}(A)=A$.
\end{proof}

\section{Morphisms}
\label{section:morphisms}

We now introduce morphisms of regular multiplier Hopf algebroids and show that they automatically preserve the antipode and are, in some sense, compatible with the counits. We pay special attention to morphisms into duals of measured regular multiplier Hopf algebroids because those   will allow us  to succinctly describe the  duals  of some examples considered in  Section \ref{section:examples}.

\subsection{Definition and examples of morphisms}
Let 
\begin{align} \label{eq:morphisms-a-d}
  \mathcal{A}=(A,B,C,t_{B},t_{C},\Delta_{B},\Delta_{C}) \quad \text{and} \quad \mathcal{D}=(D,E,F,t_{E},t_{F},\Delta_{E},\Delta_{F})
\end{align}
be two regular multiplier Hopf algebroids, let $\pi \colon D\to M(A)$ be a non-degenerate homomorphism and  suppose that, after extension of $\pi$ to multipliers,
\begin{align} \label{eq:mor-base-b}
    \pi(E)B&=B=B\pi(E), & \pi \circ t_{E} &= t_{B} \circ \pi,
    \\\pi(F)C&=C=C\pi(F), & \pi \circ t_{F} &= t_{C} \circ \pi. \label{eq:mor-base-c}
\end{align}

We shall make precise what it means  for $\pi$ to be compatible with the comultiplications of $\mathcal{D}$ and $\mathcal{A}$. Intuitively, this means that 
\begin{align} \label{eq:mor-delta-b}
(a'\pi(d') \otimes a''\pi(d'')) \Delta_{B}(\pi(d)) &= (a' \otimes a'')  (\pi \otimes \pi)((d' \otimes d'')\Delta_{E}(d)), \\ 
\label{eq:mor-delta-c}
\Delta_{C}  (\pi(d))(\pi(d')a' \otimes \pi(d'')a'') &= (\pi \otimes \pi)(\Delta_{F}(d)(d' \otimes d''))(a' \otimes a'')
\end{align}
for all $a',a''\in A$ and $d,d',d''\in D$, or, equivalently and more tersely, such that
\begin{align}
  \label{eq:mor-delta}
\Delta_{B}(\pi(d)) =(\pi {_{E}\overline{\times}^{E}}\pi)(\Delta_{E}(d)) \quad \text{and} \quad \Delta_{C}(\pi(d)) = (\pi {^{F}\overline{\times}_{F}} \pi)(\Delta_{F}(d))  
\end{align}
for all $d\in D$. However,  several expressions in these equations  have not yet been defined and require a careful interpretation. 

To make sense of the left hand sides in \eqref{eq:mor-delta-b} and \eqref{eq:mor-delta-c},  we  extend the comultiplications $\Delta_{B}$ and $\Delta_{C}$ to multipliers. 
We write $(\AlA)_{(A\oo A)}$ and $_{(A \oo A)}(\ArA)$ when we regard $\AlA$ and $\ArA$ as a right or as a left module over $A \otimes A$, respectively.  
\begin{lemma} \label{lemma:comult-extension}
  The maps $\Delta_{B}$ and $\Delta_{C}$ extend uniquely to
  homomorphisms
  \begin{align*} 
\Delta_{B}
    &\colon M(A) \to \End({_{(A \oo A)}(\ArA)})^{\op}, &
    \Delta_{C} &\colon M(A) \to \End((\AlA)_{(A \oo A)})
  \end{align*}
  such that for all $T\in M(A)$ and $a,b,c\in A$,
  \begin{align*}
(a\otimes b)\Delta_{B}(c)    \Delta_{B}(T) &= (a\otimes b) \Delta_{B}(cT), &
\Delta_{C}(T)\Delta_{C}(a)(b\otimes c) &= \Delta_{C}(Ta)(b\otimes c).
  \end{align*}
\end{lemma}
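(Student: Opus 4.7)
The plan is to construct $\Delta_{B}(T)$, and dually $\Delta_{C}(T)$, as the unique endomorphism of $\ArA$, resp.\ $\AlA$, characterised by the stated multiplicative formula, exploiting the bijectivity of the canonical maps guaranteed by~(H1).

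First I would check that for any $T \in M(A)$ the assignment $a \otimes b \mapsto aT \otimes b$ descends to a well-defined linear endomorphism of the balanced tensor product $\cATA$: the balancing identity is preserved because $(t_{C}(y)a)T = t_{C}(y)(aT)$, which holds in view of $C \subseteq M(A)$ and associativity of multiplier multiplication. Transporting this endomorphism through the bijection $\rT \colon \cATA \to \ArA$ yields a linear map $\Delta_{B}(T) \colon \ArA \to \ArA$ characterised by
\[
  \Delta_{B}(T)\bigl(\rT(a \otimes b)\bigr) = \rT(aT \otimes b) = (1 \otimes b)\Delta_{B}(aT).
\]
For $T = c \in A$ this recovers the original operator $\Delta_{B}(c) \in \ArtkA$, since $\rT(ac \otimes b) = (1 \otimes b)\Delta_{B}(ac) = \bigl((1 \otimes b)\Delta_{B}(a)\bigr)\Delta_{B}(c)$.

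Next I would verify that $\Delta_{B}(T)$ commutes with the left action of $A \otimes A$ on $\ArA$, so that it lies in $\End({_{(A \otimes A)}(\ArA)})^{\op}$. Commutation with $1 \otimes b$ is immediate from $(1 \otimes b)\rT(a \otimes b') = \rT(a \otimes bb')$, since our prescription touches only the first factor of $\cATA$. Commutation with $a' \otimes 1$ is the substantive point; I would handle it by introducing the parallel candidate $\Delta_{B}^{L}(T)$ through the other bijection $\lT \colon \cAsA \to \ArA$, defined by $\lT(a \otimes c) \mapsto \lT(a \otimes cT) = (a \otimes 1)\Delta_{B}(cT)$, and then showing $\Delta_{B}^{L}(T) = \Delta_{B}(T)$ by means of the Galois-type relations between $\rT$ and $\lT$ expressed in diagrams \eqref{dg:galois-inverse}--\eqref{dg:galois-antipode}, together with coassociativity \eqref{eq:right-comultiplication-coassociative}. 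Once the two candidates coincide, the identity $(a \otimes b)\Delta_{B}(c)\Delta_{B}(T) = (a \otimes b)\Delta_{B}(cT)$ drops out by factoring $(a \otimes b)\Delta_{B}(c) = (a \otimes 1)\rT(c \otimes b) = (1 \otimes b)\lT(a \otimes c)$ and pulling the left $A \otimes A$-action through $\Delta_{B}(T)$.

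The homomorphism property $\Delta_{B}(TT') = \Delta_{B}(T)\Delta_{B}(T')$ (in the opposite algebra) is then immediate on the spanning set $\rT(\cATA)$, since $\rT(aTT' \otimes b)$ equals both $\rT(aT \otimes b)\Delta_{B}(T')$ and $\Delta_{B}(T)\Delta_{B}(T')$ applied to $\rT(a \otimes b)$. Uniqueness is forced by the defining identity together with the fact that $\{(a \otimes b)\Delta_{B}(c) : a,b,c \in A\}$ already contains $\rT(\cATA) = \ArA$ and hence spans all of $\ArA$. The construction of $\Delta_{C}(T)$ is entirely parallel, using $\Tr$ (together with $\Tl$) in place of $\rT$ (and $\lT$) and the right $A \otimes A$-module structure on $\AlA$. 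The main obstacle I anticipate is precisely the equality $\Delta_{B}^{R}(T) = \Delta_{B}^{L}(T)$: this is where the non-trivial Hopf-algebroid input is needed, and without it one cannot check that $\Delta_{B}(T)$ commutes with the full left $A \otimes A$-action.
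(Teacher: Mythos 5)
Your construction---transporting multiplication by $T$ through the bijection $\rT$ (resp.\ $\Tr$) and checking compatibility with the ambient $A\otimes A$-action---is essentially the paper's own proof, which simply defines $\Delta_{C}(T)$ as $\Tr(T\otimes 1)\Tr^{-1}$ and asserts that existence and uniqueness ``follow easily from bijectivity of the canonical maps.'' You in fact go further than the paper by explicitly isolating the one nontrivial point, namely that the $\rT$-based and $\lT$-based candidates coincide (equivalently, that $\Delta_{B}(T)$ commutes with the full left $A\otimes A$-action rather than just with $1\otimes A$), a verification the paper leaves implicit; your proposed route to it via the Galois relations and coassociativity is sound, though not carried out in detail.
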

\begin{proof}
  Uniqueness and existence follow easily from bijectivity of the canonical maps, for example, $\Delta_{C}(T)$ has to coincide with and can be defined as $T_{\rho}(T \otimes 1)T_{\rho}^{-1}$.
\end{proof}

To make sense of the right hand sides in \eqref{eq:mor-delta-b} and \eqref{eq:mor-delta-c},  note that
\begin{align*}
  (\pi \otimes \pi)((d' \otimes d'')\Delta_{E}(d)) \in M(A)_{M(B)} \otimes {^{M(B)}M(A)}, \\
  (\pi \otimes \pi)(\Delta_{F}(d)(d' \otimes d'')) \in M(A)^{M(C)} \otimes {_{M(C)}M(A)}
\end{align*}
are well-defined by  \eqref{eq:mor-delta-b} and \eqref{eq:mor-delta-c}, so that we can multiply on the left or on the right, respectively, by $a' \otimes a'' \in A\otimes A$  to obtain well-defined elements in
$A_{M(B)} \otimes {^{M(B)}}A = \ArA$ or $A^{M(C)} \otimes A_{M(C)} = \AlA$, respectively.

By now, the left and the right hand sides of  \eqref{eq:mor-delta-b} and \eqref{eq:mor-delta-c} as well as the left hand sides in \eqref{eq:mor-delta} are well-defined. Let us now turn to the right hand sides in \eqref{eq:mor-delta}.

We write 
$A_{E}$, $^{E}A$, $_{F}A$, $A^{F}$ when we regard $A$ as a module over $E$ or $F$ such that
\begin{align*}
  a\cdot e&:= a\pi(e), & e\cdot a &:= a \pi(t_{E}(e)),   & f\cdot a& := \pi(f)a, & a\cdot f &:= \pi(t_{F}(f))a
\end{align*}
for all $a\in A$, $e\in E$ and $f\in F$. 
\begin{lemma}
There exist well-defined homomorphisms
  \begin{align*}
    \pi {_{E}\overline{\times}^{E}} \pi &\colon D_{E} \overline{\times} {^{E}D} \to \End(_{(A\otimes A)}(A_{E} \otimes {^{E}A}))^{\op}, \\
\pi {^{F}\overline{\times}_{F}} \pi &\colon D^{F}\overline{\times} {_{F}D} \to \End((A^{F} \otimes {_{F}A})_{(A\otimes A)})   
  \end{align*}
such that for all $t \in  D^{F}\overline{\times} {_{F}D}$, $s \in D_{E} \overline{\times} {^{E}D}$, $d,d'\in D$ and $a,a'\in A$,
\begin{align*}
(a'\pi(d) \otimes a\pi(d'))  (    \pi {_{E}\overline{\times}^{E}} \pi)(w) &= (a\otimes a')(\pi\otimes \pi)((d\otimes d')w), \\
  (\pi {^{F}\overline{\times}_{F}} \pi)(w)(\pi(d)a\otimes \pi(d')a') &= (\pi \otimes \pi)(w(d\otimes d'))(a\otimes a').
\end{align*}
\end{lemma}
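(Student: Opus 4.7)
Fix $w\in D_{E}\overline{\times}{^{E}D}$; I will construct $\hat w := (\pi{_{E}\overline{\times}^{E}}\pi)(w)$ as an endomorphism of $A_{E}\otimes {^{E}A}$ regarded as a left $(A\otimes A)$-module. The map $\pi{^{F}\overline{\times}_{F}}\pi$ will be constructed symmetrically, using $F$-balancings and left multiplications in place of $E$-balancings and right multiplications.

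The first observation is that the hypotheses force $\pi(D)A=A=A\pi(D)$. Indeed, from $B=\pi(E)B=B\pi(E)$ (see \eqref{eq:mor-base-b}) and the non-degeneracy of $_{B}A$ and $A_{B}$, I compute $A=BA=\pi(E)BA\subseteq\pi(D)A\subseteq A$, and symmetrically on the right. Hence every element of $A_{E}\otimes{^{E}A}$ can be written as a finite sum $\sum_{i} a_{i}\pi(d_{i})\otimes a'_{i}\pi(d'_{i})$ with $a_{i},a'_{i}\in A$ and $d_{i},d'_{i}\in D$. Next, since $\pi$ is an algebra homomorphism satisfying $\pi\circ t_{E}=t_{B}\circ\pi$, the map $\pi\otimes\pi\colon D\otimes D\to M(A)\otimes M(A)$ descends to a well-defined linear map $D_{E}\otimes{^{E}D}\to M(A)_{E}\otimes{^{E}M(A)}$ respecting the $E$-balancings. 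For $w\in D_{E}\overline{\times}{^{E}D}$ and $d,d'\in D$, the element $(d\otimes d')w$ lies in $D_{E}\otimes{^{E}D}$, so $(\pi\otimes\pi)((d\otimes d')w)\in M(A)_{E}\otimes{^{E}M(A)}$ makes sense, and left-multiplication by $a\otimes a'\in A\otimes A$ yields a well-defined element of $A_{E}\otimes{^{E}A}$. This is the candidate value for $(a\pi(d)\otimes a'\pi(d'))\hat w$.

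The main obstacle is well-definedness, since the presentation $a\pi(d)\otimes a'\pi(d')$ is far from unique. I would reduce this to verifying two identities in $A_{E}\otimes{^{E}A}$, valid for all $a,a'\in A$, $d_{1},d_{2},d,d'\in D$ and $\tilde e\in E$:
\begin{align*}
(a\otimes a')(\pi\otimes\pi)((d_{1}d_{2}\otimes d')w)&=(a\pi(d_{1})\otimes a')(\pi\otimes\pi)((d_{2}\otimes d')w), \\
(a\otimes a')(\pi\otimes\pi)((d\tilde e\otimes d')w)&=(a\otimes a')(\pi\otimes\pi)((d\otimes t_{E}(\tilde e)d')w),
\end{align*}
together with the analogous identities in the second tensor factor. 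Identity (i) follows from the $D\otimes D$-module structure of $D_{E}\otimes{^{E}D}$ and multiplicativity of $\pi$; identity (ii) is the $E$-balancing of $D_{E}\otimes{^{E}D}$ transported to $A_{E}\otimes{^{E}A}$ via $\pi\circ t_{E}=t_{B}\circ\pi$. Together they ensure that the formula depends on $a\pi(d)\otimes a'\pi(d')$ only through its class in $A_{E}\otimes{^{E}A}$, so $\hat w$ is a well-defined linear endomorphism.

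It remains to check three routine points. First, $\hat w$ is a left $(A\otimes A)$-module map: given $b,b'\in A$, one computes $((b\otimes b')(a\pi(d)\otimes a'\pi(d')))\hat w=(ba\otimes b'a')(\pi\otimes\pi)((d\otimes d')w)=(b\otimes b')((a\pi(d)\otimes a'\pi(d'))\hat w)$, so $\hat w\in\End(_{(A\otimes A)}(A_{E}\otimes{^{E}A}))^{\op}$. Second, $w\mapsto\hat w$ is a homomorphism: for $w_{1},w_{2}$, the product in the opposite algebra is $w_{1}w_{2}$, and one verifies $(a\pi(d)\otimes a'\pi(d'))\hat{w_{1}}\hat{w_{2}}=(a\pi(d)\otimes a'\pi(d'))\widehat{w_{1}w_{2}}$ by decomposing $(d\otimes d')(w_{1}w_{2})=((d\otimes d')w_{1})w_{2}$ and applying the construction twice. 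Third, the construction is symmetric in the two tensor factors, so an analogous argument produces the second homomorphism $\pi{^{F}\overline{\times}_{F}}\pi$.
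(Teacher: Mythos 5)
Your overall architecture (define $\hat w$ on elementary tensors $a\pi(d)\otimes a'\pi(d')$, then check well-definedness, the $(A\otimes A)$-linearity, and multiplicativity in $w$) is the same as the paper's, but the crucial step --- well-definedness --- has a genuine gap. You reduce it to the identities (i) and (ii) and assert that these ``ensure that the formula depends on $a\pi(d)\otimes a'\pi(d')$ only through its class.'' That does not follow. The kernel of the presentation map $(A\otimes D)\otimes(A\otimes D)\to A_{E}\otimes{}^{E}A$, $(a\otimes d)\otimes(a'\otimes d')\mapsto a\pi(d)\otimes a'\pi(d')$, is not generated by the re-bracketing relations of (i) and the balancing relations of (ii): it also contains every element $\sum_{i}a_{i}\otimes d_{i}\otimes a'\otimes d'$ with $\sum_{i}a_{i}\pi(d_{i})=0$ in $A$, and such relations cannot be reached from (i) unless $D$ has local units (or $A$ is firm over $\pi(D)$), neither of which is assumed; $D$ is only non-degenerate and idempotent. (A smaller slip: given the convention $e\cdot a=a\pi(t_{E}(e))$ for ${}^{E}A$, the balancing in (ii) should place $t_{E}(\tilde e)$ on the right of $d'$, not the left.)

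The ingredient you never invoke, and which is exactly what closes this gap in the paper's proof, is the defining ``one-leg'' property of the Takeuchi product: for $w\in D_{E}\overline{\times}{}^{E}D$ the elements $(d\otimes 1)w$ and $(1\otimes d')w$ are genuine elements of $D_{E}\otimes{}^{E}D$ and satisfy $(d\otimes d')w=(1\otimes d')((d\otimes 1)w)=(d\otimes 1)((1\otimes d')w)$. Applying $\pi\otimes\pi$, the candidate value $(a\otimes a')(\pi\otimes\pi)((d\otimes d')w)$ equals $(a\otimes a'\pi(d'))\,(\pi\otimes\pi)((d\otimes 1)w)$ and also $(a\pi(d)\otimes a')\,(\pi\otimes\pi)((1\otimes d')w)$, so it visibly depends on each pair $(a,d)$ and $(a',d')$ only through the products $a\pi(d)$ and $a'\pi(d')$ in $A$. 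Since the two multiplication maps $A\otimes D\to A$ are surjective (non-degeneracy of $\pi$), the kernel of their tensor product is the sum of the two one-sided kernels, so the assignment is well defined on $A\otimes A$; your identity (ii), correctly stated, then gives the final descent to $A_{E}\otimes{}^{E}A$. With this substitution the remaining points of your plan (module-map property, multiplicativity, and the symmetric construction of $\pi\,{}^{F}\overline{\times}_{F}\,\pi$) go through as you describe.
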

\begin{proof}
We only prove the assertion concerning $\pi {^{F}\overline{\times}_{F}} \pi$; the other map can be treated similarly. Let $w\in D^{F} \overline{\times} {_{F}D}$. By definition of $D^{F}\overline{\times} {_{F}D}$,  
\begin{align*}
 (\pi \otimes \pi)(w(d \otimes 1))(a \otimes \pi(d')a') 
 = (\pi \otimes \pi)(w(1\otimes d'))(\pi(d)a \otimes a')
\end{align*}
for all $d,d'\in D$ and $a,a'\in A$. Hence,
there exists a well-defined linear map from $A\times A$ to $A^{F} \otimes {_{F}A}$  that sends every element of the form $(\pi(d)a,\pi(d')a')$  to the expression above.  The corresponding linear map from $A\otimes A$ to $A^{F} \otimes {_{F}A}$ descends  $A^{F} \otimes {_{F}A}$ because $w(t_{F}(u) \otimes 1) = w(1\otimes u)$ for all $u \in F$ by definition of $D^{F} \overline{\times} {_{F}D}$.
\end{proof}
Now also  the right hand sides in \eqref{eq:mor-delta} are well-defined, but they do not lie in the same space as the left hand sides. To remedy this, note that the quotient maps from $A\otimes A$ to $\AlA$ or $\ArA$, respectively, factorize trough the quotient maps
\begin{align} \label{eq:mor-quotient-maps}
q_{E} \colon  A_{E}\otimes{^{E}A} \to A_{B} \otimes {^{B}A} \quad \text{and} \quad  q_{F} \colon A^{F} \otimes {_{F}A} \to A^{C} \otimes {_{C}A},
\end{align}
 which induce canonical maps  
 from   $  \End({_{(A\otimes A)}(A_{B} \otimes {^{B}A})})$ and $  \End({_{(A\otimes A)}(A_{E} \otimes {^{E}A})}) $ to
 \begin{align} \label{eq:hom-eb}
  \Hom({_{(A\otimes A)}(A_{E} \otimes {^{E}A})}, {_{(A\otimes A)}(A^{B} \otimes {_{B}A})}),
 \end{align}
 and from $ \End((A^{C} \otimes {_{C}A})_{(A\otimes A)}) $ and $ \End((A^{F} \otimes {_{F}A})_{(A\otimes A)}) $ to
 \begin{align} \label{eq:hom-fc}
   \Hom((A^{F} \otimes {_{F}A})_{(A\otimes A)}, (A^{C} \otimes {_{C}A})_{(A \otimes A)}).
 \end{align}
To keep the notation in reasonable bounds, we shall apply these maps
where needed without explicit mentioning, for example, to make sense
of the two equations in \eqref{eq:mor-delta}. 
\begin{definition} \label{definition:morphism}
Let $\mathcal{D}=(D,E,F,t_{E},t_{F},\Delta_{E},\Delta_{F})$ and  $\mathcal{A}=(A,B,C,t_{B},t_{C},\Delta_{B},\Delta_{C})$ be regular multiplier Hopf algebroids.
  A \emph{morphism} from $\mathcal{D}$ to $\mathcal{A}$ is a non-degenerate homomorphism $\pi\colon D\to M(A)$ satisfying \eqref{eq:mor-base-b}, \eqref{eq:mor-base-c}, and  \eqref{eq:mor-delta}.
\end{definition}
Examples of morphisms will be given in Section \ref{section:examples}. In particular, we shall use morphisms  into duals of  measured regular multiplier Hopf algebroids to describe the comultiplications on  these duals in a succinct way.  

Let us now suppose that $(\mathcal{A},\mu_{B},\mu_{C},\bpsib,\cphic)$ is a measured regular multiplier Hopf algebroid and
 explain how one can test whether a homomorphism $\pi\colon D\to M(\hat{A})$  is a morphism from $\mathcal{D}$ to the dual $\widehat{\mathcal{A}}$ constructed in Section \ref{section:dual}.  

 We first show that  the extended comultiplications $\Delta_{B}$ and $\Delta_{C}$ on $M(A)$ are dual to the multiplication on $\hat A$ in a natural sense, but then apply this result with $\mathcal{A}$ and $\widehat{\mathcal{A}}$ switched.  Recall that  $L(M)$ and $R(M)$ denote the left and the right multipliers of a module $M$.   Similar arguments as in the case of Lemma \ref{lemma:pairing} show: 
\begin{lemma}
 There exist unique pairings
  \begin{align*}
    L((\AlA)_{(A\otimes A)}) \times (\hat A \otimes \hat A) \to \C
    \quad\text{and}\quad R(_{(A\otimes A)}(\ArA)) \times (\hat A \otimes
    \hat A) \to \C
  \end{align*}
  such that for all $T\in M(A)$, $a,b\in A$, $\upsilon,\omega \in \hat
  A$,
  \begin{align*}
    (T \mid  a\cdot \upsilon \otimes b\cdot \omega) &= (T(a\otimes b)\mid \upsilon \otimes \omega) = (\upsilon \underset{\mu_{C}}{\otimes} \omega)(T(a\otimes b)), \\
    (T\mid \upsilon\cdot a \otimes \omega \cdot b) &= ((a\otimes
    b)T\mid \upsilon \otimes \omega) = (\upsilon
    \underset{\mu_{B}}{\otimes} \omega)((a\otimes b)T).
  \end{align*}
  These pairings separate the points of $L((\AlA)_{(A\otimes
    A)})$ and of $ R(_{(A\otimes A)}(\ArA))$.
\end{lemma}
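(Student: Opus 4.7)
The plan is to adapt the argument of Lemma \ref{lemma:pairing} to the multiplier setting. For the first pairing, note that if $T \in L((\AlA)_{(A\otimes A)})$ and $a, b \in A$, then $T(a\otimes b) \in \AlA$ by definition of a left multiplier. The factorizable functional $\upsilon \underset{\mu_{C}}{\otimes} \omega$ (well-defined on $\AlA$ by Lemma \ref{lemma:pairing}) can then be evaluated at $T(a\otimes b)$, and this scalar is declared to equal $(T \mid a\cdot \upsilon \otimes b\cdot \omega)$. Here $a \cdot \upsilon$ denotes the product in $\hat A$ arising from the embedding $A \cong \hhat{A} \hookrightarrow M(\hat A)$ afforded by biduality (Theorem \ref{theorem:biduality}) and the analogue of Proposition \ref{proposition:dual-multipliers} applied to $\hat A$; cf.\ Remark \ref{remark:dual-bimodule-explicit}.

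The critical step will be well-definedness: the expression should depend only on $a \cdot \upsilon \otimes b \cdot \omega \in \hat A \otimes \hat A$. This is where the left-multiplier property $T((a \otimes b)(c \otimes c')) = (T(a \otimes b))(c \otimes c')$ for all $c, c' \in A$ comes in: the right $A \otimes A$-action on $\AlA$ is dual, via the pairings of Lemma \ref{lemma:pairing}, to the action of $A$ on $\hat A$, so the commutation of $T$ with the former translates into consistency of the formula when factors are moved between $A$ and $\hat A$. Using that $\hat A$ is non-degenerate and idempotent (Lemma \ref{lemma:dual-bimodule}), every element of $\hat A \otimes \hat A$ is a sum of elementary tensors of the form $a\cdot \upsilon \otimes b\cdot \omega$, so linearity and independence of representation suffice to define the pairing on the whole space. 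The second pairing is entirely parallel: for $T \in R(_{(A\otimes A)}(\ArA))$, the element $(a\otimes b)T$ lies in $\ArA$ and pairs with $\upsilon \otimes \omega \in \hAcA$ via $\upsilon \underset{\mu_{B}}{\otimes} \omega$, with the right-multiplier property playing the analogous role.

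Non-degeneracy will reduce directly to that in Lemma \ref{lemma:pairing}. If $(T \mid X) = 0$ for every $X \in \hat A \otimes \hat A$, then $(T(a\otimes b) \mid \upsilon \otimes \omega) = 0$ for all $a,b \in A$ and $\upsilon, \omega \in \hat A$; non-degeneracy of the pairing $\hAbA \times \AlA \to \C$ forces $T(a \otimes b) = 0$ for every $a,b$, and since $\AlA$ is non-degenerate as a right $A\otimes A$-module (Definition \ref{definition:mult-hopf-algebroid}(2)), the left multiplier $T$ must vanish. Conversely, specializing $T$ to multiplication by elementary tensors $c \otimes c' \in A \otimes A \subseteq L((\AlA)_{(A\otimes A)})$ reduces the separation on the $\hat A \otimes \hat A$-side back to Lemma \ref{lemma:pairing}. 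The analogous argument handles the second pairing.

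The hard part will be making the well-definedness rigorous, i.e.\ spelling out that the right $A \otimes A$-action on $\AlA$ really is dual to the $A$-action on $\hat A$ used in forming $a \cdot \upsilon$. This requires a careful trace through the identifications in Lemma \ref{lemma:pairing}, Proposition \ref{proposition:dual-multipliers}, Theorem \ref{theorem:biduality}, and Remark \ref{remark:dual-bimodule-explicit}; everything else is routine.
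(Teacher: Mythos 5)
The paper offers no proof of this lemma beyond the pointer ``similar arguments as in the case of Lemma \ref{lemma:pairing}'', so you are filling a gap the paper leaves open, and your plan identifies the correct formula and the two genuine issues (well-definedness and separation). Two remarks. First, the detour through biduality to interpret $a\cdot \upsilon$ is an unnecessary complication: here $a\cdot\upsilon$ and $\upsilon\cdot a$ are just the $M(A)$-bimodule structure on $\dual{A}$ fixed in the preliminaries, $(a\cdot\upsilon)(c)=\upsilon(ca)$ and $(\upsilon\cdot a)(c)=\upsilon(ac)$, consistent with $\hat A=A\cdot\phi=\phi\cdot A$; that these agree with the convolution actions of $\hhat{A}\subseteq M(\hat{A})$ is the content of \eqref{eq:dual-bimodule-explicit-1}, but you do not need that fact here. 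Second, the step you flag as ``the hard part'' closes much more easily than your kernel analysis of $A\otimes\hat A\to\hat A\otimes \hat A$ suggests: since $\phi$ is faithful, $a\mapsto a\cdot\phi$ is a linear \emph{bijection} $A\to\hat A$, so $a\otimes b\mapsto a\cdot\phi\otimes b\cdot\phi$ is a bijection $A\otimes A\to\hat A\otimes\hat A$ and one may simply \emph{define}
\begin{align*}
(T\mid a\cdot\phi\otimes b\cdot\phi):=(\phi\underset{\mu_{C}}{\otimes}\phi)(T(a\otimes b)),
\end{align*}
with no independence of representation to verify. The general formula for arbitrary $\upsilon=c\cdot\phi$, $\omega=c'\cdot\phi$ then follows from the two facts you correctly isolate: the module property $T((a\otimes b)(c\otimes c'))=T(a\otimes b)(c\otimes c')$ together with $a\cdot(c\cdot\phi)=(ac)\cdot\phi$, and the duality identity
\begin{align*}
(\upsilon\underset{\mu_{C}}{\otimes}\omega)\bigl(Y(c\otimes c')\bigr)=\bigl((c\cdot\upsilon)\underset{\mu_{C}}{\otimes}(c'\cdot\omega)\bigr)(Y), \qquad Y\in\AlA,
\end{align*}
which one checks on elementary tensors via ${_{C}(c'\cdot\omega)}(b)=\comega(bc')$ and \eqref{eq:pairing-ala}, exactly in the spirit of Lemma \ref{lemma:pairing}; the second pairing is handled symmetrically with \eqref{eq:pairing-ara}. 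Your separation argument is correct, though the final appeal to non-degeneracy of the $A\otimes A$-module is superfluous: an element of $L((\AlA)_{(A\otimes A)})$ is by definition a map on $A\otimes A$, so $T(a\otimes b)=0$ for all $a,b$ already forces $T=0$.
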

\begin{proposition} \label{proposition:morphism-dual}
Let $(\mathcal{A},\mu_{B},\mu_{C},\bpsib,\cphic)$ be a measured regular multiplier Hopf algebroid. Then 
  \begin{align*}
 (\Delta_{B}(T)\mid \upsilon \otimes \omega) = (T \mid  \upsilon \omega)  =    (\Delta_{C}(T)\mid \upsilon \otimes \omega)
  \end{align*}
for all $a \in M(A)$ and $\upsilon,\omega \in \hat A$.
\end{proposition}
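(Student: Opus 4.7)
The plan is to reduce both equalities to the case $T\in A$, where they follow essentially by unwinding definitions. First I would treat $T\in A$: here $\Delta_B(T)\in\ArA$ is an honest tensor, and the pairing formula from the preceding lemma gives $(\Delta_B(T)\mid \upsilon\otimes\omega)=(\upsilon\underset{\mu_B}{\otimes}\omega)(\Delta_B(T))$; by the very formula recalled at the start of Section~\ref{section:dual} for the multiplication on $\hat A$, this equals $(\upsilon\omega)(T)=\dual{T}(\upsilon\omega)=(T\mid\upsilon\omega)$. The argument for $\Delta_C$ is entirely analogous, using instead the companion formula $\upsilon\omega=(\upsilon\underset{\mu_C}{\otimes}\omega)\circ\Delta_C$.

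To pass from $T\in A$ to arbitrary $T\in M(A)$, I would invoke the uniqueness characterizations of the two relevant extensions. On the one hand, Lemma~\ref{lemma:comult-extension} determines $\Delta_B(T)$ as a right multiplier of $\ArA$ by the identity $(a\otimes b)\Delta_B(c)\Delta_B(T)=(a\otimes b)\Delta_B(cT)$ for $a,b,c\in A$, with $cT\in A$; on the other, Lemma~\ref{lemma:eval} fixes $\dual{T}\in(\hat A)^{\sqcup}$ by the rule $\dual{T}(d\cdot\phi)=\phi(Td)$, again with $Td\in A$. Writing $\upsilon\omega=d\cdot\phi$ reduces the right-hand side to $\phi(Td)$. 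On the left-hand side, bijectivity of the canonical map ${_{\rho}T}$ lets me decompose any $a\otimes b$ in $\ArA$ as a finite sum $\sum_i(a'_i\otimes b'_i)\Delta_B(c_i)$; then $(a\otimes b)\Delta_B(T)=\sum_i(a'_i\otimes b'_i)\Delta_B(c_iT)$, yielding a sum of pairings involving only elements of $A$, to which the $T\in A$ case applies termwise.

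The main obstacle I anticipate is the careful bookkeeping of module structures and verifying that the extension of the factorizable functional $\upsilon\underset{\mu_B}{\otimes}\omega$ from $\ArA$ to its right multipliers is genuinely compatible with the pairing $R({_{(A\otimes A)}(\ArA)})\times(\hat A\otimes\hat A)\to\C$ defined in the preceding lemma. This compatibility ought, however, to follow directly from uniqueness: both maps $T\mapsto\Delta_B(T)$ and $T\mapsto\dual{T}$ are determined by their restrictions to $A$ via Lemmas~\ref{lemma:comult-extension} and~\ref{lemma:eval}, the two sides of the identity agree on $A$ by the first paragraph, and both are manifestly linear in $T$, forcing agreement on all of $M(A)$.
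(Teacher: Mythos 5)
Your overall strategy is sound and is essentially the paper's: the paper also reduces to $T\in A$ by decomposing the tensor through a canonical map and using the defining property of the extended comultiplication from Lemma \ref{lemma:comult-extension} to replace $\Delta_{C}(T)\Delta_{C}(d_{i})(e_{i}\otimes 1)$ by $\Delta_{C}(Td_{i})(e_{i}\otimes 1)$. The paper works on the $\Delta_{C}$ side, writing $a\otimes b=\sum_{i}\Delta_{C}(d_{i})(e_{i}\otimes 1)$ and then using left invariance of $\phi$ to collapse $(\phi\underset{\mu_{C}}{\otimes}\phi)(\Delta_{C}(Td_{i})(e_{i}\otimes 1))$ to $\phi(Td_{i}\cphic(e_{i}))$, so that the left-hand side becomes a single expression $\phi(Tg)$ with $g=\sum_{i}d_{i}\cphic(e_{i})\in A$; the case $T\in A$ then identifies $g\cdot\phi$ with $\upsilon\omega$, and Lemma \ref{lemma:eval} gives $\phi(Tg)=\dual{T}(\upsilon\omega)$. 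Your termwise variant reaches the same point by a slightly longer route.

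The one place you should be careful is the concluding sentence of your third paragraph: linearity in $T$ together with agreement on $A$ does \emph{not} by itself force agreement on $M(A)$, since $A$ is in general a proper subspace of $M(A)$ and no density is available in this purely algebraic setting. The argument has to be closed by hand, and your second paragraph already contains almost everything needed: after the termwise reduction the left-hand side reads $\sum_{i}\eta_{i}(c_{i}T)$ with $\eta_{i}=(\upsilon_{0}\cdot a_{i}')(\omega_{0}\cdot b_{i}')\in\hat A$. Writing each $\eta_{i}=\psi\cdot g_{i}$ and setting $g=\sum_{i}g_{i}c_{i}\in A$, this sum equals $\psi(gT)$; restricting to $T\in A$ and invoking your first paragraph shows $\psi\cdot g=\upsilon\omega$ as elements of $\dual{A}$, and then Lemma \ref{lemma:eval} gives $\psi(gT)=\dual{T}(\upsilon\omega)=(T\mid\upsilon\omega)$ for every $T\in M(A)$. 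With this re-assembly made explicit (it is the exact analogue of the paper's final line ``whence $\sum_{i}\phi(-\,d_{i}\cphic(e_{i}))=\upsilon\omega$''), your proof is complete.
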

\begin{proof}
  We only prove the second equation. Write $\upsilon=a\cdot \phi$ and $\omega=b\cdot \psi$ with $a,b\in A$ and choose $d_{i},e_{i} \in A$ such that $a\otimes b = \sum_{i} \Delta_{C}(d_{i})(e_{i} \otimes 1)$ in $\AlA$. Then
  \begin{align*}
    (\Delta_{C}(T)\mid  \upsilon \otimes  \omega) &=
\sum_{i}    (\phi \underset{\mu_{C}}{\otimes} \phi)(\Delta_{C}(Td_{i})(e_{i}\otimes 1)\\
&= \sum_{i} \phi(\cphic(Td_{i})e_{i}) = \sum_{i}\phi(Td_{i}\cphic(e_{i})).
  \end{align*}
For $T\in A$, the left hand side becomes $T^{\vee}(\upsilon \cdot \omega)$, whence $ \sum_{i}\phi(-d_{i}\cphic(e_{i})) )=\upsilon \cdot \omega$.
\end{proof}

\begin{corollary}
Let $(\mathcal{A},\mu_{B},\mu_{C},\bpsib,\cphic)$ be a measured regular multiplier Hopf algebroid, let $\mathcal{D}$ be a regular multiplier Hopf algebroid as above and
  let $\pi \colon D \to M(\hat{A})$ be a non-degenerate homomorphism satisfying \eqref{eq:mor-base-b} and \eqref{eq:mor-base-c}. Then $\pi$ is a morphism from $\mathcal{D}$ to $\hat{\mathcal{A}}$  if and only if 
for all $d,d',d'' \in D$ and $a,b\in A$,
  \begin{align}
((\pi \otimes \pi)((d' \otimes d'')\Delta_{E}(d))\mid a'\otimes a'') &=
\pi(d)((a' \actleft \pi(d')) (a'' \actleft \pi(d''))), \label{eq:morphism-test-deltart} \\  \label{eq:morphism-test-deltalt}
((\pi \otimes \pi)(\Delta_{F}(d)(d' \otimes d''))\mid a' \otimes a'') &=   \pi(d)((\pi(d') \actright  a') (\pi(d'') \actright a'')).
  \end{align}
\end{corollary}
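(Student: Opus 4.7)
The plan is to reduce the two morphism equations
\[
\hat\Delta_C(\pi(d)) = (\pi {_{E}\overline{\times}^{E}} \pi)(\Delta_E(d)), \qquad \hat\Delta_B(\pi(d)) = (\pi {^{F}\overline{\times}_{F}} \pi)(\Delta_F(d))
\]
demanded by Definition \ref{definition:morphism} (recalling that in $\hat{\mathcal{A}}$ it is $\hat\Delta_C$ that fills the slot of the right comultiplication and $\hat\Delta_B$ that of the left, as $\hat{\mathcal{A}}=(\hat A,C,B,t_B^{-1},t_C^{-1},\hat\Delta_C,\hat\Delta_B)$) to a pairing test against elements of the form $\dual{a'}\cdot\pi(d')\otimes\dual{a''}\cdot\pi(d'')$ in the bidual $\hhat A\cong A$ (Theorem \ref{theorem:biduality}), and to observe that this test reproduces \eqref{eq:morphism-test-deltart} and \eqref{eq:morphism-test-deltalt} term-by-term.

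For the right-hand sides of the test identities, I will apply Proposition \ref{proposition:morphism-dual} to the dual $\hat{\mathcal{A}}$: for every $T\in M(\hat A)$ and $\xi,\eta\in\hhat A$,
\[
(\hat\Delta_B(T)\mid\xi\otimes\eta) = (T\mid\xi\eta) = (\hat\Delta_C(T)\mid\xi\otimes\eta).
\]
Setting $T=\pi(d)$ and combining Remark \ref{remark:dual-bimodule-explicit} with the homomorphism property of $j\colon a\mapsto\dual a$ (Theorem \ref{theorem:biduality}) gives
\[
\dual{a'}\cdot\pi(d') = \dual{(a'\actleft\pi(d'))}, \quad \dual{(a'\actleft\pi(d'))}\cdot\dual{(a''\actleft\pi(d''))} = \dual{((a'\actleft\pi(d'))(a''\actleft\pi(d'')))},
\]
so that the pairing collapses to $\pi(d)\bigl((a'\actleft\pi(d'))(a''\actleft\pi(d''))\bigr)$ (using that the natural evaluation pairing $M(\hat A)\cong A^\sqcup_0$ with $\hhat A\cong A$ sends $(T,\dual c)$ to $T(c)$), which is the right-hand side of \eqref{eq:morphism-test-deltart}; the case of \eqref{eq:morphism-test-deltalt} is parallel.

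For the left-hand sides, I will combine the defining property of $\pi{_{E}\overline{\times}^{E}}\pi$ from the lemma following Definition \ref{definition:morphism} with the multiplication-compatibility of the pairings stated just before Proposition \ref{proposition:morphism-dual} to obtain
\[
((\pi{_{E}\overline{\times}^{E}}\pi)(\Delta_E(d))\mid\dual{a'}\cdot\pi(d')\otimes\dual{a''}\cdot\pi(d'')) = ((\pi\otimes\pi)((d'\otimes d'')\Delta_E(d))\mid\dual{a'}\otimes\dual{a''}),
\]
and then identify the right-hand side, via biduality and the coincidence of the pairings of Lemma \ref{lemma:pairing} for $\mathcal{A}$ and for $\hat{\mathcal{A}}$ under $a\leftrightarrow\dual a$, with $((\pi\otimes\pi)((d'\otimes d'')\Delta_E(d))\mid a'\otimes a'')$, which is the left-hand side of \eqref{eq:morphism-test-deltart}. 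The corresponding computation for $\Delta_F$ yields \eqref{eq:morphism-test-deltalt}.

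The forward implication follows at once. For the converse, non-degeneracy of $\pi$ (in the form $\hat A\cdot\pi(D)=\hat A$) combined with the idempotence of $A$ as an $\hat A$-bimodule (Theorem \ref{theorem:dual-algebra}) implies that $\{a'\actleft\pi(d'):a'\in A,\,d'\in D\}$ already spans $A$, so the elements $\dual{a'}\cdot\pi(d')\otimes\dual{a''}\cdot\pi(d'')$ span the relevant balanced tensor squares of $\hhat A$; non-degeneracy of the pairings in Lemma \ref{lemma:pairing} applied to $\hat{\mathcal{A}}$ then upgrades the test identities to the full morphism equations. The main obstacle is careful bookkeeping of the several balanced tensor products (over $B,C,E,F$ and the various induced bimodule structures on $A$, $\hat A$ and $\hhat A$), confirming that the extensions $\pi{_{E}\overline{\times}^{E}}\pi$ and $\pi{^{F}\overline{\times}_{F}}\pi$ act on elements of the form $\dual{a'}\cdot\pi(d')\otimes\dual{a''}\cdot\pi(d'')$ in a well-defined way, and that all intermediate pairing identifications mediated by biduality are mutually consistent.
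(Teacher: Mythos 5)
Your proposal is correct and follows essentially the same route as the paper: apply Proposition \ref{proposition:morphism-dual} to the dual $\hat{\mathcal{A}}$ (identifying $\hhat{A}$ with $A$ via $a\mapsto\dual{a}$ and using Remark \ref{remark:dual-bimodule-explicit}), then move the factors $\pi(d'),\pi(d'')$ across the pairing via its multiplication-compatibility, and conclude by non-degeneracy of the pairings. The paper compresses the converse into the word ``equivalent,'' whereas you spell out the spanning and non-degeneracy argument; that is the only difference.
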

\begin{proof}
By Proposition \ref{proposition:morphism-dual},
  \begin{align*}
\pi(d)((\pi(d')\actright a') ( \pi(d'') \actright a'')) &=
(\hat{\Delta}_{B}(\pi(d))\mid  (\pi(d')\actright  a') \otimes   (\pi(d'')\actright a'' )) \\
&= (\hat{\Delta}_{B}(\pi(d))(\pi(d') \otimes \pi(d''))\mid a' \otimes a'').
  \end{align*}
  Thus, \eqref{eq:morphism-test-deltalt} is equivalent to \eqref{eq:mor-delta-c}. A similar argument shows that \eqref{eq:morphism-test-deltart} is equivalent to \eqref{eq:mor-delta-b}.
\end{proof}

\subsection{Morphisms preserve the antipode and counits}
Let $\mathcal{A}$ and $\mathcal{D}$ be regular multiplier Hopf algebroids as in \ref{eq:morphisms-a-d} again and let $\pi\colon D \to M(A)$ be a morphism from $\mathcal{D}$ to $\mathcal{A}$. Denote by $\ceps$ and $\epsb$ the left and the right counit of $\mathcal{A}$, and by $\epse$ and $\feps$ the left and the right counit of $\mathcal{D}$.
\begin{lemma}
  For all $a\in A$ and $d\in D$,
  \begin{align*}
    \epsb(\pi(d)a) &= \epsb(\pi(\epse(d))a) &&\text{and} &
    \ceps(a\pi(d)) &= \ceps(a\pi(\feps(d))).
  \end{align*}
\end{lemma}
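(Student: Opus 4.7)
I will prove the first identity; the second is analogous, using \eqref{eq:left-counit} in place of \eqref{eq:right-counit}, and can alternatively be deduced from the first by applying the antipode and invoking the preceding result that morphisms preserve the antipode.

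The central observation is that combining the second equation in \eqref{eq:counits-multiplicative}, namely $\epsb(ab) = \epsb(t_{B}(\epsb(a))b)$, with the left $\bAt$-bilinearity $\epsb(t_{B}(x)c) = x\epsb(c)$ yields the multiplicativity $\epsb(ab) = \epsb(a)\epsb(b)$ for all $a,b\in A$. Surjectivity of $\epsb$ then promotes this to $\epsb(yb) = y\epsb(b)$ for $y\in B$ and $b\in A$, and a standard approximation argument extends it to $y\in M(B)$. Since $\pi(\epse(d)) \in \pi(E) \subseteq M(B)$, this gives at once
\[
\epsb(\pi(\epse(d))a) \;=\; \pi(\epse(d))\,\epsb(a).
\]

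It thus suffices to show $\epsb(\pi(d)a) = \pi(\epse(d))\epsb(a)$. For this, I would extend the right counit axiom \eqref{eq:right-counit} to multipliers via Lemma~\ref{lemma:comult-extension} and combine it with the morphism condition \eqref{eq:mor-delta} to substitute $\Delta_{B}(\pi(d))$ by $(\pi\,{_{E}\overline{\times}^{E}}\,\pi)(\Delta_{E}(d))$, interpreted through the quotient maps \eqref{eq:mor-quotient-maps}. Slicing $(b\otimes 1)\Delta_{B}(\pi(d))$ by $\id\odot\epsb$ on the one hand, and pushing the right counit axiom of $\mathcal{D}$ through $\pi$ on the other hand, yields the parallel Sweedler identities
\[
\sum b\,\pi(d_{(1)})\,\epsb(\pi(d_{(2)})) \;=\; b\,\pi(d) \;=\; \sum b\,\pi(d_{(1)})\,\pi(\epse(d_{(2)}))
\]
in an appropriately regularized form (with $b\in A$ playing the role of the regularizer that makes $(b\otimes 1)\Delta_{B}(\pi(d))$ land in $\ArA$). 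Right-multiplying by $a$, applying $\epsb$ once more, and using the multiplicativity together with the base identity $\sum\epse(d_{(1)})\epse(d_{(2)}) = \epse(d)$ obtained by applying $\epse$ to $\sum\epse(d_{(1)})d_{(2)} = d$, then collapses the sum to $\pi(\epse(d))\epsb(a)$ as desired.

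The principal obstacle will be making these Sweedler-style manipulations rigorous in the non-unital multiplier setting: in particular, interpreting $(\pi\otimes\pi)(\Delta_{E}(d))$ in the appropriate extended Takeuchi product, verifying that the slice map $\id\odot\epsb$ factors correctly through the quotient maps \eqref{eq:mor-quotient-maps}, and handling the extension of $\epsb$ to multipliers of $\pi(D)$ in a manner compatible with the morphism condition.
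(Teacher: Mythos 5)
There is a genuine gap, and it sits in your ``central observation''. The right counit is a bimodule map $\epsb\in\Hom(\bAt_{B},\bBb)$, and the left $B$-module structure on $\bAt$ is $x\cdot a=at_{B}(x)$; the bilinearity therefore reads $\epsb(a\,t_{B}(x))=x\,\epsb(a)$, with $t_{B}(x)$ on the \emph{right} of $a$, not $\epsb(t_{B}(x)c)=x\epsb(c)$ as you use it. Without that relation you cannot pass from $\epsb(ab)=\epsb(t_{B}(\epsb(a))b)$ to $\epsb(ab)=\epsb(a)\epsb(b)$, and in fact the counit of a multiplier Hopf algebroid is \emph{not} an algebra homomorphism in general: in the two-sided crossed product of Subsection \ref{subsection:two-sided} one has $\epsb(x^{\op}hy)=\sigma(y)^{\op}(x^{\op}\actleft h)$ with $x^{\op}\actleft h=(S_{H}^{-1}(h)\actright x)^{\op}$, so the $H$-leg acts nontrivially on the $B$-leg and multiplicativity fails whenever the action is nontrivial. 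For the same reason your intermediate target $\epsb(\pi(d)a)=\pi(\epse(d))\epsb(a)$ is false in general: the only available module relations are $\epsb(ax)=\epsb(a)x$ and $\epsb(at_{B}(x))=x\epsb(a)$, and neither lets you extract a left multiplier $\pi(\epse(d))\in M(B)$ from inside $\epsb$. This is precisely why the lemma keeps $\pi(\epse(d))$ inside the counit. Note this is not a ``rigor in the multiplier setting'' issue, as your closing paragraph suggests; the argument would already fail for unital Hopf algebroids over a noncommutative base.

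The second half of your plan --- producing the two factorizations $b\,\pi(d_{(1)})\,\epsb(\pi(d_{(2)}))=b\,\pi(d)=b\,\pi(d_{(1)})\,\pi(\epse(d_{(2)}))$ from the counit axioms of $\mathcal{A}$ and of $\mathcal{D}$ combined with the morphism condition --- is the right starting point and is essentially what the paper does (for the $\ceps$-statement, packaged as a commutative diagram built from $\widetilde{T_{\rho}}^{\pi}$ and $\widetilde{T_{\rho}}^{A}$). But the paper then concludes by cancelling the second tensor legs using surjectivity of the canonical maps of $\mathcal{D}$ and of $\mathcal{A}$, the inclusion $\pi(\feps(D))\subseteq M(C)$, the identities \eqref{eq:counits-multiplicative}, and the relation $\ceps(a't_{B}(y))=\ceps(a'y)$ --- never by invoking multiplicativity of the counit or by pulling base elements out of it. As written, your argument does not close; the final collapse has to be redone along these lines.
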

\begin{proof}
We only prove the second equation, and use a similar commutative diagram as in the proof of Proposition 3.5 (2) in \cite{timmermann:regular}:
  \begin{align*} \xymatrix@C=35pt@R=18pt{A \otimes D \otimes A
      \ar[r]^{\id \otimes \widetilde{T_{\rho}}^{\pi}} \ar[rd]_{\id \otimes
      m_{D,A}} \ar[dd]_{m_{A,D} \otimes
        \id} & A \otimes D^{F} \otimes {_{F}A} \ar[d]^{\id \otimes (\feps
        \odot \id)} \ar[r]^{(\widetilde{T_{\rho}}^{A})_{13}} & A^{C}_{E} \oo {_{E}D} \oo \cA \ar
      `r/0pt[r] [rdd]^{m_{E}
        \otimes \id} \ar[d]^{(\id \odot \feps) \otimes \id} & \\
      & A \otimes A \ar[d]_{m}  \ar[r]^{\widetilde{T_{\rho}}^{A}} & \AlA
      \ar[d]^{\ceps \odot \id} & \\  A\otimes A
      \ar `d/0pt[d] `r[rrrd]^{\widetilde{T_{\rho}}^{A}}  [rrr] \ar[r]^{m} &A \ar@{=}[r] &A &  \AlA \ar[l]_{\ceps
        \odot \id}  \\ &&&& }
  \end{align*} 
Here, $m_{A,D}$, $m_{D,A}$ and $m_{E}$ denote the  obvious multiplication maps, and
 $\widetilde{T_{\rho}}^{\pi}$ is defined by $d\otimes \pi(e)b \mapsto (\id \otimes \pi)(\Delta_{F}(d)(1\otimes e))(1\otimes b)$. All cells except for the upper right one commute by the assumptions on $\ceps,\feps$ and $\pi$. Since $\widetilde{\Tr}$ is surjective, we can conclude that the last cell must commute, which proves the assertion.
\end{proof}
\begin{remark}
  In informal Sweedler notation, the argument can be written as follows.
 Fix $a,b\in A$ and $d,e\in D$  and write $\Delta_{C}(a)$ and $\Delta_{F}(d)$ as $a_{(1)} \otimes a_{(2)}$ and $d_{(1)} \otimes d_{(2)}$, respectively.  The counit relation, applied to  $\Delta_{C}(a\pi(d))$, $\Delta_{C}(a)$ and $\Delta_{F}(d)$, implies
\begin{align*}
  \ceps(a_{(1)}\pi(d_{(1)}))a_{(2)}\pi(d_{(2)})\pi(e)b =  a\pi(d)\pi(e)b 
&= \ceps(a_{(1)})a_{(2)}\pi(\feps(d_{(1)}))\pi(d_{(2)}e)b.
\end{align*}
Since
 the canonical maps of $\mathcal{D}$ are bijective and $\pi(\feps(D)) \subseteq M(C)$, we obtain
 \begin{align*}
   \ceps(a_{(1)}\pi(d'))\pi(e')b &=\ceps(a_{(1)})a_{(2)}\pi(\feps(d'))\pi(e')b = \ceps(a_{(1)}t_{B}(\pi(\feps(d'))))a_{(2)}\pi(e')b
 \end{align*}
 for all $d',e'\in D$.
 Now, we use the relation $\pi(D)A=A$, bijectivity of the canonical maps of $\mathcal{A}$, and the relation $\ceps(a't_{B}(y))=\ceps(a'y)$, where $y\in C$, and conclude that 
 \begin{align*}
   \ceps(a'\pi(d'))b' = \ceps(a't_{B}(\pi(\feps(d'))))b' = \ceps(a'\pi(\feps(d')))b'
 \end{align*}
for all $a',b'\in A$ and $d'\in D$.
\end{remark}

\begin{theorem} \label{theorem:morphism-antipode}
  Let $  \mathcal{A}=(A,B,C,t_{B},t_{C},\Delta_{B},\Delta_{C})$ and $\mathcal{D}=(D,E,F,t_{E},t_{F},\Delta_{E},\Delta_{F})$ be regular multiplier Hopf algebroids with antipodes $S_{A}$ and $S_{D}$, respectively, and let $\pi\colon D \to M(A)$ be a morphism from $\mathcal{D}$  to $\mathcal{A}$. Then 
  $\pi \circ S_{D} = S_{A} \circ \pi$.
\end{theorem}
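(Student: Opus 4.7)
The strategy is to exploit the uniqueness of the antipode by showing that both $S_A \circ \pi$ and $\pi \circ S_D$ satisfy the same equation obtained by combining the antipode axiom \eqref{dg:antipode} with the morphism property. Fix $d, e \in D$ and $a \in A$, and decompose $\Delta_F^D(d)(1 \otimes e) = \sum_i d_i \otimes e_i$ in $D^F \otimes {_F D}$. By the morphism axiom \eqref{eq:mor-delta-c},
\[
\Delta_C^A(\pi(d))(1 \otimes \pi(e) a) \;=\; \sum_i \pi(d_i) \otimes \pi(e_i)\,a \;\in\; \AlA.
\]
Applying $m_A \circ (S_A \otimes \iota)$ to this element and invoking the antipode axiom for $\mathcal{A}$ (extended to multipliers via Lemma \ref{lemma:comult-extension}) gives
\[
\sum_i S_A(\pi(d_i))\,\pi(e_i)\,a \;=\; \epsb^A(\pi(d))\,\pi(e)\,a. \tag{I}
\]
On the other hand, applying the homomorphism $\pi$ to the antipode axiom $\sum_i S_D(d_i)\,e_i = \epse^D(d)\,e$ for $\mathcal{D}$ and multiplying by $a$ on the right gives
\[
\sum_i \pi(S_D(d_i))\,\pi(e_i)\,a \;=\; \pi(\epse^D(d))\,\pi(e)\,a. \tag{II}
\]

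The preceding Lemma asserts $\epsb^A(\pi(d)\,b) = \epsb^A(\pi(\epse^D(d))\,b)$ for every $b \in A$. Combined with the base compatibility \eqref{eq:mor-base-b} (so that $\pi(E) \subseteq M(B)$) and the multiplicativity \eqref{eq:counits-multiplicative} of $\epsb^A$, a brief Sweedler-style computation modeled on the informal argument right after the preceding Lemma promotes this to the multiplier-level identity $\epsb^A(\pi(d))\,c = \pi(\epse^D(d))\,c$ for every $c \in A$. Specialising $c = \pi(e) a$ and subtracting (II) from (I) yields
\[
\sum_i \bigl[S_A(\pi(d_i)) - \pi(S_D(d_i))\bigr]\,\pi(e_i)\,a \;=\; 0. \tag{III}
\]

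Set $F = S_A \circ \pi - \pi \circ S_D \colon D \to M(A)$. Identity (III) holds for every $(d,e) \in D_E \otimes {_E D}$; because both sides of the derivation arise from well-defined maps on $\AlA$ (resp.\ on $D$), the ambiguity in the expansion $\sum_i d_i \otimes e_i$ washes out, and bijectivity of $T_\rho^D \colon D_E \otimes {_E D} \to D^F \otimes {_F D}$ allows specialization at an elementary tensor $d_0 \otimes e_0 \in D^F \otimes {_F D}$, reducing (III) to $F(d_0)\,\pi(e_0)\,a = 0$. Since $d_0, e_0 \in D$ and $a \in A$ are arbitrary, non-degeneracy $\pi(D)\,A = A$ together with non-degeneracy of $A$ forces $F(d_0) = 0$ in $M(A)$, proving $S_A \circ \pi = \pi \circ S_D$.

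The principal technical obstacle is the counit comparison, where one must upgrade the preceding Lemma's tested identity $\epsb^A(\pi(d)\,b) = \epsb^A(\pi(\epse^D(d))\,b)$ to a multiplier-level statement; this requires tracking how $\epsb^A$ extends to $M(A)$ against the embedding $\pi(E) \hookrightarrow M(B)$ implied by \eqref{eq:mor-base-b}. A secondary subtlety is the descent argument of the last paragraph: the naive map $d \otimes e \mapsto F(d)\,\pi(e)\,a$ need not factor through $D^F \otimes {_F D}$ on general inputs, but the sum values arising from (III) do, because they are differences of values of intrinsically defined maps. Both issues are handled by Sweedler-style manipulations paralleling those used in the proof of the preceding Lemma.
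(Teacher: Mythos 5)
Your skeleton matches the paper's: both arguments combine the counit-compatibility lemma, the antipode axioms of $\mathcal{A}$ and $\mathcal{D}$, and the morphism property, and both finish by pushing the resulting identity through the surjective canonical map of $\mathcal{D}$ and invoking non-degeneracy of $\pi$. Your steps (II), (III) and the final descent are exactly the paper's closing moves (and your remark that the descent map $d\otimes e\mapsto F(d)\pi(e)a$ factors through $D^{F}\otimes{_{F}D}$ is correct, though the clean reason is that each of the two maps being subtracted is separately balanced, thanks to the base-algebra compatibilities $\pi\circ t_{F}=t_{C}\circ\pi$ and $S|_{\text{base}}=t^{-1}$ on both sides).

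The gap is step (I). You invoke ``the antipode axiom for $\mathcal{A}$ extended to multipliers'' to get $\sum_i S_A(\pi(d_i))\pi(e_i)a=\epsb(\pi(d))\pi(e)a$, but no such extension exists in the paper's framework: $\epsb$ is defined only on $A$, so $\epsb(\pi(d))$ is not a meaningful expression, and Lemma \ref{lemma:comult-extension} defines $\Delta_{C}(T)$ only through its action $\Delta_{C}(T)\Delta_{C}(a)(b\otimes c)=\Delta_{C}(Ta)(b\otimes c)$ — it does not produce an element $\Delta_{C}(\pi(d))(1\otimes c)$ of $\AlA$ (indeed your right-hand side $\sum_i\pi(d_i)\otimes\pi(e_i)a$ has first legs in $M(A)$, not in $A$). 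Establishing the identity $\sum_i S_A(\pi(d_i))\pi(e_i)a=\pi(\epse(d))\pi(e)a$ is precisely the content of the paper's proof, and it is done without ever forming $\epsb(\pi(d))$: one tensors with an auxiliary leg $a'\in A$, applies $(\Tr^{A})_{13}$ so that the antipode axiom of $\mathcal{A}$ is only ever applied to honest elements $a'\pi(d)\in A$, uses the preceding lemma in its element form $\ceps(a\pi(d))=\ceps(a\pi(\feps(d)))$ (resp.\ its $\epsb$-analogue), and strips off the auxiliary leg at the end using surjectivity of $\Tr^{A}$. Your proposed repair — ``upgrade the lemma to a multiplier-level statement by Sweedler-style manipulations'' — is a description of where the work lies rather than the work itself; as it stands, the central identity is asserted, not proved. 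To close the gap you should either carry out the paper's auxiliary-leg device or give an independent proof of the multiplier-level identity (I), including a definition of $\epsb(\pi(d))$ as a multiplier of $B$ and a verification that the diagram \eqref{dg:antipode} survives the extension.
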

\begin{proof}
  Let us first outline the argument using informal Sweedler notation. 
Fix $a,b\in A$ and $d\in D$. By the lemma above,
  \begin{align*}
    S_{A}(\pi(d_{(1)})a_{(1)}) \pi(d_{(2)})a_{(2)}b &= \epsb(\pi(d)a)b = \epsb(\pi(\epse(d))a)b = S_{A}(a_{(1)})\pi(\epse(d))a_{(2)}b.
  \end{align*}
Since the canonical maps of $\mathcal{A}$ and $\mathcal{D}$ are surjective, we can first conclude that 
\begin{align*}
  S_{A}(\pi(d_{(1)})a')\pi(d_{(2)}e)b' = S_{A}(a')\pi(\epse(d)e)b' = S_{A}(a')\pi(S_{D}(d_{(1)})d_{(2)}e)b'
\end{align*}
for all $a',b'\in A$ and $e\in D$, and then that
$S_{A}(a')S_{A}(\pi(d'))b''  = S_{A}(a') \pi(S_{D}(d'))b''$
for all $d'\in A$ and $b''\in A$.

To make this more precise, consider the following diagram:
\begin{align*}
  \xymatrix@C=32pt{
&    A_{B}^{F} \otimes {^{F}D} \otimes \bA \ar[r]^{(T_{\rho}^{A})_{13}} 
\ar `l/0pt[l] `d[ldddd]^{m^{\op}_{E}\otimes \id}  [dddd]   \ar[d]_{(\id \odot t_{E}\circ\epse) \otimes \id}&
    A^{C} \otimes D_{E} \otimes {_{C,E}A} \ar[r]^(0.45){(T_{\rho}^{\pi})_{23}} \ar[d]_{\id \otimes (\epse \odot \id)} &
    A^{C} \otimes {^{C}M(A)^{C}} \otimes \cA \ar `r/0pt[r] `d[rdddd]_{\bar m_{C}^{\op} \otimes \id} [dddd] \ar[ld]^{\id \otimes \bar m_{C}(S_{A}\otimes \id)} \ar[dd]_{S_{A} \otimes S_{A} \otimes \id} & \\
&\AbA \ar[r]^{T_{\rho}^{A}} \ar[dd]_{\epsb \odot \id} & \AlA  \ar[d]^{S_{A} \otimes \id}  &&
\\
& & \AcA \ar[ld]_{m_{C}} & \ar[l]_(0.55){\id \otimes \bar m_{C}} \Ac \oo {_{C}M(A)_{C}} \oo \cA \ar[d]^{\bar m_{C} \otimes \id} &
\\
& A && \AcA \ar[ll]_{m_{C}}& \\
& \AbA \ar[u]^{\epsb\odot \id} \ar[rr]^{T_{\rho}^{A}} && \AlA \ar[u]_{S_{A} \otimes \id}&
}
\end{align*}
Here, $m_{E}^{\op}$, $\bar m_{C}$ and $\bar m_{C}^{\op}$ denote the obvious multiplication maps, and $T^{\pi}_{\rho}$ maps an element  $d\otimes \pi(e)b$ to $(\pi \otimes \pi)(\Delta_{F}(d)(1\otimes e))(1\otimes b)$. The outer cell commutes because $\pi$ is a morphism, the cell on the left hand side commutes by the preceding lemma,  and all of the other cells except for the triangle commute as well.  Since $T_{\rho}^{A}$ is surjective, we can conclude that this triangle commutes as well, whence
\begin{align*}
  \pi(\epse(d))\pi(e)a = \bar m_{C}(S_{A} \otimes \id)(T^{\pi}_{\rho}(d\otimes \pi(e)a))
\end{align*}
for all $d,e\in D$ and $a\in A$.
Writing $\Delta_{F}(d)(1\otimes e)=\sum_{i} d'_{i} \otimes e'_{i}$, this relation  takes the form
\begin{align*}
\sum_{i}\pi(S_{D}(d'_{i})e'_{i})a= \sum_{i} S_{A}(\pi(d'_{i}))\pi(e'_{i})a.
\end{align*}
Since $T^{D}_{\rho}$ is surjective, the claim follows.
\end{proof}

\section{Examples}
\label{section:examples}

As a special case, the duality theory developed in the preceding sections restricts to a duality of regular weak multiplier Hopf algebras with integrals, which correspond with certain measured regular multiplier Hopf algebras.  We carefully explain this relation, summarizing and extending the results in \cite{daele:relation}, and give a self-contained description of the resulting duality for weak multiplier Hopf algebras.

Next, we  determine the dual objects for measured regular multiplier Hopf algebroids associated  in \cite{timmermann:integration} to \'etale, locally compact, Hausdorff groupoids, to actions of Hopf algebras,  and to   braided-commutative Yetter-Drinfeld algebras.  To keep the treatment in reasonable bounds, we introduce a few simplifying assumptions, for example, to avoid a discussion of Radon-Nikodym derivatives. The latter will be taken up in \cite{timmermann:opalg}.

\subsection{Duality of weak multiplier Hopf algebras with integrals}

A regular weak multiplier Hopf algebra \cite{daele:weakmult} consists of a non-degenerate, idempotent algebra $A$ and a
homomorphism $\Delta \colon A\to M(A\otimes A)$ satisfying the following conditions:
\begin{enumerate}
\item for all $a,b\in A$, the following elements belong to $A\otimes A$:
      \begin{align*} &\Delta(a)(1 \otimes b), && \Delta(a)(b\otimes 1), & &(1 \otimes
b)\Delta(a), && (b \otimes 1)\Delta(a);
    \end{align*}
  \item $\Delta$ is coassociative in the sense that for all $a,b,c\in A$,
      \begin{align*} (a\otimes 1 \otimes 1)(\Delta \otimes \iota)(\Delta(b)(1 \otimes c)) =
(\iota\otimes \Delta)((a \otimes 1)\Delta(b))(1 \otimes 1 \otimes c);
    \end{align*}
  \item $\Delta$ is full in the sense that there are no strict subspaces $V,W\subset A$
satisfying
      \begin{align*} 
        \Delta(A)(1 \otimes A) &\subseteq V\otimes A &&\text{or} & (A \otimes
1)\Delta(A) &\subseteq A\otimes W;
    \end{align*}
  \item there exists an idempotent $E \in M(A \otimes A)$ such that
      \begin{align*} \Delta(A)(1 \otimes A) &=E(A \otimes A) = \Delta(A)(A\otimes 1)
\end{align*}
and
\begin{align*} (A \otimes
1)\Delta(A)&=(A\otimes A)E = (1\otimes A)\Delta(A);
    \end{align*}
  \item the idempotent $E$ in condition (4) satisfies
      \begin{align*} (\Delta \otimes \iota)(E) =(E \otimes 1)(1\otimes E)=(1 \otimes
E)(E\otimes 1) = (\iota \otimes \Delta)(E),
    \end{align*}
   where $\Delta \otimes \iota$ and $\iota \otimes \Delta$ are extended
to homomorphisms $M(A \otimes A) \to M(A \otimes A \otimes A)$ such that $1 \mapsto E
\otimes 1$ or $1 \mapsto 1 \otimes E$, respectively;
\item there exist idempotents $F_{1},F_{3}\in M(A \otimes A^{\op})$ and $F_{2},F_{4} \in M(A^{\op}
\otimes A)$ such that
     \begin{align*}
 E_{13}(F_{1} \otimes 1) &=E_{13}(1 \otimes E),  & (1 \otimes
F_{2})E_{13} &= (E \otimes 1)E_{13}, \\
 (F_{3} \otimes 1)E_{13} &=(1 \otimes E)E_{13},  & E_{13}(1 \otimes
F_{4}) &= E_{13}(E \otimes 1),
    \end{align*}
   where $E_{13} \in M(A \otimes A\otimes A)$ acts like $E$ on the first
and third tensor factor, and such that the kernels of the linear maps
$T_{1},T_{2},T_{3},T_{4} \colon A\otimes A \to A\otimes A$ defined by
     \begin{align*}
      T_{1}(a\otimes b)&=
\Delta(a)(1 \otimes b), & T_{2}(a\otimes b)&=
(a \otimes 1)\Delta(b), \\
T_{3}(a\otimes b)&=
(1 \otimes b)\Delta(a), & T_{4}(a\otimes b)&=
\Delta(b)(a \otimes 1)
    \end{align*}
are given by
\begin{align*}
  \ker(T_{i})& =(A \otimes 1)(1-F_{i})(1\otimes A)  \quad \text{for } i=1,2, \\
\ker(T_{i}) &=(1 \otimes A)(1-F_{i})(A\otimes 1)  \quad \text{for } i=3,4,
\end{align*}
 \item there exists a linear map $\varepsilon\colon A\to \C$ called the \emph{counit} such
that for all $a,b\in A$,
     \begin{align*} (\varepsilon \odot \iota)(\Delta(a)(1\otimes b)) &= ab= (\iota
\odot \varepsilon)((a \otimes 1)\Delta(b)).
    \end{align*}
 \end{enumerate}

Let  $(A,\Delta)$ be regular weak multiplier Hopf algebra. Then
the comultiplication extends to a homomorphism $\Delta\colon M(A) \to
M(A\otimes A)$ such that $1\mapsto E$. Moreover, there exists an
antipode, which is a bijective anti-homomorphism $S\colon A\to A$ such that the maps
  \begin{align*}
    \begin{aligned} R_{1} &\colon A\otimes A \to A\otimes A, & a\otimes b &\mapsto a_{(1)} \otimes S(a_{(2)})b, \\ R_{2} &\colon A\otimes A\to A\otimes A, & a \otimes b
&\mapsto aS(b_{(1)}) \otimes b_{(2)}
    \end{aligned}
  \end{align*}
 are well-defined and satisfy $T_{i}R_{i}T_{i}=T_{i}$ and
$R_{i}T_{i}R_{i}=R_{i}$ for $i=1,2$; see \cite{daele:weakmult}.  
Using this antipode, one defines source and target maps
$\varepsilon_{s},\varepsilon_{t}\colon A\to M(A)$ by
\begin{align*}
 \varepsilon_{s}(a) &= S(a_{(1)})a_{(2)}, & \varepsilon_{t}(a) &=a_{(1)}S(a_{(2)}).
\end{align*}

For the purpose of this section, we adopt the following terminology.
\begin{definition} \label{def:frobenius}
  A \emph{Frobenius tuple} is a pair of algebras $B,C$ with anti-isomorphisms $t_{B}\colon B\to C$ and $t_{C} \colon C\to B$ and functionals $\mu_{B} \in B^{\vee}$  and $\mu_{C} \in C^{\vee}$ such that there exists an idempotent $E \in M(B\otimes C)$ satisfying the following conditions:
  \begin{enumerate}
  \item $E(1\otimes y) = E(t_{C}(y) \otimes 1)$  and $(x\otimes 1)E = (1\otimes t_{B}(x))E$ for all  $y\in C$ and $x\in B$;
    \item  $m(\id \otimes t_{B}^{-1})(E) = 1$ in $M(C)$ and $m(t_{C}^{-1} \otimes \id)(E) = 1$ in $M(B)$;
  \item  $(\mu_{B} \odot \id)(E)=1$ in $M(C)$ and  $(\id \odot \mu_{C})(E)=1$ in $M(B)$.
  \end{enumerate}
\end{definition}
Suppose given a Frobenius tuple with $E\in M(B\otimes C)$ as above. Then $E$ is a \emph{separability idempotent} in the sense of \cite{daele:separability}, and 
\begin{itemize}
\item[(F1)] the algebras $B$ and $C$ have local units \cite[Proposition
  1.10]{daele:separability}; in particular, they are firm;
   \item[(F2)] every idempotent $B$-submodule or $C$-submodule
   splits \cite[Section 4]{daele:separability}; in particular, every
   idempotent module over $B$ or $C$ is locally projective.
\end{itemize}
Let $A$ be a non-degenerate, idempotent algebra with $B,C \subseteq M(A)$ as commuting subalgebras such that $AB=BA=AC=CA=A$. We regard $A$ as a module over $B$ and $C$ as in Section \ref{section:multiplier-bialgebroids}, and denote by
 \begin{align*}
  \pi_{B} \colon A\otimes A \to \ArA \quad \text{and} \quad
  \pi_{C} \colon A\otimes A \to \AlA
\end{align*}
the canonical quotient maps.
\begin{lemma} \label{lemma:frobenius}
\begin{enumerate}
\item Every functional on $A$ is factorizable, that is,  $A^{\sqcup}=A^{\vee}$.
\item  There are well-defined linear maps
\begin{align*}
  \iota_{B}& \colon \ArA \to A\otimes A, \ a\otimes b \mapsto (a\otimes b)E, \\
\iota_{C}& \colon \AlA \to A\otimes A, \ a\otimes b\mapsto E(a\otimes b);
\end{align*}
and $\pi_{B} \circ \iota_{B}$ and $\pi_{C} \circ \iota_{C}$ are the identity on $\ArA$ or $\AlA$, respectively.
\item For all $\upsilon,\omega \in A^{\vee}=A^{\sqcup}$, 
  \begin{align*}
    (\upsilon \otimes \omega) \circ \iota_{B} &= \upsilon \underset{\mu_{B}}{\otimes} \omega  \text{ in } (\ArA)^{\vee}, &
    (\upsilon \otimes \omega) \circ \iota_{C} &= \upsilon \underset{\mu_{C}}{\otimes} \omega \text{ in } (\AlA)^{\vee}.
  \end{align*}
\end{enumerate}  
\end{lemma}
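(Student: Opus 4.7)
The plan is to prove (2) first, then deduce (1) from the splittings it provides, and finally verify (3) by direct substitution.

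For (2), the well-definedness of $\iota_{B} \colon \ArA \to A \otimes A$ on the balanced tensor product follows from the rebalancing identity $(x \otimes 1)E = (1 \otimes t_{B}(x))E$ in condition (1) of Definition \ref{def:frobenius}: this identity converts the $B$-balancing on the source into the same balancing after right multiplication by $E$. The map $\iota_{C}$ is treated symmetrically using the other rebalancing $E(1 \otimes y) = E(t_{C}(y) \otimes 1)$. To check $\pi_{B} \circ \iota_{B} = \id$, I would write $E = \sum_{i} e_{i} \otimes f_{i}$ symbolically, move each $e_{i}$ across the balancing bar in $\bAs \otimes \bAt$, and collapse the resulting element $\sum_{i} t_{B}(e_{i}) f_{i} \in C$ to $1$ via the unit condition (2) of Definition \ref{def:frobenius}; the argument for $\pi_{C} \circ \iota_{C}$ is analogous.

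For (1), given arbitrary $\omega \in A^{\vee}$, I will define candidate factorizations such as $\bomega(a) := (\id \odot \omega)(E(1 \otimes a))$ and $\omegab(a) := (\omega \odot \id)((a \otimes 1)E)$, all of which take values in $B$ or $C$ because $B, C \subseteq M(A)$. The factorization identity $\omega(xa) = \mu_{B}(x\bomega(a))$ for $x \in B$ will follow by applying $\mu_{B} \otimes \omega$ to $(x \otimes 1)E(1 \otimes a)$, trading $(x \otimes 1)E$ for $(1 \otimes t_{B}(x))E$, and invoking the unit condition $(\mu_{B} \odot \id)(E) = 1_{C}$ from Definition \ref{def:frobenius}(3) to collapse the $B$-leg. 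The $C$-factorizations $\comega, \omegac$ are defined and verified symmetrically via the complementary unit condition $(\id \odot \mu_{C})(E) = 1_{B}$. Hence every $\omega$ is factorizable and $A^{\sqcup} = A^{\vee}$.

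For (3), I would expand $(\upsilon \otimes \omega) \circ \iota_{B}(a \otimes b) = (\upsilon \otimes \omega)((a \otimes b)E) = \sum_{i} \upsilon(ae_{i}) \omega(f_{i} b)$, substitute the factorizations produced in step (1) for each factor, and collapse the middle sum via a unit property of $E$ to recover $\mu_{B}(\upsilonb(a) \bomega(b))$, which by \eqref{eq:factorisation-tensor} equals $(\upsilon \underset{\mu_{B}}{\otimes} \omega)(a \otimes b)$. The $\mu_{C}$-case is entirely parallel. The main challenge will be the left/right and $B$ versus $C$ bookkeeping: selecting which rebalancing or unit identity of $E$ to invoke at each substitution. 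Once the correct identification is pinpointed, every verification is a brief one-line computation.
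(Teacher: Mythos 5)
Your proposal follows essentially the same route as the paper: there, too, part (1) is proved by writing down explicit candidate factorizations of an arbitrary $\omega\in A^{\vee}$ in terms of $E$ (the formulas \eqref{eq:frobenius-factorize-b}--\eqref{eq:frobenius-factorize-c}) and verifying them via the rebalancing and unit identities of the Frobenius tuple, part (2) is checked directly, and part (3) is exactly the substitution you describe. The only corrections needed are of the bookkeeping type you already flagged: for instance, your candidate $\omega_{B}(a)=(\omega\odot\id)((a\otimes 1)E)$ takes values in $M(C)$ rather than in $B$ and must be composed with $t_{B}^{-1}$ as in \eqref{eq:frobenius-factorize-b}, your verification as written yields $\omega(t_{B}(x)a)$ rather than $\omega(xa)$ (so it factorizes a different one of the eight module structures), and the element produced by moving the $e_{i}$ across the balancing bar in $\bAs\otimes\bAt$ is $\sum_{i}f_{i}t_{B}(e_{i})$ --- which equals $1$ upon applying the anti-isomorphism $t_{B}$ to condition (2) of Definition \ref{def:frobenius} --- rather than $\sum_{i}t_{B}(e_{i})f_{i}$.
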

\begin{proof}
  (1) As in \cite[Proof of Lemma 3.3]{timmermann:integration}, one  checks that for every $\omega \in A^{\vee}$, the formulas
  \begin{align} \label{eq:frobenius-factorize-b}
    \omega_{B} (a) &= (\omega \otimes t_{B}^{-1})((a\otimes 1)E), & {_{B}\omega}(a) &= (\omega \otimes t_{C})(E(a\otimes 1)),  \\  \label{eq:frobenius-factorize-c}
    \omega_{C}(a) &=  (t_{B} \otimes \omega)((1\otimes a)E), & \comega(a)&=(t_{C}^{-1} \otimes \omega)(E(1\otimes a))
  \end{align}
 define module maps $\omega_{B},{_{B}\omega},\omega_{C},{_{C}\omega}$ whose composition with $\mu_{B}$ or $\mu_{C}$, respectively, is $\omega$.

(2) Straightforward; see also \cite[Proposition 3.9]{daele:relation}.

(3) We only prove the first relation. Let $\upsilon,\omega \in A^{\vee}$ and $a,b\in A$. By \eqref{eq:frobenius-factorize-b},
\begin{align*}
  (\upsilon \underset{\mu_{B}}{\otimes} \omega)(a\otimes b)=  \omega(bt_{B}(\upsilon_{B}(a))) &= \omega(bt_{B}((\upsilon \otimes t_{B}^{-1})((a\otimes 1)E))) \\ & = \omega(b(\upsilon \otimes \id)((a\otimes 1)E)) \\ & = (\upsilon\otimes \omega)((a\otimes b)E) = (\upsilon\otimes \omega)(\iota_{B}(a\otimes b)). \qedhere
\end{align*}
 \end{proof}
To formulate our main results, we need the notion of integrals on weak multiplier Hopf algebras \cite{daele:larson}.
\begin{definition}
    Let $(A,\Delta)$ be a regular weak multiplier Hopf algebra. We call a functional $\phi$ on $A$ a
\emph{left integral on $(A,\Delta)$} if
\begin{align*}
  (\id \odot \phi)((b\otimes 1)\Delta(a)) &= (\id \odot \phi)((b\otimes 1)F_{2}(1\otimes a))  \quad \text{for all } a,b\in A,
\end{align*}
and a functional $\psi$ on $A$ a  \emph{right integral on $(A,\Delta)$} if
\begin{align*}
(\psi  \odot \id)(\Delta(a)(1\otimes b)) &= (\psi \odot \id)((a\otimes 1)F_{1}(1\otimes b))  \quad \text{for all } a,b\in A.
\end{align*}
A \emph{regular weak multiplier Hopf algebra with integrals} is a regular weak multiplier Hopf algebra $(A,\Delta)$ with a right integral $\psi$ and a left integral $\phi$ satisfying
\begin{align} \label{eq:wmha-full}
  \begin{aligned}
    (\phi \odot \id)((A\otimes 1)E) = C = (\phi\odot \id)(E(A\otimes 1)), \\
    (\id \odot \psi)(E(1\otimes A)) = B = (\id \odot
    \psi)((1\otimes A)E).
  \end{aligned}
\end{align}
 \end{definition}

\begin{theorem} \label{theorem:wmha-had}
  Let $(A,\Delta,\psi,\phi)$ be a regular weak multiplier Hopf algebra with integrals.  Then there exists
 a unique measured  regular multiplier Hopf algebroid
\begin{align*}
  (\mathcal{A},\mu_{B},\mu_{C},\bpsib,\cphic), \quad \text{where } \mathcal{A}=(A,B,C,t_{B},t_{C},\Delta_{B},\Delta_{C}),
\end{align*}
such that for all $a,b,c\in A$, $x\in B$, $y\in C$,
    \begin{gather*}
      \begin{aligned}
        B&=\varepsilon_{s}(A), & C&=\varepsilon_{t}(A), & t_{B}(x)&=S^{-1}(x),  & t_{C}(y)&=S^{-1}(y),
      \end{aligned}  \\
 (b\otimes c)\Delta_{B}(a)
      =\pi_{B} ((b\otimes c)\Delta(a))  \quad \text{and} \quad
      \Delta_{C}(a)(b\otimes c) = \pi_{C}(\Delta(a)(b\otimes c)), \\
      \mu_{B}(\varepsilon_{s}(a)) = \varepsilon(a) =
\mu_{C}(\varepsilon_{t}(a)),  \quad        
\mu_{B} \circ \bpsib = \psi,  \quad  \mu_{C} \circ \cphic = \phi.
\end{gather*}
Moreover,   $(B,C,t_{B},t_{C},\mu_{B},\mu_{C})$ is a Frobenius tuple and $\mathcal{A}$ is locally projective.
\end{theorem}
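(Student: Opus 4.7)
The plan is to construct the data, then verify the Frobenius-tuple axioms, then the multiplier Hopf algebroid axioms, and finally the integral axioms, relying heavily on the structure of the idempotent $E$ and the source/target maps $\varepsilon_s,\varepsilon_t$ associated to a regular weak multiplier Hopf algebra, as developed in \cite{daele:weakmult} and already examined in \cite{daele:relation}.

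First, I would set $B=\varepsilon_s(A)$ and $C=\varepsilon_t(A)$; standard weak multiplier Hopf algebra theory gives that these are commuting, non-degenerate, idempotent subalgebras of $M(A)$, that $S$ restricts to mutually inverse anti-isomorphisms $B\rightleftarrows C$ (so that $t_B:=S^{-1}|_B$ and $t_C:=S^{-1}|_C$ are well-defined and inverse to $S|_C,S|_B$), and that the idempotent $E$ of the weak multiplier Hopf algebra axioms actually lies in $M(B\otimes C)$ with the characterizing properties $E(1\otimes y)=E(t_C(y)\otimes 1)$ and $(x\otimes 1)E=(1\otimes t_B(x))E$. The base functionals $\mu_B,\mu_C$ are defined by the last line of the theorem; their consistency $\mu_B\circ \varepsilon_s=\varepsilon=\mu_C\circ \varepsilon_t$ uses the standard identities $\varepsilon\circ\varepsilon_s=\varepsilon=\varepsilon\circ\varepsilon_t$ and the surjectivity of $\varepsilon_s,\varepsilon_t$ onto $B,C$. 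Faithfulness of $\mu_B,\mu_C$ follows from \eqref{eq:wmha-full} together with the non-degeneracy of $\phi,\psi$ and the fact that $\mu_B\circ\varepsilon_s=\varepsilon$ pairs non-degenerately with $B$ via $E$.

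Next, I would verify the Frobenius-tuple axioms of Definition \ref{def:frobenius}. Condition (1) is immediate from the properties of $E$ recalled above. Condition (2) comes from the counit/antipode identities $m(\id\otimes S)\Delta=\varepsilon_t$ and $m(S\otimes \id)\Delta=\varepsilon_s$, which, on unit $1\in M(A)$, reduce to $m(\id\otimes t_B^{-1})(E)=1_C$ and $m(t_C^{-1}\otimes\id)(E)=1_B$. Condition (3) is a direct reformulation of the invariance-type identities $(\phi\odot\id)(E)=(\phi\odot\id)\Delta(1)=(\mu_C\circ\varepsilon_t)(1)\cdot 1=1_C$ (and symmetrically for $\mu_C$ on the other factor), as spelled out in \cite[\S 3]{daele:relation}. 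Once the Frobenius tuple is in place, the general theory recalled in items (F1) and (F2) after Definition \ref{def:frobenius} gives that $B,C$ are firm and every idempotent $B$- or $C$-module is locally projective; applied to $\bA,\Ab,\cA,\Ac$ (all non-degenerate and idempotent by construction), this yields local projectivity of $\mathcal{A}$.

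I would then define $\Delta_B$ and $\Delta_C$ through the quotient maps $\pi_B,\pi_C$ as stated: for $a\in A$, set $(b\otimes c)\Delta_B(a):=\pi_B((b\otimes c)\Delta(a))$ and $\Delta_C(a)(b\otimes c):=\pi_C(\Delta(a)(b\otimes c))$. Well-definedness into the extended Takeuchi products follows from the compatibility of $\Delta$ with the idempotent $E$ and from the closure properties in axiom (4) of the weak multiplier Hopf algebra. The $(B,C)$-bilinearity \eqref{eq:left-comultiplication-bilinear}--\eqref{eq:right-comultiplication-bilinear} reduces to the commutation of $\Delta(a)$ with multipliers in $B\otimes C$, which is well-known. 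Coassociativity of $\Delta_B$ and $\Delta_C$ and the mixed coassociativities \eqref{eq:mixed-coassociativity-one}--\eqref{eq:mixed-coassociativity-two} come directly from coassociativity of $\Delta$ after applying the appropriate quotient maps. Bijectivity of the four canonical maps $T_\lambda,T_\rho,{_\lambda T},{_\rho T}$ follows from the bijectivity of $T_1,\ldots,T_4$ modulo the kernels $(A\otimes 1)(1-F_i)(1\otimes A)$ or $(1\otimes A)(1-F_i)(A\otimes 1)$, since these kernels are precisely what is killed in passing to the balanced tensor products over $B$ and $C$. The counits $\ceps$ and $\epsb$ are given by $\ceps=\varepsilon_t$ and $\epsb=\varepsilon_s$, and the antipode of $\mathcal{A}$ is $S$ itself; the conditions of (H2) then reduce to standard identities.

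Finally, I would verify the integral axioms. Define $\cphic$ and $\bpsib$ by $\mu_C\circ\cphic=\phi$ and $\mu_B\circ\bpsib=\psi$; existence and $C$-, respectively $B$-, bilinearity follow from Lemma \ref{lemma:frobenius} applied to $\phi$ and $\psi$, which automatically produces the four module maps $\omega_B,{_B\omega},\omega_C,{_C\omega}$ factorizing a given functional. The strong invariance diagrams \eqref{dg:strong-invariance-left} and \eqref{dg:strong-invariance-right}, equivalently conditions (LI1)--(LI3) and (RI1)--(RI3), are the weak multiplier Hopf algebroid reformulation of Van Daele's strong invariance for $\phi$ and $\psi$; they are essentially contained in \cite{daele:larson,daele:relation}. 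Fullness \eqref{eq:wmha-full} gives that the resulting $\bphi,\phib,\cpsi,\psic$ are surjective, verifying condition (4) of the measured-multiplier-Hopf-algebroid definition, and faithfulness of $\phi,\psi$ is assumed. The main obstacle I expect is the careful verification of the Takeuchi-product membership and the mixed coassociativity \eqref{eq:mixed-coassociativity-one}--\eqref{eq:mixed-coassociativity-two} of $\Delta_B,\Delta_C$ from the single comultiplication $\Delta$; this is where the precise interaction of the idempotents $E$ and $F_1,\ldots,F_4$ with the source/target subalgebras is needed, and the rest of the proof is mostly a careful bookkeeping exercise once this step and the Frobenius-tuple identity are established.
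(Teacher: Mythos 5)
Your proposal is correct and follows essentially the same route as the paper: the paper simply compresses the construction of the multiplier Hopf algebroid structure into a citation of \cite[Theorem 4.8]{daele:relation}, obtains local projectivity from (F1)--(F2), and gets the factorization of $\phi,\psi$ into partial integrals and the base-weight conditions from \cite[Lemma 3.3 and Example 5.1]{timmermann:integration}, which is exactly the content you unfold by hand (your Lemma \ref{lemma:frobenius} step playing the role of the cited factorization lemma). The only point worth flagging is that you treat faithfulness of $\phi,\psi$ as assumed, whereas the stated definition of a regular weak multiplier Hopf algebra with integrals only imposes the fullness condition \eqref{eq:wmha-full}; the paper's own proof is equally silent on this, so it does not distinguish the two arguments.
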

\begin{proof}
First, $\mathcal{A}$ is a regular multiplier Hopf algebroid
by \cite[Theorem 4.8]{daele:relation}, and locally projective by properties (F1) and (F2) noted after Definition \ref{def:frobenius}. The integrals $\phi$ and $\psi$ factorize through partial integrals $\cphic$ and $\bpsib$  by \cite[Lemma 3.3]{timmermann:integration} and its right-handed counterpart, $(\mu_{B},\mu_{C})$ is a counital base weight by \cite[Example 5.1]{timmermann:integration}, and  $\phi$ and $\psi$ are full by \eqref{eq:wmha-full}.
\end{proof}
To prove the converse, we need the following result.
 \begin{proposition} \label{theorem:local-units} Let $(A,B,C,t_{B},t_{C},\Delta_{B},\Delta_{C})$ be a regular multiplier Hopf algebroid and suppose that $(B,C,t_{B},t_{C},\mu_{B},\mu_{C})$ is a Frobenius tuple. Then  $A$ has local units.
\end{proposition}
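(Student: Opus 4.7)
The plan is to establish, for every finite subset $F \subset A$, the existence of a two-sided local unit $e \in A$ with $ea = a = ae$ for all $a \in F$, by combining the local units on the base algebras $B,C$ with the bijectivity of the canonical maps of the multiplier Hopf algebroid.

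First I would reduce to the one-sided case. If, for every finite $F' \subset A$, one can find a left local unit $u_{\ell} \in A$ (with $u_{\ell}a = a$ for $a \in F'$) and a right local unit $u_{r} \in A$ (with $au_{r} = a$ for $a \in F'$), then the two-sided case follows by choosing $u_{\ell}$ for $F$, then $u_{r}$ for $F \cup \{u_{\ell}\}$ so that in particular $u_{\ell}u_{r} = u_{\ell}$, and setting $e := u_{\ell} + u_{r} - u_{r}u_{\ell}$; a direct computation then yields $ea = a = ae$ for all $a \in F$.

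For the one-sided statement I would proceed in two stages. In the first stage I would produce, for any finite $F$, an element $x \in B$ with $x\cdot a = a$ for all $a \in F$: writing each $a = \sum_{j} x^{(a)}_{j} a^{(a)}_{j}$ using the idempotency $BA = A$, and invoking that $B$ has local units (property (F1) listed after Definition \ref{def:frobenius}), any local unit in $B$ for the finite family $\{x^{(a)}_{j}\}$ works. In the second stage I would upgrade this $B$-multiplier local unit to an element $u_{\ell} \in A$ with the same left-multiplication effect on $F$. Here I would use the counit identity $(\epsb \odot \iota)((1 \otimes u)\Delta_{B}(a)) = u\cdot a$ together with bijectivity of $\rT$: any element of $\ArA$ can be written as $\sum_{i}(1 \otimes u_{i})\Delta_{B}(a_{i})$, so by decomposing a suitable element of $\ArA$ that encodes the action of $x$ on $F$ (obtained via local projectivity of $\bA$ and the separability idempotent $E \in M(B\otimes C)$), one extracts a finite combination in $A$ acting as the desired local unit. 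The argument for right local units is symmetric, using $\Delta_{C}$, $\epsc$ and $\lT$.

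The main obstacle is the upgrade step. The local units furnished by the Frobenius tuple live in $B$ (respectively $C$), not in $A$, and translating them into genuine elements of $A$ acting identically on the finite set $F$ requires combining the counit, antipode and comultiplication of the Hopf algebroid with the separability idempotent. Particular care is needed to track the various bimodule structures and to ensure, using local projectivity (property (F2)) together with the defining relations $E(1\otimes y) = E(t_{C}(y)\otimes 1)$ and $(x\otimes 1)E = (1\otimes t_{B}(x))E$ of the Frobenius tuple, that the constructed expressions actually lie in $A$ rather than $M(A)$ and reproduce the action of $x$ on $F$.
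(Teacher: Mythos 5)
Your reduction to two-sided local units via $e=u_{\ell}+u_{r}-u_{r}u_{\ell}$ is fine, and your first stage (finding $x\in B$ with $xa=a$ for all $a$ in the finite set, using $BA=A$ and local units in $B$) is essentially how the paper begins as well: it chooses $b\in A$ with $\epsb(b)a_{i}=a_{i}$ for all $i$. The problem is your second stage, which is exactly where the whole difficulty sits and where your sketch does not actually give an argument. The tools you invoke --- the counit axiom, bijectivity of the canonical maps, and the separability idempotent --- let you write, for $\Delta_{C}(b)(1\otimes a_{i})=\sum_{j}p_{ij}\otimes q_{ij}$, the element $a_{i}=\epsb(b)a_{i}$ as the finite sum $\sum_{j}S(p_{ij})q_{ij}$ (this is diagram \eqref{dg:antipode}). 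But the left-hand factors $S(p_{ij})$ depend on $i$ and $j$, so nothing in such a decomposition produces a \emph{single} element $u_{\ell}\in A$ with $u_{\ell}a_{i}=a_{i}$ for all $i$ simultaneously. Note that the statement ``the action of a multiplier on a finite subset of $A$ is realized by left multiplication by an element of $A$'' is, for the multiplier $1$, literally the statement that $A$ has left local units; so your upgrade step is as hard as the proposition itself, and asserting that one can ``extract a finite combination in $A$ acting as the desired local unit'' begs the question.

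The paper escapes this by arguing non-constructively, following \cite{daele:actions}: assume $(a_{1},\dots,a_{n})$ does not lie in the subspace $V=\{(aa_{1},\dots,aa_{n}):a\in A\}\subseteq A^{n}$, choose functionals $\omega^{(1)},\dots,\omega^{(n)}$ with $\sum_{i}\omega^{(i)}(aa_{i})=0$ for all $a\in A$ but $\sum_{i}\omega^{(i)}(a_{i})\neq 0$, and then show that $\sum_{j}\omega^{(i)}(S(p_{ij})q_{ij})$ vanishes, contradicting $\sum_{j}\omega^{(i)}(S(p_{ij})q_{ij})=\omega^{(i)}(a_{i})$. The vanishing step is where the Frobenius hypothesis really enters: it guarantees that \emph{every} functional on $A$ is factorizable (Lemma \ref{lemma:frobenius}\,(1)) and that the relevant modules are locally projective, which allows the factors $S(p_{ij})$ to be split off through finite rank-one decompositions and the whole expression to be rewritten in terms of quantities of the form $\omega^{(i)}(a\,a_{i})$ that vanish by the choice of the $\omega^{(i)}$. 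Since the existence of the local unit is obtained only by this separation argument, and your proposal replaces that argument by an unproved direct construction at its central step, the proposal as written has a genuine gap.
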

\begin{proof}
  We  follow the proof of \cite[Proposition 2.2]{daele:actions}. Let $a_{1},\ldots,a_{n} \in A$. We suppose that the vector space $V=\{(aa_{1},\ldots,aa_{n}) : a\in A\} \subseteq A^{n}$ does not contain $(a_{1},\ldots,a_{n})$ and obtain a contradiction as follows. Choose $\omega^{(1)},\ldots,\omega^{(n)} \in A^{\vee}$ such that $\sum_{i} \omega^{(i)}(aa_{i})=0$ for all $a\in A$ but $\sum_{i} \omega^{(i)}(a_{i}) \neq 0$. Since $B$ has local units and $\epsb$ is surjective, we can find a $b\in A$ such that $\epsb(b)a_{i} = a_{i}$ for all $i$. Write
  \begin{align*}
  \Delta_{C}(b)(1\otimes a_{i}) =\sum_{j} p_{ij} \otimes q_{ij}
  \end{align*}
with $p_{ij},q_{ij} \in A$. Then by  \eqref{dg:antipode},
\begin{align*}
   \sum_{j} \omega^{(i)}(S(p_{ij})q_{ij}) = \omega^{(i)}(\epsb(b)a_{i}) = \omega^{(i)}(a_{i}).
\end{align*}
We shall show that  the left hand side is $0$ for all $i$, which is  a contradiction.  

Fix $i$. By (F1) and (F2), we can choose $\upsilon^{(k)}_{C} \in (A_{C})^{\vee}$ and $e^{(k)} \in A$ such that 
\begin{align*}
  \sum_{k} e^{(k)}\upsilon^{(k)}_{C}(S(p_{ij})) = S(p_{ij})
\end{align*}
  for all $j$.  Write $\omega^{(i,k)} = \omega^{(i)} \cdot e^{(k)}$ and $\upsilon^{(k)}=\mu_{C}\circ \upsilon^{(k)}_{C}$. Then
  \begin{align*}
    \sum_{j} \omega^{(i)}(S(p_{ij})q_{ij}) &= \sum_{j,k} \omega^{(i)}(e^{(k)}\upsilon^{(k)}_{C}(S(p_{ij}))q_{ij}) 
= \sum_{j,k}\omega^{(i,k)}(\upsilon^{(k)}_{C}(S(p_{ij}))q_{ij}).
\end{align*}
Since $\omega^{(i,k)} \in A^{\vee}=A^{\sqcup}$, we can rewrite this expression in the form
\begin{align*}
  \sum_{j,k} \upsilon^{(k)}(S(p_{ij}){_{C}\omega^{(i,k)}}(q_{ij}))
= \sum_{j,k} (\upsilon^{(k)} \circ S)(t_{C}({_{C}\omega^{(i,k)}}(q_{ij}))p_{ij}).
  \end{align*}
  But  $\sum_{j}t_{C}({_{C}\omega^{(i,k)}}(q_{ij}))p_{ij}=0$ for each $k$ because for every $\chi \in \hat{A}$,
  \begin{align*}
    \sum_{j}\chi(t_{C}({_{C}\omega^{(i,k)}}(q_{ij}))p_{ij}) &= (\chi \underset{\mu_{C}}{\otimes} \omega^{(i,k)})(\Delta_{C}(b)(1\otimes a_{i})) \\ &= \omega^{(i,k)}((b\triangleleft \chi)a_{i}) = \omega^{(i)}(e^{(k)}(b\triangleleft \chi)a_{i}) =0. \qedhere
  \end{align*}
\end{proof}

\begin{theorem} \label{theorem:had-wmha}
  Let $(\mathcal{A},\mu_{B},\mu_{C},\bpsib,\cphic)$ be a measured regular multiplier Hopf algebroid, where $\mathcal{A}=(A,B,C,t_{B},t_{C},\Delta_{B},\Delta_{C})$ and  where $(B,C,t_{B},t_{C},\mu_{B},\mu_{C})$ is a Frobenius tuple. Then there exists a unique homomorphism $\Delta\colon A\to M(A\otimes A)$ satisfying 
    \begin{align*}
      \Delta(a)(b\otimes c) = \iota_{C}(\Delta_{C}(a)(b\otimes c)) \quad \text{and} \quad
(b\otimes c)      \Delta(a) = \iota_{B}((b\otimes c)\Delta_{B}(a))
    \end{align*}
for all $a,b,c\in A$, and $(A,\Delta,\psi,\phi)$ is a regular weak multiplier Hopf algebra with integrals.
  \end{theorem}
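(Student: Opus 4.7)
The plan is to invert the construction of Theorem \ref{theorem:wmha-had}, using the splittings $\iota_{B},\iota_{C}$ from Lemma \ref{lemma:frobenius} to transport $\Delta_{B},\Delta_{C}$ back to a single honest comultiplication on $A$. Proposition \ref{theorem:local-units} ensures that $A$ has local units, and properties (F1)-(F2) following Definition \ref{def:frobenius} guarantee that the hypotheses of Lemma \ref{lemma:frobenius} are satisfied. Define linear maps $L_{a},R_{a}\colon A\otimes A \to A\otimes A$ by
\begin{align*}
 L_{a}(b\otimes c) := \iota_{C}(\Delta_{C}(a)(b\otimes c)), \qquad R_{a}(b\otimes c) := \iota_{B}((b\otimes c)\Delta_{B}(a)),
\end{align*}
so that, if $(L_{a},R_{a})$ assembles into a two-sided multiplier, the required formulas for $\Delta(a)$ are forced.

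The first main step is to verify the multiplier compatibility $R_{a}(b\otimes c)(b'\otimes c') = (b\otimes c)L_{a}(b'\otimes c')$. Using the explicit descriptions $\iota_{C}(x) = E\cdot x$ and $\iota_{B}(x) = x \cdot E$ on representatives, together with the identities in Definition \ref{def:frobenius}(1) allowing $E$ to be moved past elements of $B$ and $C$ in either tensor factor, the compatibility reduces to the mixed coassociativity relations \eqref{eq:mixed-coassociativity-one}-\eqref{eq:mixed-coassociativity-two} of $\mathcal{A}$. Multiplicativity of $\Delta$ then follows because $\Delta_{B}$ and $\Delta_{C}$ are homomorphisms and $\iota_{B},\iota_{C}$ are compatible with left and right $A\otimes A$-module structure. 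Uniqueness is immediate from non-degeneracy of $A\otimes A$.

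Next I would verify the seven weak multiplier Hopf algebra axioms. Axiom (1) is immediate from the definitions; axiom (2) follows from the coassociativity and mixed coassociativity conditions in Definition \ref{definition:mult-hopf-algebroid}(4) combined with a routine bookkeeping of applications of $\iota_{B},\iota_{C}$; and axiom (7) holds with counit $\varepsilon := \mu_{C}\circ\epsc = \mu_{B}\circ\epsb$. Axiom (3) (fullness) follows from surjectivity of the counits (Remark \ref{remark:counits-surjective}), the identities $A=\epsb(A)\cdot A = A\cdot \epsc(A)$ from (H1), and Lemma \ref{lemma:frobenius}(1). For axioms (4)-(5), the role of the idempotent is played by $E \in M(B\otimes C) \subseteq M(A\otimes A)$: the equality $\Delta(A)(1\otimes A) = E(A\otimes A)$ is built into the construction via $\iota_{C}$, and the pentagon-type identity $(\Delta\otimes\iota)(E) = (E\otimes 1)(1\otimes E) = (1\otimes E)(E\otimes 1) = (\iota\otimes\Delta)(E)$ can be checked by pairing with arbitrary elements of $A$ and using the Frobenius identities in Definition \ref{def:frobenius}(2). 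Axiom (6), the existence of the four idempotents $F_{i}$ controlling the kernels of the canonical maps $T_{i}$, is the point where the analysis is most delicate: these $F_{i}$ encode the splittings of the $B$- and $C$-balanced tensor products as direct summands of $A\otimes A$, and their existence follows from (F2) combined with the bijectivity of the canonical maps $\Tl,\Tr,\lT,\rT$ of $\mathcal{A}$, essentially inverting the derivation in Section 4 of \cite{daele:relation}. Finally, the integral properties of $\phi$ and $\psi$ transport directly from (LI2) and (RI2) via Lemma \ref{lemma:frobenius}(3), and \eqref{eq:wmha-full} follows from the fullness of $\cphic$ and $\bpsib$.

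The principal obstacle will be axioms (4)-(6): making precise the interplay between the separability idempotent $E$, the source and target maps $\varepsilon_{s},\varepsilon_{t}$ implicitly determined by the weak Hopf structure, and the algebroid base anti-isomorphisms $t_{B},t_{C}$, and in particular obtaining the four idempotents $F_{i}$ from the algebroid data. These computations are the counterpart in the reverse direction of the constructions in \cite[Section 4]{daele:relation}, so they can largely be carried out by reversing the arguments there, but care is required because the present hypotheses (a Frobenius tuple as base together with partial integrals) are slightly weaker than what one starts with in the forward direction, and one must check that (F1)-(F2) together with bijectivity of the canonical maps suffice.
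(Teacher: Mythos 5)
Your overall route differs from the paper's: the paper's proof of this theorem is essentially a citation --- existence of $\Delta$ and the weak multiplier Hopf algebra structure are taken from \cite[Theorem 5.11]{daele:relation}, the integral property of $\phi$ and $\psi$ from \cite[Lemma 3.3]{timmermann:integration} and its right-handed counterpart, and \eqref{eq:wmha-full} is read off from fullness of $\cphic,\bpsib$ together with \eqref{eq:frobenius-factorize-b} and \eqref{eq:frobenius-factorize-c} --- whereas you propose to verify the seven axioms from scratch. That is legitimate in principle, and your identification of the separability idempotent $E\in M(B\otimes C)$ as the canonical idempotent, of $\mu_{C}\circ\ceps=\mu_{B}\circ\epsb$ as the counit, and of Proposition \ref{theorem:local-units} and (F1)--(F2) as the enabling hypotheses all match what the cited theorem uses.

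However, your sketch glosses over exactly the steps that make the cited theorem nontrivial. First, the multiplier compatibility $R_{a}(b\otimes c)(b'\otimes c')=(b\otimes c)L_{a}(b'\otimes c')$ does not ``reduce to'' the mixed coassociativity relations \eqref{eq:mixed-coassociativity-one}--\eqref{eq:mixed-coassociativity-two} in any direct way: those are three-leg identities relating $\Delta_{B}$ and $\Delta_{C}$ after balancing, while what you need is a two-leg identity in the \emph{unbalanced} tensor product $A\otimes A$, namely $(b\otimes c)\Delta_{B}(a)E(b'\otimes c')=(b\otimes c)E\Delta_{C}(a)(b'\otimes c')$. Extracting this (e.g.\ by collapsing one leg of mixed coassociativity with a counit and then controlling the lifts $\iota_{B},\iota_{C}$) is the core of \cite[Theorem 5.11]{daele:relation} and needs to be written out, not asserted. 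Second, for axiom (6) and for the integral conditions you ultimately fall back on ``reversing the arguments of \cite{daele:relation}'' --- but that is precisely what the paper's one-line citation already accomplishes; note also that the weak-multiplier-Hopf notion of integral is phrased via the idempotents $F_{1},F_{2}$, so transporting (LI2)/(RI2) requires you to have constructed the $F_{i}$ first, which your plan defers. So the proposal is not wrong in outline, but at the decisive points it either re-derives or silently invokes the external theorem that the paper cites; as a self-contained argument it is incomplete there.
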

  \begin{proof}
 Existence of $\Delta$ and the fact that $(A,\Delta)$ is a regular weak multiplier Hopf algebra follow from \cite[Theorem 5.11]{daele:relation}, $\phi$ and $\psi$ are a left and a right integral for $(A,\Delta)$ by \cite[Lemma 3.3]{timmermann:integration} and its right-handed counterpart, and \eqref{eq:wmha-full} follows from the relations $\bphi(A)=B=\phib(A)$, $\cpsi(A)=C=\psic(A)$ and \eqref{eq:frobenius-factorize-b}, \eqref{eq:frobenius-factorize-c}.
  \end{proof}
 As a corollary, we obtain the following duality of regular weak  multiplier Hopf algebras.
\begin{corollary}
  Let $(A,\Delta,\psi,\phi)$ be a regular weak multiplier Hopf algebra with integrals. Then there exists 
a regular weak multiplier Hopf algebra $(\hat{A},\hat{\Delta},\hat{\psi},\hat{\phi})$ with counit $\hat{\varepsilon}$ and antipode $\hat{S}$ such that
$\hat{A} = A\cdot \phi = \psi \cdot A \subseteq \dual{A}$  as a vector space and
  \begin{gather*}
    \begin{aligned}
      ((a\cdot \phi) (b\cdot \phi))(c) &= (\phi \otimes \phi) (\Delta(c)(a\otimes b)), \\
      (\hat{\Delta}(\upsilon)(1\otimes \omega)\mid a\otimes b) &= (\upsilon \otimes \omega\mid (a\otimes 1)\Delta(b)), \\
      ( (\upsilon \otimes 1)\hat{\Delta}(\omega)\mid a\otimes b) &=
      (\upsilon \otimes \omega\mid \Delta(a)(1\otimes b)),
    \end{aligned}
\\
  \hat{\phi}(\psi \cdot a) = \varepsilon(a) = \hat{\psi}(a\cdot \phi), \quad \hat{\varepsilon}(a\cdot \phi)=\phi(a), \quad \hat{S}(\upsilon) = \upsilon \circ S
\end{gather*}
for all  $a,b\in A$ and $\upsilon,\omega \in \hat{A}$, where $(-\mid -)$ denotes the canonical pairing of $\hat{A} \otimes \hat{A} \subseteq \dual{A} \otimes\dual{A}$ with $A\otimes A$. 
\end{corollary}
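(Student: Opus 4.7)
The plan is to reduce the statement to the duality theory for measured regular multiplier Hopf algebroids developed in Section \ref{section:dual}, via the passage between weak multiplier Hopf algebras with integrals and measured regular multiplier Hopf algebroids over Frobenius base tuples given by Theorems \ref{theorem:wmha-had} and \ref{theorem:had-wmha}.

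First, apply Theorem \ref{theorem:wmha-had} to the given $(A,\Delta,\psi,\phi)$ to obtain a measured regular multiplier Hopf algebroid $(\mathcal{A},\mu_{B},\mu_{C},\bpsib,\cphic)$ with $\mathcal{A}=(A,B,C,t_{B},t_{C},\Delta_{B},\Delta_{C})$, a Frobenius tuple $(B,C,t_{B},t_{C},\mu_{B},\mu_{C})$, and $\mathcal{A}$ locally projective. By Theorem \ref{theorem:dual-hopf-algebroid}, Theorem \ref{theorem:dual-counits-antipode} and Theorem \ref{theorem:dual-measured}, we obtain the dual measured regular multiplier Hopf algebroid $(\widehat{\mathcal{A}},\mu_{C},\mu_{B},\hcpsic,\hbphib)$, whose underlying vector space is $\hat{A}=A\cdot\phi=\psi\cdot A \subseteq A^{\vee}$, and whose antipode and counits are the ones displayed in the statement.

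Next, I would verify that the base tuple of $\widehat{\mathcal{A}}$, namely $(C,B,t_{C}^{-1},t_{B}^{-1},\mu_{C},\mu_{B})$, is again a Frobenius tuple; concretely, it suffices to produce a separability idempotent $\hat E \in M(C \otimes B)$ out of the given Frobenius data by applying the counit map of $\hat{\mathcal{A}}$ (which is evaluation at $1\in M(A)$, see Theorem \ref{theorem:dual-counits-antipode}), together with the modular properties in Theorem \ref{theorem:modular-automorphism}. Once this is shown, Theorem \ref{theorem:had-wmha} applied to $\widehat{\mathcal{A}}$ yields a unique homomorphism $\hat\Delta\colon \hat A \to M(\hat A \otimes \hat A)$ determined by the relations
\begin{align*}
  \hat\Delta(\upsilon)(1\otimes \omega) &= \iota_{C}(\hat\Delta_{C}(\upsilon)(1\otimes \omega)), &
(\upsilon\otimes 1)\hat\Delta(\omega) &= \iota_{B}((\upsilon\otimes 1)\hat\Delta_{B}(\omega)),
\end{align*}
and with respect to which $(\hat A,\hat\Delta,\hat\psi,\hat\phi)$ is a regular weak multiplier Hopf algebra with integrals. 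Its counit and antipode are automatically the ones stated, by Theorem \ref{theorem:dual-counits-antipode}.

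It remains to rewrite the defining pairings of $\hat\Delta_{B}$ and $\hat\Delta_{C}$ from Theorem \ref{theorem:dual-hopf-algebroid} in terms of $\Delta$ and of the ordinary pairing on $A^{\vee}\otimes A^{\vee}$. Here Lemma \ref{lemma:frobenius}(3) is the key tool: it identifies the relative tensor products of factorizable functionals $\upsilon\otimes_{\mu_B}\omega$ and $\upsilon\otimes_{\mu_C}\omega$ with the compositions $(\upsilon\otimes\omega)\circ\iota_B$ and $(\upsilon\otimes\omega)\circ\iota_C$, respectively. Combined with the relations $\Delta(a)(b\otimes c) = \iota_C(\Delta_C(a)(b\otimes c))$ and $(b\otimes c)\Delta(a)=\iota_B((b\otimes c)\Delta_B(a))$ from Theorem \ref{theorem:had-wmha}, formulas \eqref{eq:dual-deltab} and \eqref{eq:dual-deltac} immediately translate into the two pairing identities in the statement. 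The convolution formula $((a\cdot\phi)(b\cdot\phi))(c) = (\phi\otimes\phi)(\Delta(c)(a\otimes b))$ similarly follows by unwinding the definition $\upsilon\omega = (\upsilon\otimes_{\mu_C}\omega)\circ\Delta_C$ from subsection \ref{subsection:dual-algebra} and applying Lemma \ref{lemma:frobenius}(3) once more.

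The main obstacle is the bookkeeping in verifying that the Frobenius property transfers to the dual base tuple, since the modular automorphisms of $\mu_B$, $\mu_C$ enter non-trivially via \eqref{eq:factorisation-dual-bimodule-factorisation}; everything else is a dictionary between the $\Delta$-formulation and the $(\Delta_B,\Delta_C)$-formulation provided by Lemma \ref{lemma:frobenius}.
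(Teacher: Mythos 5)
Your proposal follows essentially the same route as the paper: pass to a measured regular multiplier Hopf algebroid via Theorem \ref{theorem:wmha-had}, dualize via Theorem \ref{theorem:dual-measured}, return to a weak multiplier Hopf algebra via Theorem \ref{theorem:had-wmha}, and translate the formulas using Lemma \ref{lemma:frobenius}~(3) together with Theorems \ref{theorem:dual-counits-antipode} and \ref{theorem:dual-measured}. The one point you flag explicitly --- that the dual base tuple $(C,B,t_{B}^{-1},t_{C}^{-1},\mu_{C},\mu_{B})$ must again be verified to be a Frobenius tuple before Theorem \ref{theorem:had-wmha} can be applied --- is left implicit in the paper's proof, and your sketch of how to produce the required separability idempotent is a reasonable way to close that step.
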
 
\begin{proof}
We subsequently use  Theorem \ref{theorem:wmha-had}, Theorem \ref{theorem:dual-measured} and Theorem \ref{theorem:had-wmha} to pass from
 $(A,\Delta,\psi,\phi)$  to a  measured regular multiplier Hopf algebroid, its dual, and the associated regular weak multiplier Hopf algebra with integrals $(\hat{A},\hat{\Delta},\hat{\psi},\hat{\phi})$. The formulas given for the multiplication, comultiplication, integrals, counit and antipode  on $\hat{A}$ follow immediately from Lemma \ref{lemma:frobenius} (3) and Theorem \ref{theorem:dual-counits-antipode} and \ref{theorem:dual-measured}.
\end{proof}
In the situation above, we call $(\hat{A},\hat{\Delta},\hat{\psi},\hat{\phi})$ the \emph{dual} of $(A,\Delta,\psi,\phi)$. 
With the obvious notion of isomorphism between weak multiplier Hopf algebras with integrals,
we obtain the following biduality result as a corollary to Theorem \ref{theorem:biduality}:
\begin{corollary}
  Let $(A,\Delta,\psi,\phi)$ be a regular weak multiplier Hopf algebra with integrals and denote by $(\hat{\hat{A}},\hat{\hat{\Delta}},\hat{\hat{\psi}},\hat{\hat{\phi}})$ the dual of $(\hat{A},\hat{\Delta},\hat{\psi},\hat{\phi})$. Then the natural map $A \to \dual{\hat{A}}$ restricts to an isomorphism $A  \to \hat{\hat{A}}$ of regular weak multiplier Hopf algebras with integrals.
\end{corollary}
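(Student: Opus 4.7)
The plan is to assemble the biduality by iterating the correspondence between weak multiplier Hopf algebras with integrals and measured regular multiplier Hopf algebroids over a Frobenius tuple, and then invoking the already-established biduality for the latter class.

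First I would apply Theorem \ref{theorem:wmha-had} to $(A,\Delta,\psi,\phi)$ to obtain a measured regular multiplier Hopf algebroid $(\mathcal{A},\mu_{B},\mu_{C},\bpsib,\cphic)$ in which $(B,C,t_{B},t_{C},\mu_{B},\mu_{C})$ is a Frobenius tuple and $\mathcal{A}$ is locally projective. Theorem \ref{theorem:dual-measured} then yields the dual $(\hat{\mathcal{A}},\mu_{C},\mu_{B},\hcpsic,\hbphib)$, whose total algebra $\hat{A}=A\cdot\phi=\psi\cdot A$ carries the convolution product and comultiplications computed in Subsections \ref{subsection:dual-algebra} and \ref{subsection:dual-comultiplication}. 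Before applying Theorem \ref{theorem:had-wmha} to $\hat{\mathcal{A}}$, I must check that the base data $(C,B,t_{B}^{-1},t_{C}^{-1},\mu_{C},\mu_{B})$ of $\hat{\mathcal{A}}$ again form a Frobenius tuple: this is immediate from the definition, since swapping the roles of $B,C$ and of $t_{B},t_{C}$ (with the separability idempotent $E\in M(B\otimes C)$ replaced by its obvious counterpart) leaves all three axioms of Definition \ref{def:frobenius} intact. Hence Theorem \ref{theorem:had-wmha} produces the weak multiplier Hopf algebra $(\hat{A},\hat{\Delta},\hat{\psi},\hat{\phi})$ of the preceding corollary. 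Iterating once more gives $(\hat{\hat{A}},\hat{\hat{\Delta}},\hat{\hat{\psi}},\hat{\hat{\phi}})$.

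Next I would invoke Theorem \ref{theorem:biduality} to get that the canonical map $j\colon A\to \dual{(\hat A)}$, $a\mapsto \dual{a}$, restricts to an isomorphism from $\mathcal{A}$ onto the bidual measured regular multiplier Hopf algebroid $\hhat{\mathcal{A}}$, preserving the base weight and both partial integrals. Since $\hhat{\mathcal{A}}$ is exactly the measured regular multiplier Hopf algebroid assigned to $(\hat{A},\hat{\Delta},\hat{\psi},\hat{\phi})$ by Theorem \ref{theorem:wmha-had}, and Theorem \ref{theorem:had-wmha} reconstructs the weak comultiplication uniquely from $\Delta_{B},\Delta_{C}$ via the maps $\iota_{B},\iota_{C}$ of Lemma \ref{lemma:frobenius}, the isomorphism $j$ automatically intertwines the comultiplications: for $a,b,c\in A$ one has
\begin{align*}
(b\otimes c)\hhat{\Delta}(j(a)) &= \iota_{B}((j(b)\otimes j(c))\hhat{\Delta}_{B}(j(a)))
= (j\otimes j)\iota_{B}((b\otimes c)\Delta_{B}(a)) \\
&= (j\otimes j)((b\otimes c)\Delta(a)),
\end{align*}
and analogously from the right. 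The integrals transform correctly by the formulas $\hhat{\phi}(\dual{a})=\phi(a)$ and $\hhat{\psi}(\dual{a})=\psi(a)$ computed in the proof of Theorem \ref{theorem:biduality}.

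The main conceptual step, and potentially the most delicate one, is keeping track of the bookkeeping: verifying that the base Frobenius tuple of $\hat{\mathcal{A}}$ really is the one needed to apply Theorem \ref{theorem:had-wmha}, and that the maps $\iota_{B},\iota_{C}$ used to reconstruct $\hhat{\Delta}$ from $\hhat{\Delta}_{B},\hhat{\Delta}_{C}$ correspond under $j\otimes j$ to the maps $\iota_{B},\iota_{C}$ used to reconstruct $\Delta$ from $\Delta_{B},\Delta_{C}$. Once this is done, the compatibility of $j$ with multiplication, comultiplication, integrals, counit and antipode is a direct consequence of Theorem \ref{theorem:biduality} combined with the uniqueness assertions in Theorems \ref{theorem:wmha-had} and \ref{theorem:had-wmha}. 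The resulting $j\colon A\to \hhat{A}$ is therefore the desired isomorphism of regular weak multiplier Hopf algebras with integrals.
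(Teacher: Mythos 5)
Your proposal is correct and follows exactly the route the paper intends: the paper gives no written proof beyond declaring the statement ``a corollary to Theorem \ref{theorem:biduality}'', and your argument simply spells out that derivation via the two translation theorems, including the (genuinely worth checking) points that the flipped base data of $\hat{\mathcal{A}}$ is again a Frobenius tuple and that the maps $\iota_{B},\iota_{C}$ are compatible with the identification $j$. One small slip: the measured regular multiplier Hopf algebroid that Theorem \ref{theorem:wmha-had} assigns to $(\hat{A},\hat{\Delta},\hat{\psi},\hat{\phi})$ is $\hat{\mathcal{A}}$, not $\hhat{\mathcal{A}}$ --- it is its dual that equals $\hhat{\mathcal{A}}$, which is what your argument actually uses.
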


\subsection{\'Etale groupoids}

Let $G$ be a locally compact,  Hausdorff groupoid which is \emph{\'etale} in the sense that the source and the target maps $s,t\colon G\to G^{0}$ are open and local homeomorphisms \cite{renault}, and
let $\mu$ be a  Radon measure $\mu$ on  the space of units $G^{0}$ with full support. Suppose that $\mu$ is \emph{continuously quasi-invariant} in the sense of \cite[Example 3.6]{timmermann:integration}, that is, for every open $G$-set $U\subseteq G$, the two measures on $U$ obtained by pulling back $\mu$ along $t$ or $s$ are related by a continuous Radon-Nikodym derivative.  

Then  there exists a measured  multiplier Hopf $*$-algebroid 
\begin{align*}
  (\mathcal{A},\mu_{B},\mu_{C},\bpsib,\cphic), \quad \text{where } \mathcal{A}=(A,B,C,t_{B},t_{C},\Delta_{B},\Delta_{C}),  
\end{align*}
as follows \cite[Example 5.2]{timmermann:integration}.
First,
\begin{align*}
 A&=C_{c}(G), & B&=s^{*}(C_{c}(G^{0})), & C &=t^{*}(C_{c}(G^{0})),  &
  t_{B} \circ s^{*} &= t^{*}, & t_{C} \circ t^{*} = s^{*},
\end{align*}
where $C_{c}(G)$ and $C_{c}(G^{0})$ denote the algebras of compactly supported continuous functions on
$G$ and on $G^{0}$, respectively, and  $s^{*},t^{*} \colon C_{c}(G^{0}) \to M(C_{c}(G))$
the pull-back of functions along $s$ and $t$. The comultiplications $\Delta_{B}$ and $\Delta_{C}$ are given by
\begin{align*} (\Delta_{C}(f)(g \otimes h))(\gamma,\gamma')
=f(\gamma\gamma')g(\gamma)h(\gamma') = ((g\otimes h)\Delta_{B}(f))(\gamma,\gamma')
  \end{align*}
  for all $f,g,h\in A, \gamma,\gamma' \in G$, where we used the canonical
 isomorphisms
\begin{align*}
 \ctA \otimes \csA \cong C_{c}(\GstG) \cong
 \bAs\otimes \bAt
\end{align*}
that identify an elementary tensor $f\otimes g$ with the function $(\gamma,\gamma') \mapsto f(\gamma)g(\gamma')$, and $\GstG$ denotes the composable pairs of elements of $G$.  
Finally, 
\begin{align} \label{eq:base-weight-groupoid-functions}
  \mu_{B}(s^{*}(f)) = \int_{G^{0}} f \intd \mu = \mu_{C}(t^{*}(f))
\end{align}
and
\begin{align} \label{eq:integrals-groupoid-functions}
  (\cphic(f))(\gamma) &= \sum_{\substack{t(\gamma')=t(\gamma)}} f(\gamma'), &
  (\bpsib(f))(\gamma) &= \sum_{\substack{s(\gamma')=s(\gamma)}} f(\gamma')
\end{align}
for all $f\in C_{c}(G)$ and $\gamma\in G$.

 The total integrals $\phi:=\mu_{C} \circ \cphic$ and $\psi:=\mu_{B} \circ \bpsib$ correspond to integration with respect to the measures $\nu$ and $\nu^{-1}$ on $G$ defined by
   \begin{align} \label{eq:dnu}
  \int_{G} f \intd \nu  &=  \int_{G^{0}} \sum_{r(\gamma)=u} f(\gamma) \intd\mu(u), &
\int_{G} f \intd \nu^{-1} &=\int_{G^{0}} \sum_{s(\gamma) = u} f(\gamma) \intd\mu(u),
\end{align}
respectively,
and  the assumption that $\mu$ is continuously quasi-invariant is equivalent to the relation
$\nu  =  D\nu^{-1}$ for  some (unique) invertible $D \in C(G)$.

We show that the dual measured multiplier Hopf $*$-algebroid
\begin{align*}
  (\widehat{\mathcal{A}},\hat{\mu}_{C},\hat{\mu}_{B},{_{C}\hat{\psi}_{C}},{_{B}\hat{\phi}_{B}}), \quad \text{where }  \widehat{\mathcal{A}}=(\hat{A},C,B,\hat{t}_{C},\hat{t}_{B},\hat{\Delta}_{C},\hat{\Delta}_{B}),
\end{align*}
coincides with the  one associated to the convolution algebra of $G$ in \cite[Section  8.1]{timmermann:integration}.

By definition, the dual algebra $\hat{A}$ consists of all functionals on $C_{c}(G)$ of the form
  \begin{align*}
    \hat{f}:= fD^{-1/2}\cdot \phi  = fD^{1/2} \cdot \psi \colon g \mapsto \int_{G} fg d\nu^{0},
  \end{align*}
  where $f\in C_{c}(G)$ and $\nu^{0}=D^{-1/2}\nu=D^{1/2}\nu^{-1}$. The reason for introducing $D^{-1/2}$ in the definition of $\hat{f}$ will become clear in a moment. 
 
The multiplication on $\hat{A}$ is given by convolution, that is, for all $f,f' \in C_{c}(G)$,
\begin{align} \label{eq:gpd-convolution}
  \hat{f} \ast \hat{f'} = \hat{h}, \quad \text{where } h(\gamma'')=  \sum_{\gamma\gamma'=\gamma''}  f(\gamma)f'(\gamma') \text{ for all } \gamma\in G.
\end{align}
Indeed, short calculations show that for all $g\in C_{c}(G)$,
  \begin{align} \label{eq:gpd-factorise}
          ({_{B}\hat{f}}(g))(\gamma) &= 
      ({\hat{f}_{B}}(g))(\gamma) = \sum_{s(\gamma')=s(\gamma)} f(\gamma')D^{1/2}(\gamma')g(\gamma'), \\ \label{eq:gpd-act}
    (g \triangleleft \hat{f})(\gamma') &=\sum_{s(\gamma)=t(\gamma')}
    f(\gamma)D^{1/2}(\gamma)g(\gamma\gamma'),
  \end{align}
and  the last equation and the cocycle relation for $D$ imply
\begin{align*}
  ((g\triangleleft \hat{f}) \triangleleft \hat{f'})(\gamma'') = \sum_{\substack{s(\gamma)=t(\gamma') \\  s(\gamma')=t(\gamma'')}} f(\gamma)D^{1/2}(\gamma)f'(\gamma')D^{1/2}(\gamma') g(\gamma\gamma'\gamma'') = (g \triangleleft \hat{h})(\gamma'').
\end{align*}
 
 The involution on $\hat A$ is defined by 
  \begin{align*}
    (\hat f)^{*}(g) &= \overline{\int_{G} f\overline{S(g)} d\nu^{0}} =
    \int_{G} \overline{f(\gamma)} g(\gamma^{-1}) \intd\nu^{0}(\gamma) =
    \int_{G} \overline{f(\gamma^{-1})}g(\gamma)
    \intd\nu^{0}(\gamma),
  \end{align*}
whence
\begin{align} \label{eq:gpd-involution}
  (\hat{ f})^{*} &= \widehat{(f^{\dag})}, \quad \text{where }                   f^{\dag}(\gamma) =\overline{f(\gamma^{-1})}                   \text{ for all } \gamma\in G.
 \end{align}

  The embedding of $C$  into $M(\hat{A})$ is given by
  \begin{align*}
    t^{*}(h) \hat{f}  &= \widehat{t^{*}(h)f} =
    \hat h \ast \hat f, &
    \hat{f} t^{*}(h) &= 
    \widehat{s^{*}(h)f} = \hat f \ast \hat h 
   \end{align*}
   for all $f\in C_{c}(G)$ and $h\in C_{c}(G^{0})$, where $\ast$ denotes the convolution product. Hence, we can identify $C$ with $\widehat{C_{c}(G^{0})} \subseteq M(\hat{A})$. 
 Similar calculations show that the same is true for $B$ and that with respect to these identifications,
 $\hat S_{B}$ and $\hat S_{C}$ reduce to the identity.

Let us next compute the right comultiplication $\hat{\Delta}_{C}$ on $\hat{A}$.  
First, we can identify
   \begin{align} \label{eq:gpd-identify-hala}
      \hArA \cong C_{c}(\GssG) \quad \text{and} \quad \hAlA \cong C_{c}(\GttG)
   \end{align}
such that an elementary tensor $\hat f \otimes \hat g$ corresponds to the function $(\gamma,\gamma') \mapsto f(\gamma) g(\gamma')$, where $\GssG$ and $\GttG$ denote the pairs of all $(\gamma,\gamma')$ with $s(\gamma)=s(\gamma')$ or $t(\gamma)=t(\gamma')$, respectively.
Next, we use Proposition \ref{proposition:dual-canonical},  equation \eqref{eq:gpd-act}, and the cocycle property of $D$, and find
\begin{align*}
  ((\hat{f} \otimes 1)\hat{\Delta}_{C})(\hat{f'})&\mid g\otimes g') = 
\hat{f}'((g\triangleleft \hat{f})g') \\ 
&=\int_{G} \sum_{s(\gamma) = t(\gamma')} f(\gamma)D^{1/2}(\gamma)g(\gamma\gamma')g'(\gamma')f'(\gamma')d\nu^{0}(\gamma') \\
&=\int_{G} \sum_{s(\gamma'')=s(\gamma')} f(\gamma''\gamma'{}^{-1})D^{1/2}(\gamma'')D^{-1/2}(\gamma')g(\gamma'')g'(\gamma')f'(\gamma') d\nu^{0}(\gamma').
\end{align*}
On the other hand, the pairing $(\hArA) \times (A_{B} \otimes \bA) \to \C$  is given by
\begin{align*}
  (h\otimes h'\mid g\otimes g') &= \int_{G} \sum_{s(\gamma'')=s(\gamma')} h(\gamma'')g(\gamma'') h'(\gamma')D^{1/2}(\gamma'') g'(\gamma') d\nu^{0}(\gamma'),
\end{align*}
and comparing the two expressions we find that with respect to the identification  \eqref{eq:gpd-identify-hala},
\begin{align*}
  ((\hat{f} \otimes 1)\hat{\Delta}_{C}(\hat{f'})(\gamma'',\gamma') &= f(\gamma''\gamma'^{-1}) f'(\gamma')D^{-1/2}(\gamma').
\end{align*}
A similar calculation shows that
\begin{align*}
  (\hat{\Delta}_{B}(\hat{f})(1\otimes \hat{f'}))(\gamma,\gamma'') &= f(\gamma)f'(\gamma^{-1}\gamma'')D^{1/2}(\gamma).
\end{align*}
Thus, the multiplier Hopf $*$-algebroid $\widehat{\mathcal{A}}=(\hat{A},C,B,t_{B}^{-1},t_{C}^{-1},\hat{\Delta}_{C},\hat{\Delta}_{B})$ coincides with the modified multiplier Hopf $*$-algebroid in \cite[Section 8.1]{timmermann:integration}. Finally, the partial integrals $_{C}\hat{\psi}_{C}$ and $_{B}\hat{\phi}_{B}$ are given by
\begin{align*}
  _{C}\hat{\psi}_{C}(\hat{ f}) = \ceps(fD^{-1/2}) \equiv \widehat{f|_{G^{0}}} \equiv
   \epsb(fD^{1/2}) = { _{B}\hat{\phi}_{B}(\hat{f})},
\end{align*}
where we identified $B,C$ with the subalgebra $\widehat{C_{c}(G^{0})}$ of $M(\hat{ A})$ and used the relations $\ceps(g)=t^{*}(g|_{G^{0}})$ and $\epsb(g)=s^{*}(g|_{G^{0}})$ \cite[Example 2.3]{timmermann:integration}.

 \subsection{A two-sided crossed product}
 \label{subsection:two-sided}
Let $(H,\Delta_{H})$ be a regular multiplier Hopf algebra with a left integral $\phi_{H}$ and a right integral $\psi_{H}$, and denote its counit and antipode by $\varepsilon_{H}$ and $S_{H}$, respectively. Let $C$ be a unital algebra that is a left $H$-module algebra and write $h\triangleright y$ for the action of an element $h\in H$ on an element $y\in C$. Thus,
\begin{align*}
  h \triangleright (h' \triangleright y) &= (hh') \triangleright y, & h \triangleright (yy') &= (h_{(1)} \triangleright y)(h_{(2)} \triangleright y'), & h\triangleright 1_{C} &= \varepsilon_{H}(h)1_{C}
\end{align*}
for all $h,h' \in H$ and $y\in C$. We suppose that $H\triangleright C=C$. Finally,  let $\mu_{C}$ be a faithful functional on $C$ that is normalized in the sense that $\mu_{C}(1_{C})=1$, invariant with respect to the action of $H$ in the  sense that
$\mu_{C}(h\triangleright y) =\varepsilon_{H}(h)\mu_{C}(y)$
for all $h\in H$ and $y\in C$, and admits a modular automorphism $\sigma_{C}$. 

 Then one can define a measured regular multiplier  Hopf algebroid 
\begin{align*}
  (\mathcal{A},\mu_{B},\mu_{C},\bpsib,\cphic), \quad \text{where } \mathcal{A}=(A,B,C,t_{B},t_{C},\Delta_{B},\Delta_{C}),
\end{align*}
as follows \cite[Examples 2.7 and 5.6]{timmermann:integration}. First,
\begin{align*}
  B=C^{\op}, \quad   t_{C}(y) = y^{\op} \quad\text{and} \quad t_{B}(y^{\op})  = \sigma^{-1}(y) \quad\text{for all } y\in C.
\end{align*}
Note that $B$ carries a natural right action of $(H,\Delta_{H})$, given by
\begin{align*}
  x^{\op} \triangleleft h := (S_{H}^{-1}(h)  \triangleright x)^{\op}.
\end{align*}
  The algebra $A$ is the vector space $C \otimes H \otimes B$ with the multiplication given by
 \begin{align*}
 (y \otimes h \otimes x^{\op})(y' \otimes h' \otimes x'^{\op}) &= y(h_{(1)} \actright y') \otimes h_{(2)}h'_{(1)} \otimes (x^{\op} \actleft h'_{(2)})x'^{\op}.
 \end{align*}
 The algebras $C$ and $B$ embed naturally into $M(A)$, and we identify them with their images in $M(A)$. Moreover, we identify $H$ with the subalgebra $1_{C} \otimes H\otimes 1_{B} \subseteq A$.   The remaining ingredients of the measured  multiplier Hopf algebroid are given by
     \begin{gather*}
      (a\otimes a')\Delta_{B}(yhx) = ayh_{(1)} \otimes a'h_{(2)}x, \quad
      \Delta_{C}(yhx)(a \otimes a') = yh_{(1)}a \otimes h_{(2)}xa', \\
  \mu_{B}(y^{\op}) = \mu_{C}(y), \quad
    \bpsib(yhx) =\mu_{C}(y)\psi_{H}(h)x, \quad
 \cphic(yhx) =y\phi_{H}(h)\mu_{B}(x)
 \end{gather*}
for all $y\in C$, $x\in B$, $h\in H$ and $a,a'\in A$.
 The counits and antipode of $\mathcal{A}$ are given by
    \begin{align*}
       \ceps(x^{\op}hy) &=  (h \actright y)x, & \epsb(x^{\op}hy)
&=\sigma(y)^{\op}(x^{\op} \actleft h), &
S(yhx^{\op}) &= xS_{H}(h)\sigma(y)^{\op},
    \end{align*} 
and the modular automorphism $\sigma_{B}$ of $\mu_{B}$  is given by $\sigma_{B}(y^{\op})=\sigma_{C}^{-1}(y)^{\op}$ for all $y\in C$. 
Note that the canonical embedding of $H$ into $A$ is a morphism of multiplier Hopf algebroids.

This example goes back to \cite{connes:rankin}.  In the case where $C$  is a Frobenius algebra in the sense that it is part of a Frobenius tuple $(C,B,t_{B},t_{C})$, and that the action of $H$ is compatible with this tuple in a suitable sense, one can also endow the vector space $C \otimes H\otimes B$ with the structure of a   weak multiplier Hopf algebra, see  \cite[Proposition 3.6]{daele:weakmult2}. It is tedious to check but evident that the  associated multiplier Hopf algebroid  is isomorphic to the one constructed above.

Let us now determine  the dual multiplier Hopf algebroid
\begin{align*}
  \widehat{\mathcal{A}} = (\hat{A},C,B,t_{B}^{-1},t_{C}^{-1},\hat{\Delta}_{C},\hat{\Delta}_{B}).
\end{align*}
For trivial $H$, the dual algebra of the  weak (multiplier) Hopf algebra obtained in \cite[Proposition 3.6]{daele:weakmult2} is described nicely in \cite[Lecture 5]{daele-woronowicz}.

Denote by $\mathcal{K}$ the algebra spanned by all linear endomorphisms of $C$ of the form
\begin{align*}
  |x\rangle\langle y| \colon z \mapsto \mu_{C}(zy)x,
\end{align*}
where $x,y \in C$, and  by $(\hat{H},\Delta_{\hat{H}})$ the dual multiplier Hopf algebra of $(H,\Delta_{H})$ and by   $\varepsilon_{\hat{H}}$ and $\phi_{\hat{H}}$ its counit and the dual left integral on $\hat{H}$.
\begin{proposition} \label{proposition:chb-dual}
The dual algebra $\hat{A}$ is isomorphic to $\mathcal{K} \otimes \hat{H}$ via
  \begin{align*}
  hy  \cdot \phi \cdot x^{\op} \mapsto |y\rangle\langle x| \otimes h\cdot \phi_{H}.
  \end{align*}
  With respect to this identification, the embeddings of $B$ and of $C$  into $M(\hat{A})$ are given by
  \begin{align*}
  z^{\op} (|y\rangle\langle x| \otimes h\cdot \phi_{H})  &= |yz\rangle\langle x| \otimes h\cdot \phi_{H},  \\
  z (|y\rangle\langle x| \otimes h\cdot \phi_{H}) &=
|(S^{-1}_{H}(h_{(1)}) \triangleright z)y\rangle\langle x| \otimes h_{(2)} \cdot \phi_{H}.
\end{align*}
 The map $\hat{ H} \to \hat{A} \cong \mathcal{K}\otimes \hat{H}$ given by $\upsilon \mapsto \tilde{\upsilon}:= |1_{C} \rangle\langle 1_{C}| \otimes \upsilon$
is a morphism of multiplier Hopf algebroids. The counit functional $\hat{\varepsilon}$ and the dual right integral $\hat{\phi}$ of $\widehat{\mathcal{A}}$ are given by
\begin{align*}
  \hat{\varepsilon}(|x\rangle\langle y| \otimes \upsilon) &= \mu_{C}(x)\mu_{C}(y) \varepsilon_{\hat{H}}(\upsilon), &
                                                                                                                  \hat{\phi}(|x\rangle\langle y| \otimes \upsilon) &= \mu_{C}(yx) \phi_{\hat{H}}(\upsilon).
\end{align*}
\end{proposition}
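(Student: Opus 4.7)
The plan is to define the candidate map $\Phi \colon \mathcal{K} \otimes \hat{H} \to \hat{A}$ by
\[ \Phi(|y\rangle\langle x| \otimes h \cdot \phi_{H}) := hy \cdot \phi \cdot x^{op}, \]
show that it is a well-defined linear bijection and an algebra homomorphism, and then deduce the remaining assertions from the general duality machinery.

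For linear bijectivity, I would first expand $hyx^{op}$ inside $A$ as the element $(h_{(1)}\triangleright y) \otimes h_{(2)} \otimes x^{op}$ of $C \otimes H \otimes B$ via the multiplication of the embedded subalgebras $C,H,B \hookrightarrow M(A)$. Since $H \triangleright C = C$, the map $(y,h,x) \mapsto hyx^{op}$ surjects onto $A$, and the factorisation $\phi = \mu_{C} \otimes \phi_{H} \otimes \mu_{C}$ (using $\mu_{B}\circ(-)^{op} = \mu_{C}$) combined with faithfulness of $\phi$ implies that the assignment $(y,h,x) \mapsto hyx^{op}\cdot\phi$ descends to a linear bijection from $C \otimes \hat{H} \otimes C$ onto $\hat A$. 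The identification $C \otimes C \cong \mathcal{K}$ given by $y \otimes x \mapsto |y\rangle\langle x|$ turns this into $\Phi$.

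The main computational step is verifying that $\Phi$ intertwines the multiplications. Using $\upsilon\omega = \omega \circ ({-}\triangleleft \upsilon)$ from Theorem \ref{theorem:dual-algebra}, together with Proposition \ref{proposition:act} and the explicit form of $\Delta_{B}$ on the smash product, the product $(hy \cdot \phi \cdot x^{op}) \ast (h'y' \cdot \phi \cdot x'^{op})$ expands into an iterated slice of $\phi$ over an element of $A$ whose three tensor factors couple through $S_{H}$-twists. Repeated use of the left invariance of $\phi_{H}$ and of the $H$-invariance $\mu_{C}(h \triangleright y) = \varepsilon_{H}(h)\mu_{C}(y)$ collapses the result to $\mu_{C}(xy') \cdot (hh')y \cdot \phi \cdot x'^{op}$ modulo the identification of convolutions in $\hat{H}$, matching $(|y\rangle\langle x| \otimes h\cdot\phi_{H})(|y'\rangle\langle x'| \otimes h'\cdot\phi_{H})$ in $\mathcal{K} \otimes \hat{H}$. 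This careful bookkeeping through the smash-product twists, reconciled with the modular automorphism $\sigma_{C}$ entering via $t_{B}(y^{op}) = \sigma_{C}^{-1}(y)$, is the main obstacle of the proof.

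The remaining assertions follow more routinely. The embeddings of $B$ and $C$ into $M(\hat{A})$ are read off from \eqref{eq:dual-module}: right-multiplying $hy\cdot\phi\cdot z^{op}$ by $x^{op}$ adjusts only the $B$-factor, while commuting $z \in C \subseteq M(A)$ past $h \in H \subseteq A$ via the crossed-product relation $zh = h_{(1)}(S_{H}^{-1}(h_{(2)}) \triangleright z)$ produces exactly the $H$-twist in the formula for the $C$-action. The morphism property of $\hat{H} \to \hat{A}$ is verified using the criterion stated after Proposition \ref{proposition:morphism-dual}, reducing to the routine observation that the inclusion $H \hookrightarrow A$ is itself a morphism of regular multiplier Hopf algebroids. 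Finally, the counit and integral formulas follow from Theorems \ref{theorem:dual-counits-antipode} and \ref{theorem:dual-measured}: using that $\hat\varepsilon$ coincides with evaluation at $1 \in M(A)$, the expansion $x^{op}\cdot hy = (h_{(1)}\triangleright y) \otimes h_{(2)} \otimes (S_{H}^{-1}(h_{(3)}) \triangleright x)^{op}$, the $H$-invariance of $\mu_{C}$, and the counit axiom for $\varepsilon_{H}$ together yield $\hat\varepsilon(hy\cdot\phi\cdot x^{op}) = \mu_{C}(y)\mu_{C}(x)\phi_{H}(h) = \mu_{C}(y)\mu_{C}(x)\varepsilon_{\hat{H}}(h\cdot\phi_{H})$; the integral formula is analogous, using instead $\hbphib(\psi\cdot a)=\epsb(a)$ and Theorem \ref{theorem:modular-element} to transport $hy\cdot\phi\cdot x^{op}$ into the form $\psi\cdot a$.
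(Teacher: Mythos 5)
Your proposal is essentially correct in its conclusions, but for the central step --- identifying the algebra structure of $\hat{A}$ with that of $\mathcal{K}\otimes\hat{H}$ --- you take a genuinely different and harder route than the paper. You propose to compute the convolution product $(hy\cdot\phi\cdot x^{\op})\ast(h'y'\cdot\phi\cdot x'^{\op})$ directly via $\upsilon\omega=\omega\circ({-}\triangleleft\upsilon)$ and to collapse it by invariance of $\phi_H$ and $\mu_C$; you yourself flag the resulting bookkeeping as the main obstacle. The paper sidesteps this entirely: it computes the \emph{left action} $(hy\cdot\phi\cdot x^{\op})\triangleright x'^{\op}h'y'$ on $A\cong B\otimes H\otimes C$ and finds it equals $(|y\rangle\langle x|x')^{\op}\,((h\cdot\phi_H)\triangleright h')\,y'$; since the product on $\hat{A}$ is by definition composition of these actions and the action is faithful (Theorem \ref{theorem:dual-algebra}(2)), multiplicativity of the identification is automatic, with no convolution computation needed. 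If you do carry out your direct computation, be careful: the product in $\mathcal{K}$ gives $|y\rangle\langle x|\,|y'\rangle\langle x'|=\mu_C(y'x)\,|y\rangle\langle x'|$, so the scalar is $\mu_C(y'x)$, not the $\mu_C(xy')$ you wrote (these differ by $\sigma_C$ unless $\mu_C$ is tracial), and the $H$-component is the convolution $(h\cdot\phi_H)(h'\cdot\phi_H)$, not $(hh')\cdot\phi_H$. For the morphism property, your appeal to the corollary following Proposition \ref{proposition:morphism-dual} is a legitimate alternative to the paper's direct pairing computation for $\hat{\Delta}_B$ followed by the antipode-intertwining argument, though it does not literally ``reduce to'' the fact that $H\hookrightarrow A$ is a morphism --- you still have to verify \eqref{eq:morphism-test-deltart} and \eqref{eq:morphism-test-deltalt} by hand, which is comparable work. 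The remaining assertions (module structures from \eqref{eq:dual-module}, counit and integral formulas) you handle exactly as the paper does.
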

 \begin{proof}
As a vector space, $\hat{A}$ is spanned by functionals of the form $hy\cdot \phi \cdot x^{\op}$, where $x,y\in C$ and $h \in H$.  Such a functional  acts on the left on $A$  as follows:
\begin{align*}
  (hy\cdot \phi\cdot x^{\op}) \triangleright x'^{\op}h'y' &=
                                                   (\iota \odot \cphic)  ((1\otimes x^{\op})\Delta_{C}(x'^{\op}h'y')(1\otimes hy)) \\
&= t_{C}(\cphic(x^{\op}x'^{\op}h'_{(2)}hy))h'_{(1)}y' 
 \\
&= \mu_{C}(x'x)y^{\op}  h'_{(1)}\phi_{H}(h'_{(2)}h)  y' \\
&= (|y\rangle\langle x| x')^{\op}  \, ((h\cdot \phi_{H}) \triangleright h') \, y'.
\end{align*}
Therefore, the map $hy\cdot \phi \cdot x^{\op} \mapsto |y\rangle\langle x| \otimes h\cdot \phi_{H}$ is an isomorphism from $\hat{A}$ to $\mathcal{K}\otimes \hat{H}$.

The embeddings of $B$ and of $C$  into $M(\hat{ A})$ defined by \eqref{eq:dual-module}   take the form
\begin{align*}
  z^{\op} (|y\rangle\langle x| \otimes h\cdot \phi_{H})  &\equiv (hy \cdot \phi \cdot x^{\op}) \cdot t_{B}(z^{\op})
\\ &= (hy\cdot \phi \cdot x^{\op}\sigma^{-1}_{C}(z)) = (hyz \cdot \phi \cdot x^{\op}) \equiv |yz\rangle\langle x| \otimes h\cdot \phi_{H}, \\
  z (|y\rangle\langle x| \otimes h\cdot \phi_{H}) &\equiv z\cdot (hy \cdot \phi \cdot x^{\op}) \\
&= 
h_{(2)}(S^{-1}_{H}(h_{(1)}) \triangleright z)y \cdot \phi \cdot x^{\op}  \equiv
|(S^{-1}_{H}(h_{(1)}) \triangleright z)y\rangle\langle x| \otimes h_{(2)} \cdot \phi_{H}.
\end{align*}

The map $\upsilon\mapsto \tilde{\upsilon}$  is an algebra homomorphism because $\mu_{C}$ is normalized. 
We show that this map is compatible with the left comultiplications on $\hat{H}$ and on $\hat{A}$.

Let $h\in H$ and  $\upsilon=h\cdot \phi_{H}\in \hat{ H}$. Then $\tilde{\upsilon} = h\cdot \phi$, $_{C}\tilde{\upsilon}=h\cdot \cphic$, and since $\mu_{C}$ is invariant,
\begin{gather} 
  \tilde{\upsilon}(x'^{\op}h'y') = \mu_{C}(y')\mu_{C}(x')\upsilon(h'), \quad _{C}\tilde{\upsilon}(x'^{\op}h'y) = (h'_{(1)} \triangleright y')\mu_{C}(x')\upsilon(h'_{(2)}), \label{eq:chb-ctilde} \\ \tilde{\upsilon} \triangleright x'^{\op}h'y' = \mu_{C} (x') (\upsilon \triangleright h')y' \nonumber
\end{gather}
for all $x',y'\in C$ and $h' \in H$. Fix $\upsilon,\upsilon' \in \hat{H}$ and let $a=x^{\op}hy$, $a'=x'^{\op}h'y'$ in $A$.
  By Proposition \ref{proposition:dual-canonical} and the calculations above,
\begin{align*}
(\hat{\Delta}_{B}(\tilde{\upsilon})(1 \otimes \tilde{\upsilon'})\mid a\otimes a') &=
  \tilde{\upsilon}(a (\tilde{\upsilon'} \triangleright a')) \\ &= \tilde{\upsilon}(x^{\op}hy (\upsilon' \triangleright h') y') \mu_{C}(x') \\
&= \mu_{C}(x')\mu_{C}(x) \mu_{C}(y(h'_{(1)} \triangleright y'))  \upsilon(hh'_{(2)})\upsilon'(h'_{(3)}),
\end{align*}
where we used the preceding calculations and invariance of $\mu_{C}$. 
 Write
 $\Delta_{\hat{H}}(\upsilon)(1 \otimes \upsilon') = \sum_{i} \omega_{i} \otimes \omega'_{i}$ with $\omega_{i},\omega'_{i} \in \hat{ H}$. Then the expression above can be written in the form
 \begin{align*} 
(\hat{\Delta}_{B}(\tilde{\upsilon})(1 \otimes \tilde{\upsilon'})\mid a\otimes a') &=
    \sum_{i}\mu_{C}(x')\mu_{C}(x) \mu_{C}(y(h'_{(1)} \triangleright y')) \omega_{i}(h)\omega'_{i}(h'_{(2)}).
 \end{align*}
On the other hand, using \eqref{eq:chb-ctilde}, we find that 
\begin{align*}
  (\tilde{\omega}_{i} \otimes \tilde{\omega}'_{i}\mid a\otimes a') &= \tilde{\omega}_{i}(x^{\op}hy {_{C}\tilde{\omega}'_{i}}(x'^{\op}h'y')) \\
&= \tilde{\omega}_{i}(x^{\op}hy (h'_{(1)}\triangleright y')) \mu_{C}(x')\omega'_{i}(h'_{(2)}) \\
&= \mu_{C}(x)\mu_{C}(x') \mu_{C}(y(h'_{(1)} \triangleright y'))\omega_{i}(h)\omega'_{i}(h'_{(2)}),
\end{align*}
where the pairing on the left hand side is the pairing of $\hat{A}^{B} \otimes {_{B}\hat{A}}$ and $\cAs \otimes \csA$, see Lemma \ref{lemma:pairing}. Comparing with the equation above, we find that
\begin{align*}
  \hat{\Delta}_{B}(\tilde{\upsilon})(1 \otimes \tilde{\upsilon'}) = \sum_{i} \tilde{\omega}_{i} \otimes \tilde{\omega}'_{i}.
\end{align*}

 The map $\upsilon\mapsto \tilde{\upsilon}$   intertwines the antipode of $\hat{H}$ and the antipode of $\hat{A}$ because
\begin{align*}
  \tilde{\upsilon}(S(x^{\op}hy)) = 
\tilde{\upsilon}(\sigma_{C}(y)^{\op}S_{H}(h)x)  =\mu_{C}(y)\mu_{C}(x)\upsilon(S_{H}(h)) = \widetilde{\upsilon\circ S_{H}}(x^{\op}hy)
\end{align*}
for all $\upsilon\in \hat{H}$ and $x,y\in C$. In particular, it follows that this map is also compatible with the right comultiplications on $\hat{H}$ and $\hat{A}$.

 The formulas given for the counit functional and the left integral on $\hat{A}$ follow immediately from the definitions.
 \end{proof}

\subsection{Braided-commutative Yetter-Drinfeld algebras}

Let $(H,\Delta_{H})$ be a regular Hopf algebra with an integral $\phi_{H}$ that is normalized in the sense that $\phi_{H}(1_{H})=1$, and denote its counit and antipode by $\varepsilon_{H}$ and $S_{H}$, respectively. Let $C$ be a unital braided-commutative Yetter-Drinfeld algebra, that is, $C$ is a left $H$-module algebra and a right $H^{\op}$-comodule algebra such that the left action, written $(h,y) \mapsto h\actright y$, and the coaction, written $y\mapsto y_{(0)} \otimes y_{(1)}$, satisfy the  Yetter-Drinfeld condition
\begin{align} \label{eq:yd-condition}
  (h_{(2)} \actright y)_{(0)} \otimes (h_{(2)} \actright y) _{(1)} h_{(1)} &= h_{(1)} \actright y_{(0)} \otimes h_{(2)}  y_{(1)}
\end{align}
and the braided commutativity condition
\begin{align} \label{eq:yd-braided}
  yy' &= y'_{(0)} (y'_{(1)} \actright y) = (S_{H}(y_{(1)}) \actright y')y_{(0)}.
\end{align}
 Suppose furthermore that $\mu_{C}$ is a faithful functional on $C$ that is normalized, that is, $\mu_{C}(1_{C})=1$, and $H$-invariant in the sense that
\begin{align} \label{eq:yd-invariance}
  \mu_{C}(y_{(0)}) y_{(1)} = \mu_{C}(y)1_{H} \quad \text{and} \quad \mu_{C}(h \actright y) &= \varepsilon_{H}(h)\mu_{C}(y)
\end{align}
for all $h\in H$ and $y\in C$.

Denote by $C\# H$ the smash product for the action, which is the vector space $C\otimes H$ with multiplication given by $(y\otimes h)(y' \otimes h') = y (h_{(1)} \actright y')\otimes h_{(2)}h'$, and identify $C$ and $H$ with the subalgebras $C\# 1_{H}$ and $1_{C} \# H$ of $C\# H$. 

 Then one can define a (unital) measured regular multiplier  Hopf algebroid 
\begin{align*}
  (\mathcal{A},\mu_{B},\mu_{C},\bpsib,\cphic), \quad \text{where } \mathcal{A}=(A,B,C,t_{B},t_{C},\Delta_{B},\Delta_{C}),
\end{align*}
as follows; see  Theorem 4.1 in \cite{militaru}, Example 4.1.5 in \cite{boehm:hopf} and \cite[Examples 2.6 and 5.5]{timmermann:integration}:
\begin{gather*}
  A = C\# H, \quad B = \{ y_{(0)} \# y_{(1)} : y\in C\} \subseteq C\# H, \quad C \equiv \{ y \# 1 : y\in C\}, \\
  \begin{aligned}
     t_{B}(y_{(0)} \# y_{(1)}) &=
    S_{H}^{-1}(y_{(1)}) \actright y_{(0)},& t_{C}(y) &= y_{(0)} \# y_{(1)}, \\
  (a\otimes a')  \Delta_{B}(hy) &= ah_{(1)}y \otimes a'h_{(2)}, &
  \Delta_{C}(y h)(a \otimes a') &= yh_{(1)}a \otimes h_{(2)}a', 
  \end{aligned} \\
\mu_{B}(y_{(0)} \# y_{(1)}) = \mu_{C}(y), \quad
\cphic(yh) = y\phi_{H}(h), \quad
\bpsib(hx) = \phi_{H}(h)x
\end{gather*}
for all $y\in C$, $x\in B$, $h\in H$, and $a,a'\in A$.
The antipode and counits of $\mathcal{A}$ are  given by
\begin{align} \label{eq:yd-counits-antipode}
  S(yh) &= S_{H}(h)S_{H}^{2}(y_{(1)}) y_{(0)}, & \epsb(hx) &= \varepsilon_{H}(h) x, &  \ceps(yh) &= y\varepsilon_{H}(h)
\end{align}
for all $y\in C$, $h\in H$ and $x\in B$. 

Note that the embedding of $H$ into $A$ is a morphism in the sense of Definition \ref{definition:morphism}.

Let us now determine  the dual multiplier Hopf algebroid
\begin{align*}
  \widehat{\mathcal{A}} = (\hat{A},C,B,t_{B}^{-1},t_{C}^{-1},\hat{\Delta}_{C},\hat{\Delta}_{B}).
\end{align*}
Denote by $\hat{H} = H\cdot \phi_{H}$ the dual multiplier Hopf algebra of $H$, by $\varepsilon_{\hat{H}}$ its counit, and by $\psi_{\hat{H}}$ the dual right integral, given by $h\cdot \phi_{h} \mapsto \phi_{H}(h)$. 

  The algebra $B$ carries  a left action of $\hat{H}$,  given by
  \begin{align*}
    \upsilon \triangleright t_{C}(y) &= t_{C}(y_{(0)})\upsilon(y_{(1)})
  \end{align*}
for all $\upsilon\in \hat{H}$ and $y\in C$. Using the relation $y_{(0)} \otimes y_{(1)}S_{H}(y_{(2)})=y\otimes 1_{H}$, one finds that $\hat{H} \triangleright B = B$, and short calculations show that for all $y,y' \in C$ and $\upsilon,\omega\in \hat{H}$,
\begin{align*}
  \upsilon \triangleright (\omega \triangleright t_{C}(y)) &= t_{C}(y_{(0)})\upsilon(y_{(1)})\omega(y_{(2)}) = (\upsilon\omega) \triangleright t_{C}(y), \\
                                                                                                                                 \upsilon \triangleright t_{C}(y)t_{C}(y') &= t_{C}(y'_{(0)}y_{(0)})\upsilon(y_{(1)}y'_{(1)})  = (\upsilon_{(1)} \triangleright t_{C}(y))(\upsilon_{(2)} \triangleright t_{C}(y')), \\
\upsilon \triangleright t_{C}(1_{C})  &= \varepsilon_{\hat{H}}(\upsilon) t_{C}(1_{C}).
\end{align*}
 Thus, $B$ becomes a left $\hat{H}$-module algebra; see also \cite[Propositions 3.1]{daele:galois}. 
Moreover, by \cite[Propositions 3.2]{daele:galois}  $C$ carries a unique right coaction
\begin{align*}
  C \to M(C\otimes \hat{H}), \ y \mapsto y_{\langle 0\rangle} \otimes y_{\langle 1\rangle},
\end{align*}
such that $h \triangleright y = y_{\langle 1\rangle}(h) y_{\langle 0\rangle} $ for all $h\in H$, $y\in C$, and the resulting map
\begin{align*}
  B \to M(B\otimes \hat{H}^{\op}), \ t_{C}(y) \mapsto t_{C}(y_{\langle 0\rangle}) \otimes y_{\langle 1\rangle}
\end{align*}
turns $B$ into a right $\hat{H}^{\op}$-module algebra.
\begin{proposition} \label{proposition:yd-dual}
The algebra $\hat{A}$ is isomorphic to $B \# \hat{H}$ via
\begin{align*}
  (hy \cdot \phi) \mapsto t_{C}(y) \# (h\cdot \phi_{H}).
\end{align*}
 With respect to this identification, the embeddings of $B$ and $C$ into $M(\hat{A})$ are given by
\begin{align*}
x(x' \# \upsilon)  &=  xx' \#  \upsilon, &
                                                               y(x \# \upsilon) &=xt_{C}(y_{\langle 0\rangle}) \# y_{\langle 1\rangle}\upsilon.
\end{align*}
The map $\hat{H} \to \hat{A} \cong B\# \hat{H}$ given by $\upsilon \mapsto \tilde{\upsilon} := 1_{C}\#\upsilon$
is a morphism of multiplier Hopf algebroids. The counit functional $\hat{\varepsilon}$ and the dual right integral $\hat{\phi}$ of $\widehat{\mathcal{A}}$ are given by
\begin{align*}
\hat{\varepsilon}(t_{C}(z)\# \upsilon) &= \mu_{C}(z)\varepsilon_{\hat{H}}(\upsilon), & \hat{\psi}(t_{C}(z) \# \upsilon) &= \mu_{C}(z)\psi_{\hat{H}}(\upsilon).
\end{align*}
 \end{proposition}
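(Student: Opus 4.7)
The plan is to follow the proof of Proposition~\ref{proposition:chb-dual} adapted to the smash product setting, with the Yetter--Drinfeld structure playing the role previously filled by the module algebra action. I proceed in four steps.

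First, I would identify $\hat{A}$ with $B \# \hat{H}$ as an algebra. Since $A = C\#H$ and $HC = A$, every element of $\hat A = A\cdot\phi$ can be written as $hy\cdot\phi$ for $h\in H$ and $y\in C$. Combining Proposition~\ref{proposition:act}, the explicit form $\cphic(y'h') = y'\phi_H(h')$, and the formula for $\Delta_C$, I would compute the left action
\begin{align*}
  (hy\cdot\phi)\triangleright(y'h') = t_C(\cphic(h'_{(2)}hy))\cdot y'h'_{(1)},
\end{align*}
then apply $hy = (h_{(1)}\triangleright y)h_{(2)}$ and separate off the $\hat{H}$-part via $\phi_H$ to exhibit this expression as the action of $t_C(y)\#(h\cdot\phi_H)\in B\#\hat{H}$ on $y'h'$ through the smash product structure. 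This yields the asserted algebra isomorphism.

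Second, I would verify the embeddings of $B$ and $C$ into $M(\hat{A})$. Formula~\eqref{eq:dual-module} reduces $x(hy\cdot\phi)$ to $(hy\cdot\phi)\cdot t_B(x)$ for $x\in B$, and $y(hy'\cdot\phi)$ to $(yhy')\cdot\phi$ for $y\in C$. The first case identifies with left multiplication by $x$ in $B \# \hat{H}$ because $B$ and $C$ commute in $M(A)$. For the second, I would rewrite $yh$ using the smash product commutation $yh = h_{(2)}(S_H^{-1}(h_{(1)})\triangleright y)$ and then invoke the defining property $h\triangleright y = y_{\langle 1\rangle}(h)y_{\langle 0\rangle}$ of the induced right $\hat{H}$-coaction, together with the braided commutativity~\eqref{eq:yd-braided}, to bring $(yhy')\cdot\phi$ into the form $x\,t_C(y_{\langle 0\rangle})\# y_{\langle 1\rangle}(h\cdot\phi_H)$.

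Third, for the morphism property of $\upsilon \mapsto \tilde\upsilon$: multiplicativity is immediate from $\mu_C(1_C) = 1$. Compatibility with $\hat\Delta_B$ (and by symmetry $\hat\Delta_C$) is checked as in Proposition~\ref{proposition:chb-dual}. Fixing $\upsilon,\omega\in\hat{H}$ and $a = yh$, $a' = y'h'$, I would expand $(\hat\Delta_B(\tilde\upsilon)(1\otimes \tilde\omega)\mid a\otimes a')$ using Proposition~\ref{proposition:dual-canonical}, apply invariance~\eqref{eq:yd-invariance} of $\mu_C$ under the $H$-action to cancel the $C$-dependent factors, and match the result with $\sum_i(\tilde\omega_i \otimes \tilde\omega_i' \mid a\otimes a')$, where $\Delta_{\hat{H}}(\upsilon)(1\otimes\omega) = \sum_i \omega_i \otimes \omega_i'$. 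Preservation of the antipode then follows automatically from Theorem~\ref{theorem:morphism-antipode}.

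Finally, the counit and dual right integral formulas are immediate from Theorem~\ref{theorem:dual-counits-antipode} and Theorem~\ref{theorem:dual-measured}. Using $\ceps(hy) = h \triangleright y$, derived from~\eqref{eq:yd-counits-antipode} and the commutation in the smash product, together with invariance~\eqref{eq:yd-invariance} of $\mu_C$, one obtains $\hat\psi(hy\cdot\phi) = \mu_C(\ceps(hy)) = \mu_C(y)\varepsilon_H(h)$, which matches the asserted formula under the identification, and the counit computation is analogous. The main obstacle I anticipate is step two, specifically identifying the embedding of $C$ into $M(\hat A)$ with the induced coaction formula, which requires careful bookkeeping with the Yetter--Drinfeld axioms~\eqref{eq:yd-condition} and~\eqref{eq:yd-braided}.
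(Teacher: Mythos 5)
Your plan is essentially the paper's proof, which itself proceeds ``along the same lines as Proposition~\ref{proposition:chb-dual}''; all four steps are workable and the computations you sketch check out. Three local points where your route differs or is thin. (i) In step one, matching the single left action $(hy\cdot\phi)\actright(y'h')=t_{C}(y)\,y'\,((h\cdot\phi_{H})\actright h')$ does not by itself give the algebra isomorphism: one must still check that the actions of $B$ and of $\hat H$ assemble into an action of the smash product, equivalently that $\tilde{\upsilon}\,t_{C}(z)=t_{C}(\upsilon_{(1)}\triangleright t_{C}(z))\,\widetilde{\upsilon_{(2)}}$ holds in $\hat A$; this commutation relation is exactly the computation the paper carries out, and faithfulness of the $\hat A$-action on $A$ (Theorem~\ref{theorem:dual-algebra}) then closes the argument. (ii) For the $C$-embedding, commuting $y$ to the right via $yh=h_{(2)}(S_{H}^{-1}(h_{(1)})\triangleright y)$ leaves you with terms $t_{C}(S_{H}^{-1}(h_{(1)})\triangleright y)\#(h_{(2)}\cdot\phi_{H})$ and forces an appeal to strong invariance of $\phi_{H}$ to rewrite these as $t_{C}(y_{\langle 0\rangle})\#\,y_{\langle 1\rangle}(h\cdot\phi_{H})$; the paper instead commutes $y$ to the left inside the argument of the functional, $y'h'y=y'(h'_{(1)}\triangleright y)h'_{(2)}$, which produces the coaction directly and uses neither $S_{H}^{-1}$ nor braided commutativity. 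Also, the $B$-embedding formula is not a consequence of $B$ and $C$ commuting in $M(A)$; it comes from $t_{B}t_{C}=\sigma_{C}^{-1}$ (see \eqref{eq:modular-automorphism} and the proof of Lemma~\ref{lemma:cointegral-integral}) together with the modular property $\sigma^{\phi}|_{C}=\sigma_{C}$, which give $(hy\cdot\phi)\cdot t_{B}t_{C}(z)=hyz\cdot\phi$. (iii) Your order of operations for the morphism property is legitimate, but the ``by symmetry'' step for $\hat\Delta_{C}$ is a second full computation, not a formal symmetry; the paper avoids it by proving $\tilde{\upsilon}\circ S=\widetilde{\upsilon\circ S_{H}}$ directly from \eqref{eq:yd-counits-antipode} and invariance of $\mu_{C}$, and then deducing compatibility with $\hat\Delta_{C}$ from compatibility with $\hat\Delta_{B}$ and the antipode.
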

 \begin{proof}
We proceed along the same lines as in the proof of Proposition \ref{proposition:chb-dual}.

 Define a map $\hat{H} \to \hat{A}$, $\upsilon\mapsto \tilde{\upsilon}$, by $h\cdot \phi_{H} \mapsto h\cdot \phi$ for all $h\in H$. We shall see in a moment that then $\tilde{\upsilon} \in \hat{A}$ can be identified with $1_{C}\# \upsilon \in B\# \hat{H}$. 

The map $\upsilon\mapsto \tilde{\upsilon}$ is a homomorphism because for every element $\upsilon=h\cdot \phi_{H}$ in $\hat{H}$, and all $y'\in C$, $h'\in H$,
 \begin{align} \label{eq:yd-act}
   \tilde{\upsilon} \triangleright (y'h') =  (\iota \odot \cphic)(\Delta_{C}(y'h')(1 \otimes h)) = t_{C}(\cphic(h'_{(2)}h)) (y'h'_{(1)}) = y' (\upsilon \triangleright h').
 \end{align}
By \eqref{eq:dual-module}, 
\begin{gather*}
  t_{C}(z)\tilde{\upsilon} = h\cdot \phi \cdot t_{B}(t_{C}(z)) =  hz \cdot \phi,                                       \quad
       (t_{C}(z)\tilde{\upsilon})(y'h') =\phi(y'h'hz)=\mu_{C}(y'z)\upsilon(h'), \\
  (\tilde{\upsilon}t_{C}(z))(y'h') = \phi(t_{C}(z)y'h'h) = \phi(y't_{C}(z)h'h) = \mu_{C}(y'z_{(0)})\upsilon(z_{(1)}h').
 \end{gather*}
Since  $t_{C}(z_{(0)}) \otimes \upsilon(z_{(1)}-) = (\upsilon_{(1)} \triangleright t_{C}(z)) \otimes \upsilon_{(2)}$, we can conclude that
\begin{align*}
  \tilde{\upsilon}t_{C}(z) = t_{C}(\upsilon_{(1)} \triangleright t_{C}(z)) \widetilde{\upsilon_{(2)}}.
\end{align*}
Consequently, the map $t_{C}(z)\#\upsilon \mapsto t_{C}(z)\tilde{\upsilon}$ is a homomorphism from $B\#\hat{H}$ to $\hat{A}$. Since $\hat{A}$ is spanned by functionals of the form $ hz\cdot \phi =t_{C}(z)\tilde{\upsilon}$, this map is an isomorphism.  By  \eqref{eq:dual-module}, 
\begin{align*}
  (y(t_{C}(z)\tilde{\upsilon}))(y'h') &=(t_{C}(z)\tilde{\upsilon})(y'h'y) 
\\ &= (t_{C}(z)\tilde{\upsilon})(y'(h'_{(1)} \triangleright y)h'_{(2)}) 
= \mu_{C}(y'(h'_{(1)} \triangleright y)z) \upsilon(h'_{(2)})
\end{align*}
and hence $y(t_{C}(z)\# \upsilon) = t_{C}(z)t_{C}(y_{\langle 0\rangle}) \# y_{\langle 1\rangle}\upsilon$.

Next, we show  that the map $\upsilon\mapsto \tilde{\upsilon}$ is compatible with the left comultiplications on $\hat{H}$ and $\hat{A}$ . Let $\upsilon,\upsilon' \in \hat{ H}$, $y,y'\in C$ and $h,h'\in H$. Then by Proposition \ref{proposition:dual-canonical},
\begin{align*}
  (\hat{\Delta}_{B}(\tilde{\upsilon})(1\otimes\tilde{\upsilon'})\mid yh\otimes y'h') &=
  \tilde{\upsilon}(yh(\tilde{\upsilon'} \triangleright y'h'))  \\ &=   \tilde{\upsilon}(yhy'(\upsilon'\triangleright h')) = \mu_{C}(y(h_{(1)} \triangleright y'))\upsilon(h_{(2)}h'_{(1)})\upsilon'(h'_{(2)}).
\end{align*}
On the other hand, write $\Delta_{\hat{H}}(\upsilon)(1 \otimes \upsilon') = \sum_{i} \omega_{i} \otimes \omega'_{i}$. Then
\begin{align*} 
  (\tilde{\omega_{i}} \otimes \tilde{\omega_{i}'}\mid yh\otimes y'h') &=  \omega_{i}(yhy')\omega'_{i}(h') \\& = \mu_{C}(y(h_{(1)} \triangleright y')) \omega_{i}(h_{(2)})\omega'_{i}(h') 
=\mu_{C}(y(h_{(1)} \triangleright y')) \upsilon(h_{(2)}h'_{(1)})  \upsilon'(h'_{(2)}),
\end{align*}
where the pairing on the left hand side is the pairing of $\hat{A}^{B} \otimes {_{B}\hat{A}}$ and $\cAs \otimes \csA$, see Lemma \ref{lemma:pairing}. Comparing with the equation above, we find that
\begin{align*}
  \hat{\Delta}_{B}(\tilde{\upsilon})(1 \otimes \tilde{\upsilon'}) = \sum_{i} \tilde{\omega}_{i} \otimes \tilde{\omega}'_{i}.
\end{align*}

Next, we claim that the map $\upsilon \mapsto \tilde{\upsilon}$ intertwines the antipodes on $\hat{H}$ and on $\hat{A}$. Indeed,  since  $\mu_{C}$   is invariant with respect to the action of $H$ and the coaction of $H^{\op}$, 
\begin{align*}
  \tilde{\upsilon}(S(yh)) = \tilde{\upsilon}(S_{H}(h)S_{H}^{2}(y_{(1)})y_{(0)}) = \mu_{C}(y)\upsilon(S_{H}(h)) = 
\widetilde{(\upsilon\circ S_{H})}(yh)
\end{align*}
for all $\upsilon\in\hat{H}$, $y\in C$ and $h\in H$. 

Consequently, the map $\upsilon\mapsto \tilde{\upsilon}$ is also compatible with the right comultiplications and hence a morphism of multiplier Hopf algebroids. 

Again, the formulas given for the counit functional and the left integral on $\hat{A}$ follow immediately from the definitions.
 \end{proof}
 \subsection*{Acknowledgments} The author would like to thank Alfons Van Daele for inspiring and fruitful discussions, and the referee for many helpful comments.

     \bibliographystyle{abbrv}
\def\cprime{$'$}

\end{document}